\theoremstyle{definition}
\newtheorem{definition}{Definition}
\newtheorem{proposition}[definition]{Proposition}
\newtheorem{theorem}[definition]{Theorem}
\newtheorem{lemma}[definition]{Lemma}
\newtheorem{corollary}[definition]{Corollary}
\newtheorem{remark}[definition]{Remark}
\newtheorem{example}{Example}
\numberwithin{equation}{section}
\numberwithin{definition}{section}
\numberwithin{figure}{section}
\renewcommand\Re{\operatorname{Re}}
\renewcommand\Im{\operatorname{Im}}
\newcommand{\Spin}{\text{Spin}}
\newcommand{\GL}{ {\widetilde{GL}^+}}
\newcommand{\id}{\text{id}}
\newcommand{\Cb}{\mathbb{C}}
\tikzstyle{product}=[draw, circle, fill, inner sep=0.05cm]
\tikzstyle{naka}=[draw, ellipse, inner sep=0.05cm]
\begin{document}

\title[State sum construction of 2d TQFTs on spin surfaces]{State sum construction of two-dimensional\\ topological quantum field theories\\ on spin surfaces}

\begin{abstract}
We provide a combinatorial model for spin surfaces. Given a triangulation of an oriented surface, a spin structure is encoded by assigning to each triangle a preferred edge, and to each edge an orientation and a sign, subject to certain admissibility conditions. The behaviour of this data under Pachner moves is then used to define a state sum topological field theory on spin surfaces. The algebraic data is a $\Delta$-separable Frobenius algebra whose Nakayama automorphism is an involution. We find that a simple extra condition on the algebra guarantees that the amplitude is zero unless the combinatorial data satisfies the admissibility condition required for the reconstruction of the spin structure.
\end{abstract}

\date{}

\author{Sebastian Novak}
\address{Sebastian Novak\\Fachbereich Mathematik\\Universit\"at Hamburg\\Bundesstra\ss e 55\\20146 Hamburg\\Germany}
\email{sebastian.novak@mailbox.org}

\author{Ingo Runkel}
\address{Ingo Runkel\\Fachbereich Mathematik\\Universit\"at Hamburg\\Bundesstra\ss e 55\\20146 Hamburg\\Germany}
\email{ingo.runkel@uni-hamburg.de}

\maketitle

\newpage

\tableofcontents

\newpage

\section{Introduction}

One way to construct a two-dimensional topological field theory on oriented surfaces is via the so-called lattice topological field theory -- or state sum -- approach \cite{fukuma1994lattice, bachas1993topological}. There, one triangulates the surface, assigns an invariant to the triangulated surface and finally proves that this invariant is independent of the chosen triangulation. The algebraic data in terms of which this invariant is defined is a $\Delta$-separable symmetric Frobenius algebra in a symmetric monoidal category (for example, the category of complex vector spaces); the invariant itself is a morphism in that category (in the example, a linear map) \cite{lauda2007state}. The invariant is defined for surfaces with non-empty boundary and is compatible with gluing together boundary components.

\medskip

The aim of the present paper is to provide an analogous construction for two-dimensional topological field theory on surfaces with spin structure. We start by giving a combinatorial model of triangulated spin surfaces which is well suited for the state sum description (Section \ref{sec:spintriangulations}). We determine the behaviour of the combinatorial data under change of triangulation (and under other local moves). This allows us to conclude that, given a $\Delta$-separable Frobenius algebra whose Nakayama automorphism squares to the identity (again all taken in some fixed symmetric monoidal category -- see Section \ref{sec:2dlatticeTFT}), one can assign a morphism to a triangulated spin surface in such a way that (i)
this morphism is independent of the triangulation (and other choices), and
(ii)
the resulting invariant is compatible with gluing of boundary components.

Topological field theories on spin surfaces and combinatorial models for spin structures have already been investigated in a number of places:
\begin{itemize}\setlength{\leftskip}{-1em}
\item[-]
In papers by Kuperberg, and by Cimasoni and Reshetikhin, a relation between dimer models and spin structures is proved \cite{kuperberg1998exploration, cimasoni2007dimers, cimasoni2008dimers}. The construction starts from a dimer configuration (a perfect matching) on a surface graph together with a Kasteleyn orientation of the edges. From this the authors construct a vector field with a finite number of singular points of even winding number to which they can assign a class in $H^1(P_{SO},\mathbb{Z}_2)$. We believe, however, that for the purpose of the state sum construction, our approach of gluing a spin surface out of ``spin triangles'' (see below) is more directly applicable than the approach via dimer models.
\item[-]
In the description of open-closed two-dimensional topological field theory by Moore and Segal also the spin case is covered \cite{moore2006d}. Moore and Segal use generators and relations of the bordism category to deduce the algebraic structure on the state spaces. The state sum construction we carry out produces an example of this, and we show how our results compare to theirs in Remarks \ref{rem:N-involution-on-Z} and \ref{rem:MS-euler-char-matches}.
\item[-]
In Lurie's approach to fully extended topological field theories \cite{lurie2009classification}, framed topological field theories correspond to fully dualisable objects in a suitable higher symmetric monoidal category. Since state sum constructions of topological field theories are in a sense  as local as possible, there should be a direct relation between the two. However, to our knowledge, this has not yet been made precise in general (see, however, \cite{davidovich2011state} for special cases). What one can do is to compare the algebraic objects produced by the two approaches. We do this in Remark \ref{rem:compare-Lurie}. 
\item[-]
In the recent paper \cite{budney2013combinatorial}, Budney provides a combinatorial model for spin structures on 
manifolds of arbitrary dimension. When restricted to the two-dimensional case, this model is very similar to ours. Indeed, this model seems equally well suited to investigate state sum constructions and it would be interesting to apply it in higher dimensions. Since the method by which we arrive at our combinatorial model is quite different from Budney's, we think it is still useful to present our construction in detail.
\item[-]
In the paper \cite{barrett2013spin} by Barrett and Tavares, state sum models for spin surfaces were investigated independently from the present paper. There, spin structures are characterised via an embedding of the surface into $\mathbb{R}^3$ along which the canonical spin structure on $\mathbb{R}^3$ is pulled back. While the geometric setting in \cite{barrett2013spin} is different from our approach, on the algebraic side there is some overlap of results, which we briefly address in Remark \ref{rem:BT-BCP-relation}.
\end{itemize}

\subsection{Summary of results}

\subsubsection{Geometric part} 

We work with spin structures on compact smooth oriented manifolds with parametrised boundaries. The surfaces are not equipped with a metric, so that for us, a spin structure is a fibre-wise non-trivial double cover of the oriented frame bundle (which is a principal $GL_2^+$-bundle), see Section \ref{sec:spin-no-metric}. To combinatorially describe the spin structure on a spin surface $\Sigma$, we proceed as follows.
\begin{itemize}\setlength{\leftskip}{-1em}
\item[-]
Denote the surface without spin structure by $\underline\Sigma$. Choose a triangulation of $\underline\Sigma$. Decorate each edge with an orientation and each triangle with a preferred edge. We refer to these two choices as a {\em marking}, see Section \ref{sec:marking-comb-surf}.
\item[-]
Pick a ``standard triangle'' $\underline\Delta \subset \Cb$ and equip it with a spin structure (which is unique up to isomorphism). Denote the resulting spin surface by $\Delta$. The triangulation of $\underline\Sigma$ includes as part of its data a smooth map $\chi_\sigma : \underline\Delta \to \underline\Sigma$ for each triangle $\sigma$. Choose a spin lift $\tilde\chi_\sigma : \Delta \to \Sigma$. There are two such lifts for each triangle $\sigma$, see again Section \ref{sec:marking-comb-surf}.
\item[-]
Consider two adjacent triangles $\sigma_L$ and $\sigma_R$ in $\underline\Sigma$ and let $e$ be their common edge. The labels $L$/$R$ are such that $\sigma_L$ is to the left of $e$ and $\sigma_R$ is to the right with respect to the orientation chosen on $e$. The differential $d(\chi_L^{-1}\circ \chi_R)_p$ for a point $p$ of the corresponding edge of $\underline\Delta$ induces a transformation on the frame bundle. Choose a ``standard lift'' of all such transformations to the spin surface $\Delta$. (Actually, a good deal of effort goes into the definition of the standard lift, see Section \ref{sec:index_inner}.) The standard lift is then either equal to $\tilde\chi_L^{-1}\circ \tilde\chi_R|_p$, or it differs by a minus sign. Define the {\em edge sign} $s(e)$ as the difference between these lifts, see Section \ref{sec:index_inner}.
\item[-]
A similar treatment is applied to edges on the parametrised boundaries. We refer to Section \ref{sec:edge-sign-bnd} for details. Here we just remark that boundary components are parametrised by small annuli in $\Cb$, and such an annulus can carry two non-isomorphic spin structures: a Neveu-Schwarz type spin structure (NS-type) and a Ramond-type spin structure (R-type). Accordingly, when treating parametrised boundaries below, we always have to distinguish these two cases.
\end{itemize}
The combinatorial data for a spin surface $\Sigma$ thus consists of a triangulation, a marking and the edge signs. 

Let us call an assignment $e \mapsto s(e)$ of signs to edges {\em admissible} if around each
vertex $v$ we have $\prod_e s(e) = -1$, where the product is over all edges $e$ with $v \in \partial e$. Here we assumed that the marking is such that all edges are oriented towards the vertex and none of the edges touching $v$ are the preferred edge of a triangle (otherwise the rule is slightly more complicated, see Corollary \ref{lem:vertex.rule.inner}). For a boundary vertex there is a condition for NS-type boundaries and for R-type boundaries, see Lemma \ref{lem:vertex.rule.boundary}. 

By construction, a spin surface $\Sigma$ produces admissible edge signs $s_\Sigma$. Conversely, any assignment of edge signs $s$ defines a spin structure on $\underline\Sigma$ minus its vertices. This spin structure extends to the vertices if and only if $s$ is admissible. Let us denote the resulting spin surface by $S(\underline\Sigma,s)$. 
Given a triangulated surface $\underline\Sigma$, denote by $\mathcal{M}_{\underline\Sigma}$ the set of all pairs $(d,s)$ where $d$ is a marking on $\underline\Sigma$ and $s$ are admissible edge signs for $(\underline\Sigma,d)$. Keeping the triangulation fixed but changing the choice of marking or the choice of spin lifts $\tilde\chi_\sigma$ modifies the edge signs in a specific way, introducing an equivalence relation $\sim_\text{fix}$ on $\mathcal{M}_{\underline\Sigma}$. The equivalence classes classify spin structures on $\underline\Sigma$:

\medskip\noindent
{\bf Theorem \ref{thm:parametrisation-of-spin}.}
The assignment $(d,s) \mapsto S\big(\,(\underline\Sigma,d)\,,\,s\big)$ induces a bijection
\begin{equation*}
    \mathcal{M}_{\underline{\Sigma}} / \sim_\text{fix} ~ \longrightarrow ~ (\text{spin structures on $\underline\Sigma$ up to isom.\ of spin str.}) \ .
\end{equation*}

\medskip
The behaviour of the edge signs under change of marking, of the spin lifts $\tilde\chi_\sigma$, and of the triangulation via 2-2 and 3-1 Pachner moves is worked out in Sections \ref{sec:moves-notriang} and \ref{sec:moves-pachner}. The resulting rules dictate the properties an algebraic structure needs to have in order to define an invariant of the spin surface.

\subsubsection{Algebraic part} 

	Fix an additive symmetric monoidal category $\mathcal{S}$. 
Choose an object $A \in \mathcal{S}$ and three morphisms: $t: A \otimes A \otimes A \to \mathbf{1}$ and $c_{+1}, c_{-1} : \mathbf{1} \to A \otimes A$. Given a spin surface $\Sigma$ and a marked triangulation thereof together with the edge signs computed as above, one can produce a morphism 
$$
	T_\text{triang}(\Sigma) : \mathbf{1} \longrightarrow (A^{\otimes 3})^{\otimes B}\ ,
$$
where $B$ is the number of boundary components of $\Sigma$, see Section \ref{sec:prelim-graph}. The definition is simply $T_\text{triang} = \big(\bigotimes_\sigma t\big) \circ \tau \circ \big(\bigotimes_e c_{s(e)}\big)$, where the product is over all triangles $\sigma$ and over all edges $e$. The permutation $\tau$ is determined by the connectivity of edges and triangles, by the orientation of the edges, and by the preferred edge assigned to each triangle.
We get three copies of $A$ for each boundary component because the triangulation is required to have precisely three edges on each boundary component.

If one imposes that the data $t,c_{\pm1}$ satisfies the invariance conditions coming from the changes of marking, spin lifts and triangulation mentioned above, the morphism $T_\text{triang}$ in turn depends only on the spin surface (with parametrised boundaries) $\Sigma$, see Section \ref{sec:localmoves}.

The conditions that $t,c_{\pm1}$ have to satisfy can be phrased in a more standard way under two additional assumptions: 1) The copairing $c_{-1}$ should be non-degenerate -- this allows to define an associative product on $A$ in terms of $t$ and $c_{-1}$. 2) This product should have a unit. Under these assumptions, the conditions on $t,c_{\pm1}$ can be rephrased as the requirement that $A$ carries the structure of a 
\begin{quote}
\em
$\Delta$-separable Frobenius algebra whose Nakayama automorphism squares to the identity.
\end{quote}
The definitions of these terms are given in Section \ref{sec:ana-alg-str}. In brief, a Frobenius algebra $A$ is an algebra and a coalgebra, and the two structures are related by the Frobenius condition (i.e.\ the coproduct is a bimodule map). $A$ is $\Delta$-separable if the product $\mu$ is left-inverse to the coproduct $\Delta$, i.e.\ $\mu \circ \Delta = \id_A$. Each Frobenius algebra in a symmetric monoidal category is equipped with an automorphism, called the Nakayama automorphism, defined in terms of the pairing and copairing of $A$ and the symmetric braiding of $\mathcal{S}$.

Let us write $T_A$ for a version of $T_\text{triang}$ where we used the non-degenerate pairing of $A$ to exchange source and target object, that is,
$T_A(\Sigma) :   (A^{\otimes 3})^{\otimes B}  \to  \mathbf{1}$. This change is made to correspond to the physical notion of a correlator as a multilinear map to the complex numbers -- mathematically the other variant works just as well.
Our main result is:

\medskip\noindent
{\bf Theorem \ref{thm:TA-triang-indep}.}
Let $A$ be a Frobenius algebra in a symmetric monoidal category $\mathcal{S}$, such that $A$ is $\Delta$-separable and has a Nakayama automorphism which is an involution. 
Then $T_A(\Sigma)$ is independent of the choice of spin triangulation of the spin surface $\Sigma$ and $T_A(\Sigma) = T_A(\Sigma')$ for isomorphic spin surfaces $\Sigma$ and $\Sigma'$.

\medskip
Denote the counit of $A$ by $\varepsilon : A \to \mathbf{1}$. The non-degenerate invariant pairing on $A$ is  given by $b = \varepsilon \circ \mu : A \otimes A \to \mathbf{1}$. Let $c_{U,V} : U \otimes V \to V \otimes U$ be the symmetric braiding of $\mathcal{S}$. A Frobenius algebra is symmetric if $b \circ c_{A,A} = b$. A symmetric Frobenius algebra satisfies $N = \id_A$ (and vice versa). In particular, a $\Delta$-separable symmetric Frobenius algebra $A$ provides an example for Theorem \ref{thm:TA-triang-indep}. However, if $N = \id_A$, then $T_A(\Sigma)$ is actually independent of the spin structure. This is maybe not too surprising as in this case $A$ is the datum needed to define a state sum topological field theory on oriented surfaces without spin structure, see \cite{fukuma1994lattice, bachas1993topological, lauda2007state}.

\medskip
 
Let us turn back to the general case of $N \circ N = \id_A$ but $A$ is not necessarily symmetric. In this case it may happen that $\mu \circ (N \otimes \id_A) \circ \Delta = 0$, or, in other words, $N * \id_A = 0$ where $*$ is the convolution product on $\mathrm{End}(A)$. This condition is interesting: given a marked triangulation of a surface together with an assignment of edge signs (admissible or not), one can define $T_A$ by the same rules as before, though in general it will now depend on the triangulation. But if $N * \id = 0$, one can show that $T_A$ gives zero unless the edge signs are admissible (Section \ref{sec:N*id=0}). In this sense, algebras $A$ as in Theorem \ref{thm:TA-triang-indep} which in addition satisfy $N * \id = 0$ automatically enforce the admissibility condition.

The invariant $T_A$ is compatible with gluing of surfaces as described in Proposition \ref{prop:T-gluing} and so defines a topological field theory for spin surfaces. To make the connection to the usual functorial formulation of a 2-1 topological field theory, we have to compute the state spaces assigned to boundary circles of NS- and of R-type. These are defined as the images of  idempotents on $A \otimes A \otimes A$ obtained from cylinders with spin structure, see Sections \ref{sec:TFT.cylinder} and \ref{sec:cylinder-projections}. These idempotents factor through $A$, and assuming that they split, we obtain the state spaces as subobjects
$$
	Z^{NS} , Z^R \subset A  \ .
$$
The corresponding idempotents on $A$ are easy to describe:
$$
	P^{NS} = \mu \circ c_{A,A} \circ (N \otimes \id_A) \circ \Delta \quad , \qquad 
	P^{R} = \mu \circ c_{A,A} \circ \Delta \ .
$$
From this it is, for example, straightforward to check that $Z^{NS}$ is the centre of $A$ (Lemma \ref{lem:ZNS-is-centre-of-A}).
The projectors $P^{NS}$ and $P^R$ also appear in \cite{Brunner:2013ota} in the study of ``generalised twisted sectors'', see Remark \ref{rem:BT-BCP-relation}.

The Nakayama automorphism $N$ commutes with $P^{NS/R}$. This means it can be restricted to an automorphism of $Z^{NS}$ and $Z^R$ which, by abuse of notation, we also call $N$. One checks that on $Z^{NS}$, $N$ acts as the identity while on $Z^R$, $N$ defines an involution. Geometrically, this results from the fact that there is a unique spin structure on cylinder with NS-type boundary components and two non-isomorphic spin structures on a cylinder with R-type boundary components (Remark \ref{rem:N-involution-on-Z}).
Let us write 
$$
	Z = Z^{NS} \oplus Z^R 
$$
for the total state space. The algebra $A$ induces the structure of a $\mathbb{Z}_2$-graded Frobenius algebra on $Z$. 
We will write the group operation of the grading group $\mathbb{Z}_2$ multiplicatively, so that the two degrees are $\pm1$. Then $Z^{NS}$ has degree $+1$ and $Z^R$ has degree $-1$. Write $\mu_Z$ for the multiplication on $Z$ and denote the embeddings of the two summands by $\iota^Z_+ : Z^{NS} \to Z$ and $\iota^Z_- : Z^R \to Z$. The algebra $Z$ is graded-commutative in the sense that, for $\alpha,\beta \in \{\pm1\}$,
$$
	\mu_Z \circ c_{Z,Z} \circ (\iota^Z_\alpha \otimes \iota^Z_\beta) = \mu_Z \circ (N_{\gamma} \otimes \id_Z) \circ  (\iota^Z_\alpha \otimes \iota^Z_\beta) \ ,
$$
where $\gamma=-1$ if $\alpha=\beta=-1$ and $\gamma=1$ else, and $N_{+1} = \id$ and $N_{-1} = N$ (Proposition \ref{prop:Z-is-graded-Frobalg}).

\medskip

In \cite[Sect.\,2.6]{moore2006d}, Moore and Segal consider the case that $\mathcal{S}$ is the category of super vector spaces. They require that the involution $N$ on $Z$ coincides with the parity involution of a super vector space. $Z^{NS}$ then has to be purely even, and one easily checks that in this case $Z$ is commutative when considered as an algebra in $\mathbf{Vect}$ instead of $\mathbf{SVect}$ (i.e.\ $\mu_Z \circ c_{Z,Z}^{\mathbf{Vect}} = \mu_Z$), in agreement with \cite{moore2006d}. Moore and Segal find that $Z$ should satisfy an extra condition, namely that the NS- and R-type ``Euler characters'' agree. This condition also holds in our case, see Remark \ref{rem:MS-euler-char-matches} for details.

\subsection{Outlook}\label{sec:outlook}

There are at least three directions which would be very interesting to pursue further. 

\medskip

The first direction, and in fact the original motivation for starting this research, is to try to use this formalism to formulate two-dimensional quantum field theories on spin surfaces in terms of theories on oriented surfaces with defect lines. Let us expand a bit on this point. A two-dimensional quantum field theory with defects is defined on surfaces decorated by one-dimensional submanifolds, so-called (topological) defect lines. These defect lines are equipped with an orientation and a label determining the defect condition. See \cite{davydov2011field} for an introduction to field theories with defects. Defects can for example be used to describe (generalised) orbifolds, see \cite{frohlich2009defect, carqueville2012orbifold}. 
In this construction, to evaluate a correlator in the orbifold theory one considers the correlator of the same surface in the original theory, but equipped with a network of defect lines. 
	The defect lines and junctions must be such that the result is independent of the choice of triangulation. 
One can organise two-dimensional quantum field theories with defect lines into a bicategory. The endomorphism category of a given theory is then a monoidal category. 
Much by the same argument as used in the state-sum construction of a two-dimensional topological field theory on oriented surfaces, one finds that the defect and junctions have to come from a $\Delta$-separable symmetric Frobenius algebra in that endomorphism category (which is not braided, so the symmetry condition is formulated slightly differently), see \cite{frohlich2009defect, carqueville2012orbifold} for details. 
	An analogous construction works for quantum field theories on spin surfaces. 
Namely, if the category of topological endo-defects of a theory on oriented surfaces contains an algebra as in Theorem \ref{thm:TA-triang-indep} (with appropriate definition of the Nakayama automorphism), this 
	allows one
to define a quantum field theory on spin surfaces. A benefit of this construction would be that the description of correlators of two-dimensional rational conformal field theory on oriented surfaces via three-dimensional topological field theory developed in \cite{fuchs2002tft,fuchs2005tft, fjelstad2008uniqueness, frohlich2009defect, fjelstad2012rcft} can be brought to bear also on two-dimensional conformal field theory on spin surfaces.

\medskip

Secondly, the condition $N * \id=0$ discussed above deserves further study. It indicates that the sum over spin structures can be implemented by a local statistical model in the following sense: Fix a marked triangulation on a surface $\underline\Sigma$. Think of the sign $s(e)$ attached to an edge $e$ as a statistical variable and consider the sum over all edge sign configurations. If $A$ as in Theorem \ref{thm:TA-triang-indep} satisfies $N * \id=0$, only the admissible edge sign configurations can contribute. Thus the statistical sum over all edge signs implements the sum over spin structures (with some over-counting depending on the triangulation). This is to some extend comparable to the relation between dimer models and spin structures discussed in \cite{cimasoni2007dimers, cimasoni2008dimers}. There, for a {\em fixed} dimer configuration on a surface graph, the different Kasteleyn orientations (up to an equivalence relation) precisely describe the possible spin structures on the surface \cite[Cor.\,3]{cimasoni2007dimers}. A Kasteleyn orientation has to satisfy a rule for each face of the graph, and one could try to build a model which sums over all edge orientations but assigns non-zero Boltzmann weight only to Kasteleyn orientations. It would be interesting to investigate the condition $N*\id=0$ and the connection to dimer models further.

\medskip

Thirdly, it would be desirably to have state sum descriptions available also for higher dimensional spin topological field theories. It should be possible to construct such theories starting, for example, from the combinatorial model developed by Budney in \cite{budney2013combinatorial}. To do so, one would have to understand how the three-dimensional Pachner moves affect the combinatorial data in \cite{budney2013combinatorial}, and to deduce the algebraic structure necessary to obtain an invariant. It is known that spherical fusion categories provide examples of state sum constructions of three-dimensional topological field theories for oriented three-manifolds \cite{turaev1992state, barrett1996invariants}. Thus, as a guiding principle, such categories have to be examples of the algebraic structure defining spin-topological field theories (but those examples would -- as in the two-dimensional case -- not be sensitive to the spin structure). The result could then be compared to existing (non-state sum) constructions of spin topological field theories in three dimensions, notably \cite{blanchet1996topological, beliakova1998spin}.

\bigskip\noindent
{\bf Acknowledgements:}
	Some of the results reported in this work were presented by SN at the {\em Workshop on Field Theories with Defects} (University of Hamburg, 13.2--15.2.2013) and at the {\em Simons Center Summer Workshop 2013 ``Defects''} (Simons Center, Stony Brook, 22.7--16.8.2013).
We gratefully acknowledge helpful discussions with
	Bruce Bartlett,
	Alexander Barvels,
	Tilman Bauer,
	David B\"ucher,
	Nils Carqueville,
	Alexei Davydov,
	Chris Douglas,
	Malte Dyckmanns,
	J\"urgen Fuchs,
	Andre Henriques
	and
	Christoph Sachse,
    as well as the suggestions made by the referee.
SN is supported by the DFG funded Research Training Group 1670 ``Mathematics inspired by string theory and quantum field theory''. IR is supported in part by the DFG funded Collaborative Research Center 676 ``Particles, Strings, and the Early Universe''.

\section{Spin structures on surfaces}

A \emph{surface} is an oriented, smooth, two-dimensional real manifold, possibly with boundary. Maps between surfaces are smooth and orientation-preserving. We identify the complex plane $\Cb$ with $\mathbb{R}^2$ and sometimes use complex coordinates, but maps between subsets of $\Cb$ need not be holomorphic.
\subsection{Spin structures} \label{sec:spin-no-metric}

Let $GL^+_n$ be the group of orientation preserving linear automorphisms of $\mathbb{R}^n$.
The inclusion $i:SO_n\to GL^+_n$ is a homotopy equivalence by the $QR$-decomposition. By covering theory we then get a commutative diagram of Lie groups,

\begin{equation}
\begin{tikzcd}[row sep=tiny]
        & \Spin_n \ar{dd}{\tilde\imath} \ar{r}{p_{SO}} & SO_n \ar{dd}{i}\\
        \mathbb{Z}_2 \ar{ur} \ar{dr} && \\
        & \GL_n \ar{r}{p_{GL}}  & GL^+_n ,
    \end{tikzcd}
    \label{eq:GL+covering}
\end{equation}
such that $\GL_n$ is the nontrivial twofold covering of $GL_n$.
This leads us to the following

\begin{definition} \label{eq:spin_structure_wm}
Let $\Sigma$ be surface. Let $\zeta :F_{GL^+} \to \Sigma$ be its bundle of oriented frames. A \emph{spin structure} on $\Sigma$ is a pair $(\eta ,p)$, consisting of a $\GL_2$-principal bundle $\eta :P_\GL \to M$ and a map $p:P_\GL \to F_{GL^+}$ such that the following diagram commutes:
\begin{equation}
    \begin{tikzcd}[row sep=tiny]
        P_\GL \times \GL_2 \ar{rr}{R_{\GL_2}} \ar{dd}{p\times p_{GL}} && F_\GL \ar{dd}{p}\ar{dr}{\eta}&\\
        &&& M\\
        F_{GL^+}\times GL^+_2 \ar{rr}{R_{GL^+_2}} &&F_{GL^+} \ar{ur}{\zeta}
    &.
\end{tikzcd}
\end{equation}
Here $p_{GL}:\GL_2\to GL^+_2$ is the covering homomorphism and $R_{\GL_2}$, $R_{GL^+_2}$ denote the right actions.
An \emph{isomorphism of spin structures} is a map of principal bundles $f:P_\GL \to P'_\GL$ such that $p'\circ f = p$.
\end{definition}

We remind the reader of the usual classification result for spin structures (see e.g.\ \cite{johnson1980} or 
	\cite[Thm.\,II.1.7]{lawson1989spin}):
\begin{proposition}
Isomorphism classes of spin structures on a given oriented surface $\Sigma$ are in one-to-one correspondence to $H^1(\Sigma,\mathbb{Z}_2)$.
\label{prop:class.spinstructures}
\end{proposition}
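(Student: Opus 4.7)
The plan is to apply the standard obstruction-theoretic classification of lifts of a principal bundle through a central $\mathbb{Z}_2$-extension, and then to check that the obstruction vanishes on any oriented surface.

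First I would note that, by Definition \ref{eq:spin_structure_wm} together with diagram \eqref{eq:GL+covering}, a spin structure on $\Sigma$ is the same data as a lift of the principal $GL^+_2$-bundle $\zeta: F_{GL^+} \to \Sigma$ along the central extension $\GL_2 \to GL^+_2$ with kernel $\mathbb{Z}_2$. For such a lifting problem on a paracompact base, a standard obstruction-theoretic argument---equivalently, the five-term exact sequence extracted from the Leray--Serre spectral sequence of $\zeta$ with $\mathbb{Z}_2$-coefficients, using that the local system is trivial because $\zeta$ is oriented and that $H^1(GL^+_2,\mathbb{Z}_2)\cong\mathbb{Z}_2$ is generated by the class of $p_{GL}$---establishes the following dichotomy: either an obstruction class in $H^2(\Sigma,\mathbb{Z}_2)$ is non-zero and no lift exists, or the set of isomorphism classes of lifts forms a torsor over $H^1(\Sigma,\mathbb{Z}_2)$. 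In the latter case the action sends a lift $(P,p)$ to $(P\otimes_{\mathbb{Z}_2}\zeta^* L,\, p')$, where $L\to\Sigma$ is any double cover and $\otimes_{\mathbb{Z}_2}$ denotes the fibrewise tensor product of $\mathbb{Z}_2$-covers.

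The remaining task is to verify that the obstruction always vanishes for an oriented surface. The class in question is $w_2(T\Sigma)$. For a closed oriented surface $\Sigma_g$ of genus $g$ one has $\chi(\Sigma_g)=2-2g$, which is even, and $w_2=c_1\bmod 2$ via any almost complex structure, so $w_2=0$. For a surface with boundary, $\Sigma$ retracts onto a graph and its tangent bundle is therefore trivialisable, so $F_{GL^+}\cong \Sigma\times GL^+_2$ and the cover $\Sigma\times \GL_2 \to \Sigma\times GL^+_2$ provides an explicit spin structure. In either case $\mathcal{S}\neq\emptyset$, and choosing any spin structure as a basepoint converts the torsor structure into the claimed bijection with $H^1(\Sigma,\mathbb{Z}_2)$.

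The point requiring the most care is showing that the abstract torsor action coming from the five-term sequence coincides with the concrete tensoring operation described above, and that isomorphism of spin structures in the sense of Definition \ref{eq:spin_structure_wm}---i.e.\ $\GL_2$-equivariant bundle maps over $F_{GL^+}$---agrees with isomorphism of the underlying $\mathbb{Z}_2$-covers of $F_{GL^+}$. The latter reduces to the observation that on any connected twofold cover of $F_{GL^+}$ whose fibrewise restriction is $p_{GL}$, the $\GL_2$-action is uniquely determined by the $GL^+_2$-action on $F_{GL^+}$, so every cover isomorphism automatically intertwines the two $\GL_2$-actions. Both are routine diagram chases, after which the statement follows.
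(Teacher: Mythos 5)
Your argument is correct and is precisely the standard obstruction-theoretic proof (existence because $w_2(T\Sigma)=0$, then a torsor structure over $H^1(\Sigma,\mathbb{Z}_2)$ via the five-term exact sequence of the frame bundle), including the right observation that the $\GL_2$-action on a fibrewise non-trivial double cover of $F_{GL^+}$ is forced by path lifting, so isomorphism of covers and isomorphism of spin structures coincide. The paper does not prove this proposition at all but cites it as a classical result from the literature (Johnson; Lawson--Michelsohn, Thm.~II.1.7), and your proof is essentially the one found in those references.
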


After these preliminaries, we can introduce the main geometrical object of this paper:

\begin{definition}
A \emph{spin surface} $\Sigma$ is a surface $\underline\Sigma$, together with a spin structure $P_\GL(\Sigma)$.
\end{definition}

\begin{definition}\label{def:morp-spin-surf}
A \emph{morphism} (or \emph{map}) \emph{of spin surfaces} is given by a map of $\GL_2$ bundles $\tilde{f}:P_\GL(\Sigma) \to P_\GL(\Sigma')$
such that the diagram
\begin{equation}
\begin{tikzcd}
P_\GL(\Sigma) \ar{r}{\tilde{f}} \ar{d}{p}  & P_\GL(\Sigma') \ar{d}{p'} \\
F_{GL^+}(\Sigma) \ar{r}{df_*} \ar{d} & F_{GL^+}(\Sigma') \ar{d}  \\
\underline{\Sigma} \ar{r}{f} & \underline{\Sigma}'
\end{tikzcd}
\end{equation}
commutes. Here $f: \underline{\Sigma} \to \underline{\Sigma}'$ denotes the underlying map of surfaces, and $df_*$ the map induced on the bundle of oriented frames by the derivative of $f$.
\end{definition}

We will sometimes write $\tilde f: \Sigma \to\Sigma'$ for a map between spin surfaces as an abbreviation of $\tilde f : P_\GL(\Sigma) \to P_\GL(\Sigma')$.

\begin{remark} \label{rem:map=iso}
	Note that an isomorphism of spin structures as in Definition \ref{eq:spin_structure_wm} is required to be the identity on the underlying surface, while a map of spin surfaces may even relate spin structures with different underlying surfaces. In fact,
to give a map $\tilde f: \Sigma \to \Sigma'$ between spin surfaces is the same as to give an isomorphism of spin structures from $P_\GL(\Sigma)$ to the pullback spin structure $\tilde{f}^* P_\GL(\Sigma') = F_{GL^+} \times_{F'_{GL^+}} P_{\GL}(\Sigma')$. 
\end{remark}

\medskip

Let $\Sigma$ be a spin surface. By right multiplication with the nontrivial element of the kernel of $p_{GL}:\GL_2 \to GL^+_2$, we obtain a natural involution $\omega :\Sigma\to \Sigma$, the \emph{leaf exchange automorphism}.

\subsection{A construction of $\GL_2$}

We give a construction of $\GL_2$, parallel to the construction of the metaplectic group in \cite[Sect.\,I.1.8]{lion1980weil}.
$GL^+_2$ acts on the complex upper half plane 
$\mathbb{H} = \{ z \,|\, \mathrm{Im}(z)>0 \}$ as
\begin{equation}
    g.z=\begin{pmatrix}a & b\\c&d\end{pmatrix}.z = \frac{az+b}{cz+d}\ .
    \label{eq:GL2+action}
\end{equation}
The denominator will be denoted by $j_g(z)=cz+d$; 
	it satisfies $j_{g_1g_2}(z) = j_{g_1}(g_2.z) \, j_{g_2}(z)$.
We define $\GL_2$ as 
\begin{equation}
    \GL_2 := \big\{ (g,\varepsilon ) \,\big|\, g \in GL^+_2,\, \varepsilon :\mathbb{H} \to \Cb \; \text{holomorphic s.t.}\; \varepsilon (z)^2 = j_g(z) \big\} \ .
    \label{eq:GL-def}
\end{equation}
Composition and inverse are given by
\begin{align}
        (g_1,\varepsilon_1) \circ (g_2,\varepsilon_2) &:= (g_1g_2, \varepsilon ) ~~ , ~~ && \text{with }~~ \varepsilon (z) = \varepsilon_1(g_2.z)\, \varepsilon_2(z) \ ,
        \\
        (g,\varepsilon )^{-1} &:= (g^{-1},\tilde{\varepsilon}) ~~ , ~~ &&  \text{with } ~~ \tilde{\varepsilon}(z) = \varepsilon (g^{-1}.z)^{-1} \ . \nonumber
    \end{align}
The unit is $e = (\mathds{1},1)$.
The map $p_{GL}: \GL_2 \to GL^+_2$ is given by
\begin{equation}
p_{GL}:(g,\varepsilon ) \mapsto g \ .
\label{eq:GL-projection}
\end{equation}
Notice that for an element $(g,\varepsilon )\in \GL_2$, the function $\varepsilon $ is uniquely determined by giving its value at a single point $z\in \mathbb{H}$, e.g.\ at $z=i$.

In the following it will be useful to describe scaled rotations in $GL_2^+$, as well as their preimage in $\GL$, via elements of $\mathbb{C}^\times$. To do so, we define the injective group homomorphisms
\begin{align} \label{eq:i-delta-C*-def}
i_{\mathbb{C}^\times} : \mathbb{C}^\times &\to GL_2^+
~~ &, \qquad
\delta : \mathbb{C}^\times &\to \GL
\\
z &\mapsto \big(\begin{smallmatrix} \Re z & -\Im z \\ \Im z & \Re z \end{smallmatrix}\big) 
&
z &\mapsto \big(\,i_{\mathbb{C}^\times}(z^2) \,,\, \varepsilon_z \,\big)\quad \text{s.t.} \quad \varepsilon_z(i) = z \ .
\nonumber
\end{align}
We will identify $SO_2$ with the unit circle, $SO_2 = \{z\in \Cb \,|\,|z|=1\} \subset \mathbb{C}^\times$. Then, for example, the map $\delta$ lifts the inclusion map of $SO_2$ into $GL_2^+$: the diagram
\begin{equation}
    \begin{tikzcd}
        SO_2 \ar{r}{\delta} \ar{d}{p_{SO}} & \GL_2 \ar{d}{p_{GL}} \\
        SO_2 \ar{r}{i_{\mathbb{C}^\times}}& GL_2^+
    \end{tikzcd}
    \label{eq:GL-Spin-pullback square}
\end{equation}
commutes. We will from now on suppress the inclusion $i_{\mathbb{C}^\times}$ in our formulas, but we will write out $\delta$ to avoid confusion.

\begin{remark}
The exact sequence 
\begin{equation}
    \begin{tikzcd}
        1 \ar{r} & \mathbb{Z}_2 \ar{r}{\delta} & \GL_2 \ar{r}{p_{GL}} & GL_2^+ \ar{r} &1
    \end{tikzcd}
\end{equation}
is a central extension of groups and the centre of $\GL_2$ is the preimage of the centre of $GL_2^+$ under $p_{GL}$.
In particular, the elements in the image of $\delta$ which are in the centre are
\begin{equation} \label{eq:delta-cap-Z}
    \delta(SO_2) \cap Z(\GL_2) = \{ \delta(\pm1), \delta(\pm i) \} \ .
    \end{equation}
\end{remark}

\subsection{$QR$-decomposition for $\GL_2$}

We start from the $QR$-decomposition of an invertible matrix into an orthogonal and an upper triangular part. If we require that the upper triangular matrix have nonnegative diagonal entries, the $QR$-decomposition is unique, and we get smooth maps
\begin{equation}
 \mathbf{Q}: GL_2^+ \to SO_2 \quad , \qquad
    \mathbf{R}: GL_2^+ \to T_2 \ ,
\end{equation}
such that $\mathbf{Q}(g)\, \mathbf{R}(g) = g$ for $g \in GL_2^+$.
Here, $T_2$ is the space of upper triangular matrices with positive diagonal entries.
Notice, however, that $\mathbf{Q}$ and $\mathbf{R}$ are not group homomorphisms. 

We now describe a related decomposition for $\GL_2$. The preimage $p_{GL}^{-1}\left( T_2 \right)$ has two connected components.
Let $\tilde{T}_2$ be the connected component of the identity, 
\begin{equation}
  \tilde T_2 = \big\{ \, \big(
        \big(\begin{smallmatrix}
             a & b \\ 0 & c 
        \end{smallmatrix}\big), \sqrt{c} \big) \,\big|\, a,c>0
        \big\} \ .
\end{equation}
Note that $\tilde T_2$ is a subgroup of $\GL_2$. The second component in $p_{GL}^{-1}\left( T_2 \right)$ is given by replacing $\sqrt{c}$ by $-\sqrt{c}$ in the above expression.

\begin{lemma} \label{lem:QR-tilde}
The $QR$-decomposition lifts to $\GL_2$, i.e.\ there are unique smooth maps
\begin{equation}
 \mathbf{\tilde{Q}}: \GL_2 \to \delta(SO_2) \quad , \qquad
\mathbf{\tilde{R}}: \GL_2 \to \tilde{T}_2 \ ,
\end{equation}
such that $\mathbf{\tilde{Q}}(g) \, \mathbf{\tilde{R}}(g) = g$ for all $g\in \GL_2$, and such that 
\begin{equation} \label{eq:tQ-tR-conditions}
        p_{GL}\circ\mathbf{\tilde{Q}} = \mathbf{Q}\circ p_{GL} \quad , \qquad
        p_{GL}\circ\mathbf{\tilde{R}} = \mathbf{R}\circ p_{GL} \ .
\end{equation}
\end{lemma}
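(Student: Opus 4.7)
My plan is to reduce the lemma to the observation that the subgroup $\tilde T_2 \subset \widetilde{GL}^+_2$ maps diffeomorphically onto $T_2$ under $p_{GL}$. Since elements of $\tilde T_2$ are of the form $\bigl(\bigl(\begin{smallmatrix} a & b \\ 0 & c \end{smallmatrix}\bigr), \sqrt{c}\,\bigr)$ with $a,c>0$, the inverse of $p_{GL}|_{\tilde T_2}$ is given explicitly by sending an upper triangular matrix with positive diagonal $(a,c)$ to the element with the positive square root $\sqrt c$, which is manifestly smooth. This pins down $\tilde{\mathbf R}$: any lift satisfying $p_{GL}\circ \tilde{\mathbf R} = \mathbf R \circ p_{GL}$ and taking values in $\tilde T_2$ must equal $\tilde{\mathbf R}(g) := (p_{GL}|_{\tilde T_2})^{-1}\bigl(\mathbf R(p_{GL}(g))\bigr)$, and this is smooth.

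Uniqueness then follows: if $\tilde{\mathbf Q}$, $\tilde{\mathbf R}$ satisfy the conditions of the lemma, then $\tilde{\mathbf R}$ is forced as above, and $\tilde{\mathbf Q}(g) = g\,\tilde{\mathbf R}(g)^{-1}$. For existence, I would define $\tilde{\mathbf R}$ by that formula and set $\tilde{\mathbf Q}(g) := g\,\tilde{\mathbf R}(g)^{-1}$. Smoothness of both is then automatic from smoothness of $\mathbf Q$, $\mathbf R$ and the explicit inverse on $\tilde T_2$, and the decomposition identity $\tilde{\mathbf Q}(g)\tilde{\mathbf R}(g)=g$ holds trivially. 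The compatibility $p_{GL}\circ \tilde{\mathbf R} = \mathbf R\circ p_{GL}$ is immediate from the definition.

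It remains to verify that $\tilde{\mathbf Q}$ actually lands in $\delta(SO_2)$. Applying $p_{GL}$ to $\tilde{\mathbf Q}(g) = g\,\tilde{\mathbf R}(g)^{-1}$ yields $p_{GL}(g)\cdot \mathbf R(p_{GL}(g))^{-1} = \mathbf Q(p_{GL}(g)) \in SO_2$, so $\tilde{\mathbf Q}(g) \in p_{GL}^{-1}(SO_2)$. The key identification I need is that $p_{GL}^{-1}(SO_2) = \delta(SO_2)$. This follows from the commuting square \eqref{eq:GL-Spin-pullback square} together with the observation that $\delta(SO_2)$ is connected (being the continuous image of $SO_2$) and a nontrivial double cover of $SO_2$ inside $\widetilde{GL}^+_2$, so it exhausts the preimage — alternatively, the preimage of the circle $SO_2 \subset GL^+_2$ under the nontrivial double cover $p_{GL}$ is connected and of dimension one, hence equal to the connected closed subgroup $\delta(SO_2)$.

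There is no substantive obstacle here: everything is forced once one notes that $\tilde T_2$ is defined to be precisely the component of $p_{GL}^{-1}(T_2)$ containing the identity, making $p_{GL}|_{\tilde T_2}$ a diffeomorphism. The only small care needed is the identification $p_{GL}^{-1}(SO_2) = \delta(SO_2)$, which is what lets the lift of the orthogonal factor automatically live in $\delta(SO_2)$ rather than requiring a separate construction.
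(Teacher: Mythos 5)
Your proof is correct and follows essentially the same route as the paper's: both define $\mathbf{\tilde{R}}$ by composing $\mathbf{R}\circ p_{GL}$ with the explicit section $\big(\begin{smallmatrix} a & b \\ 0 & c \end{smallmatrix}\big)\mapsto\big(\big(\begin{smallmatrix} a & b \\ 0 & c \end{smallmatrix}\big),\sqrt{c}\,\big)$ of $p_{GL}$ over $T_2$, set $\mathbf{\tilde{Q}}(g)=g\,\mathbf{\tilde{R}}(g)^{-1}$, and get uniqueness from the fact that $\tilde T_2$ is a single sheet over $T_2$ (the paper phrases this as $\mathbf{\tilde{R}}(g)\delta(-1)\notin\tilde T_2$). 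The only difference is that you spell out the identification $p_{GL}^{-1}(SO_2)=\delta(SO_2)$, which the paper uses implicitly.
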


\begin{proof}
Consider the lift of the inclusion map $i:T_2\to GL_2^+$ given by
\begin{equation}
	\tilde\imath:T_2 \to \GL_2 \quad, \quad
	\big(\begin{smallmatrix}
         a & b \\ 0 & c 
        \end{smallmatrix}\big) \mapsto	
    \big(\big(\begin{smallmatrix}
         a & b \\ 0 & c 
    \end{smallmatrix}\big), \sqrt{c} \big) 
\end{equation}
and define 
$\mathbf{\tilde{R}} := \tilde\imath \circ \mathbf{R} \circ p_{GL}$. Using that $p_{GL} \circ \tilde\imath = \id$ and that $\tilde\imath$ is a group homomorphism, one checks that $p_{GL}(g\, \mathbf{\tilde{R}}(g)^{-1}) =\mathbf{Q}(p_{GL}(g)) \in SO(2)$ for all $g\in \GL_2$. Hence we can define $\mathbf{\tilde{Q}}(g) := g\, \mathbf{\tilde{R}}(g)^{-1} \in \delta(SO(2))$.
It is then immediate that $\mathbf{\tilde{Q}}(g) \, \mathbf{\tilde{R}}(g) = g$ and that \eqref{eq:tQ-tR-conditions} holds.

Next we turn to the uniqueness of the $QR$-decomposition in $\GL_2$. Suppose that $g \in \GL_2$ has been written as $g = \delta(q)\cdot t$ with $q \in SO_2$ and $t \in \tilde T_2$. Applying $p_{GL}$ and using uniqueness of the $QR$-decomposition of $GL_2^+$, we see that either $\delta(q) = \mathbf{\tilde{Q}}(g)$ and $t = \mathbf{\tilde{R}}(g)$, or else $\delta(q) = \mathbf{\tilde{Q}}(g) \delta(-1)$ and $t = \mathbf{\tilde{R}}(g)  \delta(-1)$. But $\mathbf{\tilde{R}}(g)  \delta(-1) \notin \tilde T_2$ and so only the first possibility is realised.
\end{proof}
    
We will refer to the decomposition $g = \mathbf{\tilde{Q}}(g) \, \mathbf{\tilde{R}}(g)$ of an element $g$ of $\GL$ as $\widetilde{QR}$-decomposition.
    
\medskip
   
Upper triangular matrices preserve the standard flag in $\mathbb{R}^2$. The $QR$-decomposition can thus be used to study how a given linear map acts on these subspaces. 
	For example, $g \in GL_2^+$ lies in $T_2$ if and only if it preserves the subspace $\Cb e_1$, and in this case $\mathbf{R}(g)=g$ and consequently $\mathbf{Q}(g)=\mathds{1}$.
Later we need to look at rotated bases and thus need a rotated $QR$-decomposition.
For $\alpha \in SO_2$ define the maps
    \begin{align}
            \mathbf{Q}_\alpha  &: GL_2^+ \to SO_2 \ , &
            \mathbf{\tilde{Q}}_\alpha  &: \GL_2 \to \delta(SO_2) \ , 
            \\
            \mathbf{R}_\alpha  &: GL_2^+ \to T_2\ , &
            \mathbf{\tilde{R}}_\alpha  &: \GL_2 \to \tilde T_2
            \nonumber
    \end{align}
    as, for $g\in GL_2^+$ and $\tilde{g} \in \GL_2$,
    \begin{align} \label{eq:QR_alpha-def}
            \mathbf{Q}_\alpha (g) &= \alpha \mathbf{Q}(\alpha^{-1}g\alpha )\alpha^{-1} 
            	= \mathbf{Q}(\alpha^{-1}g\alpha )
            \ , &
            \mathbf{R}_\alpha (g) &= \alpha \mathbf{R}(\alpha^{-1}g\alpha )\alpha^{-1} 
            \ ,\\
            \mathbf{\tilde{Q}}_\alpha (\tilde{g}) &= \tilde{\alpha}\mathbf{\tilde Q}(\tilde{\alpha}^{-1}\tilde{g}\tilde{\alpha})\tilde{\alpha}^{-1} 
            	= \mathbf{\tilde Q}(\tilde{\alpha}^{-1}\tilde{g}\tilde{\alpha})
            \ , &
            \mathbf{\tilde{R}}_\alpha (\tilde{g}) &= \tilde{\alpha}\mathbf{\tilde R}(\tilde{\alpha}^{-1}\tilde{g}\tilde{\alpha})\tilde{\alpha}^{-1} \ .
            \nonumber
    \end{align}
    Here $\tilde{\alpha}\in \GL_2$ is a lift of $\alpha $, and since $\delta (-1)$ is in the centre of $\GL_2$, the definition does not depend on the choice of $\tilde{\alpha}$.

Clearly, we still have $g = \mathbf{Q}_\alpha(g) \mathbf{R}_\alpha(g)$ and $\tilde g = \mathbf{\tilde Q}_\alpha(\tilde g) \mathbf{\tilde R}_\alpha(\tilde g)$. Furthermore, if $g$ leaves the subspace $\Cb \alpha e_1$ invariant, then $\mathbf{R}_\alpha(g) = g$ (since $\alpha^{-1} g \alpha$ leaves $\Cb e_1$ invariant) and hence also $\mathbf{Q}_\alpha(g) = \mathds{1}$.

\begin{lemma} 
    \begin{enumerate}
        \item Let $g, h\in GL_2^+$ and $\alpha , \beta \in SO_2$. Then 
            \begin{align}
                \mathbf{Q}_\alpha (\beta g) &= \beta \, \mathbf{Q}_\alpha (g),\\
                \mathbf{Q}_\alpha (g) &= \mathbf{Q}_\alpha (g \, \mathbf{Q}_\alpha (h)^{-1}h) = \mathbf{Q}_\alpha (g \, h\mathbf{Q}_\alpha (h^{-1})).
                \nonumber
            \end{align}
        \item Let $g, h\in \GL_2$ and $\alpha , \beta \in SO_2$. Then
            \begin{align} \label{eq:lem.qr.5}
                \mathbf{\tilde{Q}}_\alpha (\delta(\beta) g) &= \delta(\beta)\mathbf{\tilde{Q}}_\alpha (g),\\
                \mathbf{\tilde{Q}}_\alpha (g) &= \mathbf{\tilde{Q}}_\alpha (g \, \mathbf{\tilde{Q}}_\alpha (h)^{-1}h) = \mathbf{\tilde{Q}}_\alpha (g \, h\mathbf{\tilde{Q}}_\alpha (h^{-1})) \ .
                \nonumber
            \end{align}
    \end{enumerate}
\label{lem:QR-rules}
\end{lemma}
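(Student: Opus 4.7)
My plan is to reduce all four identities to two elementary properties of the ordinary $QR$-decomposition and then transfer them verbatim to the $\widetilde{QR}$-case using Lemma~\ref{lem:QR-tilde}. The two properties are: (a) left-multiplication by $\beta\in SO_2$ intertwines with $\mathbf{Q}$, i.e.\ $\mathbf{Q}(\beta x)=\beta\,\mathbf{Q}(x)$ and $\mathbf{R}(\beta x)=\mathbf{R}(x)$; (b) right-multiplication by an element $t\in T_2$ leaves $\mathbf{Q}$ invariant, i.e.\ $\mathbf{Q}(xt)=\mathbf{Q}(x)$. Both follow at once from uniqueness of the positive-diagonal $QR$-decomposition, using that $\beta\,\mathbf{Q}(x)\in SO_2$ and that $T_2$ is a subgroup of $GL_2^+$ closed under inversion.

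For part~(1), the first identity is immediate from the definition $\mathbf{Q}_\alpha(g)=\mathbf{Q}(\alpha^{-1}g\alpha)$ combined with abelianness of $SO_2$: $\mathbf{Q}_\alpha(\beta g)=\mathbf{Q}(\alpha^{-1}\beta g\alpha)=\mathbf{Q}(\beta\,\alpha^{-1}g\alpha)=\beta\,\mathbf{Q}_\alpha(g)$ by property~(a). For the second identity, I would observe that $\mathbf{Q}_\alpha(h)^{-1}h=\mathbf{R}_\alpha(h)=\alpha\,\mathbf{R}(\alpha^{-1}h\alpha)\,\alpha^{-1}$, so conjugating by $\alpha^{-1}$ gives $\alpha^{-1}\bigl(g\,\mathbf{Q}_\alpha(h)^{-1}h\bigr)\alpha=(\alpha^{-1}g\alpha)\cdot\mathbf{R}(\alpha^{-1}h\alpha)$, with the second factor in $T_2$. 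Property~(b) then gives the desired equality. For the rightmost version, the decomposition of $h^{-1}$ yields $h\,\mathbf{Q}_\alpha(h^{-1})=\mathbf{R}_\alpha(h^{-1})^{-1}$, and conjugating by $\alpha^{-1}$ again lands inside $T_2$ (closure under inversion), so (b) applies once more.

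Part~(2) is the same argument with tildes, provided the tilded analogues of (a) and (b) hold. These are: $\mathbf{\tilde Q}(\delta(\beta)\,x)=\delta(\beta)\,\mathbf{\tilde Q}(x)$ and $\mathbf{\tilde Q}(x\,\tilde t)=\mathbf{\tilde Q}(x)$ for $\beta\in SO_2$, $\tilde t\in\tilde T_2$. Both follow from the uniqueness statement in Lemma~\ref{lem:QR-tilde}, since $\delta$ being a group homomorphism makes $\delta(SO_2)$ a subgroup of $\GL_2$, and $\tilde T_2$ is explicitly a subgroup (visible from the formula in the excerpt) which is closed under inversion. The replacement of abelianness of $SO_2$ in part~(1) by abelianness of $\delta(SO_2)$ in part~(2) is legitimate because $\delta(SO_2)$ is the image of an abelian group under a homomorphism; in particular, any lift $\tilde\alpha$ of $\alpha\in SO_2$ lies in $\delta(SO_2)$ (as the two lifts differ by $\delta(-1)\in\delta(SO_2)$), so $\tilde\alpha^{-1}\delta(\beta)=\delta(\beta)\,\tilde\alpha^{-1}$.

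No serious obstacle arises; the entire argument is bookkeeping around uniqueness of the (tilded) $QR$-decomposition together with the group-theoretic closure of $T_2$ and $\tilde T_2$. The one mildly delicate point worth flagging is verifying that $\mathbf{\tilde R}_\alpha(g)$ really lands in the subgroup $\alpha\tilde T_2\alpha^{-1}$ of $\GL_2$ and that conjugation by $\tilde\alpha^{-1}$ sends it to $\tilde T_2$; this is direct from the definition \eqref{eq:QR_alpha-def} and the lifted $QR$-decomposition of Lemma~\ref{lem:QR-tilde}.
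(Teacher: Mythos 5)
Your proof is correct and follows essentially the same route as the paper: both rest on the uniqueness of the (tilded) $QR$-decomposition together with the facts that $T_2$, $\tilde T_2$ are subgroups closed under inversion and that $\mathbf{Q}_\alpha(h)^{-1}h=\mathbf{R}_\alpha(h)$, resp.\ $h\,\mathbf{Q}_\alpha(h^{-1})=\mathbf{R}_\alpha(h^{-1})^{-1}$. The only (immaterial) organizational difference is that the paper proves part 2 first and obtains part 1 by applying $p_{GL}$, and reduces general $\alpha$ to $\alpha=1$, whereas you argue the two parts in parallel and handle general $\alpha$ by explicit conjugation.
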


    \begin{proof}
It suffices to show part 2. Part 1 then follows by applying $p_{GL}$. 
Furthermore, the case for general $\alpha$ follows straightforwardly once we verified the claims for $\alpha = 1$. Let thus $g, h\in \GL_2$ for
  $\alpha=1$.

For the first equality in \eqref{eq:lem.qr.5}, compose $g = \mathbf{\tilde{Q}}(g)\,\mathbf{\tilde R}(g)$ with $\delta(\beta)$ to get $\delta(\beta) g = q r$, with $q = \delta(\beta) \mathbf{\tilde{Q}}(g)$ and $r = \mathbf{\tilde R}(g)$. From the uniqueness of the $\widetilde{QR}$-decomposition in Lemma \ref{lem:QR-tilde}, it follows that $\mathbf{\tilde{Q}}(\delta(\beta) g) =  \delta(\beta) \mathbf{\tilde{Q}}(g)$.
For the second equality, start with
        \begin{equation}
            \mathbf{\tilde{R}}(h) = \mathbf{\tilde{Q}}(h)^{-1}h \ .
            \label{eq:lem.qr.proof.R}
        \end{equation}
Multiplying both sides with $g$ gives
$
\mathbf{\tilde{Q}}(g) \mathbf{\tilde{R}}(g) \mathbf{\tilde{R}}(h)
= 
g\mathbf{\tilde{Q}}(h)^{-1}h 
$.
        Since $\tilde{T}_2$ is a subgroup and by uniqueness of the $\widetilde{QR}$-decomposition, the second equality in \eqref{eq:lem.qr.5} follows.
        Now invert equation \eqref{eq:lem.qr.proof.R} and replace $h$ with its inverse:
        \begin{equation}
            \mathbf{\tilde{R}}(h^{-1})^{-1} = h\,\mathbf{\tilde{Q}}(h^{-1}) \ .
            \label{eq:lem.qr.proof.Rinv}
        \end{equation}
        Multiplying by $g$ gives
$
\mathbf{\tilde{Q}}(g) \mathbf{\tilde{R}}(g) \mathbf{\tilde{R}}(h^{-1})^{-1}
= 
gh \mathbf{\tilde{Q}}(h^{-1})
$.
For the same reason as above, this shows the third equality in \eqref{eq:lem.qr.5}.
    \end{proof}

\subsection{Example: Two spin structures on $\Cb^\times$}
\label{sec:ss_on_C}
By Proposition \ref{prop:class.spinstructures}, there are two  isomorphism classes of spin structures on $\Cb^\times$. Two spin surfaces $\Cb^{NS}$ and $\Cb^R$ which represent these classes can be described as follows.

We let $\Cb^{NS}$ be the spin surface with the trivial spin structure on $\Cb^\times$. 
As a spin bundle it is given by the trivial principal bundle 
	$P_\GL(\Cb^{NS})=\Cb^\times \times \GL_2$. 
The right action of $\GL_2$ is given by right multiplication on the second component. The projection to the frame bundle is
\begin{equation}
    \begin{split}
    p^{NS}:P_\GL(\Cb^{NS}) &\to \Cb^\times \times GL_2^+\\
    (z,g)&\mapsto (z,p_{GL}(g))\ .
    \end{split}
    \label{eq:p_NS-def}
\end{equation}
The correspondence between oriented frames and elements of $GL_2^+$ is by taking the two basis vectors as the two column vectors of the $2{\times}2$-matrix.
For $\Cb^R$ we again take the manifold $P_\GL(\Cb^{R}) = \Cb^\times \times \GL_2$ with $\GL_2$-action by right multiplication. The difference to $\Cb^{NS}$ lies in the projection to the frame bundle, which for $\Cb^R$ is
\begin{equation}
    \begin{split}
        p^{R} : P_\GL(\Cb^{R}) &\to \Cb^\times \times GL^+_2 \\
         (z,g) &\mapsto \big(z, z\, p_{GL}(g)\big) \ .
    \end{split}
    \label{eq:SC-proj}
\end{equation}
(Recall that we do not write out the embedding $i_{\mathbb{C}^\times} : \mathbb{C}^\times \to GL_2^+$ from \eqref{eq:i-delta-C*-def}.)
	One quickly checks that $p^R(z,g)p_{GL}(h) = p^R(z,gh)$.
We have thus defined two spin surfaces $\Cb^{NS}, \Cb^{R}$ with underlying surface $\underline{\Cb^{NS}} = \underline{\Cb^{R}} = \Cb^\times$. 

A simple path based argument shows that these are indeed non-isomorphic. 
We lift the loop $\hat\zeta : [0,2\pi ] \to \Cb^\times \times GL^+_2$, $\hat\zeta(t) = (e^{it}, e^{it})$
along $p^{NS}$ and $p^R$ to obtain 
$\tilde{\zeta}^{NS/R}: [0,2\pi ] \to \Cb^{NS/R}$ with $\tilde{\zeta}^{NS}(t) = (e^{it}, \delta(e^{it/2}))$ and $\tilde{\zeta}^{R}(t) = (e^{it}, \delta (1))$.
We observe that $\tilde{\zeta}^R$ is closed while $\tilde{\zeta}^{NS}$ is not. Thus $\Cb^{NS}$ and $\Cb^{R}$ are different. Of these, $\Cb^{NS}$ extends to the whole of $\Cb$.
We will therefore use the notation $\Cb^{NS}$ for the unpunctured complex plane with the (unique up to isomorphism) spin structure $\Cb^{NS} = \Cb \times \GL_2$.

\subsection{Lifting properties of maps}
\label{sec:map.lifting}
\begin{lemma}
Let $\Sigma,\Sigma'$ be two spin surfaces and $f: \underline\Sigma \to \underline\Sigma'$ a map between the underlying surfaces. Suppose that $\underline\Sigma$ is contractible. Then there exist precisely two maps $\tilde f_{1,2} : \Sigma \to \Sigma'$ of spin surfaces with underlying map $f$; these are related by $\tilde{f}_1 = \tilde{f}_2 \circ \omega$, where $\omega$ is the leaf exchange automorphism.
    \label{lem:Smap.lift}
\end{lemma}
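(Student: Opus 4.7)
The plan is to use Remark \ref{rem:map=iso} to convert the lifting problem into a counting problem for spin-structure isomorphisms on $\underline\Sigma$, and then to exploit the fact that $\underline\Sigma$ is contractible.

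First I would invoke Remark \ref{rem:map=iso}: spin maps $\tilde f:\Sigma\to\Sigma'$ with underlying map $f$ are in natural bijection with isomorphisms of spin structures on $\underline\Sigma$ between $P_\GL(\Sigma)$ and the pullback $f^*P_\GL(\Sigma')$. It therefore suffices to show that the set of such isomorphisms has exactly two elements.

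For existence, I would apply Proposition \ref{prop:class.spinstructures}. Since $\underline\Sigma$ is contractible, $H^1(\underline\Sigma,\mathbb{Z}_2)=0$, so there is a unique isomorphism class of spin structures on $\underline\Sigma$. In particular $P_\GL(\Sigma)$ and $f^*P_\GL(\Sigma')$ are isomorphic as spin structures, producing at least one lift $\tilde f$.

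For the count, suppose $\tilde f_1,\tilde f_2:P_\GL(\Sigma)\to P_\GL(\Sigma')$ are two lifts of $f$. Then $p'\circ\tilde f_i=df_*\circ p$ for $i=1,2$, so for each $x\in P_\GL(\Sigma)$ the points $\tilde f_1(x)$ and $\tilde f_2(x)$ lie in a common fibre of $p'$. Since such a fibre is a $\mathbb{Z}_2$-torsor with nontrivial element given by right multiplication by $\delta(-1)$, there is a uniquely determined function $s:P_\GL(\Sigma)\to\{\pm1\}$ with $\tilde f_1(x)=\tilde f_2(x)\cdot\delta(s(x))$. By continuity of $\tilde f_1$ and $\tilde f_2$, the function $s$ is locally constant. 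Now $P_\GL(\Sigma)$ is connected: because $\underline\Sigma$ is contractible the spin bundle is trivialisable, and $\GL_2$ is itself connected (it is the nontrivial, and hence connected, double cover of the connected group $GL_2^+$ in \eqref{eq:GL+covering}). Therefore $s$ is a global constant. If $s\equiv +1$ then $\tilde f_1=\tilde f_2$; if $s\equiv -1$ then $\GL_2$-equivariance of $\tilde f_2$ gives $\tilde f_1(x)=\tilde f_2(x)\cdot\delta(-1)=\tilde f_2(x\cdot\delta(-1))=\tilde f_2(\omega(x))$, i.e.\ $\tilde f_1=\tilde f_2\circ\omega$. This yields exactly the two lifts asserted, related as in the statement.

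The only step that is not completely formal is verifying that $P_\GL(\Sigma)$ is connected, which I expect to be the main (minor) technical point; it reduces, as above, to the connectedness of $\GL_2$ implicit in the set-up of Section \ref{sec:spin-no-metric}.
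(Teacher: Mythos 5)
Your proof is correct and follows the same route as the paper: reduce via Remark \ref{rem:map=iso} to isomorphisms of spin structures, get existence from Proposition \ref{prop:class.spinstructures} and contractibility, and get the two-element count from a locally constant sign that must be globally constant. The paper phrases that last step as constancy over the connected base $\underline\Sigma$ (the sign $s$ is fibre-wise constant by equivariance and centrality of $\delta(-1)$), whereas you argue via connectedness of the total space $P_\GL(\Sigma)$ — both work.
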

    \begin{proof}
By Remark \ref{rem:map=iso}, to give a map $\tilde f : \Sigma \to \Sigma'$ is equivalent to giving an isomorphism $P_\GL(\Sigma) \to f^* P_\GL(\Sigma')$ of spin structures. Such an isomorphism exists, since $\underline\Sigma$ is contractible and so by Proposition \ref{prop:class.spinstructures} there is only one isomorphism class of spin structures on $\underline\Sigma$. Finally, any two such lifts are either equal or related by $\omega$ since $\underline\Sigma$ is in particular connected.
    \end{proof}

\begin{lemma}
    Let $\tilde{f}:\Sigma \to \Sigma'$ be a morphism of spin surfaces with underlying map $f$. Let $H:[0,1]\times \underline\Sigma \to \underline\Sigma'$ be a smooth homotopy, i.e.\ $H$ is continuous and $H_t$ is smooth for all $t\in [0,1]$. Assume $H_0=f$. Then there is a unique lift $\tilde{H}:[0,1]\times \Sigma \to \Sigma'$ such that $\tilde{H}_0=\tilde{f}$ and such that $\tilde H_t$ is a map of spin surfaces for each $t$.
    \label{lem:Smap.lift2}
\end{lemma}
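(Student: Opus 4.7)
The plan is to reduce the statement to the standard homotopy lifting property for the double cover $p_{GL} : \GL_2 \to GL_2^+$, and then verify that the resulting continuous lift is automatically $\GL_2$-equivariant on each time slice. Concretely, the derivative of $H$ induces a continuous map
\begin{equation*}
F : [0,1] \times F_{GL^+}(\Sigma) \longrightarrow F_{GL^+}(\Sigma'), \qquad F_t := dH_{t*},
\end{equation*}
that is smooth in the fibre direction for each fixed $t$, and correspondingly a continuous map $\bar F : [0,1] \times P_\GL(\Sigma) \to F_{GL^+}(\Sigma')$ via $\bar F(t,x) = F(t, p(x))$. The morphism $\tilde f$ provides a lift of $\bar F_0$ along $p' : P_\GL(\Sigma') \to F_{GL^+}(\Sigma')$, and we are asking for a lift $\tilde H$ of $\bar F$ that restricts to $\tilde f$ at $t=0$.

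The first step is to observe that $p' : P_\GL(\Sigma') \to F_{GL^+}(\Sigma')$ is a $\mathbb{Z}_2$-covering in the topological sense, since it is associated to the double cover $p_{GL}$ via the $\GL_2$-principal bundle structure. The standard homotopy lifting property for coverings, applied to the homotopy $\bar F$ with initial lift $\tilde f$, produces a unique continuous map $\tilde H : [0,1] \times P_\GL(\Sigma) \to P_\GL(\Sigma')$ with $p' \circ \tilde H = \bar F$ and $\tilde H_0 = \tilde f$. Uniqueness here is a direct consequence of the discreteness of the fibres of $p'$ together with connectedness of $[0,1]$.

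Next I would verify the regularity claim that each $\tilde H_t$ is smooth. Since $p'$ is a local diffeomorphism and $\bar F_t$ is smooth for each $t$, the local branches of $(p')^{-1}$ combine with $\bar F_t$ to show that $\tilde H_t$ is smooth in a neighbourhood of any point; the continuity of $\tilde H$ in $t$ picks out the correct branch. The final and main step is the equivariance check: for each $g \in \GL_2$, the two continuous maps $t \mapsto \tilde H_t \circ R_g$ and $t \mapsto R_g \circ \tilde H_t$ from $P_\GL(\Sigma)$ to $P_\GL(\Sigma')$ are both lifts of $\bar F_t \circ R_{p_{GL}(g)}$ along $p'$, and they coincide at $t=0$ because $\tilde f = \tilde H_0$ is a morphism of spin surfaces. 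By uniqueness of lifts with prescribed initial value, they coincide for all $t$, so $\tilde H_t$ is $\GL_2$-equivariant and the diagram of Definition \ref{def:morp-spin-surf} commutes for $\tilde H_t$ with underlying map $H_t$.

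The one place that requires care is not the existence of the lift (which is just covering theory) but the bookkeeping of regularity: one must resist the temptation to assert joint smoothness of $\tilde H$, since $H$ itself is only assumed continuous with smooth time slices, and simply propagate the available regularity through the local sections of $p'$. Uniqueness of $\tilde H$ as a whole follows once again from the covering property: any other lift with the required properties agrees with $\tilde H$ on $\{0\}\times P_\GL(\Sigma)$ and is continuous, hence equals $\tilde H$ everywhere.
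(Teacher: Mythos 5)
Your proposal is correct and follows essentially the same route as the paper's (much terser) proof: lift $H$ to the frame bundle via its fibrewise derivatives and invoke the homotopy lifting property of the double cover $p':P_\GL(\Sigma')\to F_{GL^+}(\Sigma')$. The additional care you take with the equivariance of each $\tilde H_t$ and with the regularity bookkeeping fills in details the paper leaves implicit, but does not change the argument.
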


    \begin{proof}
        Taking derivatives of $H$ at fixed times $t$, we obtain a lift of $H$ to the bundle of oriented frames. The result then follows from the homotopy lifting property of $p:P_\GL(\Sigma') \to F_{GL^+}(\Sigma')$.
    \end{proof}
    
In Section \ref{sec:ss_on_C} we saw that a spin structure on $\mathbb{C}^\times$ can be extended to $\mathbb{C}$ iff the path $\hat\zeta$ does not have a closed lift in the spin bundle. We now extend this argument to spin structures on arbitrary surfaces.
Let $\underline{\Sigma}$ be a surface. We denote by $\pi_T:F_{GL^+}\left( \underline{\Sigma} \right)\to T\underline{\Sigma}$ the projection that picks the first vector of a frame. 
A {\em (smooth) simple closed curve} is a closed path, that is a smooth embedding when considered as a map of $S^1$ into the surface.
Such a curve $\gamma:[0,1]\to \underline{\Sigma}$ induces a closed curve $d\gamma:[0,1] \to T\underline{\Sigma}$ by taking the derivative.
The curve $d\gamma$ always lifts along $\pi_T$ by completing the frame. (The derivative $d\gamma$ is non-zero everywhere by definition since $\gamma$ is an embedding.) Any two such lifts will be homotopic, as they only differ by right multiplication with a curve in $T_2$.

\begin{lemma}
    Let $\underline{\Sigma}$ be a surface
	and 
$v\in\underline{\Sigma}$, together with a spin structure on $\underline{\Sigma}\setminus\{v\}$.
    Let $\gamma:[0,1]\to \underline{\Sigma}$ be a contractible smooth simple closed curve encircling $v$ (i.e. on $\underline{\Sigma}\setminus\{v\}$ it is no longer contractible). Then for any $\hat{\gamma}:[0,1]\to F_{GL^+}\left( \underline{\Sigma} \right)$ with $\pi_T( \hat{\gamma} )=d\gamma$ the following are equivalent:
    \begin{enumerate}
        \item The spin structure on $\underline{\Sigma}\setminus \{v\}$ extends to $\underline{\Sigma}$.
        \item $\hat{\gamma}$ does not have a spin lift.
    \end{enumerate}
    \label{lem:ssc}
\end{lemma}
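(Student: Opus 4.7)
The strategy is to reduce to the explicit computation on the model spin structures $\Cb^{NS}$ and $\Cb^R$ of Section \ref{sec:ss_on_C}. To begin, I would observe that both conditions are local around $v$ and are independent of the specific choice of $\hat\gamma$. Extendability in (1) is local by definition. For (2), any two closed lifts $\hat\gamma_1, \hat\gamma_2$ of $d\gamma$ to $F_{GL^+}$ (which can always be chosen, since $d\gamma$ is closed and the stabilizer of $e_1 \in \mathbb{R}^2$ in $GL_2^+$ is path-connected) differ pointwise by a curve in this stabilizer, which is contractible; hence the two lifts are homotopic as closed loops in $F_{GL^+}(\underline{\Sigma}\setminus\{v\})$, and by Lemma \ref{lem:Smap.lift2} the existence of a closed lift in $P_\GL$ is preserved under such homotopies.

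Next, since $\gamma$ is a smooth contractible simple closed curve, I would invoke the Jordan--Sch\"onflies theorem to obtain an embedded closed disk $D \subset \underline{\Sigma}$ with $\partial D = \gamma$ and $v \in \mathrm{int}(D)$. A diffeomorphism $\phi : D \xrightarrow{\sim} \overline{\mathbb{D}} \subset \Cb$ with $\phi(v) = 0$ identifies the restricted spin structure on $D \setminus \{v\}$ with one on $\overline{\mathbb{D}} \setminus \{0\}$. By Proposition \ref{prop:class.spinstructures}, $H^1(\overline{\mathbb{D}} \setminus \{0\}, \mathbb{Z}_2) = \mathbb{Z}_2$, so this pullback is isomorphic to the restriction of either $\Cb^{NS}$ or $\Cb^R$ to $\overline{\mathbb{D}} \setminus \{0\}$. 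Only $\Cb^{NS}$ extends across the puncture: otherwise an extension of $\Cb^R$ over $0$ would, by uniqueness of spin structures on the contractible disk, be isomorphic to the extension of $\Cb^{NS}$, forcing $\Cb^R \cong \Cb^{NS}$ on $\Cb^\times$ and contradicting the path argument of Section \ref{sec:ss_on_C}. Hence (1) is equivalent to the local model being of NS-type.

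Finally, I would identify $\phi_*\hat\gamma$ with the model loop $\hat\zeta(t) = (e^{it}, e^{it})$ of Section \ref{sec:ss_on_C} up to homotopy and inversion, viewing both as loops in $F_{GL^+}(\mathbb{D}\setminus\{0\}) \cong (\mathbb{D}\setminus\{0\}) \times GL_2^+$. In $\pi_1((\mathbb{D}\setminus\{0\}) \times GL_2^+) \cong \mathbb{Z} \oplus \mathbb{Z}$, the base winding of $\phi_*\hat\gamma$ is $\pm 1$ because $\phi \circ \gamma$ is a simple loop around $0$, and the $GL_2^+$-winding is $\pm 1$ by Hopf's Umlaufsatz for smooth simple closed curves in the plane; the two signs agree by an orientation check, so $\phi_*\hat\gamma \simeq \hat\zeta^{\pm 1}$. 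Since the lift of an inverse loop is the inverse of the lift, both $\hat\zeta$ and $\hat\zeta^{-1}$ have the same closedness behavior, and the explicit computation of $\tilde\zeta^{NS/R}$ in Section \ref{sec:ss_on_C} yields the conclusion: $\hat\gamma$ has a closed spin lift iff the local model is of R-type, iff (1) fails. The main obstacle is this last homotopy identification --- specifically, computing the $GL_2^+$-winding via the Umlaufsatz and verifying the sign compatibility between base and fiber windings.
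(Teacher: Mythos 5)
Your proof is correct and follows essentially the same route as the paper's: Jordan--Sch\"onflies to reduce to a punctured disk, classification of the two local models $\Cb^{NS}$ and $\Cb^R$, and identification of $\hat\gamma$ up to homotopy with the standard loop $\hat\zeta$ (or its reverse) so that the explicit computation of $\tilde\zeta^{NS/R}$ from Section \ref{sec:ss_on_C} applies. You merely make explicit two points the paper leaves implicit --- the independence of the choice of lift $\hat\gamma$ via the contractibility of the stabilizer of $e_1$, and the fibre winding number via the Umlaufsatz --- both of which are correct.
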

\begin{proof}
    By the Jordan-Sch\"onflies theorem the image of the curve $\gamma$ bounds a disk. We can therefore find a chart $\psi:U\to \Sigma$, $U\subset\mathbb{C}$ open, in which $\text{Im}(\gamma)$ bounds the unit disk and $\psi^{-1}(v)=0$. Then $\psi^{-1}\circ \gamma$ is isotopic to either $\zeta:[0,2\pi] \to \mathbb{C}$, $t \mapsto e^{it}$, or its reverse, which has the same lifting properties. We can then assume w.l.o.g. that $\hat{\gamma}$ is homotopic to $\hat{\zeta}$ from Section \ref{sec:ss_on_C} which lifts to the spin bundle iff the spin structure does not extend to $\mathbb{C}$.
\end{proof}

\subsection{Surfaces with parametrised boundary} 

We first define a set of collars around 
	$S^1 \subset \Cb$:
\begin{align}     \label{eq:S1Nbhd}
    \mathcal{A} &:= \left\{ A_{r,R} \subset \Cb : A_{r,R} = \{z\in \Cb: 
    r<|z|<R \}; 
	0<    
    r<1<R \right\} \ ,\\
    \mathcal{A}_{\geq 1} &:= \left\{ A_{r,R} \cap  \{z\in \Cb: \lvert z \rvert \geq 1 \} : A_{r,R} \in \mathcal{A} \right\} \ .
    \nonumber
\end{align}

\begin{definition}    \label{def:surf.parameterised.boundary}
    A \emph{surface with parametrised boundary} is a compact surface $\Sigma$ together with smooth 
    	orientation preserving
embeddings $\varphi_i: U_i \to \Sigma$, $i=1,\dots,B$, where $U_i \in \mathcal{A}_{\geq 1}$
	 and $B$ is the number of connected components of the boundary $\partial\Sigma$ of $\Sigma$. We require that $\bigcup_{i=1}^B \varphi_i(\partial U_i) = \partial \Sigma$ and that the images $\varphi_i(U_i)$, $i=1,\dots,B$ are pairwise disjoint. A \emph{diffeomorphism between surfaces with parametrised boundary} is a diffeomorphism between the surfaces compatible with the germs of the boundary embeddings. 
\end{definition}	
	
Such a parametrisation in particular induces a linear order on the boundary components which will be used later. 
Unless otherwise indicated, in the following ``surface'' will stand for ``surface with parametrised boundary''.

	Boundary components of surfaces can be glued using the parametrisation.
To do this in a unique way we fix the gluing diffeomorphism
\begin{equation}
\label{eq:Cx-inversion}
    s:\Cb^\times \to \Cb^\times \qquad , \quad
    z\mapsto z^{-1} \ .
\end{equation}

\begin{definition}[Glueing of parametrised surfaces]
Let $(\Sigma,(\varphi_i)_i)$ be a surface and $(i,j)$, $i\neq j$ be a pair of boundary components. The glued surface $\Sigma_{i\#j}$ is obtained by identifying points via the diffeomorphism $\varphi_i \circ s \circ \varphi_j^{-1}|_{\partial \Sigma_j}:\partial \Sigma_j \to \partial \Sigma_i$. 
After possibly restricting the maps $\varphi_i$ and $\varphi_j$ to smaller collars, also denoted by $U_i$ and $U_j$, we get an embedding $\varphi_{i,j}:\mathcal{A}\ni U_i\cup s(U_j) \to \Sigma_{i\#j}$  given by
\begin{equation}
    \varphi_{i,j}(z) = 
    \begin{cases}
        &\varphi_i(z),\; \text{if} \; \lvert z\rvert \geq 1,\\
        &\varphi_j(s^{-1}(z)),\; \text{if} \; \lvert z \rvert < 1.
    \end{cases}
\end{equation}
The differentiable structure on the glued surface $\Sigma_{i\#j}$ is the one compatible with the differentiable structure on $\Sigma\backslash(\partial \Sigma_i\cup \partial \Sigma_j)$ and the differentiable structure induced by $\varphi_{i,j}$ on its image.
\label{def:surface.gluing}
\end{definition}

The definition of the glued surface is symmetric, $\Sigma_{i\#j} = \Sigma_{j\#i}$.

\subsection{Spin-surfaces with parametrised boundary}

As above we first define sets $\mathcal{A}^{NS}$ and $\mathcal{A}^{R}$ of spin collars for the $NS$ and $R$ spin structures on the annulus,
\begin{equation}
    \mathcal{A}^{NS/R} := \big\{ \, \Cb^{NS/R}|_{\underline U} \,\big|\, \underline{U} \in \mathcal{A}\, \big\}
    \quad , \qquad
    \mathcal{A}^{NS/R}_{\geq 1} :=\big\{ \, \Cb^{NS/R}|_{\underline U} \,\big|\, \underline{U} \in  \mathcal{A}_{\geq 1} \big\} \ .
\end{equation}

\begin{definition}    \label{def:ssurf.param.boundary}
    A \emph{spin surface with parametrised boundary} is a compact spin surface $\Sigma$ together with a collection $(\tilde{\varphi}_i)_{i=1,\dots,B}$, of spin embeddings
\begin{equation}
\tilde{\varphi}_i : U_i \to \Sigma
\end{equation}
with $U_i \in \mathcal{A}^{NS}_{\geq 1}\sqcup \mathcal{A}^{R}_{\geq 1}$ (disjoint union), and such that the tuple $(\underline{\Sigma},(\varphi_i)_{i=1,\dots,B})$ of underlying surface and parametrisation is a surface (with parametrised boundary).
We call a boundary component $i$ of NS-type if $U_i\in \mathcal{A}^{NS}$ and of R-type  if $U_i\in \mathcal{A}^{R}$.
\end{definition}

Spin structures on manifolds with boundary are defined in \cite{blanchet1996topological} from a homotopy-theoretic viewpoint; in \cite{randal2010homology} $r$-spin surfaces with boundaries and their mapping class groups are treated.

As for surfaces, in the following we will write ``spin surface'' for ``spin surface with parametrised boundary'' unless stated otherwise.

\medskip

By taking its derivative the diffeomorphism $s$ from \eqref{eq:Cx-inversion} induces a map 
	$ds_*:\Cb^\times \times GL^+_2 \to \Cb^\times \times GL^+_2$,
\begin{equation}
    ds_* : (z,g) \mapsto \left( \frac{1}{z}, 
	\frac{-1}{z^{2}} \, g 
    \right).
\end{equation}
The map $ds_*$ has two lifts $\tilde{s}^{\pm}_{NS/R}: \Cb^{NS/R}\to \Cb^{NS/R}$, which we specify via 
\begin{equation}
\tilde s^\varepsilon_{NS}(1,\delta (1))=(1,\delta(-i \varepsilon)) \quad , \qquad
\tilde s^\varepsilon_{R}(1,\delta (1))=(1,\delta(i \varepsilon)) \ .
\label{eq:spin.gluing.map}
\end{equation}
The reason to choose different looking conventions for $\tilde s_{NS}$ and $\tilde s_R$ is a simplification in Lemma \ref{lem:index.gluing} below.
We will omit the label $R$ or $NS$ if it is clear from the context which of the maps is used.

\begin{definition}
Let $(\Sigma,(\tilde{\varphi}_i)_i)$ be a spin surface. We call a triple $(i,j,\varepsilon )$, with $i,j$ distinct boundary components and $\varepsilon \in \{\pm\}$, {\em spin gluing data}. Here the boundaries $i$ and $j$ have to be either both of NS-type, or both of R-type. 
We define the {\em glued spin surface}
	$\Sigma_{i\#j}^\varepsilon$ 
by identifying points along the boundary via the homeomorphism $\tilde{\varphi}_i \circ \tilde s^\varepsilon  \circ \tilde{\varphi}_j^{-1}\vert_{\partial \Sigma_j}$, analogous to Definition \ref{def:surface.gluing}, and use the maps $\tilde{\varphi}_i$ and $\tilde{\varphi}_j\circ (\tilde s^{\varepsilon})^{-1}$ to define the differential structure and the spin structure. The bundle projection and right action commute with the gluing maps, and thus are defined on $\Sigma^\varepsilon_{i\#j}$ in the obvious way.
\label{def:glue.spin}
\end{definition}

	Since $(\tilde s^{\varepsilon})^{-1} = \tilde s^{-\varepsilon}$, the gluing operation is not symmetric, but instead satisfies 
\begin{equation}
	\Sigma_{i\#j}^\varepsilon = \Sigma_{j\#i}^{-\varepsilon} \ .
\end{equation}

\section{Spin Triangulations}\label{sec:spintriangulations}

\subsection{Smooth triangulations with boundary}\label{sec:smooth-triang}
\begin{figure}[tb]
    \begin{tikzpicture}
        \node[name=A, regular polygon, regular polygon sides=3, minimum size=4cm, rotate=30] at (0,0){};
        \begin{scope}[decoration={
            markings,
            mark=at position 0.5 with {\arrow{stealth}}}
        ] 
        \foreach \N/\M in {1/2,2/3,3/1} \draw[postaction=decorate] (A.corner \N) -- (A.corner \M);
        \end{scope}
        \draw[->] (30:0.5cm) arc (30:320:0.5cm);
    \end{tikzpicture}
\caption{Orientation convention for a triangle in a combinatorial surface. The circular arrow gives the order on the vertices defining the orientation. The arrows on the edges give the edge orientation induced by the orientation of the triangle. When comparing orientation of simplices to orientations of surfaces, our convention is that the above orientation matches that of the paper plane, thought of as $\mathbb{R}^2$ with its standard orientation.}
\label{fig:triangle-orientation}
\end{figure}
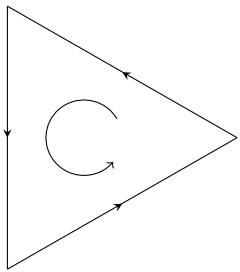

Below, we will make use of combinatorial surfaces and smooth triangulations.
Let us mention that a combinatorial surface is a simplicial complex $\mathcal{C}$ such that the polytope $|\mathcal{C}|$ is homeomorphic to a two-manifold, and that a smooth triangulation of a surface $\Sigma$ is a homeomorphism $|\mathcal{C}| \to \Sigma$ which is a smooth embedding when restricted to a simplex. We take our combinatorial surfaces to be oriented (see \cite[Sect.\,5]{lee2000} for a definition) and our orientation convention is given in Figure \ref{fig:triangle-orientation}.

\medskip

The standard triangle $\underline\Delta$ is the convex hull of the vertices $\{ 1, e^{\frac{2\pi i}{3}}  , e^{\frac{4\pi i}{3}}\} \subset \Cb$. We consider it as a simplicial complex with the usual simplices.

\begin{definition}\label{def:comb-spin-surf}
A \emph{combinatorial surface with parametrised boundary} (or {\em combinatorial surface} for short) is a combinatorial surface $\mathcal{C}$ together with simplicial
	embeddings
$f_i:\partial \underline\Delta \to \partial \mathcal{C}$, where $i$ runs from $1$ to the number of boundary components, such that the boundary $\partial\mathcal{C}$ is the disjoint union of all $f_i(\partial \underline\Delta)$.
	The $f_i$ have to be orientation reversing in the sense that the induced orientation on an edge of $\underline\Delta$ is mapped to the opposite orientation of the boundary edge in $\partial\mathcal{C}$ as induced by the adjacent triangle.
\end{definition}
	Via this definition we impose in particular that each boundary component of $\mathcal{C}$ consists of precisely three edges and three vertices. 
	The orientation convention is such that when using $f_i$ to glue the triangle into $\mathcal{C}$ one obtains an oriented simplicial complex.
	
\medskip

Triangulated surfaces can be glued. To formulate the gluing procedure, we need the map $s_{\mathcal{C}}:\partial \underline\Delta\to \partial \underline\Delta$,
	$z \mapsto \bar z$. It acts on vertices as
\begin{equation}
        s_{\mathcal{C}}:\partial \underline\Delta \to \partial \underline\Delta ~~,~~~
            1 \mapsto 1 ~,~~
            e^{\frac{2\pi i}{3}} \mapsto e^{\frac{4\pi i}{3}} ~,~~
            e^{\frac{4\pi i}{3}} \mapsto e^{\frac{2\pi i}{3}} \ .
    \label{eq:combinatorial.gluing.map}
\end{equation}
Let $i\neq j$ label two boundary components of a combinatorial surface $\mathcal{C}$. We can glue the surface as an abstract simplicial complex by identifying simplices along the map 
	$f_i\circ s_{\mathcal{C}} \circ f_j^{-1}$. 
If we obtain a simplicial complex this way,\footnote{
A simple example to illustrate the necessity of this condition is as follows: take $\mathcal{C}$ to be the disjoint union of two standard triangles $\underline\Delta$ with boundaries parametrised by $s_{\mathcal{C}}$. The two boundary components cannot be glued since the result would not be a simplicial complex, i.e.\ $(1,2)$ is not simplicial gluing data.

However, for a given combinatorial surface and arbitrary gluing data $(i,j)$, it is always possible to choose a subdivision of $\mathcal{C}$, fixing the boundary triangulation, such that $(i,j)$ becomes simplicial gluing data.

} 
we call $(i,j)$ {\em simplicial gluing data}. The resulting simplicial complex is denoted as $\mathcal{C}_{i\#j}$ and it is again a combinatorial surface.

\medskip

To triangulate surfaces with parametrised boundary we first define a canonical triangulation of the unit circle $S^1$, 
    \begin{equation}
    \varphi_S:
    \lvert\partial \underline\Delta\rvert \to S^1 \quad , \quad
        z \mapsto \frac{z}{\lvert z \rvert} \ .
    \end{equation}

\begin{definition} \label{def:triang-surf}
    A \emph{triangulated surface with parametrised boundary} (or {\em triangulated surface} for short) is a tuple $((\mathcal{C},f_i),\varphi ,(\Sigma ,\varphi_i))$, where $(\mathcal{C},f_i)$ is a combinatorial surface, $(\Sigma ,\varphi_i)$ is a surface, and 
$\varphi : |\mathcal{C}| \to \Sigma$ 
is a triangulation such that $\varphi \circ f_i = \varphi_i \circ \varphi_S$.
\end{definition}

Since $s_{\mathcal{C}}(z) = \bar{z}$, the diagram
   \begin{equation} \label{eq:sC-s-diagram}
   \begin{tikzcd}
       \lvert \partial \underline\Delta\rvert  \ar{r}{\varphi_S} \ar{d}{\lvert s_{\mathcal{C}}\rvert} & S^1 \ar{d}{s} \\
       \lvert \partial \underline\Delta\rvert \ar{r}{\varphi_S} & S^1
   \end{tikzcd}    
   \end{equation}
   commutes. This allows us to make the following

\begin{definition} \label{def:glue-triang-surf}
    Let $\Sigma=((\mathcal{C},f_i),\varphi ,(\Sigma ,\varphi_i))$ be a triangulated surface and $(i,j)$ be simplicial gluing data. The {\em glued triangulated surface} is
\begin{equation}
	\Sigma_{i\#j}:=\big( (\mathcal{C}_{i\#j},\hat{f}_k),\varphi_{i\#j},(\Sigma_{i\#j},\hat{\varphi}_k) \big) \ , 
\end{equation}
with $\hat{f}_k$, $\hat{\varphi}_k$ being the remaining boundary parametrisations and $\varphi_{i\#j}$ the quotient of the original triangulating map $\varphi$.
\end{definition}

\subsection{Markings on combinatorial surfaces}\label{sec:marking-comb-surf}

The combinatorial description of spin surfaces requires some extra data. 
Let $\mathcal{C}$ be a combinatorial surface. The first piece of data is an orientation on the edges of $\mathcal{C}$. We encode this by choosing for each edge $e\in \mathcal{C}_1$ a vertex $d_0^1(e)$ on the boundary of $e$. This determines a second map $e \mapsto d_1^1(e)$ by picking the other boundary vertex at each edge. We think of an edge as being oriented from $d_0^1(e)$ to $d_1^1(e)$, see Figure \ref{fig:label_convention}. 

\begin{figure}[tb]
    \centering
        \begin{tikzpicture}
            \draw[-] (0cm,0cm) -- +(2cm,0cm);
            \draw[->,>=stealth] (0cm,0cm) -- +(1cm,0cm);
            \node[above] at (1cm,0cm) {$e$};
            \node[left] at (0,0) {$d_0^1(e)$};
            \node[right] at (2cm,0cm) {$d_1^1(e)$};
        \end{tikzpicture}
        \caption{An edge $e$ with orientation from $d_0^1(e)$ to $d_1^1(e)$.}
    \label{fig:label_convention}
\end{figure}

The second piece of data is a ``starting edge'' for each triangle in $\mathcal{C}$, that is, for each $\sigma \in \mathcal{C}_2$ we choose an edge $d_0^2(\sigma )$ of $\sigma$. This induces two further maps $\sigma \mapsto d_1^2(\sigma)$ and $\sigma \mapsto d^2_2(\sigma)$ by choosing the next and next-to-next edge counterclockwise. For the standard triangle $\underline\Delta$ this is illustrated in Figure \ref{fig:st.triangle}, which also gives our numbering convention for the edges of $\underline\Delta$.

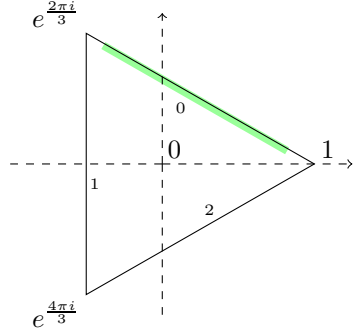
\begin{figure}[tb]
    \begin{tikzpicture}[]
        \node[name=A, regular polygon, regular polygon sides=3, minimum size=4cm, rotate=30]{};
        \draw[green!40, line width=3pt] ($(A.corner 3)!0.9!(A.corner 1)$)++(-2pt,0pt) -- ($($(A.corner 1)!0.9!(A.corner 3)$)+(-2pt,0pt)$);
        \node[name=A, regular polygon, regular polygon sides=3, draw,minimum size=4cm, rotate=30]{};
        \node[above left=-2pt] at (A.corner 1) {$e^{\frac{2\pi i}{3}}$};
        \node[below left=-2pt] at (A.corner 2) {$e^{\frac{4\pi i}{3}}$};
        \node[above right=-2pt] at (A.corner 3) {$1$};
        \node[above right=-2pt] at (A.center) {$0$};
        \node[above right=2pt and -2pt] at (A.side 2) {\tiny{$2$}};
        \node[below left=-2pt and 2pt] at (A.side 3) {\tiny{$0$}};
        \node[below right=2pt and -2pt] at (A.side 1) {\tiny{$1$}};
        \draw[->, dashed] (A.center) -- +(2.5cm,0cm);
        \draw[->, dashed] (A.center) -- +(0cm,2cm);
        \draw[dashed] (A.center) -- +(-2cm,0cm);
        \draw[dashed] (A.center) -- +(0cm,-2cm);
    \end{tikzpicture}
    \caption{Standard triangle $\underline\Delta$; the small numbers $0$, $1$, $2$ indicate the numbering of the edges. The first edge has also been marked with a fat green line.}
        \label{fig:st.triangle}
\end{figure}
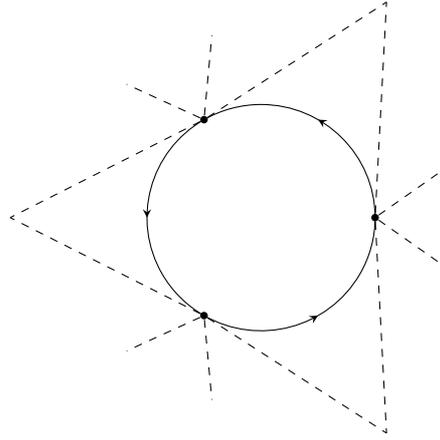
\begin{figure}[tb]
 \begin{tikzpicture}
        \foreach \N in {0,1,2} {
            \node[name=c\N, fill, inner sep=1pt, circle] at (\N*120:1.5cm){};
            \coordinate (t\N) at ($(\N*120+60:3.3cm)$);
            \coordinate (a\N) at ($(\N*120+15:2.5cm)$);
            \coordinate (b\N) at ($(\N*120-15:2.5cm)$);
            }
        \foreach \N/\M in {0/2,1/0,2/1} {
            \draw[dashed] (c\N) -- (t\N);
            \draw[dashed] (c\N) -- (t\M);
            \draw[dashed] (c\N) -- (a\N);
            \draw[dashed] (c\N) -- (b\N);
            }
        \begin{scope}[decoration={
            markings,
            mark=at position 0.5 with {\arrow{stealth}}}
        ] 
        \draw[postaction=decorate] (0:1.5cm) arc (0:120:1.5cm);
        \draw[postaction=decorate] (120:1.5cm) arc (120:240:1.5cm);
        \draw[postaction=decorate] (240:1.5cm) arc (240:360:1.5cm);
        \end{scope}
    \end{tikzpicture}
\caption{Marking of boundary edges: The marking has to be such that the orientation induced on the edge as in Figure \ref{fig:label_convention} agrees with the orientation induced by the parametrisation maps, see Definition \ref{def:comb-spin-surf}. Equivalently, the orientation of boundary edges induced by the marking is opposite to that induced by the adjacent triangle via Figure \ref{fig:triangle-orientation}.}
\label{fig:marking-boundary-edge}
\end{figure}

To summarise:

\begin{definition} \label{def:marked-triang-surf}
    A \emph{marked combinatorial surface with parametrised boundary} (or \emph{marked combinatorial surface} for short) is a combinatorial surface $\mathcal{C}$ together with maps $d^1_0:\mathcal{C}_1 \to \mathcal{C}_0$ and $d^2_0:\mathcal{C}_2 \to \mathcal{C}_1$ such that:
    \begin{itemize}
        \item $d_0^1(e) \in \partial(e)$ for all $e\in \mathcal{C}_1$,
        \item $d_0^2(\sigma ) \in \partial(\sigma )$ for all $\sigma \in \mathcal{C}_2$.
    \end{itemize}
    In addition, for boundary edges $e\in (\partial \mathcal{C})_1$ we require that they are directed 
    in accordance with the boundary orientation as imposed by the parametrisation maps, see Figure \ref{fig:marking-boundary-edge}.
    A \emph{marked triangulated surface (with parametrised boundary)} is a triangulated surface together with a marking on its combinatorial surface.
\end{definition}

\begin{definition}[Glueing of markings] \label{def:glue-mark}
    Let $(\mathcal{C},f_i)$ be a marked combinatorial surface with parametrised boundary, $(i,j)$ simplicial gluing data and $\mathcal{C}_{i\#j}$ the glued surface. For $\sigma \in \mathcal{C}$ we denote by $[\sigma ]\in \mathcal{C}_{i\#j}$ the image of $\sigma $ under the quotient map.
    The marking on $\mathcal{C}_{i\#j}$ is defined as follows:
    \begin{itemize}
        \item For $e\in \mathcal{C}_1\setminus \left( \mathrm{im}(f_i) \cup \mathrm{im}(f_j) \right)$: $d_0^1([e])=[d_0^1(e)]$.
        \item For $e\in (\mathrm{im}(f_j))_1$: $d_0^1([e])=[d_0^1(e)]$.
        \item For $e\in (\mathrm{im}(f_i))_1$: $d_0^1([e])=[d_0^1\left( (f_j\circ s_\mathcal{C}^{-1} \circ f_i^{-1})(e)\right)]$.
        \item For $\sigma \in \mathcal{C}_2$: $d_0^2([\sigma ])=[d_0^2(\sigma )]$.
    \end{itemize}
\end{definition}

In a marked simplicial surface $\mathcal{C}$, for each face $\sigma \in \mathcal{C}_2$ there is a unique orientation preserving affine linear isomorphism 
	$\check\chi_\sigma : \underline\Delta  \to \sigma$ 
which maps the marked edge of the standard triangle to $d^2_0(\sigma )$. Consequently, in a marked triangulated surface $(\mathcal{C},\varphi ,\Sigma)$ there is a canonical smooth embedding $\chi_\sigma : \underline\Delta   \to \Sigma$, $\chi_\sigma := \varphi \circ \check\chi_\sigma $ for each $\sigma \in \mathcal{C}_2$.

\medskip

Recall from Section \ref{sec:ss_on_C} that $\Cb^{NS}$ is the (unpunctured) complex plane with spin structure $\Cb^{NS} = \Cb \times \GL_2$. We define $\Delta$ to be the triangle $\underline\Delta$ with spin structure $\Cb^{NS}|_{\underline\Delta}$. The spin structure on $\underline\Delta$ is unique up to isomorphism.
We can now introduce another concept central for this paper.

\begin{definition}\label{def:spin-triang-surf-def}
    A \emph{spin triangulated surface (with parametrised boundary)} $\Sigma$ is a spin surface $\Sigma$ and a marked triangulated surface $((\mathcal{C},f_i),\varphi ,(\underline{\Sigma},\varphi_i))$ together with a choice of spin lift $\tilde{\chi}_\sigma :\Delta  \to \Sigma$ of the map 
	$\chi_\sigma : \underline\Delta \to \underline\Sigma$ 
for every face $\sigma \in \mathcal{C}_1$. 
\end{definition}

    Since simplices are connected and simply connected, a spin lift of $\chi_\sigma $ always exists and is uniquely determined by giving its value at one point. For every face of the triangulation there are two possible choices for the spin lift of the characteristic map.

\subsection{Edge signs for inner edges}
\label{sec:index_inner}

Given a spin triangulated surface, our next aim is to give a combinatorial description of the spin structure. This will be achieved by assigning signs to the edges of the triangulation. The definition of these signs and the description of their behaviour under changes of the triangulation and under gluing will be the main input into the algebraic treatment of lattice spin topological field theory in Section \ref{sec:2dlatticeTFT}.

\medskip

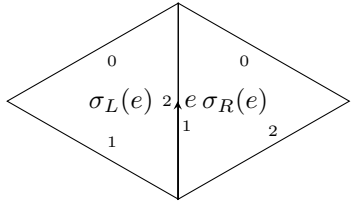
\begin{figure}[tb]
    \begin{tikzpicture}[]
        \node[name=A, regular polygon, regular polygon sides=3, draw,minimum size=3cm, rotate=30] at (0.75cm,0cm){};
        \node[name=B, regular polygon, regular polygon sides=3, draw,minimum size=3cm, rotate=90] at (-0.75cm,0cm){};
        \draw[->, >=stealth] (A.corner 2) -- (A.side 1);
        \node[] at (A.center) {$\sigma_R(e)$};
        \node[] at (B.center) {$\sigma_L(e)$};
        \node[above right=2pt and -2pt] at (A.side 2) {\tiny{$2$}};
        \node[below left=-2pt and 2pt] at (A.side 3) {\tiny{$0$}};
        \node[below right=4pt and -2pt] at (A.side 1) {\tiny{$1$}};
        \node[left=-1pt] at (B.side 2) {\tiny{$2$}};
        \node[below right=-2pt and 2pt] at (B.side 3) {\tiny{$0$}};
        \node[above right=-2pt and 2pt] at (B.side 1) {\tiny{$1$}};
        \node[right=-1pt] at (A.side 1) {$e$};
    \end{tikzpicture}
    \caption{An inner edge with left and right adjacent faces. In the configuration above $k_L=2$ and $k_R=1$.}
        \label{fig:edge_indexing_1}
\end{figure}

    Let $\mathcal{C}$ be a marked combinatorial surface. 
Recall from Figure \ref{fig:label_convention}  that the boundary maps $d^1_0, d^1_1$ give each edge $e\in \mathcal{C}_1$ a (1-)orientation. For any inner edge $e$, we denote by $\sigma_L(e)$ the adjacent face that induces this orientation on $e$, and by $\sigma_R(e)$ the face that induces the opposite orientation, see Figure \ref{fig:edge_indexing_1}. 
Furthermore let $k_L(e)$ and $k_R(e)$ be such that $d^2_{k_L(e)}(\sigma_L(e))=e=d^2_{k_R(e)}(\sigma_R(e))$. We say $e$ is the $k_L(e)$'th edge of $\sigma_L(e)$ and the $k_R(e)$'th edge of $\sigma_R(e)$.
If the edge $e$ is clear from the context we will often drop the argument in $\sigma_{L/R}$ and $k_{L/R}$.

Now fix an edge $e$ in a marked triangulated surface $(\mathcal{C},\varphi ,\Sigma)$. 
Let $p$ be a point on the $k_R(e)$'th edge of $\Delta $. 
Then the derivative $d(\chi_{\sigma_L}^{-1}\circ \chi_{\sigma_R})_p\in GL_2^+$ rotates a tangent vector in the direction of the edge by $e^{2\pi i(k_L/3-k_R/3+1/2)}$. This can be written in terms of the $QR$-decomposition of $GL_2^+$ as 
\begin{equation} \label{eq:Qalpha-angle}
    \mathbf{Q}_\alpha (  d(\chi_{\sigma_L}^{-1}\circ \chi_{\sigma_R})_p ) = e^{2\pi i(k_L/3-k_R/3+1/2)}
    \quad , ~~ \text{ where } ~~ \alpha =e^{2\pi i\left( \frac{k_R}{3}+ \frac{5}{12} \right)} \ .
\end{equation}
The constant $\frac{2 \pi \, 5}{12} = 150^\circ$ is the angle the edge labelled 0 in the standard triangle forms with the real axis (Figure \ref{fig:st.triangle}). Thus $\alpha$ is the angle between the edge labelled $k_R$ and the real axis. To avoid having to write out the uninteresting constant angle, we abbreviate
\begin{equation}\label{eq:alpha_0-def}	
	\alpha_0:= e^{2\pi i\,\frac{5}{12}} \ ,
\end{equation}
such that $\alpha =e^{ \frac{2\pi ik_R}{3}} \alpha_0$.

For a spin map $\tilde{f}:\Cb^{NS} \to \Cb^{NS}$ and a point $p \in \Cb$ we denote by $g_p(\tilde{f})\in \GL_2$ the element such that 
\begin{equation} \label{eq:g_p-def}
	\tilde{f}(p,g)=(f(p),g_p(\tilde{f})\cdot g) \ .
\end{equation}

\begin{definition}[Edge signs for inner edges]
Let $e$ be an inner edge of a spin triangulated surface and let $p$ be a point on the $k_R(e)$'th edge of $\Delta$.
The {\em edge sign} $s(e) \in \{\pm 1\}$ for the edge $e$ is defined via
    \begin{equation}\label{eq:se-inner_edge}
        \delta(s(e)) = \tilde{\mathbf{Q}}_{\alpha}\left( g_p(\tilde{\chi}_{\sigma_L}^{-1} \circ \tilde{\chi}_{\sigma_R})\right)\cdot\delta (e^{-\pi i(k_L/3-k_R/3+1/2)}) \ ,
    \end{equation}
where  $\alpha =e^{ \frac{2\pi ik_R}{3}} \alpha_0$.   
\label{def:se-inner_edge}
\end{definition}

By continuity of $\mathbf{\tilde Q}_{\alpha}$, $s(e)$ does not depend on the choice of the point $p$ so that it makes sense not to include $p$ in the notation. Because $\mathbf{\tilde Q}(\dots)$ and $\delta(\dots)$ commute in $\GL_2$, and by employing Lemma \ref{lem:QR-rules}, we may also write
    \begin{equation}\label{eq:se-inner_edge-2}
        \delta(s(e)) = \tilde{\mathbf{Q}}_{\alpha}\left( \delta (e^{-\pi i(k_L/3-k_R/3+1/2)}) \, g_p(\tilde{\chi}_{\sigma_L}^{-1} \circ \tilde{\chi}_{\sigma_R})\right) \ .
    \end{equation}

\subsection{Edge signs for boundary edges}\label{sec:edge-sign-bnd}

    Let $\mathcal{C}$ be a marked combinatorial surface. Let $e\in \mathcal{C}_1$ be a boundary edge. Therefore it will have only a single adjacent face, which -- by convention -- is on the right side and denoted as $\sigma_R(e)$ as in Section \ref{sec:index_inner}. The edge $e$ is then the $k_R(e)$'th edge of $\sigma_{R}(e)$. We define the index $k_L(e)$ by the condition that under the boundary parametrisation $f_i:\partial \Delta \to \mathcal{C}$, $e$ is the $k_{L}$'th edge of $\Delta$.

The point $z= \chi_{\sigma_R}^{-1} \circ\varphi_i (e^{2\pi i (\frac{k_L}{3} + \frac{1}{6})})$ lies on the $k_R(e)$'th edge of $\Delta $. 
The derivative $d(\varphi_i^{-1}\circ \chi_{\sigma_R})_z \in GL_2^+$ rotates a tangent vector in the direction of the edge by an angle $e^{2\pi i \left( \frac{k_L}{3}-\frac{k_R}{3} + \frac{1}{2} \right)}$.
In terms of the $QR$-decomposition this can be written as
\begin{equation}
    \mathbf{Q}_\alpha (d(\varphi_i^{-1}\circ \chi_{\sigma_R})_z) = e^{2\pi i \left(\frac{k_L}{3}-\frac{k_R}{3} +\frac{1}{2} \right)} \ ,
\end{equation}
where $\alpha =e^{\frac{2\pi ik_R}{3}}\alpha_0$. We can now state the definition of edge signs for boundary edges. The definition needs some justification which will be provided in the lemma following the definition.

\begin{definition}[Edge signs for boundary edges]
Let $e$ be a boundary edge of a spin triangulated surface, and let $z$ and $\alpha $ be as above. Depending on whether the boundary is of NS or R type, the {\em edge sign} 
	 $s(e) \in \{\pm 1\}$
is defined via
    \begin{equation}
        \delta(s(e)) = \tilde{\mathbf Q}_\alpha \big( g_z( \tilde{\varphi}_i^{-1} \circ \tilde{\chi}_{\sigma_R} ) \big) \cdot 
\begin{cases}
\text{$NS$ type} :&         
\delta\big(e^{-\pi i ( \frac{k_L}{3}-\frac{k_R}{3} +\frac{1}{2} )}\big)
\\
\text{$R$ type} :&
 \delta \big( e^{-\pi i ( -\frac{k_R}{3} - \frac{1}{6}  + \frac{1}{2} )} \big) \ .
\end{cases}
\label{eq:bnd-edge-sign-def}
\end{equation}
\label{def:se-boundary_edge}
\end{definition}

\begin{lemma}
The right hand side of \eqref{eq:bnd-edge-sign-def} defines an element in $\{ \delta (\pm 1)\} \subset \GL_2$.
\end{lemma}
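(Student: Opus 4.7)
My plan is to show separately that the right-hand side of \eqref{eq:bnd-edge-sign-def} lies in the subgroup $\delta(SO_2) \subset \GL_2$ and that its image under $p_{GL}$ is the identity; since $p_{GL}^{-1}(\mathds{1}) \cap \delta(SO_2) = \{\delta(\pm 1)\}$, this gives the claim. The first point is immediate, because $\mathbf{\tilde Q}_\alpha$ takes values in $\delta(SO_2)$ by construction, and the correction factor is $\delta$ of an element of $SO_2 \subset \Cb^\times$, so the product stays in the subgroup $\delta(SO_2)$. The nontrivial content is therefore the computation of $p_{GL}$ of the right-hand side.

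To carry this out, I first observe that by \eqref{eq:tQ-tR-conditions} and the definition of $\mathbf{\tilde Q}_\alpha$ in \eqref{eq:QR_alpha-def} we have $p_{GL}\circ\mathbf{\tilde Q}_\alpha = \mathbf{Q}_\alpha\circ p_{GL}$, while $p_{GL}(\delta(e^{i\phi})) = e^{2i\phi}$ (viewed as a rotation in $SO_2 \subset GL_2^+$ via $i_{\mathbb{C}^\times}$). So I need to identify $\mathbf{Q}_\alpha(p_{GL}(g_z(\tilde\varphi_i^{-1}\circ\tilde\chi_{\sigma_R})))$ and compare it with the square of the prescribed correction factor. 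In the NS case, unfolding the spin-map condition with the projection $p^{NS}$ of \eqref{eq:p_NS-def} gives $p_{GL}(g_z(\tilde\varphi_i^{-1}\circ\tilde\chi_{\sigma_R})) = d(\varphi_i^{-1}\circ\chi_{\sigma_R})_z$, whose $\mathbf{Q}_\alpha$-part is the rotation $e^{2\pi i(k_L/3 - k_R/3 + 1/2)}$ already recorded above the definition. Its square-root correction $\delta(e^{-\pi i(k_L/3 - k_R/3 + 1/2)})$ then projects exactly to the inverse rotation, and the two factors cancel under $p_{GL}$.

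For the R-type case, the same unfolding against the twisted projection $p^{R}$ of \eqref{eq:SC-proj} yields instead $p_{GL}(g_z(\tilde\varphi_i^{-1}\circ\tilde\chi_{\sigma_R})) = f(z)^{-1}\cdot df_z$, with $f = \varphi_i^{-1}\circ\chi_{\sigma_R}$. At the chosen point $z = \chi_{\sigma_R}^{-1}\circ\varphi_i(e^{2\pi i(k_L/3+1/6)})$ one has $f(z) = e^{2\pi i(k_L/3+1/6)} \in SO_2$, and applying Lemma \ref{lem:QR-rules}(1) to pull this rotation out of $\mathbf{Q}_\alpha$ gives
\[
\mathbf{Q}_\alpha\!\big(f(z)^{-1}\cdot df_z\big) = e^{-2\pi i(k_L/3+1/6)}\cdot e^{2\pi i(k_L/3-k_R/3+1/2)} = e^{2\pi i(-k_R/3 - 1/6 + 1/2)} \ ,
\]
which is precisely the square of the R-type correction phase in \eqref{eq:bnd-edge-sign-def}. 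So again the two factors cancel under $p_{GL}$. The only step requiring genuine care is the R-case book-keeping, because the $z$-dependent twist in $p^R$ is exactly what changes the NS-type phase $(k_L/3 - k_R/3 + 1/2)$ into the R-type phase $(-k_R/3 - 1/6 + 1/2)$; this is also what motivates the slightly different-looking convention for $\tilde s^\varepsilon_{NS}$ and $\tilde s^\varepsilon_R$ in \eqref{eq:spin.gluing.map}, so it is reassuring that the two corrections in \eqref{eq:bnd-edge-sign-def} line up with the two projections.
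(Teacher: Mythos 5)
Your proof is correct and follows essentially the same route as the paper: both unfold the spin-map condition against the projections $p^{NS}$ and $p^{R}$ at the chosen point, apply $\mathbf{Q}_\alpha$ together with Lemma \ref{lem:QR-rules}(1) to extract the phase, and check that it cancels against the square of the correction factor. Your explicit framing via $p_{GL}^{-1}(\mathds{1})\cap\delta(SO_2)=\{\delta(\pm1)\}$ and your spelled-out NS case are just more detailed versions of what the paper does.
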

    \begin{proof}
        If the boundary is of NS type, the argument is as in the case of an inner edge. The case of an R type boundary is slightly more subtle. 
Pick a small neighbourhood $U$ of $z$ and an appropriate neighbourhood $V$ of $e^{2\pi i (\frac{k_L}{3} + \frac{1}{6})}$.
      Then the following diagram commutes:
        \begin{equation}
            \begin{tikzcd}[column sep=huge]
                U\times \GL_2 \ar{r}{\tilde{\varphi}_i^{-1}\circ \tilde{\chi}_{\sigma_R}} \ar{d}{p^{NS}} & V\times \GL_2 \ar{d}{p^R}\\
                U\times GL_2^+ \ar{r}{d(\varphi_i^{-1}\circ \chi_{\sigma_R})} & V\times  GL_2^+
            \end{tikzcd}.
            \label{eq:Rboundindex}
        \end{equation}
Applying this to $(z,e)$ and inserting $p_{NS}$, $p_R$ from \eqref{eq:p_NS-def}, \eqref{eq:SC-proj} gives 
        \begin{equation}
            e^{2\pi i \left(\frac{k_L}{3} + \frac{1}{6}\right)} p_{GL}\big( g_z( \tilde{\varphi}_i^{-1}\circ \tilde{\chi}_{\sigma_R} ) \big) = d(\varphi_i^{-1}\circ \chi_{\sigma_R})_z  \ .
        \end{equation}
If one applies $\mathbf{Q}_\alpha$ to both sides and uses Lemma \ref{lem:QR-rules} one arrives at 
\begin{align}
e^{2\pi i \left(\frac{k_L}{3} + \frac{1}{6}\right)} \mathbf{Q}_\alpha\big( 
	p_{GL}\big(
g_z( \tilde{\varphi}_i^{-1}\circ \tilde{\chi}_{\sigma_R} ) 
	\big)
\big) = e^{2\pi i \left(\frac{k_L}{3}-\frac{k_R}{3} +\frac{1}{2} \right)} \ .
\end{align}
This is equivalent to $p_{GL}\Big( \delta\big(e^{\pi i ( \frac{k_R}{3} + \frac{1}{6} - \frac{1}{2} )}\big) \mathbf{\tilde{Q}}_\alpha  \left(g_z\left( \tilde{\varphi}_i^{-1}\circ \tilde{\chi}_{\sigma_R} \right)  \right) \Big) = 1$.
    \end{proof}
    
We have defined all geometric and combinatorial ingredients needed for our combinatorial model of spin structures:
\begin{itemize} \itemsep .5em

\item {\em surface} $(\Sigma,\varphi_i)$ -- Definition \ref{def:surf.parameterised.boundary}, \\ 
$\Sigma$ -- compact surface; $\varphi_i$ -- boundary parametrisation.

\item {\em marked triangulated surface} $((\mathcal{C},f_i,d^1_0,d^2_0), \varphi, (\Sigma,\varphi_i))$ -- Definition \ref{def:marked-triang-surf},
\\
$\mathcal{C}$ -- combinatorial surface; $f_i$ -- boundary parametrisation of $\mathcal{C}$; $d^1_0$, $d^2_0$ -- marking; $\varphi : |\mathcal{C}| \to \Sigma$ -- smooth triangulation; $(\Sigma,\varphi_i)$ -- surface.
\item {\em spin surface} $(\Sigma,\tilde\varphi_i)$ -- Definition \ref{def:ssurf.param.boundary},
\\
$\Sigma$: compact spin surface; $\tilde\varphi_i$: boundary parametrisation via spin maps.

\item {\em spin triangulated surface} $((\mathcal{C},f_i,d^1_0,d^2_0), (\varphi, \tilde\chi_\sigma) ,(\Sigma,\tilde\varphi_i))$ -- Definition \ref{def:spin-triang-surf-def},
\\
$(\Sigma,\tilde\varphi_i)$ -- spin surface;
$((\mathcal{C},f_i,d^1_0,d^2_0),\varphi ,(\underline{\Sigma},\varphi_i))$ -- marked triangulated surface; 
$\tilde\chi_\sigma$ -- spin lift of $\chi_\sigma : \Delta \to \underline\Sigma$.

\item {\em edge signs} $s(e)$ -- Definitions \ref{def:se-inner_edge} and \ref{eq:bnd-edge-sign-def}.
\end{itemize}

\subsection{Behaviour of edge signs under gluing}
\begin{lemma} \label{lem:index.gluing}
    Let $\Sigma$ be a spin triangulated surface and $(i,j,\varepsilon )$ 
	be spin gluing data such that $(i,j)$ is simplicial gluing data. Let $e_i, e_j$ be edges on the $i$'th and $j$'th boundary respectively which are to be glued together, i.e.\ $e_i=\left( f_i \circ s_{\mathcal{C}} \circ f_j^{-1} \right)(e_j)$. 
    Let $e=[e_i]=[e_j]$ be the glued edge. 
    Then $s(e)= \varepsilon \, s(e_i)s(e_j)$.
\end{lemma}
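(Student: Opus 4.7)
The plan is to reduce the edge sign $s(e)$ of the glued inner edge to $\varepsilon\, s(e_i)\, s(e_j)$ by factorising the transition map $\tilde\chi_{\sigma_L(e)}^{-1}\circ\tilde\chi_{\sigma_R(e)}$ on the glued spin surface through the two boundary spin charts together with the spin gluing map $\tilde s^{\pm\varepsilon}$, and then feeding the resulting product into Definition \ref{def:se-inner_edge} with the help of Lemma \ref{lem:QR-rules}.

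First I would sort out the combinatorial data. By the gluing of markings in Definition \ref{def:glue-mark} together with $s_{\mathcal{C}}(z)=\bar z$, the two triangles $\sigma_R(e_i)$ and $\sigma_R(e_j)$ become the two faces adjacent to $e$, so $\{k_L(e),k_R(e)\}=\{k_R(e_i),k_R(e_j)\}$ and the rotation parameter $\alpha=e^{2\pi i k_R(e)/3}\alpha_0$ in the inner-edge formula \eqref{eq:se-inner_edge} coincides with the $\alpha$ used for one of the boundary edges in \eqref{eq:bnd-edge-sign-def}. The same combinatorics, using $s_{\mathcal{C}}(z)=\bar z$ on $S^1$, forces the reference midpoints $q_1=e^{2\pi i(k_L(e_j)/3+1/6)}$ and $q_2=e^{2\pi i(k_L(e_i)/3+1/6)}$ to be related by $q_2=q_1^{-1}$, which is exactly the projection of $\tilde s^{\pm\varepsilon}$ to $\mathbb{C}^\times$.

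The central identity is that on the glued spin surface the two spin charts satisfy $\tilde\varphi_i\circ\tilde s^{\varepsilon}=\tilde\varphi_j$ near the boundary (Definition \ref{def:glue.spin}). Inserting the identity $\tilde\varphi_j\circ\tilde\varphi_j^{-1}$ between $\tilde\chi_{\sigma_R(e_i)}^{-1}$ and $\tilde\chi_{\sigma_R(e_j)}$ and using this relation yields
\begin{equation*}
\tilde\chi_{\sigma_L(e)}^{-1}\circ\tilde\chi_{\sigma_R(e)}\;=\;\bigl(\tilde\chi_{\sigma_R(e_i)}^{-1}\circ\tilde\varphi_i\bigr)\,\circ\,\tilde s^{\varepsilon}\,\circ\,\bigl(\tilde\varphi_j^{-1}\circ\tilde\chi_{\sigma_R(e_j)}\bigr).
\end{equation*}
Taking $p=z_j$ and applying the chain rule $g_p(f\circ h)=g_{h(p)}(f)\cdot g_p(h)$ of \eqref{eq:g_p-def}, the argument of $\tilde{\mathbf Q}_\alpha$ in $\delta(s(e))$ decomposes as $X_i^{-1}\cdot g_{q_1}(\tilde s^{\varepsilon})\cdot X_j$, where $X_i=g_{z_i}(\tilde\varphi_i^{-1}\circ\tilde\chi_{\sigma_R(e_i)})$ and $X_j=g_{z_j}(\tilde\varphi_j^{-1}\circ\tilde\chi_{\sigma_R(e_j)})$ are precisely the $\GL_2$-elements appearing in the boundary edge sign formulas \eqref{eq:bnd-edge-sign-def} for $e_i$ and $e_j$.

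The middle factor $g_{q_1}(\tilde s^{\varepsilon})$ projects to $-q_1^{-2}\in GL_2^+$, hence has the form $\delta(\lambda)\in\delta(SO_2)$ with $\lambda^2=-q_1^{-2}$; the precise sign of $\lambda$ is fixed by the normalisation \eqref{eq:spin.gluing.map} at $q=1$, which is deliberately different for NS- and R-type boundaries. Substituting this, repeatedly using Lemma \ref{lem:QR-rules} (part 2) to pull central $\delta(\cdot)$-factors through $\tilde{\mathbf Q}_\alpha$, and then reading off $\tilde{\mathbf Q}_{\alpha_i}(X_i)$ and $\tilde{\mathbf Q}_{\alpha_j}(X_j)$ from \eqref{eq:bnd-edge-sign-def}, reduces the claim to an explicit identity of phases in $\delta(SO_2)$. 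The main obstacle is the phase bookkeeping: the offset $e^{-\pi i(k_L/3-k_R/3+1/2)}$ from \eqref{eq:se-inner_edge}, the NS- and R-specific offsets from \eqref{eq:bnd-edge-sign-def}, the $\varepsilon$-dependent phase of $g_{q_1}(\tilde s^{\varepsilon})$, and the relation $q_2=q_1^{-1}$ must conspire so that the only residue is the overall sign $\varepsilon$. That this cancellation works uniformly in both the NS- and R-case is precisely the reason for the distinct normalisations $\delta(\mp i\varepsilon)$ versus $\delta(i\varepsilon)$ chosen in \eqref{eq:spin.gluing.map}, as remarked in the paper immediately after that equation.
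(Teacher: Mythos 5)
Your proposal follows the paper's proof essentially verbatim: the same factorisation of $\tilde\chi_{[\sigma_i]}^{-1}\circ\tilde\chi_{[\sigma_j]}$ through the two boundary spin charts and $\tilde s^{\varepsilon}$ (the paper's Eq.~\eqref{eq:sign-glue-aux1}), the same chain rule for $g_p$, the same use of Lemma~\ref{lem:QR-rules} to move central $\delta$-factors through $\tilde{\mathbf{Q}}_\alpha$, and the same substitution of the boundary edge sign definitions \eqref{eq:bnd-edge-sign-def} to cancel the charts. One detail to fix before carrying out the $R$-case: since $p^R$ twists the projection to the frame bundle by $z$, the $\GL_2$-transition element $g_{q_1}(\tilde s^{\varepsilon}_{R})$ is the \emph{constant} $\delta(i\varepsilon)$ (with $p_{GL}$-image $-1$), so your claim that the middle factor squares to $-q_1^{-2}$ holds only in the NS-case, where indeed $g_{q_1}(\tilde s^{\varepsilon}_{NS})=\delta(-\varepsilon i q_1^{-1})$.
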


\begin{proof}
We introduce the following abbreviations: $\sigma_i=\sigma_R(e_i)$ and $\sigma_j=\sigma_R(e_j)$, as well as
\begin{itemize}
    \item $k_L=k_L(e)=k_R(e_i)$,
    \item $k_i=k_L(e_i)$,
    \item $k_R=k_R(e)=k_R(e_j)$,
    \item $k_j=k_L(e_j)$.
\end{itemize}

\noindent
$NS$ {\em type boundary}:
We want to get $\tilde{s}_{NS}^\varepsilon$ explicitly for $z\in S^1$. 
By \eqref{eq:spin.gluing.map}, $\tilde{s}_{NS}^{\varepsilon}(1,e)=(1,\delta(-i \varepsilon))$.
Let $\alpha \in \mathbb{R}$ and $g_\alpha \in \GL_2$ such that $\tilde{s}_{NS}^{\varepsilon}( e^{i\alpha}, e ) = ( e^{-i\alpha} , g_\alpha  )$.
Evaluating the identity $p^{NS} \circ \tilde{s}^\varepsilon = ds_* \circ p^{NS}$ at the point $(e^{i\alpha}, e)$ thus gives $p_{GL}(g_\alpha) = -e^{-2i\alpha}$. Since $g_0 = \delta(-\varepsilon i)$, by continuity of $g_\alpha$ in $\alpha$ we get $g_\alpha = \delta(-\varepsilon  i e^{-i\alpha})$ and thus $\tilde{s}_{NS}^{\varepsilon}( e^{i\alpha}, e )=(e^{-i\alpha},\delta(-\varepsilon  i e^{-i\alpha}))$.

Let $z$ be a point on the $k_R$'th edge of $\Delta$ and 
\begin{equation}
        z_1 = (\varphi_j^{-1} \circ \chi_{\sigma_j})(z) \quad , \qquad
        z_2 = (s\circ \varphi_j^{-1} \circ \chi_{\sigma_j})(z) \  .
\end{equation}
We have  
\begin{align}\label{eq:sign-glue-aux1}
    g_z\left( \tilde{\chi}_{[\sigma_i]}^{-1} \circ \tilde{\chi}_{[\sigma_j]} \right)
    &= g_z \!\left( \tilde{\chi}_{\sigma_i}^{-1} \circ \tilde{\varphi}_i \circ \tilde{s}^\varepsilon  \circ \tilde{\varphi}_j^{-1} \circ \tilde{\chi}_{\sigma_j} \right) 
    \\
    &= g_{z_{2}}\!\left( \tilde{\chi}_{\sigma_i}^{-1} \circ \tilde{\varphi}_i \right) \, g_{z_1}\!\left( \tilde{s}^\varepsilon \right) \, g_z\!\left( \tilde{\varphi}_j^{-1} \circ \tilde{\chi}_{\sigma_j} \right) \ .
    \nonumber
\end{align}
We pick $z$ such that $z_1=e^{2\pi i\left( \frac{k_j}{3} + \frac{1}{6} \right)}$, so that in particular
\begin{equation} \label{eq:sign-glue-aux2}
	g_{z_1}\!\left( \tilde{s}^\varepsilon \right) = \delta\big({-}\varepsilon  i e^{-2\pi i\left( \frac{k_j}{3} + \frac{1}{6} \right)}\big) \ .
\end{equation}
Let $\alpha =e^{\frac{2\pi ik_R}{3}}\alpha_0$.
\begin{align}
\label{eq:signs-gluing-aux1}
        \delta(s(e))
        &\stackrel{\text{\eqref{eq:se-inner_edge-2}}}= 
        \tilde{\mathbf{Q}}_\alpha  \Big( \delta\big(e^{-\pi i( \frac{k_L}{3} - \frac{k_R}{3} + \frac{1}{2} )}\big) g_z\big( \tilde{\chi}_{[\sigma_i]}^{-1} \circ \tilde{\chi}_{[\sigma_j]} \big) \Big) 
        \\
        &\stackrel{\text{\eqref{eq:sign-glue-aux1}}}= \tilde{\mathbf{Q}}_\alpha  \Big( \delta\big(e^{-\pi i( \frac{k_L}{3} - \frac{k_R}{3} + \frac{1}{2} )}\big) g_{z_{2}}\!\left( \tilde{\chi}_{\sigma_i}^{-1} \circ \tilde{\varphi}_i \right) g_{z_1}\!\left( \tilde{s}^\varepsilon \right) g_z\!\left( \tilde{\varphi}_j^{-1} \circ \tilde{\chi}_{\sigma_j} \right) \Big)
        \nonumber \\
        &= \delta(s(e_j)) \, \tilde{\mathbf{Q}}_\alpha\Big( \delta\big(e^{-\pi i( \frac{k_L}{3} - \frac{k_R}{3} + \frac{1}{2} )} \big) \, g_{z_{2}}\!\left( \tilde{\chi}_{\sigma_i}^{-1} \circ \tilde{\varphi}_i \right) \, g_{z_1}\left( \tilde{s}^\varepsilon\right) 
\nonumber \\
& \hspace{3em} 
        \circ \delta\big(e^{\pi i( \frac{k_j}{3}-\frac{k_R}{3}+\frac12 )}  s(e_j)^{-1}  e^{-\pi i( \frac{k_j}{3}-\frac{k_R}{3} + \frac12 )} \big) \, g_z\!\left( \tilde{\varphi}_j^{-1} \circ \tilde{\chi}_{\sigma_j} \right) \Big) \ .
        \nonumber 
\end{align}
	In the last step we used the first equality in \eqref{eq:lem.qr.5} and that $\delta(s(e_j))$ is central in $\GL$, see \eqref{eq:delta-cap-Z}.
Now insert the identity
\begin{equation}
\label{eq:signs-gluing-aux2}
	\delta(s(e_j))^{-1} \stackrel{\text{\eqref{eq:bnd-edge-sign-def}}}= \Big(
{\mathbf{\tilde Q}}_\alpha \big( g_z( \tilde{\varphi}_j^{-1} \circ \tilde{\chi}_{\sigma_j} ) \big) 
\delta\big(e^{-\pi i ( \frac{k_j}{3}-\frac{k_R}{3} +\frac{1}{2} )}\big) \Big)^{-1}
\end{equation}
at the second occurrence of $s(e_j)$. One can apply Lemma \ref{lem:QR-rules} to remove the combination ${\mathbf{\tilde Q}}_\alpha \big( g_z(-) \big)^{-1} g_z(-)$. This leads to 
\begin{align}
\text{Eqn.\,\eqref{eq:signs-gluing-aux1}}
        &=
        \delta(s(e_j))  \tilde{\mathbf{Q}}_\alpha  \Big( \delta\big(e^{-\pi i( \frac{k_L}{3} - \frac{k_R}{3} + \frac{1}{2} )} \big) g_{z_{2}}\!\left( \tilde{\chi}_{\sigma_i}^{-1} \circ \tilde{\varphi}_i \right) g_{z_1}\!\left( \tilde{s}^\varepsilon \right) \delta\big(e^{\pi i( \frac{k_j}{3}-\frac{k_R}{3} +\frac12)} \big) \Big) 
        \nonumber \\
        &\stackrel{\text{\eqref{eq:sign-glue-aux2}}}= \delta(-s(e_j)) \tilde{\mathbf{Q}}_\alpha  \Big( \delta\big(e^{-\pi i( \frac{k_L}{3} - \frac{k_R}{3} + \frac{1}{2} )} \big)g_{z_{2}}\!\left( \tilde{\chi}_{\sigma_i}^{-1} \circ \tilde{\varphi}_i \right)   \delta\big( \varepsilon e^{\pi i( \frac{1}{2} - \frac{1}{3} -\frac{k_j}{3}-\frac{k_R}{3} + \frac12)} \big) \Big)
        \nonumber \\
        &= \delta(-s(e_i)s(e_j)\varepsilon)  \tilde{\mathbf{Q}}_\alpha  \Big( \delta\big( e^{-\pi i( \frac{k_L}{3} - \frac{k_R}{3}  )} \big) g_{z_{2}}\!\left( \tilde{\chi}_{\sigma_i}^{-1} \circ \tilde{\varphi}_i \right) 
\nonumber \\
& \hspace{7em} 
        \circ \delta\big( e^{\pi i( \frac{k_i}{3} - \frac{k_L}{3} )}s(e_i) e^{\pi i( \frac{k_L}{3} -\frac{k_i}{3} - \frac{k_j}{3} -\frac{1}{3} -\frac{k_R}{3} + \frac12)} \big) \Big)
        \nonumber \\
        &\stackrel{k_i+k_j=2}{=} \delta(s(e_i)s(e_j)\varepsilon)  \tilde{\mathbf{Q}}_\alpha  \Big( \delta\big(e^{-\pi i( \frac{k_L}{3} - \frac{k_R}{3}  )} \big) g_{z_{2}}\!\left( \tilde{\chi}_{\sigma_i}^{-1} \circ \tilde{\varphi}_i \right) 
\nonumber \\
& \hspace{7em} 
        \circ \delta\big( e^{\pi i( \frac{k_i}{3} - \frac{k_L}{3} )}s(e_i)^{-1} e^{\pi i( \frac{k_L}{3} -\frac{k_R}{3} + \frac12)} \big) \Big)
        \nonumber \\
        &\stackrel{\alpha '=\alpha_0e^{ 2\pi ik_L/3}}{=}
        \delta(s(e_i)s(e_j)\varepsilon)  \tilde{\mathbf{Q}}_{\alpha'} \Big( g_{z_{2}}\!\left( \tilde{\chi}_{\sigma_i}^{-1} \circ \tilde{\varphi}_i \right) \delta\big(e^{\pi i( \frac{k_i}{3} - \frac{k_L}{3} + \frac12)}s(e_i) \big)\Big)
       \nonumber \\
 &= \delta(s(e_i)s(e_j)\varepsilon)\ .
  \nonumber
\end{align}
In the last step a replacement analogous to \eqref{eq:signs-gluing-aux2} was used for $s(e_i)$.

\medskip
\noindent
$R$ {\em type boundary}:
The condition for an $R$-type boundary is slightly simpler: we need to determine $\tilde{s}^\varepsilon_{R}$ for $z\in S^1$ from the defining condition $\tilde{s}_{R}^{\varepsilon}(1,e)=(1,\delta(i \varepsilon))$. In analogy to the $NS$-case, using $p_R$ from \eqref{eq:SC-proj} to evaluate $p^{R} \circ \tilde{s}^\varepsilon = ds\circ p^{R}$ gives $g_\alpha = \varepsilon i$, i.e.\ $g_\alpha$ is independent of $\alpha$.

We pick $z$ and $\alpha $ as before. Then
\begin{align}
        \delta(s(e)) &=  \tilde{\mathbf{Q}}_\alpha \Big( \delta\big(e^{-\pi i( \frac{k_L}{3} - \frac{k_R}{3} + \frac{1}{2} )}\big) g_z\big( \tilde{\chi}_{[\sigma_i]}^{-1} \circ \tilde{\chi}_{[\sigma_j]} \big) \Big)
        \\
        &= \delta( s(e_j) ) \tilde{\mathbf{Q}}_\alpha \Big( \delta\big(e^{-\pi i( \frac{k_L}{3} - \frac{k_R}{3} + \frac{1}{2} )}\big) g_{z_{2}}\big( \tilde{\chi}_{\sigma_i}^{-1} \circ \tilde{\varphi}_i \big) g_{z_1}\left( \tilde{s}^\varepsilon \right)
        \nonumber \\
        &\hspace{7em}
        \delta\big(  e^{\pi i( -\frac{k_R}{3} - \frac{1}{6}+ \frac{1}{2} )}s(e_j) e^{-\pi i( -\frac{k_R}{3} - \frac{1}{6}+ \frac{1}{2} )} \big) g_z\big( \tilde{\varphi}_j^{-1} \circ \tilde{\chi}_{\sigma_j} \big) \Big)
\nonumber        \\
        &\stackrel{\text{Lem.\,\ref{lem:QR-rules}}}{=} \delta( s(e_j) )\tilde{\mathbf{Q}}_\alpha \Big( \delta\big(e^{-\pi i( \frac{k_L}{3} - \frac{k_R}{3} + \frac{1}{2} )}\big) g_{z_{2}}\big( \tilde{\chi}_{\sigma_i}^{-1} \circ \tilde{\varphi}_i \big) g_{z_1}\left( \tilde{s}^\varepsilon\right) \delta\big(  e^{\pi i( -\frac{k_R}{3} - \frac{1}{6}+ \frac{1}{2} )}\big)\Big)
\nonumber        \\
        &= \delta(  s(e_j)\varepsilon )  \tilde{\mathbf{Q}}_\alpha   \Big( \delta\big(e^{-\pi i( \frac{k_L}{3} - \frac{k_R}{3} )}\big) g_{z_{2}}\big( \tilde{\chi}_{\sigma_i}^{-1} \circ \tilde{\varphi}_i \big)  \delta\big(  e^{\pi i( -\frac{k_R}{3} - \frac{1}{6}+ \frac{1}{2} )}\big)\Big)
\nonumber        \\
        &\stackrel{\alpha '=\alpha_0e^{2\pi i k_L/3 }}{=}
        \delta(  s(e_i)s(e_j)\varepsilon )  \tilde{\mathbf{Q}}_{\alpha '}\Big(  g_{z_{2}}\big( \tilde{\chi}_{\sigma_i}^{-1} \circ \tilde{\varphi}_i \big)  \delta\big(s(e_i)  e^{\pi i( -\frac{k_L}{3} - \frac{1}{6}+ \frac{1}{2} )}\big)\Big)
\nonumber        \\
        &\stackrel{\text{Lem.\,\ref{lem:QR-rules}}}{=}\delta(  \varepsilon  s(e_i)s(e_j)) \ .
\nonumber       
\end{align}
\end{proof}

In the notation of Lemma \ref{lem:index.gluing}: recall from Definition \ref{def:glue-mark} that the edge $e$ carries the orientation induced by $e_j$, which is opposite to the one induced by $e_i$. The rule $s(e) = \varepsilon \, s(e_i)s(e_j)$ is compatible with the observation $\Sigma^\varepsilon_{i \# j} = \Sigma^{-\varepsilon}_{j \# i}$ made for glued spin surfaces below Definition \ref{def:glue.spin}. Namely, in $\Sigma^{-\varepsilon}_{j \# i}$ the edge $e$ has opposite orientation and opposite sign, which is one of the local moves discussed in the next section.

\subsection{Moves leaving the triangulation invariant}\label{sec:moves-notriang}

In this section and Section \ref{sec:moves-pachner} below we investigate how the edge signs behave under change of marking and triangulation.
For the calculations below we fix a lift $\tilde\alpha_0$ of the angle $\alpha_0$ defined in \eqref{eq:alpha_0-def}. We choose
\begin{equation}\label{eq:tildealpha_0-def}	
	\tilde\alpha_0:= e^{\pi i\,\frac{5}{12}} \ ,
\end{equation}
which satisfies $p_{GL}(\delta(\tilde\alpha_0)) = \alpha_0$, as required. 
We will also denote with $[n]_3 \in \{0,1,2\}$ the representative of $n$ mod $3$. 

The following lemma details how the edge signs behave under change of marking, see Figure \ref{fig:local-moves} for an illustration.

\begin{figure}[tb]
    1)\raisebox{-1.7cm}{\begin{tikzpicture}
        \node[name=A, regular polygon, regular polygon sides=3, draw,minimum size=2cm] at (0cm,0cm){};
        \node[name=A, regular polygon, regular polygon sides=3, draw,minimum size=2cm] at (0cm,0cm){};
        \node[name=s0,right] at (A.side 3) {$s_0$};
        \node[name=s1,left] at (A.side 1) {$s_1$};
        \node[name=s2,below] at (A.side 2) {$s_2$};
        \draw[<->] (1.2cm,0cm) -- (1.8cm,0cm) ;
        \node[name=A, regular polygon, regular polygon sides=3, draw,minimum size=2cm] at (3cm,0cm){};
        \node[name=A, regular polygon, regular polygon sides=3, draw,minimum size=2cm] at (3cm,0cm){};
        \node[name=s0,right] at (A.side 3) {$-s_0$};
        \node[name=s1,left] at (A.side 1) {$-s_1$};
        \node[name=s2,below] at (A.side 2) {$-s_2$};
    \end{tikzpicture}}\\
    2)\raisebox{-1.3cm}{\begin{tikzpicture}
        \node[name=A1, regular polygon, regular polygon sides=3, minimum size=1.5cm, rotate=30] at (0.375cm,0cm){};
        \node[name=B1, regular polygon, regular polygon sides=3, minimum size=1.5cm, rotate=90] at (-0.375cm,0cm){};
        \draw (A1.corner 2) -- (A1.corner 3);
        \draw (A1.corner 3) -- (A1.corner 1);
        \draw (B1.corner 1) -- (B1.corner 2);
        \draw (B1.corner 3) -- (B1.corner 1);
        \node[right] at (A1.side 1) {$s$};
        \draw[<->] (0.375cm+1cm,0cm) -- (3cm+-0.375cm-1cm,0cm);
        \node[name=A2, regular polygon, regular polygon sides=3, minimum size=1.5cm, rotate=30] at (3cm+0.375cm,0cm){};
        \node[name=B2, regular polygon, regular polygon sides=3, minimum size=1.5cm, rotate=90] at (3cm+-0.375cm,0cm){};
        \draw (A2.corner 2) -- (A2.corner 3);
        \draw (A2.corner 3) -- (A2.corner 1);
        \draw (B2.corner 1) -- (B2.corner 2);
        \draw (B2.corner 3) -- (B2.corner 1);
        \node[right] at (A2.side 1) {$-s$};
        \begin{scope}[decoration={
            markings,
            mark=at position 0.5 with {\arrow{stealth}}}
        ] 
        \draw[postaction=decorate] (A1.corner 1) -- (A1.corner 2);
        \draw[postaction=decorate] (A2.corner 2) -- (A2.corner 1);
        \end{scope}
    \end{tikzpicture}}\\
    3)\raisebox{-1.7cm}{\begin{tikzpicture}
        \node[name=A, regular polygon, regular polygon sides=3, draw,minimum size=2cm] at (0cm,0cm){};
        \draw[green!40, line width=4pt] ($(A.corner 3)!0.9!(A.corner 1)$)++(-2pt,0pt) -- ($($(A.corner 1)!0.9!(A.corner 3)$)+(-2pt,0pt)$);
        \node[name=A, regular polygon, regular polygon sides=3, draw,minimum size=2cm] at (0cm,0cm){};
        \node[name=s0,right] at (A.side 3) {$s_0$};
        \node[name=s1,left] at (A.side 1) {$s_1$};
        \node[name=s2,below] at (A.side 2) {$s_2$};
        \draw[<->] (1.2cm,0cm) -- (1.8cm,0cm) ;
        \node[name=A, regular polygon, regular polygon sides=3, draw,minimum size=2cm] at (3cm,0cm){};
        \draw[green!40, line width=4pt] ($(A.corner 1)!0.9!(A.corner 2)$)++(2pt,0pt) -- ($($(A.corner 2)!0.9!(A.corner 1)$)+(2pt,0pt)$);
        \node[name=A, regular polygon, regular polygon sides=3, draw,minimum size=2cm] at (3cm,0cm){};
        \node[name=s0,right] at (A.side 3) {$-s_0$};
        \node[name=s1,left] at (A.side 1) {$s_1$};
        \node[name=s2,below] at (A.side 2) {$s_2$};
    \end{tikzpicture}}
    \caption{The moves 1--3 of Lemma \ref{lem:index.marking}. Where the marking is not explicitly specified it is arbitrary but fixed.}
    \label{fig:local-moves}
\end{figure}
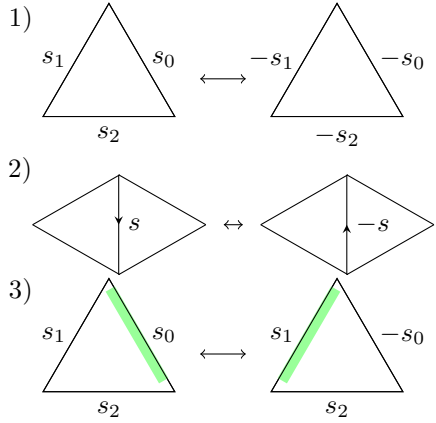

\begin{lemma}
    Let $\Sigma = ((\mathcal{C},f_i), (\varphi, \tilde\chi_\sigma) ,(\Sigma,\tilde\varphi_i))$ be a spin  triangulated surface and let $s(e)$ be the corresponding edge signs.
    \begin{enumerate}
        \item Let $\sigma $ be a face of $\mathcal{C}$. Change $\tilde{\chi}_\sigma $ by precomposing with the leaf exchange automorphism, i.e.
            \begin{equation*}
	        \tilde{\chi}_{\text{new},\sigma} = \tilde{\chi}_{\text{old},\sigma} \circ \omega \ .
            \end{equation*}
            Then $s(e)\mapsto -s(e)$ for all edges $e\in \partial(\sigma )$.
        \item Let $e$ be an inner edge of $\mathcal{C}$. Exchange
        the marking $d_0^1(e)$, i.e.
            \begin{equation*}
            d_{\text{new},0}^1(e) = d_{\text{old},1}^1(e) \ .
            \end{equation*}
            Then $s(e) \mapsto -s(e)$.
        \item Let $\sigma $ be a face of $\mathcal{C}$. Change the marking $d_0^2(\sigma )$ by picking the next edge counterclockwise, i.e.
            \begin{equation*}
                d_{\text{new},0}^2(\sigma) = 	d_{\text{old},1}^2(\sigma) \ .
            \end{equation*}
            Then there is a choice for the $\tilde{\chi}_{\text{new},\sigma}$ such that only the edge signs on the previously marked edge of $\sigma$ changes. In more detail, the only change is
            \begin{equation*}
                s_\text{new}(d_{\text{new},2}^2(\sigma ))=s_\text{new}(d_{\text{old},0}^2(\sigma ))=-s_\text{old}(d_{\text{old},0}^2(\sigma )) \ . 
            \end{equation*}
            (Here, $s_\text{new}$ is evaluated with respect to the lifts $\tilde{\chi}_{\text{new},\sigma}$.)
    \end{enumerate}     
    \label{lem:index.marking}
\end{lemma}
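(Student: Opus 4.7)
All three statements follow from direct computation with the definitions \eqref{eq:se-inner_edge}--\eqref{eq:se-inner_edge-2}. The main tools are Lemma \ref{lem:QR-rules}, the centrality of $\delta(\pm 1)\in\GL_2$ from \eqref{eq:delta-cap-Z}, and the commutativity of the subgroup $\delta(SO_2)\subset \GL_2$.

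For part (1), the leaf exchange $\omega_\Delta$ is right multiplication by $\delta(-1)$ on the spin bundle of $\Delta$, so $g_p(\omega_\Delta) = \delta(-1)$. Substituting $\tilde\chi_{\text{new},\sigma} = \tilde\chi_{\text{old},\sigma}\circ\omega_\Delta$ into $g_p(\tilde\chi_L^{-1}\circ\tilde\chi_R)$ introduces a single factor of $\delta(-1)$, whether $\sigma = \sigma_L(e)$ or $\sigma = \sigma_R(e)$. Since $\delta(-1)$ is central, I pull it through $\mathbf{\tilde{Q}}_\alpha$ via the first equality of \eqref{eq:lem.qr.5} and conclude $s_{\text{new}}(e) = -s_{\text{old}}(e)$ for all three edges $e\in\partial\sigma$.

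For part (2), orientation reversal of $e$ exchanges $L\leftrightarrow R$, swaps $k_L\leftrightarrow k_R$, replaces $\alpha$ by $\alpha' = e^{2\pi i k_L/3}\alpha_0$, and replaces $g := g_p(\tilde\chi_L^{-1}\circ\tilde\chi_R)$ by $g^{-1}$ (at a different basepoint, which is irrelevant by continuity). The product of the two half-angle prefactors in \eqref{eq:se-inner_edge} for the old and the new orientation is $\delta(e^{-\pi i}) = \delta(-1)$. A brief $\widetilde{QR}$-computation using Lemma \ref{lem:QR-rules} and uniqueness of the $\widetilde{QR}$-decomposition shows that $\mathbf{\tilde{Q}}_{\alpha'}(g^{-1})\cdot \mathbf{\tilde{Q}}_\alpha(g)$ lies in $\delta(SO_2)$ and cancels the remaining rotation parts, yielding $s_{\text{new}}(e) = -s_{\text{old}}(e)$.

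For part (3), the move corresponds to $\chi_{\text{new},\sigma} = \chi_{\text{old},\sigma}\circ r$ with $r:\underline\Delta\to\underline\Delta$ the counterclockwise rotation by $2\pi/3$, cycling the edge indices as $k\mapsto k+1\bmod 3$. I fix the spin lift $\tilde r$ with $g_p(\tilde r) = \delta(e^{\pi i/3})$ and set $\tilde\chi_{\text{new},\sigma} := \tilde\chi_{\text{old},\sigma}\circ \tilde r$. For each edge $e\in\partial\sigma$, exactly one of $k_L(e), k_R(e)$ (the index pertaining to $\sigma$) shifts by $-1\bmod 3$. For edges with $\sigma = \sigma_L(e)$, $\alpha$ is unchanged and $g_p$ acquires a factor $\delta(e^{-\pi i/3})$; this factor commutes with $\mathbf{\tilde{Q}}_\alpha(g_p)\in\delta(SO_2)$, and a short computation shows that the net multiplicative change in $\delta(s(e))$ is $\delta\bigl(e^{-\pi i(1 + k_L^{\text{new}} - k_L^{\text{old}})/3}\bigr)$, which equals $\delta(1)$ when $k_L^{\text{old}}\in\{1,2\}$ and $\delta(-1)$ when $k_L^{\text{old}} = 0$. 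The case $\sigma = \sigma_R(e)$ requires an extra step to relate $\mathbf{\tilde{Q}}_{\alpha_{\text{new}}}$ and $\mathbf{\tilde{Q}}_{\alpha_{\text{old}}}$ — the two angles differ by $e^{-2\pi i/3}$, and one conjugates their lifts by $\delta(e^{-\pi i/3})$ and applies Lemma \ref{lem:QR-rules} — but the same formula emerges. The unique edge with $k^{\text{old}} = 0$ in $\sigma$ is $e = d^2_{\text{old},0}(\sigma)$, giving the claimed sign flip there and no change elsewhere.

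\textbf{Main obstacle.} Part (3) is the most delicate. The specific spin lift $\tilde r = \delta(e^{\pi i/3})$ is singled out by the requirement that two of the three signs be preserved, and the residual flip on the previously marked edge arises precisely from the branch jump of $\delta(\cdot)$ when $k-1$ must be reduced modulo $3$. Tracking how $\widetilde{QR}$-decompositions transform when $\alpha$ changes — which is what makes the $\sigma = \sigma_R$ sub-case slightly more involved than $\sigma = \sigma_L$ — is the only nontrivial computational work.
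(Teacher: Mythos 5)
Your proposal is correct and follows essentially the same route as the paper: centrality of $\delta(-1)$ for part (1), cancellation of the two half-angle prefactors and the $\widetilde{QR}$-parts under $L\leftrightarrow R$ for part (2), and the lift $\delta(e^{\pi i/3})$ of the rotation together with the branch jump at $k=0$ for part (3) (the paper handles the $\sigma=\sigma_L(e)$ sub-case by reducing to $\sigma_R$ via two applications of move (2) rather than by your direct computation, but this is immaterial). Note only the cosmetic slip in part (3) where you first write "$k\mapsto k+1\bmod 3$" but then correctly use $k'=[k-1]_3$, and that boundary edges in part (3) need the R-type phase from \eqref{eq:bnd-edge-sign-def}, which goes through identically.
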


    \begin{proof} 
For a given edge $e$ we will abbreviate $k_L = k_L(e)$, $\sigma_L = \sigma_L(e)$, etc.  Primed quantities indicate the new choice of data / the resulting edge signs.

\medskip\noindent    
    (1)
If $e$ is inner, then composition with the leaf exchange automorphism just multiplies  $g_z(\tilde{\chi}_{\sigma_L}^{-1} \circ \tilde{\chi}_{\sigma_R})$ with $\delta (-1)$, which can be pulled out of the $QR$-decomposition since it is central. If $e$ is a boundary edge, then the same reasoning applies for $g_z\!\left( \tilde{\varphi}_i^{-1} \circ \tilde{\chi}_{\sigma_R} \right)$.

\medskip\noindent
(2)  Writing out $\mathbf{\tilde Q}_\alpha $ in formula \eqref{eq:se-inner_edge-2} for the edge sign $s(e)$ in terms of \eqref{eq:QR_alpha-def}, we get
                \begin{equation}
                    \delta(s(e)) = \tilde{\mathbf{Q}} \left( \delta\big( e^{-\pi i( \frac{k_L}{3}  + \frac{1}{2}  )}\tilde{\alpha}_0^{-1} \big) g_p\!\left( \tilde{\chi}_{\sigma_L}^{-1} \circ \tilde{\chi}_{\sigma_R}\right) \delta\big( e^{\pi i   \frac{k_R}{3}}\tilde{\alpha}_0  \big) \right).
                \end{equation}
Note that since the two possible lifts of $\alpha_0$ differ by $\delta(\pm1)$, which is central in $\GL$, the value $\delta(s(e))$ is actually independent of the choice of $\tilde\alpha_0$ we made in \eqref{eq:tildealpha_0-def}. The same applies to the calculations below.

                After changing the direction of $e$ the new edge sign is given by exchanging $L \leftrightarrow R$:
                \begin{equation} \label{eq:edge-sign-local-move-aux1}
                    \delta(s'(e)) = \tilde{\mathbf{Q}} \left( \delta\big( e^{-\pi i( \frac{k_R}{3}  + \frac{1}{2}  )}\tilde{\alpha}_0^{-1} \big) g_q\!\left( \tilde{\chi}_{\sigma_R}^{-1} \circ \tilde{\chi}_{\sigma_L}\right) \delta\big( e^{\pi i   \frac{k_L}{3}}\tilde{\alpha}_0  \big) \right) \ ,
                \end{equation}
	where $q =  \chi_{\sigma_L}^{-1} \circ \chi_{\sigma_R}(p)$.
                The product $s(e)s'(e)$ is then determined by
                \begin{equation}
                    \begin{split}
                        \delta(s(e)s'(e)) &= \tilde{\mathbf{Q}}\left( \delta\big( s'(e) e^{-\pi i(  \frac{k_L}{3}  + \frac{1}{2}  )} \tilde{\alpha}_0^{-1} \big) g_p\!\left( \tilde{\chi}_{\sigma_L}^{-1} \circ \tilde{\chi}_{\sigma_R}\right) \delta\big( e^{\pi i   \frac{k_R}{3} } \tilde{\alpha}_0  \big) \right) \\
                        &= \tilde{\mathbf{Q}}\left( \delta\big( e^{\pi i}  
                        s'(e) \big)
                        \left( \delta\big( e^{-\pi i ( \frac{k_R}{3}  + \frac{1}{2}  )}\tilde{\alpha}_0^{-1} \big) g_q\!\left( \tilde{\chi}_{\sigma_R}^{-1} \circ \tilde{\chi}_{\sigma_L}\right) \delta\big( e^{\pi i  \frac{k_L}{3}}\tilde{\alpha}_0 \big) \right)^{-1}  \right) \\
                    \end{split}
                \end{equation}
where we used that $g_p = g_q^{-1}$ and that $\delta(e^{\pi i /2})$ is central in $\GL_2$, see \eqref{eq:delta-cap-Z}. 
Substituting \eqref{eq:edge-sign-local-move-aux1} for $s(e')$ in the above expression gives
                \begin{equation}
                    \delta(s(e)s'(e)) = \tilde{\mathbf{Q}}\left( \delta\big(e^{\pi i}\big) \right) = \delta(-1) \ .
                \end{equation}

\medskip\noindent
(3) Rotating the marked edge of $\sigma $ counterclockwise means replacing $\chi_\sigma $ with $\chi_\sigma ' := \chi_\sigma  \circ (z\mapsto e^{2\pi i/3}z)$. We then choose 
$\tilde{\chi}'_\sigma := \tilde{\chi}_\sigma  \circ ( (z,g) \mapsto (e^{2\pi i/3}z,\delta(e^{\pi i/3})g))$.
Let $e$ be the $k$-th edge of $\sigma $ before the change of marking. It becomes the $k'$-th edge of $\sigma $, with $k'=[k-1]_3$.
We will first treat the case that $e$ is an inner edge and that $\sigma$ is to the right of $e$. 
Let $\alpha =e^{2\pi i \frac{k}{3} }\alpha_0$ and $\alpha '=e^{2\pi i \frac{k'}{3}  }\alpha_0$. Then 
                \begin{align}
                        \delta\big(s'(e)\big) &= \mathbf{\tilde{Q}}_{\alpha '}\left(\delta\Big(e^{-\pi i\left(\frac{k_L}{3}-\frac{k'}{3} + \frac{1}{2}\right)}\Big)g(\tilde{\chi}_{\sigma_L(e)}^{-1} \circ \tilde{\chi}_{\sigma}')\right)\\
                        &= \mathbf{\tilde{Q}}_{\alpha}\left(\delta\Big(e^{-\pi i\left(\frac{k_L}{3}-\frac{k'}{3}  - \frac{1}{3} + \frac{1}{2}\right)}\Big)g(\tilde{\chi}_{\sigma_L(e)}^{-1} \circ \tilde{\chi}_{\sigma}')\delta\!\left( e^{-\frac{\pi i}{3}} \right)\right)\nonumber\\
                        &= \delta\Big( e^{-\pi i\left( \frac{k-1}{3} - \frac{k'}{3} \right)} \Big)\mathbf{\tilde{Q}}_{\alpha}\left(\delta \Big(e^{-\pi i\left(\frac{k_L}{3}-\frac{k}{3} + \frac{1}{2}\right)}\Big)g(\tilde{\chi}_{\sigma_L(e)}^{-1} \circ \tilde{\chi}_{\sigma})\right)                    \nonumber\\
                    &= \delta\Big( e^{-\pi i\left( \frac{k-1}{3} - \frac{k'}{3} \right)} \Big) \delta\big(s(e)\big) \ . 
                    \nonumber
                \end{align}
If $k=0$ then $k-1-k'=-3$ and thus $s'(e)=-s(e)$. Otherwise $k-1-k'=0$ and $s'(e)=s(e)$. 

If $e$ is a boundary edge, then in the above calculation replace $\tilde{\chi}_{\sigma_L(e)}$ with $\tilde{\varphi}_i$ and use the phase as stated in \eqref{eq:bnd-edge-sign-def}.

If $e$ is an inner edge such that $\sigma$ is to the left of $e$, we can change the edge orientation by move (2), apply the above argument, and then change the edge orientation back, again by move (2). The sign changes in $s(e)$ form the two flips of the edge orientation cancel.
    \end{proof}

\subsection{Lifting properties of simple closed curves}\label{sec:lifting-prop}
In Section \ref{sec:map.lifting} we examined the lifting properties of smooth simple closed curves to the spin bundle. We will now determine how the lifting behaviour depends on the edge signs. 

To treat inner and boundary edges on the same footing, we use the boundary parametrisation $\varphi_i : U_i \to \Sigma$ to enlarge the surface $\Sigma$ to a new surface $\Sigma^+$ obtained by gluing on little collars: pick any $0<r<1$, define the open sets $U_i^+$ as $U_i \cup \{ z \in \mathbb{C} \,|\, r{<}|z| {\le} 1 \,\}$ and set
\begin{equation}
\Sigma^+ = \Sigma \sqcup U_1^+ \sqcup \cdots \sqcup U_B^+  / \sim \ ,
\end{equation}
where $\sim$ identifies $\varphi_i(z) \in \Sigma$ with $z \in U_i$. 
It is easy to see that the following are equivalent: a) the structure of a spin surface (with parametrised boundary) on $\Sigma$, and b) a spin structure on $\Sigma^+$ which on $U_i^+$ is equal to $\Cb^{NS/R}|_{U_i^+}$, depending on the type of the $i$'th boundary component of $\Sigma$. We will use description b).

We will have need for spin structures on triangulated surfaces minus their vertices.

\begin{definition}
Let $(\mathcal{C},\Sigma,\varphi)$ be a triangulated surface. A \emph{punctured spin structure} on $(\mathcal{C},\Sigma,\varphi)$ is a spin structure on $\Sigma^+\setminus \varphi(\mathcal{C}_0)$ which on the glued-on collars 
\begin{equation}
    \left(U_i^+\cap\{z\in \mathbb{C} \vert \lvert z \rvert\leq 1\}\right)\setminus \{1,e^{2\pi i/3},e^{4\pi i/3}\}
\end{equation} is equal to the restriction of $\Cb^{NS/R}$.
A \emph{punctured spin triangulated surface} is a marked triangulated surface together with a punctured spin structure and a choice of spin lift $\tilde{\chi}_\sigma:\Delta\setminus\Delta_0\to \Sigma$ for each triangle $\sigma\in \mathcal{C}_2$.
The boundary parametrisation maps $\varphi_i:U_i \to \Sigma$ extend naturally to embeddings $U_i^+\to \Sigma^+$ and for a punctured spin triangulated surface we get a spin lift for $\varphi_i|_{\{\lvert z \rvert \leq 1\}}$.
By slight abuse of notation we call this spin lift $\tilde{\varphi}_i$.
\end{definition}

We will see in Section \ref{sse:spin_reconstruct} how to construct a punctured spin structure on a marked triangulated surface with an arbitrary assignment of edge signs.
On the other hand one can define edge signs for punctured spin triangulated surfaces in the same way as for spin triangulated surfaces since Definitions \ref{def:se-inner_edge} and \ref{def:se-boundary_edge} do not rely on the extendibility of the spin structure.
In the following let $(\mathcal{C},\tilde\varphi,\Sigma)$ be a punctured spin triangulated surface with edge signs
	$s(e)$.

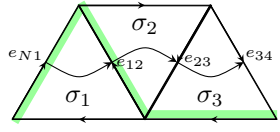
\begin{figure}[tb]
    \centering
    \begin{tikzpicture}
        \node[name=A1, regular polygon, regular polygon sides=3, draw=none, minimum size=2cm] at (0cm,0cm){};
        \node[name=A2, regular polygon, regular polygon sides=3, draw=none, minimum size=2cm, rotate=180] at (0.87cm, 0.5cm){};
        \node[name=A3, regular polygon, regular polygon sides=3, draw=none, minimum size=2cm] at (1.73cm, 0cm){};
        \node[below] at (A1.center) {$\sigma_{1}$};
        \node[above] at (A2.center) {$\sigma_{2}$};
        \node[below] at (A3.center) {$\sigma_{3}$};
        \draw[green!40, line width=3pt] (A1.corner 1)++(1pt,-1pt) -- ($(A1.corner 2)+(1pt,-1pt)$);
        \draw[green!40, line width=3pt] (A2.corner 3)++(1pt,1pt) -- ($(A2.corner 1)+(1pt,1pt)$);
        \draw[green!40, line width=3pt] (A3.corner 2)++(0pt,2pt) -- ($(A3.corner 3)+(0pt,2pt)$);
        \draw[->,>=stealth, very thin] (A1.corner 2) -- (A1.side 1);
        \draw[->,>=stealth, very thin] (A1.corner 3) -- (A1.side 2);
        \draw[->,>=stealth, very thin] (A1.corner 3) -- (A1.side 3);
        \draw[->,>=stealth, very thin] (A2.corner 3) -- (A2.side 2);
        \draw[->,>=stealth, very thin] (A3.corner 3) -- (A3.side 2);
        \draw[->,>=stealth, very thin] (A3.corner 1) -- (A3.side 3);
        \draw[->,>=stealth, very thin] (A3.corner 1) -- (A3.side 1);
        \draw[->,>=stealth] (A1.side 1) ..  controls (A1.center) .. (A1.side 3);
        \draw[->,>=stealth] (A1.side 3) ..  controls (A2.center) .. (A3.side 1);
        \draw[->,>=stealth] (A3.side 1) ..  controls (A3.center) .. (A3.side 3);
        \foreach \Na/\Nb in {1/2,2/3,3/1} {
            \foreach \N in {1,2,3} \draw[] (A\N.corner \Na) -- (A\N.corner \Nb);
        }
        \node[name=A1, regular polygon, regular polygon sides=3, draw, minimum size=2cm] at (0cm,0cm){};
        \node[name=A2, regular polygon, regular polygon sides=3, draw, minimum size=2cm, rotate=180] at (0.87cm, 0.5cm){};
        \node[name=A3, regular polygon, regular polygon sides=3, draw, minimum size=2cm] at (1.73cm, 0cm){};
        \node[above left=-4pt] at (A1.side 1) {\tiny{$e_{N1}$}};
        \node[above right=-6pt and -2pt] at (A1.side 3) {\tiny{$e_{12}$}};
        \node[below right=-6pt and -1pt] at (A3.side 1) {\tiny{$e_{23}$}};
        \node[above right=-4pt] at (A3.side 3) {\tiny{$e_{34}$}};
    \end{tikzpicture}
    \caption{A part of a configuration of $N$ adjacent triangles. The first three triangles are drawn. For that configuration we have $k_1=0$, $k_2=0$ and $k_3=2$, as well as $\mu_{N1}=-1$, $\mu_{12}=-1$, $\mu_{23}=1$, $\mu_{34}=1$ and $\eta_1=-1$, $\eta_2=1$, $\eta_3=-1$.}
    \label{fig:triang.path.conf}
\end{figure}

\subsubsection{Paths transversing inner edges}\label{sec:path_argument_inner}

To describe the paths we want to lift to the spin bundle explicitly, we need a little preliminary setup.
Let $(\sigma_i)_{i\in \mathbb{Z}_N}$, $\sigma_i\in \mathcal{C}_2$ be a sequence of $N$ distinct triangles such that each two consecutive triangles $\sigma_i$, $\sigma_{i+1}$ intersect in an edge $e_{i,i+1}$.
We describe the marking on $(\sigma_i)$ and $(e_{i,i+1})$ explicitly:
\begin{itemize}
    \item $k_i\in \{0,1,2\}$ is the position of the edge $e_{i-1,i}$ in $\sigma_i$, 
    $d^2_{k_i}(\sigma_i)=e_{i-1,i}$. 
In other words, the path enters the triangle $\sigma_i$ through the edge $d^2_{k_i}(\sigma_i)$.
    \item $\eta_i=\pm 1$ describes the position of the edge $e_{i,i+1}$ relative to the edge $e_{i-1,i}$ in $\sigma_i$, $d^2_{[k_i+\eta_i]_3}(\sigma_i)=e_{i,i+1}$. In other words, the path exits through the edge $d^2_{[k_i+\eta_i]_3}(\sigma_i)$.
    \item $\mu_{i,i+1}=\pm 1$ describes the direction of the edge $e_{i,i+1}$: $\mu_{i,i+1}=1$ if $\sigma_i$ is to the right of $e_{i,i+1}$ and $\mu_{i,i+1}=-1$ otherwise.
\end{itemize}
An example of such a configuration is shown in Figure \ref{fig:triang.path.conf}. 
We will in the following abbreviate
\begin{equation}\label{eq:sii+1-def}
	s_{i,i+1} = s(e_{i,i+1}) \ .
\end{equation}

\begin{lemma}\label{lem:path_lifting_inner_generic}
    Given a configuration as above let $\gamma:[0,1]\to \underline{\Sigma}$ be a smooth simple closed curve that can be written as the composition of paths $\gamma_0,\dots,\gamma_{N-1}$,
    \begin{equation}
        \gamma=\gamma_0\star \gamma_1 \star \dots \star \gamma_{N-1}.
    \end{equation}
 Here,
	$\gamma$ is a path from $\gamma_0(0)$ to $\gamma_{N-1}(1)$ and the individual components
$\gamma_i:[a_i,b_i]\to \Sigma$
are smooth paths such that:
    \begin{itemize}
        \item $\text{Im}\, \gamma_i \subset \text{Im}\, \varphi(\sigma_i)$.
        \item $\gamma_i(a_i)\in \text{Im}\, \varphi(e_{i-1,i})$.
        \item $\gamma_i(b_i) \in \text{Im}\, \varphi(e_{i,i+1})$.
    \end{itemize}
Let $\hat{\gamma}:[0,1]\to F_{GL^+}(\underline{\Sigma})$ be a lift of $d\gamma$ along 
	the projection $\pi_T:F_{GL^+}\left( \underline{\Sigma} \right)\to T\underline{\Sigma}$
	mapping a frame to its first component vector. 
Then $\hat{\gamma}$ has a spin lift if and only if
\begin{equation}
    \prod_{i=1}^N s_{i,i+1} \mu_{i,i+1} e^{\pi i\left( \frac{k_i + \eta_i - [k_i+\eta_i]_3}{3} + \frac{1-\eta_i}{2} \right)} = 1. 
    \label{eq:lemma_path_lifting_inner_generic}
\end{equation}
\end{lemma}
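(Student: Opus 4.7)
\emph{Strategy.} My plan is to lift $\hat\gamma$ to the spin bundle piecewise over each triangle and to express the total obstruction as a product of signs attached to the edge crossings. For each $i$, the map $\tilde\chi_{\sigma_i}:\Delta\to\Sigma$ trivialises the spin bundle over the image of $\sigma_i$, so $\hat\gamma|_{[a_i,b_i]}$ admits a unique spin lift $\tilde\gamma_i$ once a starting frame is chosen (Lemmas \ref{lem:Smap.lift} and \ref{lem:Smap.lift2}). Fix $\tilde\gamma_1$ arbitrarily and inductively pick $\tilde\gamma_{i+1}$ so that it starts at the image of $\tilde\gamma_i(b_i)$ under the canonical identification of spin frames provided by $\tilde\chi_{\sigma_{i+1}}$ above the crossing point on $e_{i,i+1}$. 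After traversing the loop, $\tilde\gamma_N(b_N)$ and $\tilde\gamma_1(a_1)$ project to the same point of $F_{GL^+}(\underline\Sigma)$ and differ by a leaf-exchange sign $\epsilon\in\{\pm 1\}$, which factorises as $\epsilon = \prod_{i=1}^N \epsilon_{i,i+1}$ for local jump signs $\epsilon_{i,i+1}$. By construction $\hat\gamma$ has a spin lift if and only if $\epsilon = +1$.

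\emph{Computing the local jumps.} To evaluate $\epsilon_{i,i+1}$ I would use the fact that the change of trivialisation at the crossing is governed by the fibre element $g_p(\tilde\chi_{\sigma_L}^{-1}\circ\tilde\chi_{\sigma_R})$ of \eqref{eq:g_p-def}. Decomposing via the rotated $\widetilde{QR}$-decomposition $\mathbf{\tilde{Q}}_\alpha$ with $\alpha=e^{2\pi i k_R/3}\alpha_0$ (the direction of the shared edge in $\Delta$), the $\tilde T_2$-part is connected to the identity and contributes nothing to the leaf, while the $\delta(SO_2)$-part equals $\delta(s(e_{i,i+1}))\cdot\delta(e^{\pi i(k_L/3-k_R/3+1/2)})$ by Definition \ref{def:se-inner_edge}. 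The factor $\mu_{i,i+1}$ appears because when $\sigma_i$ lies on the left of the oriented edge (the case $\mu_{i,i+1}=-1$) one must use the inverse transition, which by Lemma \ref{lem:QR-rules} swaps $L\leftrightarrow R$ and negates the phase. Substituting $k_R = [k_i+\eta_i]_3$ and $k_L = k_{i+1}$ (for $\mu_{i,i+1}=+1$, and their swap otherwise) yields
\begin{equation*}
\epsilon_{i,i+1}\;=\;s(e_{i,i+1})\,\mu_{i,i+1}\,e^{\pi i\left(\frac{k_i+\eta_i-[k_i+\eta_i]_3}{3}+\frac{1-\eta_i}{2}\right)}\cdot\rho_i\,\rho_{i+1}^{-1},
\end{equation*}
where $\rho_i$ is the rotation aligning the tangent frame of $\gamma$ at the entry edge of $\sigma_i$ with that edge's direction in $\Delta$.

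\emph{Assembly and main obstacle.} In the loop product $\prod_{i=1}^N \epsilon_{i,i+1}$ the factors $\rho_i$ telescope away, leaving precisely the expression in \eqref{eq:lemma_path_lifting_inner_generic}; the condition $\epsilon=+1$ is thus equivalent to that product being $1$, proving the lemma. Independence from the choice of spin lifts $\tilde\chi_{\sigma_i}$ follows from Lemma \ref{lem:index.marking}(1): swapping $\tilde\chi_{\sigma_i}$ flips both $s(e_{i-1,i})$ and $s(e_{i,i+1})$, and the two sign flips cancel in the product. The principal obstacle is the bookkeeping: handling $\mu_{i,i+1}=\pm 1$ uniformly in spite of the $L/R$ asymmetry of Definition \ref{def:se-inner_edge}, carefully extracting the leaf from the rotated $\widetilde{QR}$-decomposition, and verifying that the $\rho_i$'s really cancel. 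The geometric reason for this last point is that the tangent vector of $\gamma$ at each crossing has a single well-defined direction in $\Sigma$, so the angular alignment is forced to match between the two neighbouring trivialisations.
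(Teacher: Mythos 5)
Your overall strategy is the same as the paper's: lift the frame curve triangle by triangle using the trivialisations $\tilde\chi_{\sigma_i}$, record a sign mismatch at each edge crossing via the spin transition functions and the rotated $\widetilde{QR}$-decomposition, and multiply the mismatches around the loop. However, there is a genuine gap in the step where you identify the local jump $\epsilon_{i,i+1}$. Substituting $k_R=[k_i+\eta_i]_3$, $k_L=k_{i+1}$ into Definition \ref{def:se-inner_edge} produces the phase $e^{\pi i(k_{i+1}/3-[k_i+\eta_i]_3/3+1/2)}$, which depends on $k_{i+1}$, the entry index of the \emph{next} triangle; it is not the factor $e^{\pi i(\frac{k_i+\eta_i-[k_i+\eta_i]_3}{3}+\frac{1-\eta_i}{2})}$ appearing in \eqref{eq:lemma_path_lifting_inner_generic}. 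In particular the term $\frac{1-\eta_i}{2}$, which records whether the path turns left or right inside $\sigma_i$ and is what ultimately distinguishes (say) a small loop around a vertex from other loops, is simply not present in the edge-sign definition. In the paper it arises from combining the transition phase with the explicitly chosen spin lifts of the tangent-frame rotation \emph{inside} each triangle (the phases $e^{\pi i(-\eta_i/6+k_i/3)}$ and $e^{\pi i k_{i+1}/3}$ attached to the endpoints of the arcs $\tilde\zeta_i$): the $k_{i+1}$-dependence cancels against the start of the next triangle's lift, and the leftover $-\eta_i/6$ produces $\frac{1-\eta_i}{2}$. Your argument hides exactly this computation inside the claim that the alignment factors $\rho_i$ "telescope away, leaving precisely the expression". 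They do not: the non-telescoping part of the within-triangle lifts \emph{is} the missing phase.

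The justification you offer for the cancellation — that the tangent vector of $\gamma$ has a single well-defined direction at each crossing, so the alignments are forced to match — only works in $SO_2$, i.e.\ in $F_{GL^+}$. In the double cover the two lifts of a matching alignment still differ by $\delta(\pm1)$, and that sign is precisely the quantity the lemma computes; so this reasoning is circular. Relatedly, for an arbitrary smooth representative $\gamma_i$ the winding of the tangent vector inside $\sigma_i$ is uncontrolled, and an extra full turn changes the spin lift by $-1$. The paper avoids both problems by first noting that any two curves satisfying the hypotheses are isotopic triangle by triangle, hence have homotopic frame-bundle lifts (Section \ref{sec:map.lifting}), and then carrying out the computation for one explicit standard representative: circular arcs around the shared vertex, smoothed to meet the edges tangentially, with the $90^\circ$ kink rotations and their half-angle spin lifts written out in closed form. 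To repair your proof you would need both this reduction to a standard representative and the explicit evaluation of the spin lift of the in-triangle rotation; without them the formula for $\epsilon_{i,i+1}$ is unsupported.
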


    \begin{figure}[tb]
    \centering
    \raisebox{4em}{a)}
    \begin{tikzpicture}
        \node[name=A1, regular polygon, regular polygon sides=3, draw, minimum size=2cm] at (0cm,0cm){};
        \node[name=A2, regular polygon, regular polygon sides=3, draw, minimum size=2cm, rotate=180] at (0.87cm, 0.5cm){};
        \node[below] at (A1.center) {$\sigma_{1}$};
        \node[below] at (A2.center) {$\sigma_{2}$};
        \begin{scope}[decoration={
            markings,
            mark=at position 0.5 with {\arrow{stealth}}}
        ] 
        \draw[postaction=decorate] (A1.side 1) .. controls (A1.center) and ($(A1.center)+(0.3cm,-0.3cm)$) ..  (A1.side 3);
        \draw[postaction=decorate] (A1.side 3) .. controls ($(A2.center)+(0.3cm,0.3cm)$) .. (A2.side 2);
        \end{scope}
    \end{tikzpicture}
    \hspace{4em}
    \raisebox{4em}{b)}
    \begin{tikzpicture}
        \node[name=A1, regular polygon, regular polygon sides=3, draw, minimum size=2cm] at (0cm,0cm){};
        \node[name=A2, regular polygon, regular polygon sides=3, draw, minimum size=2cm, rotate=180] at (0.87cm, 0.5cm){};
        \node[below] at (A1.center) {$\sigma_{1}$};
        \node[below] at ($(A2.center)!0.4!(A2.corner 2)$) {$\sigma_{2}$};
        \begin{scope}[decoration={
            markings,
            mark=at position 0.5 with {\arrow{stealth}}}
        ] 
        \draw[postaction=decorate] (A1.side 1) .. controls (A1.center) and ($(A1.corner 1)!0.2!(A1.corner 3)$) ..  (A1.side 3);
        \draw[postaction=decorate] (A1.side 3) .. controls ($(A1.corner 3)!0.2!(A1.corner 1)$) and (A2.center) .. (A2.side 2);
        \end{scope}
    \end{tikzpicture}
    \caption{a) The paths $\chi_i\circ \zeta_i$ can intersect an edge in an arbitrary angle and thus the composition may not be differentiable. b) The modified paths intersects 
the middle edge tangentially and thus up to a reparametrisation a smooth composition is possible.}
    \label{fig:triang.path_smooth}
\end{figure}
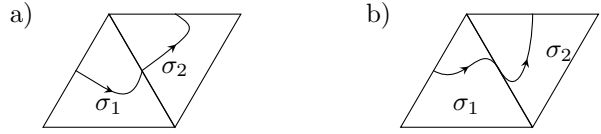

\begin{proof}
    Any two curves $\gamma,\gamma':[0,1]\to \Sigma$ that satisfy the assumptions of Lemma \ref{lem:path_lifting_inner_generic} are isotopic: We can find an isotopy triangle by triangle. 
    By the reasoning in Section \ref{sec:map.lifting} the corresponding paths in the frame bundle are homotopic. 
    It is then sufficient to examine the lifting properties of one such curve.

\medskip\noindent    
$\bullet$\,\emph{Explicit construction of a suitable curve in the frame bundle}.
    We start by defining a simple closed curve $\zeta:[0,1]\to \underline{\Sigma}$. Let
    \begin{equation}\label{eq:triangle.path.segment}
        \begin{split}
        \zeta_i:[0,1]&\to \underline\Delta\\
        t&\mapsto z_i - \eta_i r_0\alpha_0 e^{2\pi i\left( -\frac{\eta_i t}{6} + \frac{k_i}{3} \right)} \ ,
        \end{split}
    \end{equation}
where $\alpha_0 = e^{2\pi i\,\frac{5}{12}}$ as in \eqref{eq:alpha_0-def}. The centre point $z_i=e^{\frac{2\pi i}{3}\left( k_i + \frac{1}{2}(1+\eta_i) \right)}$ is the preimage under $\check\chi_{\sigma_i}$ of the vertex $e_{i-1,i}$ and $e_{i,i+1}$ intersect in. The radius $r_0=\frac{\sqrt{3}}{2}$ is chosen such that $\zeta_i$ starts and ends on the midpoints of the edges. This ensures that
    \begin{equation}
        \left( \chi_{\sigma_i} \circ \zeta_i \right)(1)= \left( \chi_{\sigma_{i+1}} \circ \zeta_{i+1} \right)(0),
    \end{equation}
    and thus we obtain the simple closed curve
    \begin{equation}
        \zeta := \left( \chi_{\sigma_0} \circ \zeta_0 \right) \star \left( \chi_{\sigma_1} \circ \zeta_1 \right) \star \dots \star \left( \chi_{\sigma_{N-1}} \circ \zeta_{N-1} \right).
    \end{equation}
    The curve $\zeta$ may not be differentiable at the edges, even if reparametrisation (with non-vanishing velocity)
is taken into account. We fix this as depicted in Figure \ref{fig:triang.path_smooth}:
    We change $\zeta_i$ slightly at the edges, such that it intersects the edge tangentially. By reparametrisation of the paths, the composition can then be made a smooth simple closed curve. The differentials of these paths then have a lift to the frame bundle which are (up to a homotopy that leaves the base path fixed) composable. We describe these 
    lifts
    in the limit of changing the initial paths $\zeta_i$ minimally. The change at the edges can then be described by rotations in the frame bundle:
    Let
\begin{align}
            \hat{\zeta}_i &: [0,1] \to \underline\Delta \times GL_2^+ 
            ~~,~~~&
            t&\mapsto 
            \left( \zeta_i(t), \frac{2\pi i}{6} r_0 \alpha_0 e^{2\pi i\left( -\frac{\eta_i t}{6} + \frac{k_i}{3} \right)} \right) \ ,
            \nonumber \\
            \hat{\zeta}_i^L &:[0,1] \to \underline\Delta \times GL_2^+ 
            ~~,~~~&
            t& \mapsto \left( \zeta_i(0), \frac{2\pi i }{6} r_0 \alpha_0 e^{2\pi i\left( \frac{t-1}{4} + \frac{k_i}{3} \right)} \right) \ ,
	\\
            \hat{\zeta}_i^R&:[0,1] \to \underline\Delta \times GL_2^+ 
            ~~,~~~&
            t&\mapsto \left( \zeta_i(1), \frac{2\pi i }{6} r_0 \alpha_0 e^{2\pi i \left(- \frac{t}{4} - \frac{\eta_i}{6} + \frac{k_i}{3}  \right)} \right).
            \nonumber
    \end{align}
    The paths $\hat{\zeta}_i$ are lifts of the unmodified paths $d\zeta_i$ and 
	$\hat\zeta_i^L, \hat\zeta_i^R$ 
represent the left/right rotation. Let
    \begin{equation}
        \hat{\zeta}_i^0:= \hat{\zeta}_i^L \star \hat{\zeta}_i \star \hat{\zeta}_i^R.
    \end{equation}
    We next verify explicitly that the paths 
    $(\chi_{\sigma_i})_* \circ \hat{\zeta}_i^0$ 
are composable up to homotopy:
    Let $\alpha =e^{2\pi i\frac{k_i+\eta_i}{3}}\alpha_0$. Using an $\alpha$-rotated $QR$-decomposition, see \eqref{eq:QR_alpha-def}, we obtain that
    \begin{equation}
        d\left( \chi_{\sigma_i+1}^{-1} \circ \chi_{\sigma_i} \right)_*
        \vert_{\zeta_i(1)} = e^{2\pi i\left( \frac{k_{i+1}}{3} - \frac{k_i+\eta_i}{3} + \frac{1}{2} \right)}\alpha t_i\alpha^{-1},
        \label{eq:qr.pathcompose}
    \end{equation}
    with $t_i \in \mathbf{T}_2$ an upper triangular matrix. Therefore
    \begin{equation}
        \begin{split}
            &\left( d\chi_{\sigma_{i+1}}^{-1}\circ d\chi_{\sigma_i} \right) \hat{\zeta}_i^R\left( 1 \right)  \\
            &= \left( \chi_{\sigma_{i+1}^{-1}}\circ \chi_{\sigma_i} \left( \zeta_i\left( 1 \right) \right) 
            ~,~
            \frac{2\pi}{6} e^{2\pi i\left( \frac{k_{i+1}}{3} - \frac{k_i+\eta_i}{3} + \frac{1}{2} \right)} \alpha t_i\alpha^{-1} r_0 \alpha_0 e^{2 \pi i\left( - \frac{\eta_i}{6} + \frac{k_i}{3} \right)} \right) \\
            &= \left( \zeta_{i+1}(0) \,,\, \frac{2\pi}{6}r_0 e^{2\pi i\left( \frac{k_{i+1}}{3} + \frac{1}{2} - \frac{\eta_i}{2} \right)} \alpha_0 t_i \right) \\
            &= \hat{\zeta}_{i+1}^L(0).t_i.
        \end{split}
        \label{eq:pathcompose.triang}
    \end{equation}
    Here in the first step we used equation \eqref{eq:qr.pathcompose}, in the second step we use that $e^{-\pi i\eta_i}$ is in the centre of $GL_2^+$. In the last step we use the right action of $GL_2^+$ on $F_{GL^+}(\underline\Delta )$.
    The desired homotopy is then given by right action with a path from $t_i$ to the identity matrix.
    Using these homotopies and composing we finally obtain a closed curve
    \begin{equation}
        \hat{\zeta}:[0,1]\to F_{GL^+}(\zeta)
    \end{equation}
    that is homotopic to a lift of the original curve $d\gamma$ to $F_{GL^+}(\zeta)$.

\medskip\noindent    
$\bullet$\,\emph{Lifting properties of the curve $\hat{\zeta}$}.
    Next we determine how this curve lifts to the spin bundle of $\Sigma$. We first pick spin lifts of the paths $\hat{\zeta}_i$, $\hat{\zeta}_i^L$ and $\hat{\zeta}_i^R$: Let $\tilde{\zeta}_i, \tilde{\zeta}_i^L, \tilde{\zeta}_i^R:[0,1] \to P_{\GL}(\Sigma)$ be given by
    \begin{align}
        \tilde{\zeta}_i(t) &= \left(\zeta_i(t) \,,\, \delta\!\left( \sqrt{\tfrac{2\pi}{6}} \, \tilde{r}_0 \tilde{\alpha}_0e^{\pi i \left(-\frac{\eta_it}{6} + \frac{1}{4} + \frac{ k_i}{3}\right)}\right)\right)\label{eq:path.triangle.spinlift}\\
        \tilde{\zeta}_i^L(t) &= \left(\zeta_i(0) \,,\, \delta\!\left( \sqrt{\tfrac{2\pi}{6}} \, \tilde{r}_0\tilde{\alpha}_0e^{\pi i\left(\frac{t}{4} + \frac{k_i}{3}\right)}\right)\right)\label{eq:path.leftrotate.spinlift}\\
        \tilde{\zeta}_i^R(t)&= \left(\zeta_i(1) \,,\, \delta\!\left( \sqrt{\tfrac{2\pi}{6}} \, \tilde{r}_0\tilde{\alpha}_0e^{\pi i\left(\frac{1-t}{4} - \frac{\eta_i}{6} + \frac{k_i}{3}\right)}\right)\right),\label{eq:path.rightrotate.spinlift}
    \end{align}
    with $\tilde{r}_0$ a lift of $r_0$ to $\Spin_2$ and $\tilde\alpha_0$ as defined in \eqref{eq:tildealpha_0-def}.
    These are chosen such that the compositions $\tilde{\zeta}_i^L\star \tilde{\zeta}_i \star \tilde{\zeta}_i^R$ exist.
    We write the spin transition functions as
    \begin{equation}
        g_{\zeta_i(1)}\left( \tilde{\chi}_{\sigma_{i+1}}^{-1} \circ \tilde{\chi}_{\sigma_i} \right) = \delta\!\left( s_{i,i+1}\mu_{i,i+1} e^{\pi i\left( \frac{k_{i+1}}{3} - \frac{[k_i + \eta_i]_3}{3} + \frac{1}{2} \right)} \right)
    \tilde{\alpha} \tilde{t}_i\tilde{\alpha}^{-1},
    \end{equation}
    with $\delta(\tilde{\alpha})$ being a spin lift of $\alpha =\alpha_0e^{2\pi i\left( \frac{k_i+\eta_i}{3} \right)}$, and $\tilde{t}_i\in \tilde{\mathbf{T}}_2$ the lift of $t_i$. Using this, we compute
    \begin{equation}
        \begin{split}
            &\left(\tilde{\chi}_{\sigma_{i+1}}^{-1}\circ \tilde{\chi}_{\sigma_i}\right)\left( \tilde{\zeta}_i^R (1) \right)  \\
            &= \left( \zeta_{i+1}(0)\,,\, \delta\!\left( \sqrt{\tfrac{2\pi}{6}}\,\tilde{r}_0 s_{i,i+1}\mu_{i,i+1} e^{\pi i\left( \frac{k_{i+1}-[k_i+\eta_i]_3}{3} + \frac{1}{2} \right)}\tilde{\alpha} \right)  \tilde{t}_i \delta\!\left( \tilde{\alpha}^{-1} \tilde{\alpha}_0 e^{\pi i\left( -\frac{\eta_i}{6}+ \frac{k_i}{3} \right)}  \right) \right)\\
            &=\left( \zeta_{i+1}(0) \,,\, \delta\!\left(\sqrt{\tfrac{2\pi}{6}}\,\tilde{r}_0 e^{\pi i\left( \frac{k_{i+1}}{3} \right)} s_{i,i+1}\,\mu_{i,i+1}\, e^{\pi i\left( \frac{k_i+\eta_i-[k_i+\eta_i]_3}{3} + \frac{1-\eta_i}{2} \right)} \tilde{\alpha}_0 \right)  \tilde{t}_i \right) \\
            &= \tilde{\zeta}_{i+1}^L(0).(\delta(\omega_{i,i+1})\tilde{t}_i) \ ,
        \end{split}
        \label{eq:spinpath.inner.compose.sign}
    \end{equation}
where in the second step we used that
$\delta\left(e^{-\pi i \eta_i/2}\right)$ is central, see \eqref{eq:delta-cap-Z}.
The sign $\omega_{i,i+1}$ is given by
    \begin{equation}
        \omega_{i,i+1} = s_{i,i+1}\mu_{i,i+1} e^{\pi i\left( \frac{k_{i}+\eta_i}{3} - \frac{[k_i + \eta_i]_3}{3} + \frac{1}{2}  - \frac{\eta_i}{2}\right)} \ .
    \label{eq:sign.path}
\end{equation}
    The homotopies we used to remove the $t_i$ have unique lifts, so whether $\hat{\zeta}$ has a closed spin lift depends only on the signs $\omega_{i,i+1}$.
    Multiplying these together this gives a total sign
    \begin{equation}
        \varepsilon = \prod_{i\in \mathbb{Z}_N} \omega_{i,i+1},
    \end{equation}
    i.e.\ the spin lift of the curve is closed iff $\varepsilon =1$.
\end{proof}

We now evaluate Lemma \ref{lem:path_lifting_inner_generic} for circular paths around vertices. 

\begin{corollary} \label{lem:vertex.rule.inner}
Let $v$ be an inner vertex and $(\sigma_i)_{i=0,\dots,N-1}$ 
be the triangles containing $v$,
ordered counterclockwise starting with an arbitrary triangle $\sigma_0$. Let $D$ be the number of triangles $\sigma_i$ such that $k_i=0$ 
	(i.e.\ the path enters $\sigma_i$ through the marked edge).
Let $K$ be the number of edges $e_{i,i+1}$ pointing away from $v$. Then the spin structure on $\Sigma$ extends to $v$ if and only if
    \begin{equation}
        \prod_{i \in \mathbb{Z}_N} s_{i,i+1} = (-1)^{D+K+1}.
    \label{eq:vertex.rule.inner}
    \end{equation}
\end{corollary}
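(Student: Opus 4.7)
The plan is to apply Lemma \ref{lem:path_lifting_inner_generic} to a small smooth simple closed curve $\gamma$ encircling $v$ once counterclockwise and visiting the triangles $\sigma_0,\dots,\sigma_{N-1}$ in order. Since $\gamma$ is contractible in $\underline{\Sigma}$ but encircles $v$, Lemma \ref{lem:ssc} says that the punctured spin structure on $\underline{\Sigma}\setminus\{v\}$ extends to $v$ if and only if $\gamma$ does \emph{not} admit a spin lift; by Lemma \ref{lem:path_lifting_inner_generic} this is equivalent to the product in \eqref{eq:lemma_path_lifting_inner_generic} equalling $-1$. The task is therefore to evaluate that product for a counterclockwise loop around $v$.

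First I pin down $\eta_i$. Because each $\sigma_i$ is counterclockwise-oriented and the loop travels counterclockwise around $v$, within each triangle the loop enters through the edge of $\sigma_i$ at $v$ whose CCW triangle-boundary orientation starts at $v$ and exits through the edge that ends at $v$. If $v=V_{k_i}$ is the $k_i$-th vertex of $\sigma_i$ (so that the two edges at $v$ have cyclic positions $k_i$ and $k_i-1$), the entry edge has position $k_i$ and the exit edge has position $k_i-1$, so $\eta_i=-1$ for all $i$. Substituting $\eta_i=-1$ into the phase factor of \eqref{eq:lemma_path_lifting_inner_generic} gives
\begin{equation*}
e^{\pi i\left(\frac{k_i-1-[k_i-1]_3}{3}+1\right)} = \begin{cases} \;+1 & \text{if } k_i=0, \\ -1 & \text{if } k_i\in\{1,2\}, \end{cases}
\end{equation*}
so $\prod_i e^{\pi i(\cdots)}=(-1)^{N-D}$.

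Next I analyse $\mu_{i,i+1}$ by placing $v$ at the origin of a local chart with $e_{i,i+1}$ along the imaginary axis. The counterclockwise loop crosses $e_{i,i+1}$ from the half-plane $\mathrm{Re}>0$ (which contains $\sigma_i$) into the half-plane $\mathrm{Re}<0$ (which contains $\sigma_{i+1}$). If $e_{i,i+1}$ points away from $v$ (upwards), then $\sigma_i$ lies to the \emph{right} of the oriented edge, so $\mu_{i,i+1}=+1$; if it points toward $v$ (downwards), then $\sigma_i$ lies to the \emph{left}, so $\mu_{i,i+1}=-1$. Hence $\prod_i \mu_{i,i+1}=(-1)^{N-K}$.

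Combining, the product in \eqref{eq:lemma_path_lifting_inner_generic} equals $\bigl(\prod_i s_{i,i+1}\bigr)\,(-1)^{2N-D-K} = \bigl(\prod_i s_{i,i+1}\bigr)\,(-1)^{D+K}$, and demanding this be $-1$ rearranges at once to \eqref{eq:vertex.rule.inner}. The only real obstacle is bookkeeping: three different orientation data (the CCW loop around $v$, the CCW triangle-boundary orientation, and the marking orientation of each edge) must be tracked consistently, but once a local chart at $v$ is fixed each of the sign identifications $\eta_i=-1$, the case distinction on $k_i$, and the $\mu_{i,i+1}$-vs-edge-direction correspondence is immediate.
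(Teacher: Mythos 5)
Your proposal is correct and follows essentially the same route as the paper: apply Lemma \ref{lem:path_lifting_inner_generic} to a counterclockwise simple closed curve around $v$, invoke Lemma \ref{lem:ssc} to translate non-existence of a spin lift into extendibility, and then count the signs $\eta_i=-1$, the $(-1)^{N-D}$ from the phase factor, and the $(-1)^{N-K}$ from the $\mu_{i,i+1}$. Your bookkeeping of all three contributions agrees with the paper's, so there is nothing to add.
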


\begin{proof}
    We pick a differentiable simple closed curve $\gamma$ around the vertex $v$ as in Figure \ref{fig:triang.path.vertex} that fulfils the decomposition conditions of Lemma \ref{lem:path_lifting_inner_generic}. 
    It is now a simple counting problem to reformulate equation \eqref{eq:lemma_path_lifting_inner_generic}.
    We go term by term through the factors in equation \eqref{eq:sign.path}, the definition of $\omega_{i,i+1}$:
    Since we chose the sequence of triangles counterclockwise we have $\eta_i=-1$ for all 
	$i=0,\dots,N-1$
    and thus get a minus sign from the $\frac{1-\eta_i}{2}$ term for each edge. 
    The sign $\mu_{i,i+1}$ is $-1$ if the edge points to $v$. We get an extra minus sign if $k_{i}+\eta_i \neq [k_i + \eta_i]_3$ which happens if $k_i=0$ or equivalently $\chi_{\sigma_i}(1)=v$.
    Collecting these together, we see that the curve has a closed lift if and only if
\begin{equation}
    \prod_{i\in \mathbb{Z}_N}  s_{i,i+1} (-1)^{D+K}=1.
\end{equation}
Since the curve $\gamma$ also fulfils the conditions of Lemma \ref{lem:ssc} the spin structure can be extended if and only if this curve does not admit a closed lift to the spin bundle.
\end{proof}

\begin{figure}[tb]
    \centering
    \raisebox{7em}{a)}~~~
    \begin{tikzpicture}
        \node[name=A1, regular polygon, regular polygon sides=7,minimum size=3cm] at (0cm,0cm){};
        \foreach \N/\M in {1/2,2/3,3/4,7/1} \draw (A1.corner \N) -- (A1.corner \M);
        \foreach \N/\M in {4/5,5/6,6/7} \draw[dashed] (A1.corner \N) -- (A1.corner \M);
        \foreach \N in {1,2,3,4,7} \draw (A1.corner \N) -- (A1.center);
        \foreach \N in {5,6} \draw[dashed] (A1.corner \N) -- (A1.center);
        \foreach \N/\M/\K in {1/2/0,2/3/1,3/4/2} \node at ($0.33*(A1.corner \N)+0.33*(A1.corner \M)+0.33*(A1.center)$){$\sigma_\K$};
        \node at ($0.33*(A1.corner 7)+0.33*(A1.corner 1)+0.33*(A1.center)+(2pt,0pt)$){$\sigma_{N-1}$};
 
        \node[below right] at (A1.center) {\tiny{$v$}};
        \draw [postaction={decorate, decoration={markings, mark=between positions 0 and 1 step 0.34 with {\arrow{stealth}}}}] (0,0) circle (0.5cm);
    \end{tikzpicture}
    \hspace{6em}
    \raisebox{7em}{b)}~~~
    \begin{tikzpicture}
        \node[name=A2, regular polygon, regular polygon sides=7, minimum size=2.9cm] at (0cm,0cm){};
        \foreach \N/\M in {1/2,2/3,3/4,7/1} \draw[green!40, line width=3pt] (A2.corner \N) -- (A2.corner \M);
        \foreach \N/\M in {4/5,5/6,6/7} \draw[dashed, green!40, line width=3pt] (A2.corner \N) -- (A2.corner \M);
        \node[name=A1, regular polygon, regular polygon sides=7, minimum size=3cm] at (0cm,0cm){};
        \foreach \N/\M in {1/2,2/3,3/4,7/1} \draw (A1.corner \N) -- (A1.corner \M);
        \foreach \N/\M in {4/5,5/6,6/7} \draw[dashed] (A1.corner \N) -- (A1.corner \M);
        \begin{scope}[decoration={markings, mark=at position 0.5 with {\arrow{stealth}}}]
            \foreach \N in {1,2,3,4,7} \draw[postaction=decorate] (A1.corner \N) -- (A1.center);
            \foreach \N in {5,6} \draw[dashed, postaction=decorate] (A1.corner \N) -- (A1.center);
        \end{scope}
        \foreach \N/\M/\K in {1/2/0,2/3/1,3/4/2} \node at ($0.33*(A1.corner \N)+0.33*(A1.corner \M)+0.33*(A1.center)$){$\sigma_\K$};
        \node at ($0.33*(A1.corner 7)+0.33*(A1.corner 1)+0.33*(A1.center)+(2pt,0pt)$){$\sigma_{N-1}$};
        \node[below right] at (A1.center) {\tiny{$v$}};
        \draw [postaction={decorate, decoration={markings, mark=between positions 0 and 1 step 0.34 with {\arrow{stealth}}}}] (0,0) circle (0.5cm);
    \end{tikzpicture}
    \caption{a) Simple closed curve around a vertex $v$; the triangles containing $v$ are ordered counter-clockwise. b) A possible marking of the triangles; for this choice, the statement of Corollary \ref{lem:vertex.rule.inner} becomes particularly easy: the spin structure extends iff $\prod_{i=0}^{N-1} s_{i,i+1} = -1$.}
   \label{fig:triang.path.vertex}
\end{figure}
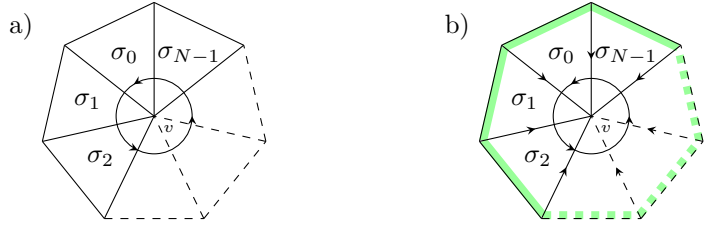

\subsubsection{Paths at the boundary}

We aim to get a rule similar to corollary \ref{lem:vertex.rule.inner} for boundary vertices. For $v$ a boundary vertex we label the surrounding edges and triangles as in Figure \ref{fig:boundary.vertex.labelling}. 
	As in \eqref{eq:sii+1-def} let $s_{i,i+1}=s(e_{i,i+1})$ 
for all $i=0,\dots,N$, with $s_{N,N+1}\equiv s_{N,0}$.

\begin{figure}[tb]
    \centering
    \begin{tikzpicture}
        \node[name=Dc, regular polygon, regular polygon sides=3, minimum size=2cm, rotate=30] at (0cm,0cm){}; 
        \node[name=Cc, circle, minimum size=2cm, draw] at (0cm,0cm){};
        \node[name=Cdash, circle, minimum size=0.7cm, draw, dashed] at (0cm,0cm){};
        \node[name=A, regular polygon, regular polygon sides=7, minimum size=4cm, rotate=12.9] at (Dc.corner 3){}; 
        \foreach \N in {1,2,3} \node[inner sep=0.02cm, fill, circle] at (Dc.corner \N){};
        \foreach \N in {1,4,5} \draw (A.corner \N) -- (Dc.corner 3);
        \foreach \N in {6,7} \draw[dashed] (A.corner \N) -- (Dc.corner 3);
        \draw (A.corner 1) -- (Dc.corner 1);
        \draw (A.corner 4) -- (Dc.corner 2);
        \foreach \N/\M in {4/5} \draw (A.corner \N) -- (A.corner \M);
        \foreach \N/\M in {5/6,6/7,7/1} \draw[dashed] (A.corner \N) -- (A.corner \M);
        \node at (0,0) {\small{$\partial\Sigma_i$}};
        \node[above right=-2pt and 0pt] at (Dc.corner 3) {\tiny{$v$}};
        \begin{scope}[decoration={markings, mark=at position 0.5 with {\arrow{stealth}}}]
            \draw[postaction=decorate] (Dc.corner 3) ++(-120:0.8cm) arc (-120:120:0.8cm);
            \draw[postaction=decorate] (Dc.corner 3) ++(120:0.8cm) .. controls ($(Dc.corner 3)+(130:0.8cm)$) and ($(Dc.corner 3)+(-0.1cm,0.1cm)$) .. ($(Dc.corner 3)+(-0.1cm,0cm)$);
            \draw[postaction=decorate] ($(Dc.corner 3)+(-0.1cm,0cm)$) .. controls ($(Dc.corner 3)+(-0.1cm,-0.1cm)$) and ($(Dc.corner 3)+(-130:0.8cm)$) .. ($(Dc.corner 3)+(-120:0.8cm)$);
        \end{scope}
        \node[above] at ($0.33*(A.corner 1)+0.33*(A.corner 2)+0.33*(A.center)+(1pt,0pt)$) {\small{$\sigma_{N}$}};
        \node[below] at ($0.33*(A.corner 3)+0.33*(A.corner 4)+0.33*(A.center)+(1pt,0pt)$) {\small{$\sigma_{1}$}};
        \node[below=-2pt] at ($0.33*(A.corner 4)+0.33*(A.corner 5)+0.33*(A.center)+(1pt,0pt)$) {\small{$\sigma_{2}$}};
        \node[above] at ($(Dc.corner 1)!0.5!(Dc.corner 3)$) {\tiny{$e_{N0}$}};
        \node[below] at ($(Dc.corner 2)!0.5!(Dc.corner 3)$) {\tiny{$e_{01}$}};
        \node[right=-2pt] at ($(A.corner 4)!0.5!(A.center)$) {\tiny{$e_{12}$}};
        \node[right=-2pt] at ($(A.corner 5)!0.5!(A.center)$) {\tiny{$e_{23}$}};
    \end{tikzpicture}
    \caption{The labelling of triangles and edges around a boundary vertex $v$. The triangles $(\sigma_i)_{i=1,\dots,N}$ are ordered counterclockwise, starting at the boundary. Edges are given by $e_{i,i+1}=\sigma_i\cap \sigma_{i+1}$ for $i=1,\dots,N-1$. $e_{0,1}$ and $e_{N,0}$ are boundary edges of $\sigma_1$ and $\sigma_N$ respectively. Also shown is a sketch of the simple closed curve we use.}
   \label{fig:boundary.vertex.labelling}
\end{figure}
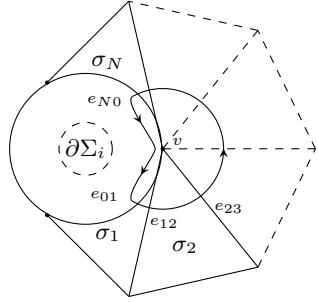

\begin{lemma}\label{lem:vertex.rule.boundary}
    For a vertex $v$ on the boundary component $i$ let $D_R$ be the number of triangles $\sigma_i$ with $\chi_{\sigma_i}(1)=v$ or equivalently $d^2_0(\sigma_i)=e_{i-1,i}$. If $\varphi_i(1)=v$ let $D_{NS}=D_R+1$ and let $D_{NS}=D_R$ otherwise. Let $K$ be the number of edges $e_{i,i+1}$ pointing away from $v$, counting the boundary edges. The spin structure on $\Sigma^+$ extends to $v$ if and only if
    \begin{equation}\label{eq:boundary-vertex-rule}
        \prod_{i=0}^N s_{i,i+1} = (-1)^{D_{R/NS}+K+1}
    \end{equation}
    for a $R/NS$-type boundary respectively.
\end{lemma}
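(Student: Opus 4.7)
The plan is to adapt the path-lifting argument that proved Corollary \ref{lem:vertex.rule.inner} to the boundary setting. First I would pick a smooth simple closed curve $\gamma$ around $v$ as illustrated in Figure \ref{fig:boundary.vertex.labelling}: it consists of arcs $\gamma_1, \ldots, \gamma_N$ passing through $\sigma_1, \ldots, \sigma_N$ in counter-clockwise order, glued together via a final arc $\gamma_{\mathrm{bdry}}$ running inside $\varphi_i(U_i^+)$ from a point on $e_{N,0}$ to a point on $e_{0,1}$. By Lemma \ref{lem:ssc}, the spin structure on $\Sigma^+\setminus\{v\}$ extends to $v$ if and only if a lift $\hat\gamma$ of $d\gamma$ to the frame bundle does not admit a closed spin lift.

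Each triangle-to-triangle transition $\sigma_j \to \sigma_{j+1}$ for $j=1,\dots,N-1$ contributes exactly the sign $\omega_{j,j+1}$ from \eqref{eq:sign.path}. What is new are the two transitions involving $\gamma_{\mathrm{bdry}}$, as well as the sign picked up while traversing $\gamma_{\mathrm{bdry}}$ itself. Using Definition \ref{def:se-boundary_edge} and following the calculation leading to \eqref{eq:spinpath.inner.compose.sign}, with $\tilde\chi_{\sigma_L}$ replaced by $\tilde\varphi_i$, one gets analogous signs $\omega_{\mathrm{bdry},1}$ and $\omega_{N,\mathrm{bdry}}$ involving $s_{0,1}$ and $s_{N,0}$. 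The sign contributed by $\gamma_{\mathrm{bdry}}$ itself is computed directly in the collar models $\mathbb{C}^{NS/R}$ of Section \ref{sec:ss_on_C}: a segment of the loop $\hat\zeta$ introduced there lifts closed in the R case but picks up a factor $-1$ in the NS case, reflecting that $\tilde\zeta^R$ is closed while $\tilde\zeta^{NS}$ is not. This is the origin of the shift $D_{NS} = D_R + 1$ when $\varphi_i(1) = v$, which is precisely the configuration in which the collar arc has to traverse the distinguished point of $S^1 \subset U_i^+$.

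Once the signs are assembled, the counting is essentially the one in the proof of Corollary \ref{lem:vertex.rule.inner}: $\eta = -1$ at every transition; each transition through an edge pointing towards $v$ contributes a $\mu = -1$; each triangle $\sigma_j$ with $d^2_0(\sigma_j) = e_{j-1,j}$ contributes an extra $-1$, giving a cumulative factor $(-1)^{D_R}$; and the NS correction supplies the further factor $(-1)^{D_{NS}-D_R}$. Collecting these and applying Lemma \ref{lem:ssc} yields \eqref{eq:boundary-vertex-rule}.

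The main obstacle is checking that the phase conventions in \eqref{eq:bnd-edge-sign-def} combine with the spin lifts $\tilde s^\varepsilon_{NS/R}$ from \eqref{eq:spin.gluing.map} and the projection $p^R$ from \eqref{eq:SC-proj} to produce exactly the extra sign distinguishing NS from R: a single misplaced factor of $\delta(\pm i)$ would alter the parity of the exponent on the right-hand side of \eqref{eq:boundary-vertex-rule}. In particular, the different convention in \eqref{eq:bnd-edge-sign-def} for the two types (using $k_L$ from the boundary parametrisation in the NS case versus a fixed phase in the R case) must exactly absorb the non-closure of $\tilde\zeta^{NS}$ relative to $\tilde\zeta^R$.
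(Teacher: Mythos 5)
Your proposal follows the paper's proof essentially step for step: the same simple closed curve built from triangle arcs plus a collar arc, the same frame-bundle lift with rotations at the kinks, the same transition signs $\omega_{i,i+1}$ supplemented by two boundary transitions, the NS/R distinction extracted from the explicit spin lifts of the collar segment (constant phase for R, rotating phase for NS, whence the $k_0$-dependent extra sign producing $D_{NS}=D_R+1$), and the final appeal to Lemma \ref{lem:ssc}. The only cosmetic difference is that the paper absorbs the collar arc's own contribution into the two boundary transition signs $\omega_{N,0}$ and $\omega_{0,1}$ rather than listing it separately, which is pure bookkeeping.
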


\begin{proof}
    We first describe the marking on the relevant edges and triangles more explicitly:
    \begin{itemize}
        \item $k_i\in \{0,1,2\}$ is fixed by $e_{i-1,i}=d^2_{k_i}(\sigma_i)$ for $i=1,\dots,N$ as in Section \ref{sec:path_argument_inner}.
        \item $k_0\in \{0,1,2\}$ is the position of $e_{N,0}$ on the boundary under the boundary inclusion map $f_i$,
        		i.e.\ $f_i(e_{k_0}) = e_{N,0}$, where $e_0, e_1, e_2$ are the edges of $\underline\Delta$.
        \item $\eta_i=-1$ for $i=1,\dots,N$ which agrees with the definition in Section \ref{sec:path_argument_inner} since we ordered the triangles counterclockwise.
        \item $\mu_{i,i+1}=\pm 1$
            for $i=1,\dots,N-1$ 
            are as defined in Section \ref{sec:path_argument_inner}.
    \end{itemize}
    As was the case for inner vertices, we determine the lifting behaviour of a specific curve from the frame bundle to the spin bundle via the edge signs.
We then relate it to the known spin lifting behaviour of contractible simple closed curves.
    
\medskip\noindent    
$\bullet$\,\emph{Construction of a suitable curve in the frame bundle.} 
    As in the proof of Lemma \ref{lem:path_lifting_inner_generic} we first construct a simple closed curve around the vertex $v$.
    For the boundary part we need a new path segment. 
    Let 
    \begin{equation}
        \begin{split}
        \zeta_0:[0,1] &\to U_i^+\\
        t&\mapsto e^{2\pi i\left( - \frac{t}{3} + \frac{1}{6} + \frac{k_0}{3} \right)}.
        \end{split}
    \end{equation}
    The path $\zeta_0$ now intersects $\varphi_i^{-1}(v)$, the preimage of the vertex $v$. To avoid this we exchange it by an isotopic path in the unit disc that still lies in $U_i^+$. Its explicit form is not relevant as we are only interested in lifting properties of the path segments.
    The segments $\zeta_i:[0,1]\to \underline\Delta$ for $i=1,\dots,N$ are as in equation \eqref{eq:triangle.path.segment}. The composition
    \begin{equation}
        \zeta:= \left( \varphi_i\circ \zeta_0 \right) \star \left( \chi_{\sigma_1}\circ \zeta_1 \right) \star \dots \star \left( \chi_{\sigma_N}\circ \zeta_N \right).
    \end{equation}
    is then a simple closed curve. By the same procedure as in the proof of Lemma \ref{lem:path_lifting_inner_generic} (see also Figure \ref{fig:triang.path_smooth}) we obtain a smooth simple closed curve. Its velocity curve has a lift to the frame bundle and we describe a curve homotopic to this lift explicitly by implementing the kinks as rotations in the frame bundle.
    Let $\hat{\zeta}_i$, $\hat{\zeta}_i^0$,  $\hat{\zeta}_i^{L/R}$ ($i = 1,\dots,N-1$) and $\hat{\zeta}_N^{L}$ be as in the proof of Lemma \ref{lem:path_lifting_inner_generic}. Let
    \begin{equation}
            \hat{\zeta}_N^{R\prime}:[0,1] \to \underline\Delta \times GL_2^+ 
            \quad , \quad
            t \mapsto \left( \zeta_N(1), \frac{2\pi i}{6} r_0\alpha_0 e^{2\pi i \left( \frac{t}{4} - \frac{\eta_N}{6} + \frac{k_N}{3} \right)} \right)
    \end{equation}
    and set 
	$\hat{\zeta}_N^{0\prime}:= \hat{\zeta}_N^L\star \hat{\zeta}_N\star \hat{\zeta}_N^{R\prime}$.
    For the boundary part we let 
    \begin{equation}
            \hat{\zeta}_0:[0,1] \to U_i^+ \times GL_2^+
            \quad , \quad
            t \mapsto \left( \zeta_0(t), - \frac{2\pi i}{3} e^{2\pi i\left( -\frac{t}{3}+ \frac{1}{6} + \frac{k_0}{3} \right)} \right) \ .
    \end{equation}
    The isotopy we used to make $\zeta_0$ avoid $\varphi_i^{-1}(v)$ lifts to the frame-bundle, so we can change $\hat{\zeta}_0$ in the same way.
    
    By \eqref{eq:pathcompose.triang} we already know that the paths 
	$(d\chi_{\sigma_i})_*\hat{\zeta}_i$ and $(d\chi_{\sigma_{i+1}})_*\hat{\zeta}_{i+1}$ 
are composable up to right action by an upper triangular matrix for $i=1,\dots,N-1$. We proceed to show that the same holds for $(d\chi_{\sigma_N})_*\hat{\zeta}_N'$ and $(d\varphi_i)_*\hat{\zeta}_0$, as well as for $(d\varphi_i)_*\hat{\zeta}_0$ and 
	$(d\chi_{\sigma_1})_*\hat{\zeta}_1$.

Let $\alpha =\alpha_0 e^{2\pi i\left( \frac{k_N+\eta_N}{3} \right)}$.
\begin{equation}
    \begin{split}
    &d\left( \varphi_i^{-1}\circ \chi_{\sigma_N} \right)_*
    \left( \hat{\zeta}_N^{R\prime} \left( 1 \right) \right)  \\
    &=\left( \left( \varphi_i^{-1}\circ \chi_{\sigma_N} \right)(\zeta_N\left( 1 \right)), 
\frac{2\pi}{6}e^{2\pi i\left( \frac{k_0}{3}- \frac{k_N+\eta_N}{3} + \frac{1}{2} \right)} \alpha  t_N \alpha^{-1}r_0 \alpha_0 i^2 e^{2\pi i\left( -\frac{\eta_N}{6} + \frac{k_N}{3} \right)} \right)\\
    &=\left( \zeta_0(0), \frac{2\pi}{6}r_0 e^{2\pi i\left( \frac{k_0}{3} + \frac{1}{2} + \frac{1}{4} + \frac{1}{6} \right)} t_N \right)\\
    &= \left( \zeta_0(0), -\frac{2 \pi i}{3}
 e^{2\pi i\left( \frac{k_0}{3} + \frac{1}{6} \right)} \right).\frac{r_0t_N}{2} = \hat{\zeta}_0(0).\frac{r_0t_N}{2}.
    \end{split}
\end{equation}
Here we used the explicit value of $\alpha_0= e^{2\pi i\left( \frac{1}{4} + \frac{1}{6} \right)}$ in the
	second
step, and introduced a matrix $t_N\in \mathbf{T}_2$. 
Now let $\alpha =\alpha_0e^{2\pi i \frac{k_1}{3} }$.
\begin{equation}
    \begin{split}
        &d\left( \varphi_i^{-1} \circ \chi_{\sigma_1} \right)_* \left( \hat{\zeta}_1^L(0) \right) =\\
        &= \left( \left( \varphi_i^{-1} \circ \chi_{\sigma_1} \right) \left( \zeta_1(0) \right), \frac{2\pi}{6}e^{2\pi i\left(
			\frac{k_0-1}{3}
         - \frac{k_1}{3} + \frac{1}{2} \right)} \alpha t_0 \alpha^{-1} r_0 \alpha_0 e^{2\pi i\frac{k_1}{3}} \right)\\
        &= \left( \zeta_0\left( 1 \right), -\frac{2\pi i}{3} e^{2\pi i\left( \frac{k_0-1}{3} + \frac{1}{6} \right)} \right).\frac{r_0t_0}{2} = \hat{\zeta}_0\left( 1 \right).\frac{r_0t_0}{2}.
    \end{split}
\end{equation}
Here we used again the value of $\alpha_0$, and introduced a matrix $t_0\in \mathbf{T}_2$. As in Section \ref{sec:path_argument_inner} we pick paths in $\mathbf{T}_2$ from $t_i$, $i=0,\dots,N$ to the identity matrix, as well as from $r_0/2$ to $1$,
and use these to obtain a closed curve 
\begin{equation}
    \hat{\zeta}:[0,1]\to F_{GL^+}(\Sigma ).
\end{equation}

\noindent    
$\bullet$\,\emph{Lifting $\hat{\zeta}$ to the spin bundle}.
In order to give an explicit lift of $\hat{\zeta}$ to the spin bundle of $\Sigma $, we pick spin lifts of $\hat{\zeta}_i, \hat{\zeta}_i^L$ and $\hat{\zeta}_i^R$ as in equations \eqref{eq:path.triangle.spinlift}--\eqref{eq:path.rightrotate.spinlift}, and of $\hat{\zeta}_N^{R\prime}$ and $\hat{\zeta}_0$ as follows:
\begin{equation}
    \tilde{\zeta}_N^{R\prime}(t) = \left( \zeta_N\left( 1 \right) \,,\, \delta\!\left( \sqrt{\tfrac{2\pi}{6}} \, \tilde{r}_0\tilde{\alpha}_0 e^{\pi i\left( \frac{t}{4} + \frac{1}{4}
		-\frac{\eta_N}{6} 
     + \frac{k_N}{3} \right)} \right) \right) \ .
\end{equation}
The spin lift of $\hat{\zeta}_0$ depends on the given spin structure on the boundary component $i$, see Section \ref{sec:ss_on_C}. We choose
\begin{equation}
    \tilde{\zeta}_0^{NS}(t) = \left( \zeta_0(t)\,,\, \delta\!\left( \sqrt{\tfrac{2\pi}{3}} \, e^{\pi i\left( -\frac{t}{3} + \frac{1}{6} + \frac{k_0}{3} - \frac{1}{4} \right)} \right) \right)
\end{equation}
and 
\begin{equation}
    \tilde{\zeta}_0^R(t) = \left( \zeta_0(t)\,,\, \delta\!\left(  \sqrt{\tfrac{2\pi}{3}}e^{-\frac{\pi i}{4}} \right) \right) \ ,
\end{equation}
respectively.
By \eqref{eq:spinpath.inner.compose.sign}, the composition of the spin lifts of $\left(\tilde{\chi}_{\sigma_i}\right)_*\tilde{\zeta}_i$ yields signs $\omega_{i,i+1}$ for $i=1,\dots,N-1$.
For the boundary path $\hat{\zeta}_0$ we have to distinguish between $NS$ and $R$-type spin structure on the boundary.

\medskip
\noindent
{\em NS--type boundary:}
We write the spin transition function at $e_{N,0}$ as
\begin{equation}
    g_{\zeta_N\left( 1\right)}\left( \tilde{\varphi}_i^{-1} \circ \tilde{\chi}_{\sigma_N} \right) = \delta\!\left( s_{N,0} e^{\pi i\left( \frac{k_0}{3} - \frac{[k_N-1]_3}{3} + \frac{1}{2} \right)} \right) \delta(\tilde{\alpha}) \tilde{t}_N \delta(\tilde{\alpha})^{-1},
\end{equation}
with $\delta(\tilde{\alpha})$ being a spin lift of $\alpha = \alpha_0e^{2\pi i\left( \frac{k_N-1}{3} \right)}$ and $\tilde{t}_N\in \tilde{\mathbf{T}}_2$  the lift of $t_N$. Then
\begin{equation}
    \begin{split}
        &\left( \tilde{\varphi}_i^{-1} \circ \tilde{\chi}_{\sigma_N} \right) \left(  \tilde{\zeta}_N^{R\prime} \left( 1 \right) \right) =\\
        &= \left( \zeta_0(0) \,,\, \delta\!\left(\sqrt{\tfrac{2\pi}{6}}\,s_{N,0} e^{\pi i\left( \frac{k_0}{3} - \frac{[k_N-1]_3}{3} + \frac{1}{2} \right)}  \tilde{\alpha} \right) \tilde{t}_N  \delta\!\left( \tilde{\alpha}^{-1} \tilde{r}_0\tilde{\alpha}_0 e^{\pi i\left( \frac{1}{2} + \frac{1}{6} + \frac{k_N}{3} \right)} \right) \right)\\
        &= \left( \zeta_0(0) \,,\, \delta\!\left( \sqrt{\tfrac{2\pi}{3}}\, e^{\pi i\left( \frac{k_0}{3} + \frac{1}{6} -\frac{1}{4} \right)} \right) \delta\!\left( \frac{\omega_{N,0} \tilde{r}_0}{\sqrt{2}} \right) \tilde{t}_N \right)
        = \tilde{\zeta}_0^{NS}(0)\, .\,  \delta\!\left( \frac{\omega_{N,0} \tilde{r}_0}{\sqrt{2}} \right) \tilde{t}_N  \ .
    \end{split}
\end{equation}
Here $\omega_{N,0}$ is a sign,
\begin{equation}
    \omega_{N,0} = s_{N,0} \, e^{\pi i\left( \frac{k_N-1}{3} - \frac{[k_N-1]_3}{3} \right)} \ .
\end{equation}
We write the spin transition function at $e_{0,1}$ as
\begin{equation}
    g_{\zeta_1(0)}\left( \tilde{\varphi}_i^{-1} \circ \tilde{\chi}_{\sigma_1} \right) = \delta\!\left(s_{0,1} \,  e^{\pi i\left( \frac{[k_0-1]_3}{3} - \frac{k_1}{3} + \frac{1}{2} \right)} \right) \delta(\tilde{\alpha}) \tilde{t_0} \delta(\tilde{\alpha})^{-1}
\end{equation}
with $\delta(\tilde{\alpha})$ a spin lift of $\alpha =\alpha_0e^{2\pi i\frac{k_1}{3}}$ and $\tilde{t}_0\in \tilde{\mathbf{T}}_2$ the spin lift of $t_0$.
Then 
\begin{equation}
    \begin{split}
        &\left( \tilde{\varphi}_i^{-1}\circ \tilde{\chi}_{\sigma_1}\right)  \left(  \tilde{\zeta}_1^L(0) \right) \\
        &=\left( \zeta_0\left( 1 \right) \,,\, \delta\!\left(\sqrt{\tfrac{2\pi}{6}}\,s_{0,1}  e^{\pi i\left( \frac{[k_0-1]_3}{3} - \frac{k_1}{3} + \frac{1}{2} \right)} \tilde{\alpha} \right)  \tilde{t}_0  \, \delta\!\left( \tilde{\alpha}^{-1} \tilde{r}_0 \tilde{\alpha}_0 e^{\pi i\frac{k_1}{3}} \right) \right)\\
        &= \left( \zeta_0\left( 1 \right)\,,\, \delta\!\left( \sqrt{\tfrac{2\pi}{3}}\, e^{\pi i\left( \frac{k_0-1}{3} + \frac{1}{6} -\frac{1}{4} \right)}  \frac{\omega_{0,1} \tilde{r}_0 }{\sqrt{2}} \right)\tilde{t}_0\right)  \\
        &=  \tilde{\zeta}_0(1) \,.\, \delta\!\left(  \frac{\omega_{0,1} \tilde{r}_0 }{\sqrt{2}} \right)\tilde{t}_0 \ .
    \end{split}
\end{equation}
Here $\omega_{0,1}$ is a sign,
\begin{equation}
    \omega_{0,1}= - s_{0,1} \, e^{\pi i\left( \frac{[k_0-1]_3}{3} - \frac{k_0-1}{3} \right)} \ .
\end{equation}
Let
\begin{equation}
    \varepsilon =\left( \prod_{i=1}^{N-1} \omega_{i,i+1} \right) \omega_{N,0}\omega_{0,1}.
    \label{eq:total.sign.ns.path}
\end{equation}
A spin lift of $\hat{\zeta}$ is closed iff $\varepsilon =1$.
Using the numbers $D_{NS}$ and $K$ defined above and counting the signs as in the proof of Lemma \ref{lem:vertex.rule.boundary} we can reformulate equation \eqref{eq:total.sign.ns.path} as
\begin{equation}
    \varepsilon = \left( \prod_{i=0}^{N} s_{i,i+1}  \right) (-1)^{D_{NS}+K}.
\end{equation}

\medskip
\noindent
{\em R--type boundary:}
We write the spin transition function at $e_{N,0}$ as
\begin{equation}
    g_{\zeta_N\left(  1 \right)}\left( \tilde{\varphi}_i^{-1} \circ \tilde{\chi}_{\sigma_N} \right) = \delta\!\left( s_{N,0}  \, e^{\pi i\left( -\frac{[k_N-1]_3}{3} - \frac{1}{6} + \frac{1}{2} \right)}\right) \delta(\tilde{\alpha}) \tilde{t}_N \delta(\tilde{\alpha})^{-1} \ ,
\end{equation}
with $\delta(\tilde{\alpha})$ a spin lift of $\alpha =\alpha_0e^{2\pi i \frac{k_N-1}{3} }$ and $\tilde{t}_N \in \tilde{\mathbf{T}}_2$ the spin lift of $t_N$.
Then 
\begin{equation}
    \begin{split}
       & \left( \tilde{\varphi}^{-1}\circ \tilde{\chi}_{\sigma_N} \right) \left(  \tilde{\zeta}_N^{R\prime} \left(  1 \right) \right) = \\
       &= \left( \zeta_0(0) \,,\, \delta\!\left(\sqrt{\tfrac{2\pi}{6}}\,s_{N,0}  e^{\pi i\left( -\frac{[k_N-1]_3}{3} - \frac{1}{6} + \frac{1}{2} \right)} \tilde{\alpha} \right) \tilde{t}_N \delta\!\left( \tilde{\alpha}^{-1} \tilde{r}_0\tilde{\alpha}_0  e^{\pi i\left( \frac{1}{2} + \frac{1}{6} + \frac{k_N}{3} \right)} \right) \right) \\
       &= \left( \zeta_0(0)\,,\, \delta\!\left(\sqrt{\tfrac{2\pi}{3}} \, e^{-\frac{\pi i}{4}} \frac{\omega_{N,0}  \tilde{r}_0 }{\sqrt{2}} \right)  \tilde{t}_N \right) = \tilde{\zeta}_0(0)\,.\, \delta\!\left( \frac{\omega_{N,0}  \tilde{r}_0 }{\sqrt{2}} \right)  \tilde{t}_N  \ .
    \end{split}
\end{equation}
Here 
\begin{equation}
    \omega_{N,0} = s_{N,0} \, e^{\pi i\left( \frac{k_N-1}{3} - \frac{[k_N-1]_3}{3} \right)} \ .
\end{equation}
We write the spin transition function at $e_{0,1}$ as
\begin{equation}
    g_{\zeta_1(0)}\left( \tilde{\varphi}_i^{-1}\circ \tilde{\chi}_{\sigma_1} \right) = s_{0,1} \delta\!\left( e^{\pi i\left( -\frac{k_1}{3} - \frac{1}{6} + \frac{1}{2} \right)} \right)\delta(\tilde{\alpha}) \tilde{t}_N \delta(\tilde{\alpha})^{-1} \ ,
\end{equation}
with $\delta(\tilde{\alpha})$ a spin lift of $\alpha =\alpha_0e^{2\pi i \frac{k_1}{3}}$ and $\tilde{t}_0\in \tilde{\mathbf{T}_2}$ the spin lift of $t_0$. Then
\begin{equation}
    \begin{split}
        &\left( \tilde{\varphi}_i^{-1}\circ \tilde{\chi}_{\sigma_1} \right) \left( \tilde{\zeta}_1^L(0) \right) =\\
        &=\left( \zeta_0\left( 1 \right)\,,\, \delta\!\left(\sqrt{\tfrac{2\pi}{6}}\,s_{0,1}  e^{\pi i\left( -\frac{k_1}{3} - \frac{1}{6} + \frac{1}{2} \right)} \tilde{\alpha}\right)  \tilde{t}_0 \delta\!\left( \tilde{\alpha}^{-1} \tilde{r}_0 \tilde{\alpha}_0 e^{\frac{\pi ik_1}{3}} \right)\right)\\
        &= \left( \zeta_0\left(1  \right) \,,\, \delta\!\left(\sqrt{\tfrac{2\pi}{3}}\, e^{-\frac{\pi i}{4}}  \frac{\omega_{0,1} \tilde{r}_0}{\sqrt{2}} \right) \tilde{t}_0 \right) 
        = \tilde{\zeta}_0^R\left(  1 \right)\,.\,\delta\!\left(\frac{\omega_{0,1} \tilde{r}_0}{\sqrt{2}} \right) \tilde{t}_0 
    \end{split}
\end{equation}
with $\omega_{0,1}= -s_{0,1}$. Let
\begin{equation}
    \varepsilon = \left(\prod_{i=1}^{N-1}\omega_{i,i+1}\right) \omega_{N,0} \omega_{0,1}.
\end{equation}
A spin lift of $\hat{\zeta}$ is closed iff $\varepsilon =1$, and we can rewrite $\varepsilon $ as
\begin{equation}
    \varepsilon = \left( \prod_{i=0}^{N} s_{i,i+1} \right) (-1)^{K+D_{R}}.
\end{equation}

Since $\hat{\zeta}$ is homotopic to a lift of the derivative of a differentiable simple closed curve we can apply Lemma \ref{lem:ssc}. We then get that the spin structure can be extended if and only if the lift of $\hat{\zeta}$ is not closed, i.e. $\varepsilon=-1$.
\end{proof}

\subsection{Spin structures and admissible edge signs}
\label{sse:spin_reconstruct}
For this section, let us fix a marked triangulated surface 
$$
	\Sigma = ((\mathcal{C},f_i,d^1_0,d^2_0), (\varphi,\chi), (\Sigma,\varphi_i)) \ .
$$

\begin{definition}\label{def:admissible-signs}
An edge sign assignment $s : \mathcal{C}_1 \to \{\pm1 \}$ is called {\em admissible} if
\begin{enumerate}
\item condition \eqref{eq:vertex.rule.inner} is satisfied at each inner vertex,
\item for each boundary component, all three vertices on that boundary component satisfy \eqref{eq:boundary-vertex-rule} for $NS$-type boundary conditions, or all three vertices satisfy \eqref{eq:boundary-vertex-rule} for $R$-type boundary conditions.
\end{enumerate}
Depending on the situation in (2), we call a boundary component of {\em of $NS$-type} or {\em of $R$-type}.
\end{definition}

Given admissible edge signs $s$,  we are going to construct a spin structure on $\Sigma$ in two steps. First, we use the edge signs to construct the spin structure on $\Sigma \setminus \{ \text{vertices}\}$ -- this part works without conditions on the edge signs $s$. Then we use the admissibility condition to extend the spin structure to the vertices of the triangulation. We will denote the resulting spin structure as
\begin{equation} \label{eq:S(Sig,s)-def}
	S(\Sigma,s) \ .
\end{equation}
By construction, $S(\Sigma,s)$ is a spin triangulated surface. 

\medskip

We now give the detailed construction.
For each face $\sigma \in \mathcal{C}_2$, pick a smooth extension $\chi_\sigma^+$ of $\chi_\sigma$ to some ($\sigma$-dependent) open neighbourhood $\underline\Delta^{+,\sigma}$ of the standard triangle $\underline\Delta$. 

\medskip\noindent
{\em Open cover with only trivial triple intersections:}  Let $\mathcal{I} = \mathcal{C}_2 \cup \{1,\dots,B\}$. Recall from Section \ref{sec:lifting-prop} the construction of $\Sigma^+$ and 
that $\varphi(\mathcal{C}_0) \subset \Sigma^+$ is the set of images of the vertices under the triangulation map $\varphi$. We will construct an open cover $(V_\alpha)_{\alpha \in \mathcal{I}}$ of $\Sigma^+ \setminus \varphi(\mathcal{C}_0)$ such that non-empty intersections $V_\alpha \cap V_\beta$ are contractible, and such that $V_\alpha \cap V_\beta \cap V_\gamma = \emptyset$ whenever $\alpha,\beta,\gamma$ are pairwise distinct.

Around the image in $\Sigma^+$ of each edge $e \in \mathcal{C}_1$ minus its endpoints choose a contractible open neighbourhood $W_e$ not containing any vertices. By shrinking $W_e$ if necessary, we may assume that 
\begin{itemize}
\item $W_e \cap W_{e'} = \emptyset$ for $e \neq e'$, 
\item for each face $\sigma$ and each edge $e$ on the boundary of $\sigma$, $W_e$ is contained in the image of the extended map $\chi_\sigma^+$,
\item for each boundary edge $e$ on the $i$'th boundary component, $W_e$ is contained in $U^+_i$ (considered as a subset of the quotient surface $\Sigma^+$).
\end{itemize}
For $\sigma \in \mathcal{C}_2$ we set 
\begin{equation}
	V_\sigma = \underline{\mathring\Delta} \cup \bigcup_{e \in \partial(\sigma)} (\chi^+_\sigma)^{-1}(W_e) \ .
\end{equation}
Here, $\underline{\mathring\Delta}$ denotes the interior of $\underline\Delta$. For $i \in \{1,\dots,B\}$ we set
\begin{equation}
	V_i = \{ z \in \Cb | r<|z|<1 \} \cup \bigcup_{e \in \mathrm{im}(f_i)} \varphi_i^{-1}(W_e) \ .
\end{equation}
	We will identify the $V_\sigma$ and $V_i$ with their images in $\Sigma^+$. The $V_\sigma$ and $V_i$ then
give a cover with the desired properties.

\medskip\noindent
{\em Spin structure on the collared surface less the vertices:}
We define a spin structure on $\Sigma^+ \setminus \varphi(\mathcal{C}_0)$
via the atlas $(V_\alpha)_{\alpha \in \mathcal{I}}$. For $\sigma \in \mathcal{C}_2$ fix $\tilde V_\sigma = \Cb^{NS}|_{V_\sigma}$. For $i \in \{1,\dots,B\}$ we take $\tilde V_i = \Cb^{NS/R}|_{V_i}$ depending on whether the $i$'th boundary is of $NS$ or $R$ type.

For an inner edge $e \in \mathcal{C}_1$ fix the spin lift of the transition function 
\begin{equation}
f_e : (\chi^+_{\sigma_R})^{-1}(W_e) \to (\chi^+_{\sigma_L})^{-1}(W_e)
\end{equation}
to be $\tilde f_e(z,g) = (f_e(z),g_z g)$ where $g_z \in \GL_2$ is uniquely determined by the requirement that (i) $p_{GL}(g_z) = (df_e)_z$, and that (ii) on a point $p$ on the preimage of the edge $e$ we have
\begin{equation}
        \delta(s(e)) = \tilde{\mathbf{Q}}_{\alpha}(g_p) \cdot\delta (e^{-\pi i(k_L/3-k_R/3+1/2)}) \ ,
\end{equation}
i.e.\ the rule in \eqref{eq:se-inner_edge} is satisfied.

For a boundary edge $e$ on the $i$'th boundary component we fix the spin lift of the transition function 
$f_e : (\chi^+_{\sigma_R})^{-1}(W_e) \to \varphi_i^{-1}(W_e)$
to be $\tilde f_e(z,g) = (f_e(z),g_z g)$, where now $g_z$ is characterised as follows. 
Let $p= \chi_{\sigma_R}^{-1} \circ\varphi_i (e^{2\pi i (\frac{k_L}{3} + \frac{1}{6})})$ as in Section \ref{sec:edge-sign-bnd}. The transformation $g_z \in \GL_2$ is uniquely determined by $p_{GL}(g_z) = (df_e)_z$ and by demanding that at the point $p$ we have
    \begin{equation}
        \delta(s(e)) = \tilde{\mathbf Q}_\alpha(  g_p ) \cdot 
\begin{cases}
\text{$NS$ type} :&         
\delta\big(e^{-\pi i ( \frac{k_L}{3}-\frac{k_R}{3} +\frac{1}{2} )}\big)
\\
\text{$R$ type} :&
 \delta \big( e^{-\pi i ( -\frac{k_R}{3} - \frac{1}{6}  + \frac{1}{2} )} \big) \ .
\end{cases}
\end{equation}
This is the rule stated in \eqref{eq:bnd-edge-sign-def}.

Since there are no non-trivial triple overlaps, there is no cocycle condition on the spin transition functions. Hence the above assignment defines a spin structure on $\Sigma^+ \setminus \varphi(\mathcal{C}_0)$.

\medskip\noindent
{\em Extending the spin structure to the entire collared surface:}
If the extension of the spin structure to the vertices $\varphi(\mathcal{C}_0)$ exists, it is unique. The conditions for extendibility are stated in Corollary \ref{lem:vertex.rule.inner} and Lemma \ref{lem:vertex.rule.boundary}.
Since we assumed that $s$ it admissible, we do indeed obtain a spin structure on $\Sigma$.

\medskip

Let now 
$$
	\Lambda = ((\mathcal{C},f_i,d^1_0,d^2_0), (\varphi, \tilde\chi_\sigma) ,(\Lambda,\tilde\varphi_i))
$$
be a spin triangulated surface. Denote the underlying marked triangulated surface by $\underline\Lambda$. Let $s_{\Lambda}$ be the edge signs for $\Lambda$ from Definitions \ref{def:se-inner_edge} and \ref{eq:bnd-edge-sign-def}.

\begin{theorem}\label{thm:reconstruction}
Let $\Lambda$ be a spin triangulated surface. The spin structures $\Lambda$ and $S(\underline\Lambda,s_\Lambda)$ on $\underline\Lambda$ are isomorphic.
\end{theorem}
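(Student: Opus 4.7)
The plan is to construct the isomorphism first on the complement of the vertices, using the data $\{\tilde{\chi}_\sigma,\tilde{\varphi}_i\}$ of $\Lambda$ as a collection of local trivialisations for $\Lambda$, and then to extend it across the vertices by unique continuation.

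First I would extend each spin lift $\tilde{\chi}_\sigma : \Delta \to \Lambda$ to a spin map $\tilde{\chi}_\sigma^+ : \Cb^{NS}|_{V_\sigma} \to \Lambda$ covering the smooth extension $\chi_\sigma^+ : V_\sigma \to \underline{\Lambda}$ fixed in Section \ref{sse:spin_reconstruct}. Such a lift exists because $V_\sigma$ is contractible, and by Lemma \ref{lem:Smap.lift} it is uniquely determined once we demand agreement with $\tilde{\chi}_\sigma$ on $\underline\Delta$. The boundary parametrisation $\tilde{\varphi}_i$ is already defined on a domain containing $V_i$. These lifts then yield, for each chart $\alpha \in \mathcal{I}$, a spin-structure isomorphism between the trivialising chart $\tilde V_\alpha$ used in the construction of $S(\underline{\Lambda},s_\Lambda)$ and the restriction of $\Lambda$ to $V_\alpha$; call it
$$
\Phi_\alpha : \tilde V_\alpha \xrightarrow{\ \sim\ } \Lambda\big|_{V_\alpha} \ .
$$

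Next I would verify that on each two-fold overlap $V_\alpha \cap V_\beta$, which is contained in some $W_e$ for a unique edge $e$, the transition $\Phi_\beta^{-1} \circ \Phi_\alpha$ coincides with the transition function $\tilde f_e$ chosen in the construction of $S(\underline{\Lambda},s_\Lambda)$. Both maps cover the same base transition (namely $d(\chi_{\sigma_L}^{-1} \circ \chi_{\sigma_R})$ for an inner edge, or $d(\varphi_i^{-1} \circ \chi_{\sigma_R})$ for a boundary edge), and both are continuous spin lifts thereof on the connected set $W_e$. By the uniqueness part of Lemma \ref{lem:Smap.lift} such a lift is fixed by its value at a single point, so it suffices to compare at a point $p$ on the edge. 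At $p$, the value of $\Phi_\beta^{-1} \circ \Phi_\alpha$ is recorded by $g_p(\tilde{\chi}_{\sigma_L}^{-1} \circ \tilde{\chi}_{\sigma_R})$ (or the boundary analogue), which by Definition \ref{def:se-inner_edge} (respectively Definition \ref{def:se-boundary_edge}) satisfies the edge-sign relation with the value $s_\Lambda(e)$. By construction, $\tilde f_e$ at $p$ is the unique spin lift of the same base transition satisfying the same edge-sign relation with $s(e) = s_\Lambda(e)$. Hence the two lifts agree at $p$ and consequently on all of $W_e$. Since the cover $(V_\alpha)_{\alpha \in \mathcal{I}}$ has no non-trivial triple intersections, the collection $\{\Phi_\alpha\}$ satisfies the cocycle condition vacuously and glues to a well-defined isomorphism
$$
\Phi : S(\underline{\Lambda},s_\Lambda)\big|_{\underline\Lambda \setminus \varphi(\mathcal{C}_0)} \xrightarrow{\ \sim\ } \Lambda\big|_{\underline\Lambda \setminus \varphi(\mathcal{C}_0)} \ .
$$

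The main remaining point — which I expect to be the most delicate — is to extend $\Phi$ across each vertex $v \in \varphi(\mathcal{C}_0)$. Both spin structures are already defined at $v$: for $\Lambda$ by assumption, and for $S(\underline{\Lambda},s_\Lambda)$ because the edge signs $s_\Lambda$ are admissible, which is precisely the content of the vertex rules of Corollary \ref{lem:vertex.rule.inner} and Lemma \ref{lem:vertex.rule.boundary} read off from $\Lambda$. Choose a contractible open neighbourhood $D_v \subset \underline{\Lambda}$ of $v$ containing no other vertex. By Proposition \ref{prop:class.spinstructures} both spin structures trivialise over $D_v$; in such trivialisations an isomorphism of spin structures covering the identity on $D_v$ is given by a smooth map $D_v \to \ker(p_{GL}) \cong \mathbb{Z}_2$, and hence is locally constant. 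The restriction of $\Phi$ to the connected punctured disc $D_v \setminus \{v\}$ is therefore a constant element of $\mathbb{Z}_2$, which extends smoothly to all of $D_v$. Patching these local extensions together with $\Phi$ yields the required isomorphism of spin structures on all of $\underline{\Lambda}$.
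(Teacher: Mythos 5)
Your proof is correct and follows essentially the same route as the paper, which simply declares the isomorphism to be $\tilde\chi_\sigma^+$ on $V_\sigma$ and the identity on $V_i$ and calls the rest evident. You have merely filled in the details the paper omits: the agreement of $\Phi_\beta^{-1}\circ\Phi_\alpha$ with $\tilde f_e$ via the edge-sign relation and uniqueness of lifts, and the removable-singularity argument at the vertices.
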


\begin{proof}
This is evident from the explicit construction of $S(\underline\Lambda,s_\Lambda)$. In terms of the atlas $(V_\alpha)_{\alpha \in \mathcal{I}}$ considered above, the isomorphism of spin structures $S(\underline\Lambda,s_\Lambda) \to \Lambda$ is simply given by $\tilde\chi^+_\sigma$ on $V_\sigma$ ($\sigma \in \mathcal{C}_2$) and by the identity on $V_i$ ($i \in \{1,\dots,B\}$).
\end{proof}

Isomorphism classes of spin structures on triangulated surfaces can be parametrised by equivalence classes of markings and admissible edge signs. In more detail, fix a triangulated surface $\Sigma=((\mathcal{C},f_i),\varphi ,(\Sigma ,\varphi_i))$ (without marking). Consider the set $\mathcal{M}_\Sigma := \{ ((d^1_0,d^2_0),s) \}$ of pairs of markings $(d^1_0,d^2_0)$ on $\Sigma$ and admissible edge signs $s$ on the resulting marked triangulated surface.  The moves described in Lemma \ref{lem:index.marking} leave the underlying triangulation fixed and just operate on markings and edge signs. They generate an equivalence relation on $\mathcal{M}_\Sigma$ which we denote by $\sim_\text{fix}$.

\begin{theorem}\label{thm:parametrisation-of-spin}
The assignment $((d^1_0,d^2_0),s) \mapsto S\big(\,\text{($\Sigma$ with $d^1_0,d^2_0$})\,,\,s\big)$ induces a bijection
\begin{equation}\label{eq:equiv-mark-spinstr-iso}
    \mathcal{M}_{\underline{\Sigma}} / \sim_\text{fix} ~ \longrightarrow ~ (\text{spin structures on $\Sigma$ up to isom.\ of spin str.}) \ .
\end{equation}
\end{theorem}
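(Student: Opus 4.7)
The plan is to verify the map is well-defined, surjective, and injective, in that order, using Theorem \ref{thm:reconstruction} together with the change-of-data rules from Lemma \ref{lem:index.marking}.

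First I would show the assignment descends to $\mathcal{M}_{\underline\Sigma}/\!\sim_\text{fix}$ by checking each of the three moves of Lemma \ref{lem:index.marking} produces isomorphic reconstructed spin structures. For move 1, if $s'$ differs from $s$ only by simultaneous flips on the three edges of a triangle $\sigma$, the isomorphism $S(\underline\Sigma,d,s)\to S(\underline\Sigma,d,s')$ is defined on the atlas $(V_\alpha)_{\alpha\in\mathcal{I}}$ used in the construction of \eqref{eq:S(Sig,s)-def} as the leaf exchange $\omega$ on the chart $V_\sigma$ and the identity on every other $V_\tau$ and $V_i$; it is centrality of $\delta(-1)$ in $\GL_2$ that ensures this local modification multiplies exactly the three transition functions at $\partial\sigma$ by $\delta(-1)$, matching the three sign flips. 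For moves 2 and 3, one uses the same atlas but reconstructs with the new marking: the $\widetilde{QR}$-identities invoked in Lemma \ref{lem:index.marking} show that the transition functions produced from $(d,s)$ and $(d',s')$ are literally equal up to the chosen lift, so the reconstructed spin structures coincide (not merely up to isomorphism).

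Surjectivity is then immediate from Theorem \ref{thm:reconstruction}: given a spin structure on $\underline\Sigma$, equip the fixed triangulation with any marking $d$ and any spin lifts $\tilde\chi_\sigma$; the resulting edge signs $s_\Lambda$ are admissible by construction (the spin structure extends over the vertices), and $S(\underline\Sigma,d,s_\Lambda)$ is isomorphic to $\Lambda$.

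For injectivity, suppose $S(\underline\Sigma,d,s)\cong S(\underline\Sigma,d',s')$. I would first apply moves 2 and 3 to reduce to $d=d'$: these moves act transitively on the finite set of markings of the fixed triangulation, and each is invertible within $\sim_\text{fix}$, so we may replace $(d,s)$ by some $\sim_\text{fix}$-equivalent pair $(d',s'')$. It then suffices to show that two admissible sign assignments $s''$ and $s'$ on the same marked triangulated surface which produce isomorphic spin structures are related by a product of move-1 flips. Pick an isomorphism $\phi: S(\underline\Sigma,d',s'')\to S(\underline\Sigma,d',s')$. Each chart $V_\sigma$ ($\sigma\in\mathcal{C}_2$) is contractible and carries the fixed trivialisation $\Cb^{NS}|_{V_\sigma}$ on both sides, so Lemma \ref{lem:Smap.lift} forces $\phi|_{V_\sigma}$ to equal either the identity or the leaf exchange $\omega$; record this choice by $\varepsilon_\sigma\in\{\pm1\}$. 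On the boundary collars $V_i$ the spin structures on both sides are by definition equal to $\Cb^{NS/R}|_{V_i}$ and the boundary parametrisation $\tilde\varphi_i$ is part of the datum of a spin surface, so $\phi|_{V_i}$ must be the identity. Comparing transition functions at each edge then gives $s''(e)\,s'(e)=\varepsilon_{\sigma_L(e)}\varepsilon_{\sigma_R(e)}$ for inner edges and $s''(e)\,s'(e)=\varepsilon_{\sigma_R(e)}$ for boundary edges; this is exactly the sign pattern produced by applying move 1 to each $\sigma$ with $\varepsilon_\sigma=-1$, proving $(d',s'')\sim_\text{fix}(d',s')$.

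The main obstacle will be the reduction step in injectivity: verifying that moves 2 and 3 together generate all marking changes, and tracking the induced sign changes carefully enough that the remaining discrepancy between $s''$ and $s'$ is indeed in the image of the move-1 action. A related subtlety is the boundary: the fixed $NS$- or $R$-type collar trivialisation removes any freedom on $V_i$, and it must be checked that this is consistent with the admissibility conditions of Lemma \ref{lem:vertex.rule.boundary} so that the counting on both sides of \eqref{eq:equiv-mark-spinstr-iso} matches.
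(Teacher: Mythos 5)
Your proposal is correct and follows essentially the same route as the paper: well-definedness via the moves of Lemma \ref{lem:index.marking}, surjectivity directly from Theorem \ref{thm:reconstruction}, and injectivity by first reducing to a common marking and then absorbing the remaining discrepancy into per-triangle leaf exchanges (your $\varepsilon_\sigma$ bookkeeping is exactly the paper's step of adjusting the spin lifts $\tilde\chi_\sigma$ by sheet exchanges so that $f\circ\tilde\chi_\sigma=\tilde\chi'''_\sigma$). The only cosmetic difference is in well-definedness, where the paper avoids your direct atlas manipulation by observing that the moves act on the data of a \emph{fixed} spin surface, so Theorem \ref{thm:reconstruction} immediately yields isomorphic reconstructions.
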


\begin{proof}
{\em Well-definedness on equivalence classes:} Let $(d,s)$ and $(d',s')$ be two pairs in $\mathcal{M}_\Sigma$ linked by a move from Lemma \ref{lem:index.marking}. Let $\Lambda$ be the spin triangulated surface $S( (\Sigma,d),s)$, and let $\Lambda'$ be the new spin triangulated surface resulting from the move as in  Lemma \ref{lem:index.marking}. Then $\Lambda$ and $\Lambda'$ have the same underlying spin surface and differ only in marking and choice of spin lifts, in particular $\underline\Lambda = (\Sigma,d)$ and 
	$\underline\Lambda' = (\Sigma,d')$. 
By construction, $s_{\Lambda'} = s'$, and so by Theorem \ref{thm:reconstruction} we have $\Lambda' \cong S( (\Sigma,d'),s')$.

\smallskip\noindent
{\em Surjectivity:} Immediate from Theorem \ref{thm:reconstruction}.

\smallskip\noindent
{\em Injectivity:} Let $(d,s), (d',s') \in \mathcal{M}_\Sigma$ be such that $S( (\Sigma,d),s)$ and $S( (\Sigma,d'),s')$ are isomorphic spin structures on $\Sigma$. 
	The moves in Lemma \ref{lem:index.marking} relate any two markings on on $\Sigma$, and so
the equivalence class of $(d',s')$ contains elements with marking $d$, i.e.\ there is $s''$ such that $(d',s') \sim_\text{fix} (d,s'')$. 
Let $\Lambda := S( (\Sigma,d),s)$ and $\Lambda'' :=S( (\Sigma,d),s'')$ be the corrsponding spin triangulated surfaces.
	By well-definedness of \eqref{eq:equiv-mark-spinstr-iso},
there this is an isomorphism $f : \Lambda \to \Lambda''$ of spin structures. Write $\tilde\chi_\sigma$ and $\tilde\chi_\sigma''$ for the spin lifts in $\Lambda$ and $\Lambda''$, respectively, of the embedding $\chi_\sigma$ (which is the same for $\Lambda$ and $\Lambda''$ as their marking agrees). Then $f \circ \tilde\chi_\sigma$ and $\tilde\chi''_\sigma$ are either equal or they differ by a sheet exchange. Applying sheet exchanges where necessary and changing the edge signs $s''$ according to Lemma \ref{lem:index.marking} produces a new pair $(d,s''')$ in the equivalence class of $(d',s')$ with the property that $\Lambda''' :=S( (\Sigma,d),s''')$ has isomorphic spin structure to $\Lambda$, and that the isomorphism $f$ can be chosen such that $f \circ \tilde\chi_\sigma = \tilde\chi'''_\sigma$ for all triangles. By definition of the edge signs, it then follows that $s=s'''$. Thus $(d,s) \sim_\text{fix} (d,s''') \sim_\text{fix} (d',s')$.
\end{proof}

We stress that the above theorem classifies spin structures up to isomorphism of spin structures as in Definition \ref{eq:spin_structure_wm}, not up to isomorphism of spin surfaces as in Definition \ref{def:morp-spin-surf}. For example, if $\Sigma$ is a torus with empty boundary, $\mathcal{M}_\Sigma / \sim_\text{fix}$ has four elements, explicit representatives of which will be given in Section \ref{sec:TFT-spin-torus}.

\subsection{Pachner moves}\label{sec:moves-pachner}

Recall that any two finite combinatorial manifolds that are PL-homeomorphic, can be transformed into each other by a finite sequence of Pachner moves \cite{pachner1991pl}.
	In two dimensions, there are the 2-2 and the 3-1 Pachner move (and its inverse).
We want to examine the effect of these moves on the edge signs. 

A 2d-Pachner move on a complex $\mathcal{C}$ changes at most three adjacent triangles. 
We say two spin triangulated surfaces $(\mathcal{C},\varphi,\Sigma)$ and $(\mathcal{C}',\varphi',\Sigma')$ {\em are related by a Pachner move}, if the underlying complexes $\mathcal{C}$ and $\mathcal{C}'$ are related by a Pachner move, and if the spin lifts $\tilde{\chi}$ and markings are not affected away from the triangles changed by the Pachner move.

\begin{figure}[tb]
    \begin{tikzpicture}[]
        \node[name=A, regular polygon, regular polygon sides=4, draw,minimum size=3cm]{};
        \node[above right=-2pt] at (A.corner 1) {$v_2$};
        \node[above left=-2pt] at (A.corner 2) {$v_3$};
        \node[below left=-2pt] at (A.corner 3) {$v_0$};
        \node[below right=-2pt] at (A.corner 4) {$v_1$};
        \node[above] at (A.side 1) {$\sigma_A$};
        \node[left=-3pt] at (A.side 2) {$\sigma_B$};
        \node[below] at (A.side 3) {$\sigma_C$};
        \node[right=-3pt] at (A.side 4) {$\sigma_D$};
        \node[below right=8pt] at (A.corner 2) {$\sigma_1$};
        \node[above left=8pt] at (A.corner 4) {$\sigma_2$};
        \draw[green!40, line width=5pt] (A.corner 3) -- ($(A.corner 1)$);
        \draw[] (A.corner 3) -- (A.corner 1);
        \draw[->,>=stealth] (A.corner 3) -- (A.center);
        \foreach \Na/\Nb in {1/2,2/3,3/4,4/1}
        {\draw[->,>=stealth] (A.corner \Na) -- (A.side \Na);
        \draw (A.corner \Na) -- (A.corner \Nb);}
        \foreach \N/\Npos/\Na in {1/below/A,2/right/B,3/above/C,4/left/D}
        \node[\Npos=-1pt] at (A.side \N) {\scriptsize{${s_{\Na}}$}};
        \node[above left=-2pt] at (A.center) {\scriptsize{$s $}};
        \node[regular polygon, regular polygon sides=8, draw,minimum size=3cm, dotted, rotate=22.5]{};
        \node[name=B, regular polygon, regular polygon sides=4, draw,minimum size=3cm] at (5cm,0cm){};
        \node[above right=-2pt] at (B.corner 1) {$v_2$};
        \node[above left=-2pt] at (B.corner 2) {$v_3$};
        \node[below left=-2pt] at (B.corner 3) {$v_0$};
        \node[below right=-2pt] at (B.corner 4) {$v_1$};
        \node[above] at (B.side 1) {$\sigma_A$};
        \node[left=-3pt] at (B.side 2) {$\sigma_B$};
        \node[below] at (B.side 3) {$\sigma_C$};
        \node[right=-3pt] at (B.side 4) {$\sigma_D$};
        \node[above right=8pt] at (B.corner 3) {$\sigma_3$};
        \node[below left=8pt] at (B.corner 1) {$\sigma_4$};
        \draw[green!40, line width=5pt] (B.corner 4) -- ($(B.corner 2)$);
        \draw[] (B.corner 4) -- (B.corner 2);
        \draw[->,>=stealth] (B.corner 4) -- (B.center);
        \foreach \Na/\Nb in {1/2,2/3,3/4,4/1}
        {\draw[->,>=stealth] (B.corner \Na) -- (B.side \Na);
        \draw (B.corner \Na) -- (B.corner \Nb);}
        \foreach \N/\Npos/\Na in {1/below/A,2/right/B,3/above/C,4/left/D}
        \node[\Npos=-2pt] at (B.side \N) {\scriptsize{${s_{\Na}'}$}};
        \node[above right=-2pt] at (B.center) {\scriptsize{${s '}$}};
        \node[regular polygon, regular polygon sides=8, draw,minimum size=3cm, dotted, rotate=22.5] at (5cm,0cm){};
        \draw[<->] (A.east)++(1cm,0cm) -- ($(B.west)+(-1cm,0)$);
    \end{tikzpicture}
    \caption{Configuration of markings and labels for the Pachner 2-2 move. $v_0,\dots,v_3$ are vertices in $\mathcal{C}$ and in $\mathcal{C}'$.
	$\sigma_A,\dots,\sigma_D$ are faces in $\mathcal{C}$ and $\mathcal{C}'$, while
 $\sigma_1,\sigma_2$ belong to $\mathcal{C}$ and $\sigma_3,\sigma_4$ belong to $\mathcal{C}'$.
$s_A,\dots,s_D$ and $s $ as well as the primed version are the edge signs of the respective edges. The orientation of the edges is as indicated by the arrows. The green lines indicate the marked edges of the triangles, 
     e.g.\ $\{v_0,v_2\}$ 
for $\sigma_1$.}
    \label{fig:Pachner2.2}
\end{figure}
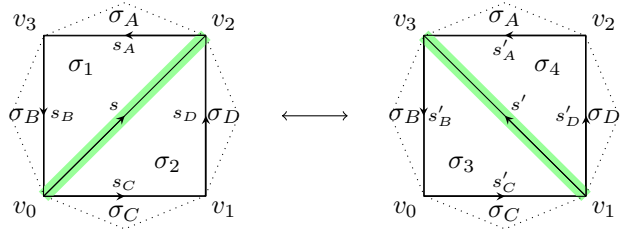

\begin{proposition} \label{thm:pachner2-2}
    Let $(C,\varphi,\Sigma)$ and $(C',\varphi',\Sigma)$ be spin triangulated surfaces related by a Pachner 2-2 move such that the configuration of markings on the affected subcomplex is as in Figure \ref{fig:Pachner2.2}.
    If the spin lifts on $\chi_{\sigma_3}$ and $\chi_{\sigma_4}$ are such that $s'=s$ and $s_A'=s_A$, then the remaining edge signs are related as
    \begin{equation} \label{eq:pachner2-2}
        s_B'=-s\,s_B ~~,\quad
        s_C'=-s_C ~~,\quad
        s_D'=-s\,s_D \ .
    \end{equation}
\end{proposition}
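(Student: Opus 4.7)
My plan is to derive the three relations from the admissibility condition at each of the four vertices $v_0,\ldots,v_3$ of the quadrilateral $Q$ affected by the Pachner move. Since the underlying spin surface is unchanged across the move, the spin structure extends to each vertex in both triangulations; hence admissibility (Corollary \ref{lem:vertex.rule.inner} for an inner vertex, Lemma \ref{lem:vertex.rule.boundary} for a boundary vertex) holds in both $\mathcal{C}$ and $\mathcal{C}'$, constraining the product of incident edge signs at each $v_i$ to a specific sign $(-1)^{D+K+1}$.

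The contributions from edges outside $Q$ are unchanged by the move and cancel in the ratio of admissibility equations for $\mathcal{C}$ and $\mathcal{C}'$ at each vertex. What remains on the edge-sign side is the ratio of products of interior edge signs incident at $v_i$; what remains on the combinatorial side is $(-1)^{\Delta D_i + \Delta K_i}$, where $\Delta D_i$ and $\Delta K_i$ denote the change in the number of interior triangles $\sigma$ containing $v_i$ with $\chi_\sigma(1) = v_i$, and the change in the number of interior edges pointing away from $v_i$, respectively. Using the markings in Figure \ref{fig:Pachner2.2}, I would compute these four values. Enforcing the normalizations $s' = s$ and $s_A' = s_A$ (and using $s^2 = 1$), the equations at $v_2$ and $v_3$ directly yield $s_B' = -s\,s_B$ and $s_D' = -s\,s_D$; combining $s_D'$ with the equation at $v_0$ then gives $s_C' = -s_C$, and the equation at $v_1$ serves as a consistency check.

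The main obstacle is the bookkeeping of $\Delta D$ and $\Delta K$ at each vertex, which requires fixing the markings of $\sigma_2$ and $\sigma_3$ (not explicitly shown in the figure); the natural choice is to take the diagonal to be the preferred edge of every interior triangle adjacent to it, and any other choice reduces to this by Lemma \ref{lem:index.marking}. A computational alternative would be to work in a flat model: using contractibility of $Q$ to identify its spin structure with $\Cb^{NS}|_Q$, each spin lift $\tilde\chi_{\sigma_i}$ ($i \in \{1,2,3,4\}$) is then determined by a sign $\epsilon_i \in \{\pm 1\}$ via Lemma \ref{lem:Smap.lift}, so that each edge sign takes the form $\epsilon_L \epsilon_R$ times a combinatorial phase via Definition \ref{def:se-inner_edge}; the two normalizations pin down $\epsilon_3$ and $\epsilon_4$ in terms of $\epsilon_1, \epsilon_2$ and one reads off the relations \eqref{eq:pachner2-2} directly.
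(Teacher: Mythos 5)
Your proposal is correct and follows essentially the same route as the paper: the paper's proof likewise applies Corollary \ref{lem:vertex.rule.inner} (resp.\ Lemma \ref{lem:vertex.rule.boundary}) at each of $v_0,\dots,v_3$, observes that only $\Delta D$ and $\Delta K$ matter since contributions from edges outside the quadrilateral cancel, and then solves the resulting four sign equations under the normalisations $s'=s$, $s_A'=s_A$, with one equation left over as a consistency check. The only remaining work in your plan is the "simple counting" of $\Delta D$ and $\Delta K$ at each vertex, for which the markings of the interior triangles are in fact fixed by Figure \ref{fig:Pachner2.2} (the diagonal is the marked edge of both adjacent triangles), so no further choice is needed.
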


The other choices can be obtained by composing $\tilde{\chi}_3$ or $\tilde{\chi}_4$ with the leaf exchange automorphism; the corresponding edge signs are given by Lemma \ref{lem:index.marking}.

    \begin{proof}
    First assume all vertices $v_0,\dots,v_3$ are inner. We use Corollary \ref{lem:vertex.rule.inner} at each of those vertices and count the difference in the 
	numbers $K$ and $D$. 
For $v_1$ we have
    \begin{equation}
        s_C s_D = (-1)^{\Delta D+ \Delta K} s_C's 's_D'.
    \end{equation}
    Simple counting yields $\Delta K=1$ and $\Delta D=1$. Thus
    \begin{equation}
        s_Cs_D=s_C's 's_D'.
    \end{equation}
    The similar counting argument for the other vertices yields
\begin{align}
    v_2~&:~~ s_A \,s\, s_D = - s_A'\,s_D' \nonumber\\ 
    v_3~&:~~ s_A\,s_B = -s_A'\,s\, 's_B' \\
    v_0~&:~~ s_B\,s\,  s_C = s_B'\,s_C' \nonumber
\end{align}
	Assuming $s=s'$ and $s_A' = s_A$, the above set of equations has \eqref{eq:pachner2-2} as unique solution.

By Lemma \ref{lem:vertex.rule.boundary} for vertices on the boundary we have the same dependence on the numbers $K$ and $D_{NS}$ or $D_R$. Thus the result holds for these cases, too.
    \end{proof}

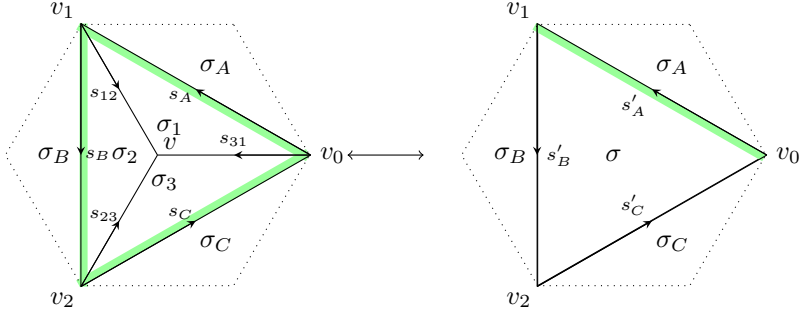
\begin{figure}[tb]
    \begin{tikzpicture}[]
        \node[name=A, regular polygon, regular polygon sides=3, draw,minimum size=4cm, rotate=30]{};
        \draw[green!40, line width=3pt] (A.corner 1)+(1pt,0pt) -- ($(A.corner 2)+(1pt,0pt)$);
        \draw[green!40, line width=3pt] (A.corner 2)+(-1pt,1pt) -- ($(A.corner 3)+(-1pt,1pt)$);
        \draw[green!40, line width=3pt] (A.corner 3)+(-1pt,-1pt) -- ($(A.corner 1)+(-1pt,-1pt)$);
        \node[above left=-2pt] at (A.corner 1) {$v_1$};
        \node[below left=-2pt] at (A.corner 2) {$v_2$};
        \node[right] at (A.corner 3) {$v_0$};
        \node[above right=-2pt] at (A.center) {$v$};
        \node[above right=2pt and -2pt] at (A.side 3) {$\sigma_A$};
        \node[left] at (A.side 1) {$\sigma_B$};
        \node[below right=2pt and -2pt] at (A.side 2) {$\sigma_C$};
        \foreach \Na/\Nb in {1/2,2/3,3/1}
        {\draw[->,>=stealth] (A.corner \Na) -- (A.side \Na);
        \draw (A.corner \Na) -- (A.corner \Nb);
        \draw (A.corner \Na) -- (A.center);
        \draw[->,>=stealth](A.corner \Na) -- ($(A.corner \Na)!0.5!(A.center)$);
        }
        \node[below left=-5pt] at ($(A.corner 1)!0.5!(A.center)$) {\scriptsize{${s_{12}}$}};
        \node[above left=-5pt] at ($(A.corner 2)!0.5!(A.center)$) {\scriptsize{${s_{23}}$}};
        \node[above] at ($(A.corner 3)!0.5!(A.center)$) {\scriptsize{${s_{31}}$}};
        \foreach \N/\Npos/\Na/\No in {1/right/B/-2pt,2/above left/C/-4pt,3/below left/A/-4pt}{
            \node[\Npos=\No] at (A.side \N) {\scriptsize{${s_{\Na}}$}};
        }
        \node[right=8pt] at (A.side 1) {$\sigma_2$};
        \node[above left=8pt and 3pt] at (A.side 2) {$\sigma_3$};
        \node[below left=8pt and 1pt] at (A.side 3) {$\sigma_1$};
        \node[regular polygon, regular polygon sides=6, draw,minimum size=4cm, dotted]{};
        \node[name=B, regular polygon, regular polygon sides=3, draw,minimum size=4cm, rotate=30] at (6cm,0cm){};
        \draw[green!40, line width=3pt] (B.corner 3)+(-1pt,-1pt) -- ($(B.corner 1)+(-1pt,-1pt)$);
        \node[above left=-2pt] at (B.corner 1) {$v_1$};
        \node[below left=-2pt] at (B.corner 2) {$v_2$};
        \node[right] at (B.corner 3) {$v_0$};
        \node[above right=2pt and -2pt] at (B.side 3) {$\sigma_A$};
        \node[left] at (B.side 1) {$\sigma_B$};
        \node[below right=2pt and -2pt] at (B.side 2) {$\sigma_C$};
        \foreach \Na/\Nb in {1/2,2/3,3/1}
        {\draw[->,>=stealth] (B.corner \Na) -- (B.side \Na);
        \draw (B.corner \Na) -- (B.corner \Nb);
        }
        \foreach \N/\Npos/\Na/\No in {1/right/B/,2/above left/C/-2pt,3/below left/A/-2pt}{
            \node[\Npos=\No] at (B.side \N) {\scriptsize{${s '_{\Na}}$}};
        }
        \node[] at (B.center) {$\sigma $};
        \node[regular polygon, regular polygon sides=6, draw,minimum size=4cm, dotted] at (6cm,0cm) {};
        \draw[<->] (A.center)++(2.5cm,0cm) -- ($(B.center)+(-2.5cm,0cm)$);
    \end{tikzpicture}
    \caption{Configuration of markings and labels for the Pachner 3-1 move and its inverse. $v_0,v_1,v_2$ are vertices in $\mathcal{C}$ and $\mathcal{C}'$, and $v$ is a vertex in $\mathcal{C}$. $\sigma_1,\sigma_2,\sigma_3$ are faces of $\mathcal{C}$, $\sigma$ is a face of $\mathcal{C}'$, and $\sigma_A,\sigma_B,\sigma_C$ are faces in $\mathcal{C}$ and $\mathcal{C}'$. $s_{12},s_{23},s_{31}$ and $s_A,s_B,s_C$ as well as the primed version are the edge signs of the respective edges. The orientation of the edges is as indicated by the arrows. The green lines indicate the marked edges of the triangles.}
    \label{fig:Pachner1.3}
\end{figure}

\begin{proposition}\label{thm:pachner3-1}
    Let $(C,\varphi,\Sigma)$ and $(C',\varphi',\Sigma)$ be related by a Pachner 3-1 move. Let the configuration of markings on the affected subcomplex be as in Figure \ref{fig:Pachner1.3}.
    If the spin lift of $\chi_\sigma$ is such that $s_A'=s_A$, then the remaining edge signs are related by
    \begin{equation}\label{eq:pachner3-1}
        s_B' = s_{12}s_B ~~ , \quad
        s_C' = -s_{31}s_C~~ , \quad
        s_{12}s_{23}s_{31}= -1 \ ,
    \end{equation}
	where $s_{12}$, $s_{23}$, $s_{31}$ are arbitrary, subject to the last condition.
\end{proposition}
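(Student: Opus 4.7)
The strategy I would adopt mirrors the proof of Proposition~\ref{thm:pachner2-2}: I will extract all the stated relations by applying the vertex admissibility rules (Corollary~\ref{lem:vertex.rule.inner} or Lemma~\ref{lem:vertex.rule.boundary}) at every vertex of the affected subcomplex, before and after the move, and reading off the resulting shifts $\Delta(D+K)$. Since the spin structure on the underlying surface is unchanged by a Pachner move and the spin lifts and markings outside the pictured subcomplex are untouched, the choice $s_A'=s_A$ picks out (via Lemma~\ref{lem:index.marking}.1) exactly one of the two possible spin lifts of $\chi_\sigma$; the remaining equations in \eqref{eq:pachner3-1} are then forced by the vertex rules.

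First I would treat the central vertex $v$, which appears only in $\mathcal{C}$. All three inner edges $s_{12}, s_{23}, s_{31}$ point toward $v$, so $K=0$; moreover, the marked edges of $\sigma_1, \sigma_2, \sigma_3$ are the outer edges $s_A, s_B, s_C$ respectively, so none of these triangles is entered through its marked edge by the path around $v$, giving $D=0$. Corollary~\ref{lem:vertex.rule.inner} then yields $s_{12}s_{23}s_{31}=(-1)^{0+0+1}=-1$, which is the third relation. (Since a $3$--$1$ move removes an interior vertex, $v$ cannot be on the boundary.)

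For each of the peripheral vertices $v_0, v_1, v_2$ I would apply the vertex rule in $\mathcal{C}$ and in $\mathcal{C}'$ and divide the two equations: the contributions to $D$ and $K$ from outside the subcomplex cancel. Reading off the remaining contributions from Figure~\ref{fig:Pachner1.3} one finds: at $v_0$, $\Delta D=0$ and $\Delta K=1$ (the inner edge $s_{31}$ pointing away from $v_0$ disappears), so $s_A s_{31} s_C = -\,s_A' s_C'$, hence $s_C'=-s_{31} s_C$ after imposing $s_A'=s_A$; at $v_1$, the counterclockwise path through the subcomplex in $\mathcal{C}$ enters $\sigma_2$ through its marked edge $s_B$ while in $\mathcal{C}'$ it enters $\sigma$ through $s_B'$, which is not the marked edge $s_A'$, so $\Delta D=1$ in addition to $\Delta K=1$, giving $s_A s_B s_{12}= s_A' s_B'$ and hence $s_B'=s_{12} s_B$; at $v_2$ an analogous count produces a relation that, after substituting the two previous ones, reduces to the identity $s_{12}s_{23}s_{31}=-1$ already found at $v$, providing a consistency check.

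The only point demanding care is tracking through which edge the counterclockwise path enters each subcomplex triangle, since this determines whether $k_i=0$ and thus the value of $D$; in each case it is fixed by the counterclockwise ordering of triangles around the vertex imposed by the planar embedding in Figure~\ref{fig:Pachner1.3}. If a peripheral vertex $v_i$ happens to lie on the boundary, the argument is identical except that Corollary~\ref{lem:vertex.rule.inner} is replaced by Lemma~\ref{lem:vertex.rule.boundary}; since $\Delta D$ and $\Delta K$ depend only on the (unchanged) subcomplex data around $v_i$, the edge-sign rules are the same in the interior and boundary cases.
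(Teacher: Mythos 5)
Your proposal is correct and follows essentially the same route as the paper: apply Corollary~\ref{lem:vertex.rule.inner} (resp.\ Lemma~\ref{lem:vertex.rule.boundary}) at $v$, $v_0$, $v_1$, $v_2$ before and after the move, obtain the four relations $s_{12}s_{23}s_{31}=-1$, $s_As_{31}s_C=-s_A's_C'$, $s_As_{12}s_B=s_A's_B'$, $s_Bs_{23}s_C=s_B's_C'$, and solve after fixing the spin lift of $\chi_\sigma$ so that $s_A'=s_A$. The parity counts you give for $\Delta D+\Delta K$ at each vertex reproduce exactly the signs in the paper's relations, and your treatment of boundary vertices matches the paper's one-line remark.
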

\begin{proof}   
    We will assume $v_0,v_1,v_2$ are inner. The argument in case some vertices belong to the boundary is the same.
As in Proposition \ref{thm:pachner2-2}, we use Corollary \ref{lem:vertex.rule.inner} at each of those vertices to get the relations
    \begin{align}
	    v~&:~~ s_{12}s_{23} s_{31} = -1 \nonumber \\
        v_0~&:~~ s_A s_{31} s_C = - s_A' s_C'\\
        v_1~&:~~ s_A s_{12} s_B = s_A' s_B' \nonumber \\
        v_2~&:~~ s_Bs_{23} s_C = s_B' s_C'\nonumber 
\end{align}
We can choose 
	$\tilde{\chi}_\sigma$ 
such that $s_A'= s_A$. Then the unique solution to the equations for $v_0,v_1,v_2$ is $s_B' = s_{12} s_B$ and $s_C'= -s_{31} s_C$.
\end{proof}

\section{Application: Two-dimensional lattice spin TFT}\label{sec:2dlatticeTFT}

\subsection{Preliminaries about Graphs}\label{sec:prelim-graph}

We first recall some graph theoretic notions used in \cite{joyal1991geometry}. 
All graphs are finite.
A \emph{graph with boundary}, in the following just {\em graph}, $\Gamma =(\Gamma ,\partial \Gamma )$ is a graph $\Gamma $ together with a set $\partial \Gamma $ of univalent vertices. Elements of $\partial \Gamma $ are called \emph{outer} vertices, and vertices not in $\partial \Gamma $ are called \emph{inner} vertices.
	A graph $\Gamma$ is {\em directed} if each edge is equipped with an orientation.
For a directed graph and a vertex $v$, the set of ingoing edges is denoted as $\text{in}(v)$ and that of outgoing edges as $\text{out}(v)$.
A \emph{polarised graph} is a directed graph together with a choice of linear order on $\text{in}(v)$ and $\text{out}(v)$ for each inner vertex $v$. 
A \emph{progressive graph} is a directed graph with no (oriented) circuits. The \emph{domain} $\text{dom}(\Gamma )$ (resp.\ $\emph{codomain}$ $\text{cod}(\Gamma )$) of a progressive graph is the union of $\text{out}(v)$ (resp.\ $\text{in}(v)$) over all $v\in \partial \Gamma$. An \emph{anchored progressive graph} $\Gamma $ is a progressive graph together with linear orders on both $\text{dom}(\Gamma )$ and $\text{cod}(\Gamma )$.

\medskip

\begin{figure}[tb]
\begin{center}
\raisebox{7em}{a)} \hspace{1em}
\begin{tikzpicture}
        \node[name=A, regular polygon, regular polygon sides=3, draw,minimum size=3cm, dashed]{};
        \draw (A.side 1) -- (A.center);
        \draw (A.side 2) -- (A.center);
        \draw (A.side 3) -- (A.center);
        \node[above right=3pt] at (A.center) {\tiny{$0$}};
        \node[above left=3pt] at (A.center) {\tiny{$1$}};
        \node[below right=3pt and -2pt] at (A.center) {\tiny{$2$}};
        \node[above=4pt] at (A.center) {$\sigma $};
        \node[above right] at (A.side 3) {$d^2_0(\sigma )$};
        \node[above left] at (A.side 1) {$d^2_1(\sigma )$};
        \node[below] at (A.side 2) {$d^2_2(\sigma )$};
    \end{tikzpicture}
\hspace{5em}
\raisebox{7em}{b)}\hspace{1em}
\raisebox{1em}{
\begin{tikzpicture}
    \begin{scope}[decoration={
        markings,
        mark=at position 0.5 with {\arrow{stealth}}}
    ] 
    \draw[dashed, postaction=decorate] (0cm,1cm) -- (0cm,2cm);
    \draw[dashed, postaction=decorate] (0cm,0cm) -- (0cm,1cm);
    \draw (-1cm,1cm) -- (0cm,1cm);
    \draw (1cm,1cm) -- (0cm,1cm);
    \end{scope}
    \node[] at (0cm,1cm) {$\times$};
    \node[above left=-2pt and 4pt] at (0cm,1cm) {\tiny{$0$}};
    \node[above right=-2pt and 4pt] at (0cm,1cm) {\tiny{$1$}};
    \end{tikzpicture}}
\end{center}         
    \caption{The polarisation of the graph $\Gamma$. (a) A single triangle $\sigma $ (in dashed lines) together with the dual graph. The linear order of the edges at the vertex in the centre is indicated by small numbers $0$, $1$, $2$ which corresponds to the linear order given on the edges of $\sigma$ by the maps $d^2_0$, $d^2_1$ and $d^2_2$. (b) A single edge of the triangulation as a dashed line, together with its orientation. The dual edge is drawn solid, and in its centre a new vertex has been placed. The linear order on the edges of that vertex is indicated by the numbers $0$ and $1$.}
    \label{fig:graph.order}
\end{figure}
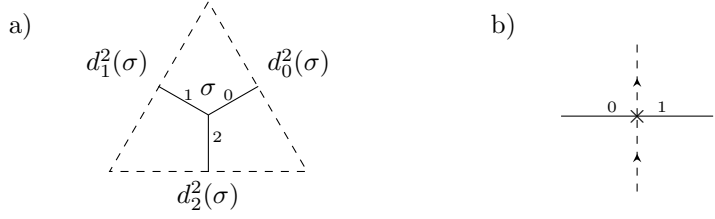

Starting from a marked combinatorial surface $\mathcal{C}$ (Definition \ref{def:marked-triang-surf}) we produce a progressive polarised graph $\Gamma(\mathcal{C})$ as follows:
\begin{enumerate}
    \item Take the 1-skeleton of the Poincar\'e dual of $\mathcal{C}$. This yields a graph $\Gamma '$ with only univalent and trivalent vertices. 
	(The univalent vertices sit at the end of edges dual to edges of $\mathcal{C}$ that lie on the boundary.)
    Let $\partial \Gamma $ be the set of all univalent vertices of $\Gamma '$.
    \item Put an additional vertex on each 
	edge.
The resulting graph $(\Gamma ,\partial \Gamma )$ has bi- and trivalent inner vertices, with each inner edge bounding one bi- and one trivalent vertex.
	(An edge is inner if none of its bounding vertices is in $\partial \Gamma$.)
	Note that every edge bounds exactly one bivalent vertex.
    \item Turn the graph $\Gamma $ into a directed graph by orienting each edge
	away from the bivalent vertex.
    \item For every trivalent vertex $v$, order the set $\text{in}(v)$ as depicted in Figure \ref{fig:graph.order}\,a), using the boundary maps $d^2$ of $\mathcal{C}$. 
	(The set $\text{out}(v)$ is empty by the orientation choice in 3.)
    \item For every bivalent vertex $v$, order the set $\text{out}(v)$ as depicted in Figure \ref{fig:graph.order}\,b), using the boundary maps $d^1$ of $\mathcal{C}$.
\end{enumerate}
	Since every edge starts at a bivalent vertex, the graph $\Gamma(\mathcal{C})$ constructed above has no circuits, so that it is indeed a polarised progressive graph. By the same argument, $\text{dom}(\Gamma(\mathcal{C}))=\emptyset$.

\medskip

To anchor this graph $\Gamma (\mathcal{C})$, we have to give an ordering on 
	$\text{cod}(\Gamma )$. 
We first order the elements of $\text{cod}(\Gamma )$ by the number of the boundary component of $\mathcal{C}$ they start from. 
We then order the three edges for each boundary 
	component according to the order on $\underline\Delta$ (see Figure \ref{fig:st.triangle}), transported to the $i$'th boundary via the parametrisation map $f_i:\partial \underline\Delta  \to \partial \mathcal{C}_i$.

\medskip

Let $\mathcal{S}$ be a symmetric monoidal category. 
	We will assume $\mathcal{S}$ to be strict monoidal in order to simplify notation. (But we will nonetheless think of $\mathbf{Vect}$ and $\mathbf{SVect}$ as examples, leaving it to the reader to add associators in the relevant places.)
The symmetric braiding will be denoted as 
\begin{equation}
	\sigma_{U,V} : U \otimes V \to V \otimes U \ .
\end{equation}
We will briefly sketch how to pass from an anchored polarised progressive graph $\Gamma$ to a morphism in $\mathcal{S}$, for details see \cite[Ch.\,2]{joyal1991geometry}. Fix a {\em valuation} on $\Gamma$, that is, to each edge of $\Gamma$ assign an object of $\mathcal{S}$, and to each vertex $v$ a morphism in $\mathcal{S}$ compatible with the objects and linear order on $\text{in}(v)$ and $\text{out}(v)$. Since the graph is progressive, one can fix a order on the vertices of $\Gamma$ such that for $v \ge v'$, there are no edges directed from $v$ to $v'$ (``all edges go up''). 
The graph $\Gamma$ together with the valuation is called a {\em diagram in} $\mathcal{S}$.

Now compose all the morphisms assigned to the vertices in the chosen order, using the symmetric structure of $\mathcal{S}$ to match the in-- and outgoing objects of the morphisms as dictated by the edges and tensoring with identity morphisms where necessary. This results in a morphism in $\mathcal{S}$ from the tensor product of the objects assigned to $\text{dom}(\Gamma)$ in the chosen order to the corresponding product for $\text{cod}(\Gamma)$. The resulting morphism is called the {\em value} of the diagram. By \cite[Cor.\,2.3]{joyal1991geometry}, the value of a diagram is independent of the ordering chosen on the vertices \cite[Ch.\,2]{joyal1991geometry}.\footnote{
    Strictly speaking at some point we have to take the geometrical realisation of the graph, since \cite{joyal1991geometry} deals with topological graphs. Different choices of the geometrical realisation lead however to isomorphic diagrams, which due to  \cite[Cor.\,2.3]{joyal1991geometry} represent identical morphisms.
	}

\medskip

We proceed to define a valuation for anchored polarised progressive graphs of the form $\Gamma(\mathcal{C})$ as described above. Choose $A\in \text{Ob}(\mathcal{S})$ and morphisms 
\begin{equation} \label{eq:morph-for-val}
    \begin{split}
        c_1&:\mathbf{1}_\mathcal{S} \to A\otimes A \ , \\
        c_{-1}&: \mathbf{1}_\mathcal{S} \to A\otimes A \ ,\\
        t&:A\otimes A\otimes A \to \mathbf{1}_\mathcal{S} \ ,
    \end{split}
\end{equation}
in $\mathcal{S}$. 

Let $\Lambda = ((\mathcal{C},f_i,d^1_0,d^2_0), (\varphi, \tilde\chi_\sigma) ,(\Sigma,\tilde\varphi_i))$ be a spin triangulated surface (Definition \ref{def:spin-triang-surf-def}). To each bivalent vertex in $\Gamma (\mathcal{C})$ corresponds an (inner or boundary) edge $e$ in the triangulation of $\Sigma$, with an edge sign $s(e)$. To every such bivalent vertex assign the map $c_{s(e)}$.
To every trivalent vertex assign the map $t$.
This defines a valuation on $\Gamma(\mathcal{C})$ and therefore a morphism in $\mathcal{S}$. 
Let $B$ be the number of boundary components of $\Sigma$. Then $|\text{cod}(\Gamma)| = 3B$ while $\text{dom}(\Gamma)$ is empty. We denote the value of the diagram by
\begin{equation} \label{eq:Ttriang-def}
	T_\text{triang}(\Lambda) : \mathbf{1}_\mathcal{S} \longrightarrow (A^{\otimes 3})^{\otimes B}   \ .
\end{equation}

\subsection{Local moves}\label{sec:localmoves}

The local moves from Section \ref{sec:moves-notriang} and \ref{sec:moves-pachner} relate spin triangulations and consequently the corresponding diagrams. From these moves we will derive a sufficient set of relations on the maps $c_1$, $c_{-1}$ and $t$ such that the resulting morphism $T_\text{triang}$ is invariant under the moves.
Some of these conditions are easiest presented in the standard graphical notation for morphisms in a tensor category, see e.g.\ \cite{baki}.
This is basically the language of diagrams where the vertices are drawn as boxes labelled by the morphism assigned by the valuation. The separation into $\text{in}(v)$ and $\text{out}(v)$ as well as the linear order is encoded by how the lines attach to the boxes. For example, a morphism $f : A \otimes B \otimes C \to D \otimes E$ is drawn as:
\begin{equation}\label{eq:morph-and-edge-order-convention}
\raisebox{-5em}{
    \begin{tikzpicture}
        \node[name=f,draw, minimum width=1.5cm] at (0,0) {$f$};
        \coordinate (fA) at ($(f.south)!0.66!(f.south west)$);
        \coordinate (fC) at ($(f.south)!0.66!(f.south east)$);
        \coordinate (fD) at ($(f.north)!0.5!(f.north west)$);
        \coordinate (fE) at ($(f.north)!0.5!(f.north east)$);
        \coordinate (A) at (-0.7cm,-1.5cm);
        \coordinate (B) at (0cm,-1.5cm);
        \coordinate (C) at (0.7cm,-1.5cm);
        \coordinate (D) at (-0.5cm,1.5cm);
        \coordinate (E) at (0.5cm,1.5cm);
        \coordinate (su) at (0cm,0.3cm);
        \coordinate (sd) at (0cm,-0.3cm);
        \draw (A) .. controls ($(A)+(su)$) and ($(fA)+(sd)$) .. (fA);
        \draw (B) -- (f);
        \draw (C) .. controls ($(C)+(su)$) and ($(fC)+(sd)$) .. (fC);
        \draw (fD) .. controls ($(fD)+(su)$) and ($(D)+(sd)$) .. (D);
        \draw (fE) .. controls ($(fE)+(su)$) and ($(E)+(sd)$) .. (E);
        \node[right] at (A) {$A$};
        \node[right] at (B) {$B$};
        \node[right] at (C) {$C$};
        \node[right] at (D) {$D$};
        \node[right] at (E) {$E$};
        \node[below left] at (fA) {\tiny{$0$}};
        \node[below left] at (f.south) {\tiny{$1$}};
        \node[below left] at (fC) {\tiny{$2$}};
        \node[above left] at (fD) {\tiny{$0$}};
        \node[above left] at (fE) {\tiny{$1$}};
        \draw [decorate,decoration={brace,amplitude=5},xshift=-4pt,yshift=0pt]
(-1cm,-1.5cm) -- (-1cm,-0.5cm) node [black,midway,xshift=-0.6cm] {in};
        \draw [decorate,decoration={brace,amplitude=5},xshift=-4pt,yshift=0pt]
(-1cm,0.5cm) -- (-1cm,1.5cm) node [black,midway,xshift=-0.6cm] {out};
    \end{tikzpicture}}
\end{equation}

\begin{enumerate}  \setlength{\leftskip}{-1.5em}
\item {\em Edge orientation change.}
By Lemma \ref{lem:index.marking}\,(2), replacing the orientation of a single edge $e$ corresponds to a change of the edge sign $s(e)$. This corresponds to exchanging $c_1$ and $c_{-1}$ on the corresponding bivalent vertex $v$, together with a change of linear order on the outgoing edges. 
We thus require that
\begin{equation}
    c_1 = \sigma_{A,A} \circ c_{-1} \ .
    \label{eq:comparison.of.copairings}
\end{equation}

\item {\em Leaf exchange automorphism on a single triangle.}
By Lemma \ref{lem:index.marking}\,(1), exchanging the spin lift of the characteristic map $\chi_\sigma $ for a single triangle $\sigma $ corresponds to inverting the edge signs on its three bounding edges $e_0, e_1, e_2$. We thus require that
\begin{equation}
    \raisebox{-3em}{
    \begin{tikzpicture}
        \node[name=t, draw, minimum width=1.5cm] at (0cm,2cm) {$t$};
        \coordinate (tl) at ($(t.south west)!0.2!(t.south east)$);
        \coordinate (tr) at ($(t.south west)!0.8!(t.south east)$);
        \node[name=c0, draw] at (-1.5cm,0cm) {$c_{s(e_0)}$};
        \coordinate (c0l) at ($(c0.north west)!0.2!(c0.north east)$);
        \coordinate (c0r) at ($(c0.north west)!0.8!(c0.north east)$);
        \node[name=c1, draw] at (0cm,0cm) {$c_{s(e_1)}$};
        \coordinate (c1l) at ($(c1.north west)!0.2!(c1.north east)$);
        \coordinate (c1r) at ($(c1.north west)!0.8!(c1.north east)$);
        \node[name=c2, draw] at (1.5cm,0cm) {$c_{s(e_2)}$};
        \coordinate (c2l) at ($(c2.north west)!0.2!(c2.north east)$);
        \coordinate (c2r) at ($(c2.north west)!0.8!(c2.north east)$);
        \coordinate (su) at (0cm,0.5cm);
        \coordinate (sd) at (0cm,-0.5cm);
        \coordinate (out0) at (2cm,2.5cm);
        \coordinate (out1) at (2.5cm,2.5cm);
        \coordinate (out2) at (3cm,2.5cm);
        \draw (c0l) .. controls ($(c0l)+(su)$) and ($(tl)+(sd)$) .. (tl);
        \draw (c0r) .. controls ($(c0r)+(su)$) and ($(out0)+(sd)$) .. (out0);
        \draw (c1l) .. controls ($(c1l)+(su)$) and ($(t.south)+(sd)$) .. (t.south);
        \draw (c1r) .. controls ($(c1r)+(su)$) and ($(out1)+(sd)$) .. (out1);
        \draw (c2l) .. controls ($(c2l)+(su)$) and ($(tr)+(sd)$) .. (tr);
        \draw (c2r) .. controls ($(c2r)+(su)$) and ($(out2)+(sd)$) .. (out2);
    \end{tikzpicture}}
    =
    \raisebox{-3em}{
    \begin{tikzpicture}
        \node[name=t, draw, minimum width=1.5cm] at (0cm,2cm) {$t$};
        \coordinate (tl) at ($(t.south west)!0.2!(t.south east)$);
        \coordinate (tr) at ($(t.south west)!0.8!(t.south east)$);
        \node[name=c0, draw] at (-1.5cm,0cm) {$c_{-s(e_0)}$};
        \coordinate (c0l) at ($(c0.north west)!0.2!(c0.north east)$);
        \coordinate (c0r) at ($(c0.north west)!0.8!(c0.north east)$);
        \node[name=c1, draw] at (0cm,0cm) {$c_{-s(e_1)}$};
        \coordinate (c1l) at ($(c1.north west)!0.2!(c1.north east)$);
        \coordinate (c1r) at ($(c1.north west)!0.8!(c1.north east)$);
        \node[name=c2, draw] at (1.5cm,0cm) {$c_{-s(e_2)}$};
        \coordinate (c2l) at ($(c2.north west)!0.2!(c2.north east)$);
        \coordinate (c2r) at ($(c2.north west)!0.8!(c2.north east)$);
        \coordinate (su) at (0cm,0.5cm);
        \coordinate (sd) at (0cm,-0.5cm);
        \coordinate (out0) at (2cm,2.5cm);
        \coordinate (out1) at (2.5cm,2.5cm);
        \coordinate (out2) at (3cm,2.5cm);
        \draw (c0l) .. controls ($(c0l)+(su)$) and ($(tl)+(sd)$) .. (tl);
        \draw (c0r) .. controls ($(c0r)+(su)$) and ($(out0)+(sd)$) .. (out0);
        \draw (c1l) .. controls ($(c1l)+(su)$) and ($(t.south)+(sd)$) .. (t.south);
        \draw (c1r) .. controls ($(c1r)+(su)$) and ($(out1)+(sd)$) .. (out1);
        \draw (c2l) .. controls ($(c2l)+(su)$) and ($(tr)+(sd)$) .. (tr);
        \draw (c2r) .. controls ($(c2r)+(su)$) and ($(out2)+(sd)$) .. (out2);
    \end{tikzpicture}}
    \label{eq:alg.leaf}
\end{equation}
Written out explicitly, this identity reads
\begin{equation}
	f(s(e_0),s(e_1),s(e_2)) = f(-s(e_0),-s(e_1),-s(e_2)) \ ,
\end{equation}
where
\begin{align}
	f(\alpha,\beta,\gamma) 
	&=
	\big( t \otimes \id_{A \otimes A \otimes A} \big)
	\circ
	\big( \id_{A \otimes A} \otimes c_{A,A} \otimes \id_{A \otimes A} \big) 
	\\
	& \qquad \circ
	\big( \id_A \otimes c_{A,A} \otimes c_{A,A} \otimes \id_A \big) 
	\circ
	\big( c_\alpha \otimes c_\beta \otimes c_\gamma \big)  \ .
	\nonumber 
\end{align}

\item {\em Cyclic permutation of boundary edges for a single triangle.}
By Lemma \ref{lem:index.marking}\,(3), the value of the diagram will not change under such a cyclic permutation if we require the identity
\begin{equation}
    \raisebox{-3em}{
    \begin{tikzpicture}
        \node[name=t, draw, minimum width=1.5cm] at (0cm,2cm) {$t$};
        \coordinate (tl) at ($(t.south west)!0.2!(t.south east)$);
        \coordinate (tr) at ($(t.south west)!0.8!(t.south east)$);
        \node[name=c0, draw] at (-1.5cm,0cm) {$c_{s(e_0)}$};
        \coordinate (c0l) at ($(c0.north west)!0.2!(c0.north east)$);
        \coordinate (c0r) at ($(c0.north west)!0.8!(c0.north east)$);
        \node[name=c1, draw] at (0cm,0cm) {$c_{s(e_1)}$};
        \coordinate (c1l) at ($(c1.north west)!0.2!(c1.north east)$);
        \coordinate (c1r) at ($(c1.north west)!0.8!(c1.north east)$);
        \node[name=c2, draw] at (1.5cm,0cm) {$c_{s(e_2)}$};
        \coordinate (c2l) at ($(c2.north west)!0.2!(c2.north east)$);
        \coordinate (c2r) at ($(c2.north west)!0.8!(c2.north east)$);
        \coordinate (su) at (0cm,0.5cm);
        \coordinate (sd) at (0cm,-0.5cm);
        \coordinate (out0) at (2cm,2.5cm);
        \coordinate (out1) at (2.5cm,2.5cm);
        \coordinate (out2) at (3cm,2.5cm);
        \draw (c0l) .. controls ($(c0l)+(su)$) and ($(tl)+(sd)$) .. (tl);
        \draw (c0r) .. controls ($(c0r)+(su)$) and ($(out0)+(sd)$) .. (out0);
        \draw (c1l) .. controls ($(c1l)+(su)$) and ($(t.south)+(sd)$) .. (t.south);
        \draw (c1r) .. controls ($(c1r)+(su)$) and ($(out1)+(sd)$) .. (out1);
        \draw (c2l) .. controls ($(c2l)+(su)$) and ($(tr)+(sd)$) .. (tr);
        \draw (c2r) .. controls ($(c2r)+(su)$) and ($(out2)+(sd)$) .. (out2);
    \end{tikzpicture}}
    =
    \raisebox{-3em}{
    \begin{tikzpicture}
        \node[name=t, draw, minimum width=1.5cm] at (0cm,2cm) {$t$};
        \coordinate (tl) at ($(t.south west)!0.2!(t.south east)$);
        \coordinate (tr) at ($(t.south west)!0.8!(t.south east)$);
        \node[name=c0, draw] at (-1.5cm,0cm) {$c_{-s(e_0)}$};
        \coordinate (c0l) at ($(c0.north west)!0.2!(c0.north east)$);
        \coordinate (c0r) at ($(c0.north west)!0.8!(c0.north east)$);
        \node[name=c1, draw] at (0cm,0cm) {$c_{s(e_1)}$};
        \coordinate (c1l) at ($(c1.north west)!0.2!(c1.north east)$);
        \coordinate (c1r) at ($(c1.north west)!0.8!(c1.north east)$);
        \node[name=c2, draw] at (1.5cm,0cm) {$c_{s(e_2)}$};
        \coordinate (c2l) at ($(c2.north west)!0.2!(c2.north east)$);
        \coordinate (c2r) at ($(c2.north west)!0.8!(c2.north east)$);
        \coordinate (su) at (0cm,0.5cm);
        \coordinate (sd) at (0cm,-0.5cm);
        \coordinate (out0) at (2cm,2.5cm);
        \coordinate (out1) at (2.5cm,2.5cm);
        \coordinate (out2) at (3cm,2.5cm);
        \draw (c0l) .. controls ($(c0l)+(su)$) and ($(tr)+(sd)$) .. (tr);
        \draw (c0r) .. controls ($(c0r)+(su)$) and ($(out0)+(sd)$) .. (out0);
        \draw (c1l) .. controls ($(c1l)+(su)$) and ($(tl)+(sd)$) .. (tl);
        \draw (c1r) .. controls ($(c1r)+(su)$) and ($(out1)+(sd)$) .. (out1);
        \draw (c2l) .. controls ($(c2l)+(su)$) and ($(t.south)+(sd)$) .. (t.south);
        \draw (c2r) .. controls ($(c2r)+(su)$) and ($(out2)+(sd)$) .. (out2);
    \end{tikzpicture}}
    \label{eq:alg.cyclic}
\end{equation}

\item {\em Pachner 2-2 move.} 
By Proposition \ref{thm:pachner2-2}, a sufficient condition for invariance is
\begin{equation}
    \begin{split}
    \begin{tikzpicture}
        \node[name=t1, draw, minimum width=1.5cm] at (0cm,2cm) {$t$};
        \coordinate (t1l) at ($(t1.south west)!0.2!(t1.south east)$);
        \coordinate (t1r) at ($(t1.south west)!0.8!(t1.south east)$);
        \node[name=t2, draw, minimum width=1.5cm] at (2.5cm,2cm) {$t$};
        \coordinate (t2l) at ($(t2.south west)!0.2!(t2.south east)$);
        \coordinate (t2r) at ($(t2.south west)!0.8!(t2.south east)$);
        \node[name=cA, draw] at (-1cm,0cm) {$c_{s_A}$};
        \coordinate (cAl) at ($(cA.north west)!0.2!(cA.north east)$);
        \coordinate (cAr) at ($(cA.north west)!0.8!(cA.north east)$);
        \node[name=cB, draw] at (0cm,0cm) {$c_{s_B}$};
        \coordinate (cBl) at ($(cB.north west)!0.2!(cB.north east)$);
        \coordinate (cBr) at ($(cB.north west)!0.8!(cB.north east)$);
        \node[name=c, draw] at (1cm,0cm) {$c_{s}$};
        \coordinate (cl) at ($(c.north west)!0.2!(c.north east)$);
        \coordinate (cr) at ($(c.north west)!0.8!(c.north east)$);
        \node[name=cC, draw] at (2cm,0cm) {$c_{s_C}$};
        \coordinate (cCl) at ($(cC.north west)!0.2!(cC.north east)$);
        \coordinate (cCr) at ($(cC.north west)!0.8!(cC.north east)$);
        \node[name=cD, draw] at (3cm,0cm) {$c_{s_D}$};
        \coordinate (cDl) at ($(cD.north west)!0.2!(cD.north east)$);
        \coordinate (cDr) at ($(cD.north west)!0.8!(cD.north east)$);
        \coordinate (su) at (0cm,0.5cm);
        \coordinate (sd) at (0cm,-0.5cm);
        \coordinate (outA) at (4cm,2.5cm);
        \coordinate (outB) at (4.5cm,2.5cm);
        \coordinate (outC) at (5cm,2.5cm);
        \coordinate (outD) at (5.5cm,2.5cm);
        \draw (cAl) .. controls ($(cAl)+(su)$) and ($(t1.south)+(sd)$) .. (t1.south);
        \draw (cAr) .. controls ($(cAr)+(su)$) and ($(outA)+2.5*(sd)$) .. (outA);
        \draw (cBl) .. controls ($(cBl)+(su)$) and ($(t1r)+(sd)$) .. (t1r);
        \draw (cBr) .. controls ($(cBr)+(su)$) and ($(outB)+2*(sd)$) .. (outB);
        \draw (cCl) .. controls ($(cCl)+(su)$) and ($(t2.south)+(sd)$) .. (t2.south);
        \draw (cCr) .. controls ($(cCr)+(su)$) and ($(outC)+(sd)$) .. (outC);
        \draw (cDl) .. controls ($(cDl)+(su)$) and ($(t2r)+(sd)$) .. (t2r);
        \draw (cDr) .. controls ($(cDr)+(su)$) and ($(outD)+(sd)$) .. (outD);
        \draw (cl) .. controls ($(cl)+(su)$) and ($(t1l)+(sd)$) .. (t1l);
        \draw (cr) .. controls ($(cr)+(su)$) and ($(t2l)+(sd)$) .. (t2l);
    \end{tikzpicture}\\
    =  \raisebox{-3em}{ \begin{tikzpicture}
        \node[name=t1, draw, minimum width=1.5cm] at (0cm,2cm) {$t$};
        \coordinate (t1l) at ($(t1.south west)!0.2!(t1.south east)$);
        \coordinate (t1r) at ($(t1.south west)!0.8!(t1.south east)$);
        \node[name=t2, draw, minimum width=1.5cm] at (2.5cm,2cm) {$t$};
        \coordinate (t2l) at ($(t2.south west)!0.2!(t2.south east)$);
        \coordinate (t2r) at ($(t2.south west)!0.8!(t2.south east)$);
        \node[name=cA, draw] at (-1.2cm,0cm) {$c_{s_A}$};
        \coordinate (cAl) at ($(cA.north west)!0.2!(cA.north east)$);
        \coordinate (cAr) at ($(cA.north west)!0.8!(cA.north east)$);
        \node[name=cB, draw] at (0cm,0cm) {$c_{-ss_B}$};
        \coordinate (cBl) at ($(cB.north west)!0.2!(cB.north east)$);
        \coordinate (cBr) at ($(cB.north west)!0.8!(cB.north east)$);
        \node[name=c, draw] at (1cm,0cm) {$c_{s}$};
        \coordinate (cl) at ($(c.north west)!0.2!(c.north east)$);
        \coordinate (cr) at ($(c.north west)!0.8!(c.north east)$);
        \node[name=cC, draw] at (2cm,0cm) {$c_{-s_C}$};
        \coordinate (cCl) at ($(cC.north west)!0.2!(cC.north east)$);
        \coordinate (cCr) at ($(cC.north west)!0.8!(cC.north east)$);
        \node[name=cD, draw] at (3.3cm,0cm) {$c_{-ss_D}$};
        \coordinate (cDl) at ($(cD.north west)!0.2!(cD.north east)$);
        \coordinate (cDr) at ($(cD.north west)!0.8!(cD.north east)$);
        \coordinate (su) at (0cm,0.5cm);
        \coordinate (sd) at (0cm,-0.5cm);
        \coordinate (outA) at (4cm,2.5cm);
        \coordinate (outB) at (4.5cm,2.5cm);
        \coordinate (outC) at (5cm,2.5cm);
        \coordinate (outD) at (5.5cm,2.5cm);
        \draw (cBl) .. controls ($(cBl)+(su)$) and ($(t1.south)+(sd)$) .. (t1.south);
        \draw (cAr) .. controls ($(cAr)+(su)$) and ($(outA)+2.5*(sd)$) .. (outA);
        \draw (cCl) .. controls ($(cCl)+(su)$) and ($(t1r)+(sd)$) .. (t1r);
        \draw (cBr) .. controls ($(cBr)+(su)$) and ($(outB)+2*(sd)$) .. (outB);
        \draw (cDl) .. controls ($(cDl)+(su)$) and ($(t2.south)+(sd)$) .. (t2.south);
        \draw (cCr) .. controls ($(cCr)+(su)$) and ($(outC)+(sd)$) .. (outC);
        \draw (cAl) .. controls ($(cAl)+(su)$) and ($(t2r)+(sd)$) .. (t2r);
        \draw (cDr) .. controls ($(cDr)+(su)$) and ($(outD)+(sd)$) .. (outD);
        \draw (cl) .. controls ($(cl)+(su)$) and ($(t1l)+(sd)$) .. (t1l);
        \draw (cr) .. controls ($(cr)+(su)$) and ($(t2l)+(sd)$) .. (t2l);
    \end{tikzpicture}}
    \end{split}
    \label{eq:alg.pachner2-2}
\end{equation}

\item {\em Pachner 3-1 move and its inverse.}
By Proposition \ref{thm:pachner3-1}, a sufficient condition for invariance is
\begin{equation}\label{eq:alg.pachner3-1}
    \begin{split}
    \begin{tikzpicture}
        \foreach \N in {1,2,3} {
        \node[name=t\N, draw, minimum width=1.5cm, xshift=2*\N cm] at (0cm,2cm) {$t$};
        \coordinate (tl\N) at ($(t\N.south west)!0.2!(t\N.south east)$);
        \coordinate (tr\N) at ($(t\N.south west)!0.8!(t\N.south east)$);
        \coordinate (tc\N) at (t\N.south);
    }
        \foreach \N/\M in {1/A,2/12,3/B,4/31,5/C,6/23}{
            \node[name=c\N, draw, xshift=\N cm] at (0cm,0cm) {$c_{s_{\M}}$};
        \coordinate (cl\N) at ($(c\N.north west)!0.3!(c\N.north east)$);
        \coordinate (cr\N) at ($(c\N.north west)!0.7!(c\N.north east)$);
    }
        \foreach \N in {1,2,3}{
            \coordinate[xshift=0.5*\N cm] (out\N) at (7cm,2.5cm);
    }
        \coordinate (su) at (0cm,0.5cm);
        \coordinate (sd) at (0cm,-0.5cm);
        \foreach \N/\M in {1/1,2/3,3/5} {
            \draw (cl\M) .. controls ($(cl\M)+(su)$) and ($(tl\N)+(sd)$) .. (tl\N);
            \draw (cr\M) .. controls ($(cr\M)+(su)$) and ($(out\N)+4*(sd)$) .. (out\N);
        }
        \draw (cl2) .. controls ($(cl2)+(su)$) and ($(tc1)+(sd)$) .. (tc1);
        \draw (cr2) .. controls ($(cr2)+(su)$) and ($(tr2)+(sd)$) .. (tr2);
        \draw (cl4) .. controls ($(cl4)+(su)$) and ($(tc3)+(sd)$) .. (tc3);
        \draw (cr4) .. controls ($(cr4)+(su)$) and ($(tr1)+(sd)$) .. (tr1);
        \draw (cl6) .. controls ($(cl6)+(su)$) and ($(tc2)+(sd)$) .. (tc2);
        \draw (cr6) .. controls ($(cr6)+(su)$) and ($(tr3)+(sd)$) .. (tr3);
    \end{tikzpicture}\\
    = \qquad
    \raisebox{-1cm}{
    \begin{tikzpicture}
        \node[name=t, draw, minimum width=1.5cm] at (0cm,2cm) {$t$};
        \coordinate (tl) at ($(t.south west)!0.2!(t.south east)$);
        \coordinate (tr) at ($(t.south west)!0.8!(t.south east)$);
        \node[name=c0, draw] at (-1.5cm,0cm) {$c_{s_A}$};
        \coordinate (c0l) at ($(c0.north west)!0.3!(c0.north east)$);
        \coordinate (c0r) at ($(c0.north west)!0.7!(c0.north east)$);
        \node[name=c1, draw] at (0cm,0cm) {$c_{s_{12}s_B}$};
        \coordinate (c1l) at ($(c1.north west)!0.3!(c1.north east)$);
        \coordinate (c1r) at ($(c1.north west)!0.7!(c1.north east)$);
        \node[name=c2, draw] at (1.5cm,0cm) {$c_{-s_{31}s_C}$};
        \coordinate (c2l) at ($(c2.north west)!0.3!(c2.north east)$);
        \coordinate (c2r) at ($(c2.north west)!0.7!(c2.north east)$);
        \coordinate (su) at (0cm,0.5cm);
        \coordinate (sd) at (0cm,-0.5cm);
        \coordinate (out0) at (2cm,2.5cm);
        \coordinate (out1) at (2.5cm,2.5cm);
        \coordinate (out2) at (3cm,2.5cm);
        \draw (c0l) .. controls ($(c0l)+(su)$) and ($(tl)+(sd)$) .. (tl);
        \draw (c0r) .. controls ($(c0r)+(su)$) and ($(out0)+(sd)$) .. (out0);
        \draw (c1l) .. controls ($(c1l)+(su)$) and ($(t.south)+(sd)$) .. (t.south);
        \draw (c1r) .. controls ($(c1r)+(su)$) and ($(out1)+(sd)$) .. (out1);
        \draw (c2l) .. controls ($(c2l)+(su)$) and ($(tr)+(sd)$) .. (tr);
        \draw (c2r) .. controls ($(c2r)+(su)$) and ($(out2)+(sd)$) .. (out2);
    \end{tikzpicture}}
    \end{split}
\end{equation}
whenever $s_{12}s_{23}s_{31}=-1$.
\end{enumerate}

\begin{proposition}    \label{prop:invariance}
Let $A$ and $t,c_{\pm1}$ satisfy relations 1--5 above.
Let $\Lambda_1$ and $\Lambda_2$ be two spin triangulated surfaces with the same underlying spin surface $\Sigma$. Then $T_\text{triang}(\Lambda_1) = T_\text{triang}(\Lambda_2)$.
\end{proposition}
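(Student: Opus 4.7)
The plan is to reduce the claim to the statement that $T_\text{triang}$ is invariant under each of the elementary local moves that can relate two spin triangulated surfaces with the same underlying spin surface $\Sigma$. These moves are: (a) precomposing a single spin lift $\tilde\chi_\sigma$ with the leaf exchange automorphism; (b) reversing the orientation of a single edge; (c) cyclically rotating the preferred edge of a single triangle; (d) a 2-2 Pachner move on a pair of adjacent triangles; (e) a 3-1 Pachner move and its inverse. By the results of Sections \ref{sec:moves-notriang}--\ref{sec:moves-pachner}, each of these moves prescribes exactly how the edge signs $s(e)$ must transform, because the edge signs are determined by the fixed spin structure on $\Sigma$.

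Next I would argue that any two spin triangulated surfaces $\Lambda_1,\Lambda_2$ over the same spin surface $\Sigma$ are connected by a finite sequence of these moves. The underlying triangulations can be linked by a sequence of (smooth) Pachner moves (leaving the fixed boundary triangulation invariant, which is possible since Pachner moves are strictly interior); this is the standard Pachner theorem in the smooth setting. Given a fixed triangulation, any two markings are connected by a sequence of moves of type (b) and (c), since (b) lets us put each edge in any chosen orientation and (c) lets us cycle the preferred edge of any triangle independently. Finally, given a fixed marked triangulation, any two systems of spin lifts $\tilde\chi_\sigma$ differ on each triangle by the leaf exchange (Lemma \ref{lem:Smap.lift}), which is precisely move (a).

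The invariance of $T_\text{triang}$ under each move then follows directly from the corresponding relation:
move (b) corresponds to relation 1 (equation \eqref{eq:comparison.of.copairings}),
move (a) corresponds to relation 2 (equation \eqref{eq:alg.leaf}, combined with the change of edge signs from Lemma \ref{lem:index.marking}(1)),
move (c) corresponds to relation 3 (equation \eqref{eq:alg.cyclic}, combined with Lemma \ref{lem:index.marking}(3)),
move (d) corresponds to relation 4 (equation \eqref{eq:alg.pachner2-2}, with edge signs transforming as in Proposition \ref{thm:pachner2-2}),
and move (e) to relation 5 (equation \eqref{eq:alg.pachner3-1}, Proposition \ref{thm:pachner3-1}). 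In each case the diagram for $T_\text{triang}$ changes only in the local sub-diagram associated with the affected triangles, and the identities 1--5 imply equality as morphisms in $\mathcal{S}$.

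The main technical obstacle is matching the standard configurations shown in Figures \ref{fig:Pachner2.2} and \ref{fig:Pachner1.3} to the arbitrary markings one actually encounters, and ensuring the spin lifts of the newly introduced triangles can be chosen so that the normalising conditions ($s_A'=s_A$, $s'=s$ in the 2-2 move; $s_A'=s_A$ in the 3-1 move) hold. Before applying the Pachner move one first uses moves (b) and (c) to bring the local marking into the standard form, and Lemma \ref{lem:Smap.lift} (together with move (a)) guarantees that a spin lift with the required normalisation exists on each new triangle. After the Pachner move one then restores, by further moves (b), (c), (a), whatever marking and spin lift data is desired, so the entire reduction remains within the class of elementary moves covered by relations 1--5.
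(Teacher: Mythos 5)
Your overall strategy is the same as the paper's: first connect the two triangulations by Pachner moves, then match markings and spin lifts by the local moves of Lemma \ref{lem:index.marking}, and invoke relations 1--5 for invariance at each step. The second half of your plan is fine: matching markings via moves (b),(c) and then spin lifts via move (a) (using Lemma \ref{lem:Smap.lift}) forces the edge signs to agree, since they are computed from exactly this data; the paper reaches the same conclusion slightly differently, by citing the classification Theorems \ref{thm:reconstruction} and \ref{thm:parametrisation-of-spin} to conclude $(d_{[1]},s_1)\sim_\text{fix}(d_{[2b]},s_{2b})$, but your direct matching is an equivalent (essentially inlined) version of the injectivity argument there.

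The one substantive gap is your appeal to ``the standard Pachner theorem in the smooth setting.'' Pachner's theorem is a statement about combinatorial (PL) manifolds; here $\Lambda_1$ and $\Lambda_2$ come with two different \emph{smooth} triangulation maps $\varphi_{[1]},\varphi_{[2]}:|\mathcal{C}|\to\underline\Sigma$, and a sequence of combinatorial Pachner moves starting from $\mathcal{C}_{[2]}$ will in general not terminate at the pair $(\mathcal{C}_{[1]},\varphi_{[1]})$ but only at $\mathcal{C}_{[1]}$ with some other realising map. The paper closes this by first invoking \cite[Cor.\,10.13]{munkres1966elementary} to deform $\varphi_{[2]}$ to a triangulation $\varphi_{[2a]}$ with $\varphi_{[1]}^{-1}\circ\varphi_{[2a]}$ piecewise linear, so that the two triangulations admit a genuine common subdivision compatible with both maps; the small smooth deformation must then be lifted to the spin lifts $\tilde\chi_\sigma$ (Lemma \ref{lem:Smap.lift2}) and one checks it leaves the combinatorial data, hence $T_\text{triang}$, unchanged. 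Your plan should include this approximation/common-subdivision step explicitly; without it the reduction to Pachner moves does not go through as stated. The rest of your technical-obstacles paragraph (normalising the local marking before a Pachner move and choosing the spin lifts of the new triangles so that $s_A'=s_A$, $s'=s$) is correctly identified and handled.
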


\begin{proof}
For $\alpha=1,2$, 
let $\Lambda_\alpha = ((\mathcal{C}_{[\alpha]},f_{[\alpha]i},d^1_{[\alpha]0},d^2_{[\alpha]0}), (\varphi_{[\alpha]}, \tilde\chi_{[\alpha]\sigma}) ,(\Sigma,\tilde\varphi_{i}))$.
To prove the assertion, we will modify $\Lambda_2$ in several steps, each one leaving $T_\text{triang}$ invariant.

By \cite[Cor.\,10.13]{munkres1966elementary} we can approximate $\varphi_{[2]}$ by a triangulation $\varphi_{[2a]} :|\mathcal{C}_{[2a]}|\to \underline\Sigma$ such that
	$\varphi_{[1]}^{-1}\circ \varphi_{[2a]}$ is piecewise-linear and $\mathcal{C}_{[2a]}$ is a subdivision of $\mathcal{C}_{[2]}$. We construct a spin triangulated surface $\Lambda_{2a}$ from $\Lambda_2$ in two steps. First we pass from $\mathcal{C}_{[2]}$ to $\mathcal{C}_{[2a]}$ via a series of Pachner 2-2, 3-1 and 1-3 moves, and carry out the spin lifts of this sequence of moves as chosen in Section \ref{sec:moves-pachner} on the spin triangulation $\Lambda_a$. 
Relations 4 and 5 guarantee that this does not change $T_\text{triang}$.
Then carry out the small deformation from the resulting map $\varphi : |\mathcal{C}_{[2a]}| \to \underline\Sigma$ to $\varphi_{[2a]}$, along with a lift of the deformation to the spin lifts $\tilde\chi$. 
This does not affect the combinatorial data, and hence not $T_\text{triang}$. Altogether,	
\begin{equation}
T_\text{triang}(\Lambda_2)
=
T_\text{triang}(\Lambda_{2a}) \ .
\end{equation}
Since the triangulations $\varphi_{[1]}$ and $\varphi_{[2a]}$ now have a common subdivision, we can pass from $\varphi_{[2a]}$ to $\varphi_{[1]}$ by a sequence of Pachner moves. Let $\Lambda_{2b}$ be the spin triangulation resulting from the spin lift of this sequence, so that again by Relations 4 and 5 we have
\begin{equation}
T_\text{triang}(\Lambda_{2a})
=
T_\text{triang}(\Lambda_{2b}) \ .
\end{equation}
At this point we have $\mathcal{C}_{[1]} = \mathcal{C}_{[2b]}$, $\varphi_{[1]} = \varphi_{[2b]}$ and $f_{[1]i} = f_{[2b]i}$, so that $\Lambda_1$ and $\Lambda_{2b}$ differ at most in the marking $d^2_0,d^1_0$ and in the choice of spin lifts $\tilde\chi$.

Denote by $s_1$, resp.\ $s_{2b}$ the edge signs resulting from $\Lambda_1$, resp.\ $\Lambda_{2b}$. Since $\Lambda_1$ and $\Lambda_{2b}$ are spin triangulations of the same spin surface $\Sigma$, by Theorem \ref{thm:reconstruction} we have $S(\underline \Lambda_1,s_1) \cong S(\underline \Lambda_{2b},s_{2b})$ as spin structures. Since the underlying triangulations already agree, by Theorem \ref{thm:parametrisation-of-spin} we must have $(d_{[1]},s_1) \sim_\text{fix} (d_{[2b]},s_{2b})$, where $d_{[1]} := (d_{[1]0}^1,d_{[1]0}^2)$ and dito for $d_{[2b]}$. By Relations 1--3, $T_\text{triang}$ is constant on equivalence classes for $\sim_\text{fix}$. Thus finally
\begin{equation}
T_\text{triang}(\Lambda_{2b})
=
T_\text{triang}(\Lambda_{1}) \ .
\end{equation}
\end{proof}

\subsection{Analysis of the algebraic structure}\label{sec:ana-alg-str}

\begin{figure}[tb]
    \begin{tabular}[h]{lc@{\hspace{3em}}lc@{\hspace{3em}}lc@{\hspace{3em}}lc}
        $b=$&
        \raisebox{-0.4cm}{
        \begin{tikzpicture}
            \node[name=b,circle,fill,inner sep=0.05cm] at (1cm,0.5cm){};
            \coordinate (in1) at (0.5cm,-0.5cm);
            \coordinate (in2) at (1.5cm,-0.5cm);
            \draw (in1) .. controls ($(in1)+(0cm,0.5cm)$) .. (b);
            \draw (in2) .. controls ($(in2)+(0cm,0.5cm)$) .. (b);
        \end{tikzpicture}}&
        $c_{-1}=$&
        \raisebox{-0.4cm}{
        \begin{tikzpicture}
            \node[name=b,circle,fill,inner sep=0.05cm] at (1cm,-0.5cm){};
            \coordinate (in1) at (0.5cm,0.5cm);
            \coordinate (in2) at (1.5cm,0.5cm);
            \draw (in1) .. controls ($(in1)+(0cm,-0.5cm)$) .. (b);
            \draw (in2) .. controls ($(in2)+(0cm,-0.5cm)$) .. (b);
        \end{tikzpicture}}&
        $N=$&
        \raisebox{-0.4cm}{
    \begin{tikzpicture}
    \node[name=N,circle,draw,inner sep=0.05cm] at (0.7cm,0cm){};
    \coordinate (in) at (0.7cm,-0.5cm);
    \coordinate (out) at (0.7cm,0.5cm);
    \draw (in) -- (N);
    \draw (N) -- (out);
\end{tikzpicture}}&
$N_\varepsilon=$&
        \raisebox{-0.4cm}{
\begin{tikzpicture}
    \node[name=N,circle,draw,inner sep=0.05cm] at (1cm,0cm){$\varepsilon$};
    \coordinate (in) at (1cm,-0.5cm);
    \coordinate (out) at (1cm,0.5cm);
    \draw (in) -- (N);
    \draw (N) -- (out);
\end{tikzpicture}}\\[1cm]
$\mu=$&
        \raisebox{-0.45cm}{
\begin{tikzpicture}
    \node[name=p,circle,fill,inner sep=0.05cm] at (1cm,0cm){};
    \coordinate (in1) at (0.5cm,-0.5cm);
    \coordinate (in2) at (1.5cm,-0.5cm);
    \coordinate (out) at (1cm,0.5cm);
    \draw (in1) -- (p);
    \draw (in2) -- (p);
    \draw (p) -- (out);
\end{tikzpicture}}&
$\eta=$&
\begin{tikzpicture}
    \node[name=N,circle,draw,inner sep=0.05cm] at (0.7cm,0cm){};
    \coordinate (out) at (0.7cm,0.5cm);
    \draw (N) -- (out);
\end{tikzpicture}&
$\Delta=$&
        \raisebox{-0.45cm}{
\begin{tikzpicture}
    \node[name=p,circle,fill,inner sep=0.05cm] at (1cm,0cm){};
    \coordinate (out1) at (0.5cm,0.5cm);
    \coordinate (out2) at (1.5cm,0.5cm);
    \coordinate (in) at (1cm,-0.5cm);
    \draw (out1) -- (p);
    \draw (out2) -- (p);
    \draw (p) -- (in);
\end{tikzpicture}}&
$\varepsilon=$&
        \raisebox{-0.45cm}{
\begin{tikzpicture}
    \node[name=N,circle,draw,inner sep=0.05cm] at (0.7cm,0cm){};
    \coordinate (in) at (0.7cm,-0.5cm);
    \draw (N) -- (in);
\end{tikzpicture}}
\end{tabular}
\caption{Abbreviated graphical notation for frequently used morphisms. 
	The pairing $b$ and copairing $c_{-1}$ give a duality on $A$, see \eqref{eq:nondegen}. 
The Nakayama automorphism $N$ is defined in \eqref{eq:Nak-definition} and $N_\varepsilon$ is defined in \eqref{eq:N_eps-def}. The product $\mu$ is given in \eqref{eq:A-product-def}, the existence of the unit $\eta$ as assumed in Assumption 2, the coproduct $\Delta$ and counit $\varepsilon$ are defined in \eqref{eq:coproduct-counit}.}
    \label{fig:graph-symbol}
\end{figure}
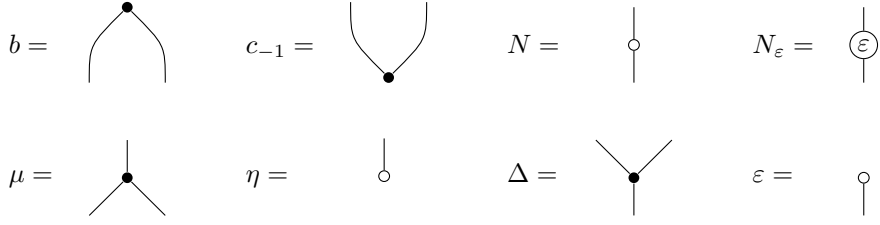

In the previous section we described a set of relations between the morphisms $c_1$, $c_{-1}$ and $t$, which guarantee invariance of the 
	$T_\text{triang}$
under the local moves. 
To further analyse these relations we will make two additional assumptions. The first one is:
\begin{quote}
{\bf{Assumption 1:}} The copairing $c_{-1}$ is nondegenerate, i.e.\ there is a map $b:A\otimes A \to \mathbf{1}$ such that
\begin{equation}\label{eq:nondegen}
    (b\otimes \id_A)\circ (\id_A \otimes c_{-1}) = \id_A = (\id_A\otimes b) \circ (c_{-1}\otimes \id_A) \ .
\end{equation}
\end{quote}
By \eqref{eq:comparison.of.copairings}, Assumption 1 in particular implies that also $c_{1}$ is nondegenerate. The map $b$ in Assumption 1 is unique, and we may use it to define
    \begin{equation} \label{eq:Nak-definition}
        N = (b\otimes \id_A) \circ (\id_A\otimes \sigma_{A,A}) \circ (\id_A\otimes c_{-1})  ~:~ A \longrightarrow A \ .
    \end{equation}
In graphical notation, this reads
\begin{equation} \label{eq:Nak-definition-pic}
    \raisebox{-0.8cm}{
    \begin{tikzpicture}
        \coordinate (in) at (0cm,0cm);
        \coordinate (out) at (0cm,2cm);
        \node[name=N,circle, draw, inner sep=0.05cm] at (0cm,1cm){};
        \draw (in) -- (N);
        \draw (N) -- (out);
    \end{tikzpicture}}
    ~=~
    \raisebox{-0.8cm}{
    \begin{tikzpicture}
        \coordinate (in) at (0.3cm,0cm);
        \coordinate (out) at (1.3cm,2cm);
        \node[name=b, circle, fill, inner sep=0.05cm] at (0.5cm,1.5cm){};
        \node[name=c, circle, fill, inner sep=0.05cm] at (1cm,0.5cm){};
        \coordinate (bl) at (0.3cm,1cm);
        \coordinate (br) at (1cm,1cm);
        \coordinate (cl) at (0.5cm,1cm);
        \coordinate (cr) at (1.5cm,1cm);
        \draw (in) .. controls (bl) .. (b);
        \draw (b) .. controls (br) and (cr) .. (c);
        \draw (c) .. controls (cl) and ($(out)+(0cm,-1cm)$) .. (out);
    \end{tikzpicture}} \ ,
\end{equation} 
where we have started to use shorthand graphical symbols for some morphisms that will appear frequently. These abbreviations are collected in Figure \ref{fig:graph-symbol}.

The map $N$ is invertible, with inverse given by
    \begin{equation} \label{eq:Nak-inv}
        N^{-1} = (\id_A \otimes b) \circ (\sigma_{A,A}\otimes \id_A) \circ (c_{-1}\otimes \id_A) \ .
    \end{equation}
One of the two computations to verify this is a follows:
\begin{equation}
	N^{-1} \circ N = 
    \raisebox{-1.5cm}{
    \begin{tikzpicture}
        \coordinate (in1) at (0.2cm,0cm);
        \coordinate (out1) at (1.3cm,1.5cm);
        \node[name=b1, circle, fill, inner sep=0.05cm] at (0.5cm,1.5cm){};
        \node[name=c1, circle, fill, inner sep=0.05cm] at (1cm,0.5cm){};
        \coordinate (bl1) at ($(b1)+(-0.3cm,-0.3cm)$);
        \coordinate (br1) at ($(b1)+(0.3cm,-0.3cm)$);
        \coordinate (cl1) at ($(c1)+(-0.3cm,0.3cm)$);
        \coordinate (cr1) at ($(c1)+(0.3cm,0.3cm)$);
        \draw (in1) .. controls (bl1) .. (b1);
        \draw (b1) .. controls (br1) and (cr1) .. (c1);
        \draw (c1) .. controls (cl1) and ($(out1)+(0cm,-0.5cm)$) .. (out1);
        \coordinate (in2) at (out1);
        \node[name=b2, circle, fill, inner sep=0.05cm] at ($(c1)+(0cm,2.5cm)$){};
        \node[name=c2, circle, fill, inner sep=0.05cm] at ($(b1)+(0cm,0.5cm)$){};
        \coordinate (bl2) at ($(b2)+(-0.3cm,-0.3cm)$);
        \coordinate (br2) at ($(b2)+(0.3cm,-0.3cm)$);
        \coordinate (cl2) at ($(c2)+(-0.3cm,0.3cm)$);
        \coordinate (cr2) at ($(c2)+(0.3cm,0.3cm)$);
        \coordinate (out2) at ($(in1)+(0cm,3.5cm)$);
        \draw (in2) .. controls (br2) .. (b2);
        \draw (b2) .. controls (bl2) and (cl2) .. (c2);
        \draw (c2) .. controls (cr2) and ($(out2)+(0cm,-0.5cm)$) .. (out2);
    \end{tikzpicture}}\stackrel{\text{deform}}{=}
    \raisebox{-1.5cm}{
    \begin{tikzpicture}
        \coordinate (in1) at (0.2cm,0cm);
        \coordinate (out1) at (1.3cm,1.5cm);
        \node[name=b1, circle, fill, inner sep=0.05cm] at (1cm,2cm){};
        \node[name=c1, circle, fill, inner sep=0.05cm] at (2cm,0.5cm){};
        \coordinate (bl1) at ($(b1)+(-0.3cm,-0.3cm)$);
        \coordinate (br1) at ($(b1)+(0.3cm,-0.3cm)$);
        \coordinate (cl1) at ($(c1)+(-0.3cm,0.3cm)$);
        \coordinate (cr1) at ($(c1)+(0.3cm,0.3cm)$);
        \draw (in1) .. controls ($(in1)+(0cm,0.5cm)$) and (bl1) .. (b1);
        \draw (b1) .. controls (br1) and (cr1) .. (c1);
        \node[name=c2, circle, fill, inner sep=0.05cm] at ($(c1)+(-1cm,0cm)$){};
        \node[name=b2, circle, fill, inner sep=0.05cm] at ($(c2)+(0cm,1cm)$){};
        \coordinate (bl2) at ($(b2)+(-0.3cm,-0.3cm)$);
        \coordinate (br2) at ($(b2)+(0.3cm,-0.3cm)$);
        \coordinate (cl2) at ($(c2)+(-0.3cm,0.3cm)$);
        \coordinate (cr2) at ($(c2)+(0.3cm,0.3cm)$);
        \coordinate (out2) at ($(in1)+(2cm,3.5cm)$);
        \draw (c1) .. controls (cl1) and (br2) .. (b2);
        \draw (b2) .. controls (bl2) and (cl2) .. (c2);
        \draw (c2) .. controls (cr2) and ($(out2)+(0cm,-1cm)$) .. (out2);
    \end{tikzpicture}}\stackrel{\eqref{eq:nondegen}}{=}
    \raisebox{-1.5cm}{
    \begin{tikzpicture}
        \coordinate (in1) at (0.2cm,0cm);
        \coordinate (out1) at (1.3cm,1.5cm);
        \node[name=b1, circle, fill, inner sep=0.05cm] at (0.5cm,2cm){};
        \coordinate (bl1) at ($(b1)+(-0.3cm,-0.3cm)$);
        \coordinate (br1) at ($(b1)+(0.3cm,-0.3cm)$);
        \draw (in1) .. controls ($(in1)+(0cm,0.5cm)$) and (bl1) .. (b1);
        \node[name=c2, circle, fill, inner sep=0.05cm] at (1cm,0.5cm){};
        \coordinate (cl2) at ($(c2)+(-0.3cm,0.3cm)$);
        \coordinate (cr2) at ($(c2)+(0.3cm,0.3cm)$);
        \coordinate (out2) at (1.3cm,3.5cm);
        \draw (b1) .. controls (br1) and (cl2) .. (c2);
        \draw (c2) .. controls (cr2) and ($(out2)+(0cm,-1cm)$) .. (out2);
    \end{tikzpicture}}\stackrel{\eqref{eq:nondegen}}{=}\id_A \ .
\end{equation}    
The map $N$ has an additional important property.
\begin{lemma}
    The map $N^2$ acts trivially on the map $t$:
    \begin{equation}
        t \circ (N^2 \otimes \id_{A \otimes A}) = t \circ (\id_A \otimes N^2 \otimes \id_A)= t \circ (\id_{A\otimes A} \otimes N^2) = t\ .
    \end{equation}
    \label{lem:naka.t}
\end{lemma}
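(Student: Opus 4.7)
The strategy is to extract two different ``cyclic quasi-symmetries'' of $t$ from the cyclic relation \eqref{eq:alg.cyclic} by choosing two distinct sign patterns on the triangle's edges, and then combine them to force $N^2$-invariance. Throughout, I would use the nondegeneracy \eqref{eq:nondegen} of $c_{-1}$ and the definition \eqref{eq:Nak-definition} of $N$.

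First I would apply \eqref{eq:alg.cyclic} with all three signs $s_i=-1$. The two sides are morphisms $\mathbf{1}\to A^{\otimes 3}$; pairing each of the three boundary legs with an auxiliary element of $A$ via $b$ reduces both sides to scalar expressions. On the left, each such pairing of the form $b(x,-)$ against the second factor of a copairing yields $N(x)$ by \eqref{eq:Nak-definition}. On the right, the cyclic shift has swapped the two factors of the zeroth copairing so that the same pairing now collapses via the snake identity \eqref{eq:nondegen} to $x$ itself, while the other two legs again produce $N$'s. The resulting identity is $t(a,b,c)=t(b,c,N^{-1}a)$. Repeating the same computation with $s_0=+1$ and $s_1=s_2=-1$ proceeds analogously: the sign change on the zeroth edge replaces $c_{-1}$ by $\sigma_{A,A}\circ c_{-1}$ via \eqref{eq:comparison.of.copairings}, which interchanges the ``snake'' and ``$N$-producing'' roles on that leg, yielding $t(a,b,c)=t(b,c,Na)$.

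Combining the two identities gives $t\circ(\id_{A\otimes A}\otimes N) = t\circ(\id_{A\otimes A}\otimes N^{-1})$, and post-composing both sides with $\id_{A\otimes A}\otimes N$ delivers the third equality of the lemma, $t\circ(\id_{A\otimes A}\otimes N^{2}) = t$. The remaining two equalities then follow by iterating $t(a,b,c)=t(b,c,Na)$: a single iteration moves $N^2$ on the first slot to $N^3$ on the third slot, which the third equality then reduces to $N$, and applying $t(a,b,c)=t(b,c,Na)$ once more recovers $t(a,b,c)$; two iterations similarly handle the middle slot. The main technical subtlety will be the careful bookkeeping of which of the two factors of each $c_{\pm 1}$ sits in which position after the wiring and sign changes encoded by \eqref{eq:alg.cyclic}; once this is under control, the pairing computations reduce to routine applications of \eqref{eq:nondegen} and \eqref{eq:Nak-definition}.
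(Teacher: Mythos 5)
Your proposal is correct, but it takes a genuinely different route from the paper. The paper's proof of this lemma uses the \emph{leaf exchange} relation \eqref{eq:alg.leaf} as its engine: it inserts dual pairs $b$, $c_{-1}$ via Assumption 1, rewrites one $N$ through its definition and \eqref{eq:comparison.of.copairings}, flips all three copairing signs at once with \eqref{eq:alg.leaf}, and cancels; the cyclic relation \eqref{eq:alg.cyclic} is invoked only at the very end to transport the identity from the first slot to the other two. You instead never touch the leaf exchange relation: you exploit the fact that \eqref{eq:alg.cyclic} is required to hold for \emph{every} sign pattern $(s_0,s_1,s_2)$, and extract from the two patterns $(-1,-1,-1)$ and $(+1,-1,-1)$ the pair of identities $t(a,b,c)=t(b,c,N^{-1}a)$ and $t(a,b,c)=t(b,c,Na)$ (this is exactly the paper's later reformulation \eqref{eq:cyclic-only-in}, which you are in effect rederiving from \eqref{eq:nondegen}, \eqref{eq:Nak-definition} and \eqref{eq:comparison.of.copairings}); comparing the two forces $t\circ(\id_{A\otimes A}\otimes N^2)=t$, and iterating the cyclic move handles the remaining slots. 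Both arguments are legitimate given that relations 1--5 and Assumption 1 are all in force here, but your version isolates a slightly sharper statement: the lemma already follows from relations 1 and 3 alone (the sign-flip relation 2 is not needed), whereas the paper's computation leans on relations 1 and 2. The only blemishes are cosmetic: ``post-composing'' with $\id_{A\otimes A}\otimes N$ should read ``pre-composing'' (it acts on the source of $t$), and the factor swap on the zeroth leg is caused by the sign change $-1\mapsto+1$ via \eqref{eq:comparison.of.copairings} rather than by the cyclic rewiring itself; neither affects the correctness of the argument.
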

\begin{proof}
   We verify the first equality. The other cases then follow from the cyclic property of $t$ stated in \eqref{eq:alg.cyclic}. We have:
    \begin{align}
    \raisebox{-0.8cm}{\begin{tikzpicture}
        \coordinate (su) at (0cm,0.5cm);
        \coordinate (sd) at (0cm,-0.5cm);
        \node[name=t, draw, minimum width=1.5cm, minimum height=1.4em] at (0cm,2cm) {$t$};
            \coordinate (t1) at ($(t.south west)!0.33!(t.south)$);
            \coordinate (t2) at (t.south);
            \coordinate (t3) at ($(t.south east)!0.33!(t.south)$);
        \node[name=N, draw, minimum height=1.4em] at ($(t1)+(0cm,-1cm)$){$N^2$};
        \foreach \N in {1,2,3} \coordinate (in\N) at ($(t\N)+(0cm,-1.7cm)$);
        \foreach \N/\M in {
            in1/N,
            N/t1,
            in2/t2,
            in3/t3} \draw (\N) .. controls ($(\N)+(su)$) and ($(\M)+(sd)$) .. (\M);
    \end{tikzpicture}}
    &\stackrel{\text{(1)}}{=}
    \raisebox{-0.8cm}{\begin{tikzpicture}
        \coordinate (su) at (0cm,0.3cm);
        \coordinate (sd) at (0cm,-0.3cm);
        \node[name=t, draw, minimum width=1.5cm, minimum height=1.4em] at (0cm,2cm) {$t$};
            \coordinate (t1) at ($(t.south west)!0.33!(t.south)$);
            \coordinate (t2) at (t.south);
            \coordinate (t3) at ($(t.south east)!0.33!(t.south)$);
        \node[name=b1, draw, minimum width=0.8cm, minimum height=1.4em] at (-1.5cm,2cm) {$b$};
        \node[name=b2, draw, minimum width=0.8cm, minimum height=1.4em] at (1.5cm,2cm) {$b$};
        \node[name=b3, draw, minimum width=0.8cm, minimum height=1.4em] at (2.75cm,2cm) {$b$};
        \foreach \N in {1,2,3} {
            \coordinate (b\N1) at ($(b\N.south west)!0.25!(b\N.south east)$);
            \coordinate (b\N2) at ($(b\N.south west)!0.75!(b\N.south east)$);
        }
        \node[name=c1, draw, minimum width=0.8cm, minimum height=1.4em] at ($0.5*(b12)+0.5*(t1)+(0cm,-1cm)$) {$c_{1}$};
        \node[name=c2, draw, minimum width=0.8cm, minimum height=1.4em] at ($(t2)+(0.3cm,-1cm)$){$c_{-1}$};
        \node[name=c3, draw, minimum width=0.8cm, minimum height=1.4em] at ($(t3)+(1cm,-1cm)$){$c_{-1}$};
        \foreach \N in {1,2,3} {
            \coordinate (c\N1) at ($(c\N.north west)!0.25!(c\N.north east)$);
            \coordinate (c\N2) at ($(c\N.north west)!0.75!(c\N.north east)$);
        }
        \node[name=N, draw, minimum height=1.4em] at ($(b11)+(-0.3cm,-1cm)$){$N$};
        \coordinate (in1) at ($(b11)+(-0.3cm,-1.7cm)$);
        \coordinate (in2) at ($(b31)+(0cm,-1.7cm)$);
        \coordinate (in3) at ($(b32)+(0.3cm,-1.7cm)$);
        \foreach \N/\M in {
            in1/N,
            N/b11,
            c11/b12,
            c12/t1,
            c21/t2,
            c22/b21,
            c31/t3,
            c32/b31,
            in2/b22,
            in3/b32}
            \draw (\N) .. controls ($(\N)+(su)$) and ($(\M)+(sd)$) .. (\M);
    \end{tikzpicture}}
    \\&\stackrel{\text{(2)}}{=}
    \raisebox{-0.8cm}{\begin{tikzpicture}
        \coordinate (su) at (0cm,0.3cm);
        \coordinate (sd) at (0cm,-0.3cm);
        \node[name=t, draw, minimum width=1.5cm, minimum height=1.4em] at (0cm,2cm) {$t$};
            \coordinate (t1) at ($(t.south west)!0.33!(t.south)$);
            \coordinate (t2) at (t.south);
            \coordinate (t3) at ($(t.south east)!0.33!(t.south)$);
        \node[name=b1, draw, minimum width=0.8cm, minimum height=1.4em] at (-1.5cm,2cm) {$b$};
        \node[name=b2, draw, minimum width=0.8cm, minimum height=1.4em] at (1.5cm,2cm) {$b$};
        \node[name=b3, draw, minimum width=0.8cm, minimum height=1.4em] at (2.75cm,2cm) {$b$};
        \foreach \N in {1,2,3} {
            \coordinate (b\N1) at ($(b\N.south west)!0.25!(b\N.south east)$);
            \coordinate (b\N2) at ($(b\N.south west)!0.75!(b\N.south east)$);
        }
        \node[name=c1, draw, minimum width=0.8cm, minimum height=1.4em] at ($0.5*(b12)+0.5*(t1)+(0cm,-1cm)$) {$c_{-1}$};
        \node[name=c2, draw, minimum width=0.8cm, minimum height=1.4em] at ($(t2)+(0.3cm,-1cm)$){$c_{1}$};
        \node[name=c3, draw, minimum width=0.8cm, minimum height=1.4em] at ($(t3)+(1cm,-1cm)$){$c_{1}$};
        \foreach \N in {1,2,3} {
            \coordinate (c\N1) at ($(c\N.north west)!0.25!(c\N.north east)$);
            \coordinate (c\N2) at ($(c\N.north west)!0.75!(c\N.north east)$);
        }
        \node[name=N, draw, minimum height=1.4em] at ($(b11)+(-0.3cm,-1cm)$){$N$};
        \coordinate (in1) at ($(b11)+(-0.3cm,-1.7cm)$);
        \coordinate (in2) at ($(b31)+(0cm,-1.7cm)$);
        \coordinate (in3) at ($(b32)+(0.3cm,-1.7cm)$);
        \foreach \N/\M in {
            in1/N,
            N/b11,
            c11/b12,
            c12/t1,
            c21/t2,
            c22/b21,
            c31/t3,
            c32/b31,
            in2/b22,
            in3/b32}
            \draw (\N) .. controls ($(\N)+(su)$) and ($(\M)+(sd)$) .. (\M);
    \end{tikzpicture}}
    \nonumber\\&\stackrel{\text{(3)}}{=}
    \raisebox{-0.8cm}{\begin{tikzpicture}
        \coordinate (su) at (0cm,0.3cm);
        \coordinate (sd) at (0cm,-0.3cm);
        \node[name=t, draw, minimum width=1.5cm, minimum height=1.4em] at (0cm,2cm) {$t$};
            \coordinate (t1) at ($(t.south west)!0.33!(t.south)$);
            \coordinate (t2) at (t.south);
            \coordinate (t3) at ($(t.south east)!0.33!(t.south)$);
        \node[name=b1, draw, minimum width=0.8cm, minimum height=1.4em] at (-1.5cm,2cm) {$b$};
        \node[name=b2, draw, minimum width=0.8cm, minimum height=1.4em] at (1.5cm,2cm) {$b$};
        \node[name=b3, draw, minimum width=0.8cm, minimum height=1.4em] at (2.75cm,2cm) {$b$};
        \foreach \N in {1,2,3} {
            \coordinate (b\N1) at ($(b\N.south west)!0.25!(b\N.south east)$);
            \coordinate (b\N2) at ($(b\N.south west)!0.75!(b\N.south east)$);
        }
        \node[name=c1, draw, minimum width=0.8cm, minimum height=1.4em] at ($0.5*(b12)+0.5*(t1)+(0cm,-1cm)$) {$c_{1}$};
        \node[name=c2, draw, minimum width=0.8cm, minimum height=1.4em] at ($(t2)+(0.3cm,-1cm)$){$c_{1}$};
        \node[name=c3, draw, minimum width=0.8cm, minimum height=1.4em] at ($(t3)+(1cm,-1cm)$){$c_{1}$};
        \foreach \N in {1,2,3} {
            \coordinate (c\N1) at ($(c\N.north west)!0.25!(c\N.north east)$);
            \coordinate (c\N2) at ($(c\N.north west)!0.75!(c\N.north east)$);
        }
        \coordinate (in1) at ($(b11)+(0cm,-1.7cm)$);
        \coordinate (in2) at ($(b31)+(0cm,-1.7cm)$);
        \coordinate (in3) at ($(b32)+(0.3cm,-1.7cm)$);
        \foreach \N/\M in {
            in1/b11,
            c11/b12,
            c12/t1,
            c21/t2,
            c22/b21,
            c31/t3,
            c32/b31,
            in2/b22,
            in3/b32}
            \draw (\N) .. controls ($(\N)+(su)$) and ($(\M)+(sd)$) .. (\M);
    \end{tikzpicture}}
    \stackrel{\text{(4)}}{=}
    \raisebox{-0.8cm}{\begin{tikzpicture}
        \coordinate (su) at (0cm,0.5cm);
        \coordinate (sd) at (0cm,-0.5cm);
        \node[name=t, draw, minimum width=1.5cm, minimum height=1.4em] at (0cm,2cm) {$t$};
            \coordinate (t1) at ($(t.south west)!0.33!(t.south)$);
            \coordinate (t2) at (t.south);
            \coordinate (t3) at ($(t.south east)!0.33!(t.south)$);
        \foreach \N in {1,2,3} \coordinate (in\N) at ($(t\N)+(0cm,-1.7cm)$);
        \foreach \N/\M in {
            in1/t1,
            in2/t2,
            in3/t3} \draw (\N) .. controls ($(\N)+(su)$) and ($(\M)+(sd)$) .. (\M);
    \end{tikzpicture}}
    \nonumber
    \end{align}
In step 1 we used the non-degeneracy of $c_{-1}$ (Assumption 1) to insert two pairs $b$, $c_{-1}$. We also replaced one of the $N$'s by its definition in \eqref{eq:Nak-definition} and used the edge orientation change \eqref{eq:comparison.of.copairings} to trade $c_{-1}$ for $c_1$. Step 2 is the leaf exchange \eqref{eq:alg.leaf}. In step 3 the leftmost $b$, $c_{-1}$ pair is replaced by $\id_A$ using Assumption 1 and the remaining $N$ is replaced by its definition in terms of $c_1$ as in step 1. Finally, in step 4 all $c_1$ are converted to $c_{-1}$ via a leaf exchange \eqref{eq:alg.leaf} and then the three pairs $b,c_{-1}$ are cancelled via Assumption 1.
\end{proof}

In particular, precomposing $t$ with $N$ is identical to precomposing with $N^{-1}$.
Below, we will frequently use of the notation
\begin{equation} \label{eq:N_eps-def}
	N_\varepsilon = \begin{cases} 
		\id_A &: \varepsilon = +1 \\
		N &: \varepsilon = -1 
	\end{cases}
	\quad .
\end{equation} 
Its graphical counterpart is shown in Figure \ref{fig:graph-symbol}. 

\medskip

We would like to cast the data $t,c_{\pm1}$ into a more standard algebraic form, namely that of a Frobenius algebra. 
We recall that a {\em Frobenius algebra} (in a monoidal category) is a unital associative algebra and a counital coassociative coalgebra such that the coproduct is a map of bimodules.

We start by introducing a product:
Let 
\begin{equation} \label{eq:A-product-def}
    \mu  = (t \otimes \id_A) \circ ( \id_{A\otimes A} \otimes  c_{-1})
    ~:~A\otimes A \to A \ .
\end{equation}
In graphical notation, this reads
\begin{equation}
    \mu = \raisebox{-1cm}{
    \begin{tikzpicture}
        \node[name=t1, draw, minimum width=1.5cm] at (0cm,1.5cm) {$t$};
        \coordinate (t1l) at ($(t1.south west)!0.2!(t1.south east)$);
        \coordinate (t1r) at ($(t1.south west)!0.8!(t1.south east)$);
        \coordinate (in1) at ($(t1l)+(0cm,-1cm)$);
        \coordinate (in2) at ($(t1.south)+(0cm,-1cm)$);
        \node[name=cp1, circle, fill, inner sep=0.05cm, draw] at (1cm,0.5cm) {};
        \draw (in1) -- (t1l);
        \draw (in2) -- (t1.south);
        \draw (t1r) .. controls +(down:0.2cm) .. (cp1);
        \draw (cp1) .. controls +(0.3cm,0.3cm) and (1.5cm,1.3cm) .. (1.5cm,2cm);
    \end{tikzpicture}}\;.
\end{equation}
For $\mu$ we will use the graphical shorthand listed in Figure \ref{fig:graph-symbol}.
Conversely, using the non-degeneracy of $c_{\pm 1}$ we can now write the map $t$ as 
\begin{equation}\label{eq:t-via-mu}
    t = b \circ (\mu \otimes \id).
\end{equation}

\begin{lemma}
    The map $\mu :A\otimes A \to A$ 
	is associative.
\end{lemma}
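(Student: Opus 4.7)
The plan is to recognise associativity of $\mu$ as a direct consequence of the Pachner 2--2 relation \eqref{eq:alg.pachner2-2}, together with the edge orientation change \eqref{eq:comparison.of.copairings} (and, if necessary, the cyclic permutation \eqref{eq:alg.cyclic} and leaf exchange \eqref{eq:alg.leaf}) to reconcile sign and ordering conventions.

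First, I would draw both sides of the desired identity $\mu \circ (\mu \otimes \id_A) = \mu \circ (\id_A \otimes \mu)$ graphically. By definition \eqref{eq:A-product-def}, each side is assembled out of two copies of $t$ and three copies of $c_{-1}$: two of the $c_{-1}$'s supply the three incoming legs together with the outgoing leg, while the third $c_{-1}$ links one input of the first $t$ to one input of the second $t$. Geometrically this third $c_{-1}$ is the dual of the inner edge of a triangulated quadrilateral, and the two choices of bracketing correspond to the two diagonals of that quadrilateral --- precisely the LHS and RHS of a Pachner 2--2 move on two triangles sharing an edge.

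Second, I would set up the marking and edge orientations in Figure \ref{fig:Pachner2.2} so that in the LHS of the Pachner move every edge sign equals $-1$, making all five copairings $c_{-1}$; this recovers the diagram for $\mu \circ (\mu \otimes \id_A)$. Then invoking \eqref{eq:alg.pachner2-2} produces the RHS diagram with edge signs $s_A' = -1$, $s_B' = -1$, $s_C' = +1$, $s_D' = -1$ and $s' = -1$. The lone positive sign $s_C' = +1$ means $c_{s_C'} = c_1$ instead of $c_{-1}$; this is handled by applying \eqref{eq:comparison.of.copairings} in the form $c_1 = \sigma_{A,A} \circ c_{-1}$, which trades the $c_1$ for $c_{-1}$ at the cost of a transposition of the two legs leaving it.

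Third, I would check that this transposition is exactly the one needed to match the leg assignments of $\mu \circ (\id_A \otimes \mu)$: in the other bracketing the three input strands enter the two $t$'s in a cyclically shifted pattern, which is reflected in a swap on the strand corresponding to the $C$-edge after the Pachner move. Any residual mismatch in the ordering of inputs at a single trivalent vertex can be absorbed using the cyclic permutation relation \eqref{eq:alg.cyclic} together with one application of the leaf exchange \eqref{eq:alg.leaf}. The main obstacle is purely bookkeeping: carefully matching which input slot of each $t$ is attached to which leg of which copairing, and verifying that the conventions about edge orientations and marked edges induced by our choice of $\mu$ line up correctly on both sides of the Pachner 2--2 move. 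No further algebraic input beyond the five local-move identities of Section \ref{sec:localmoves} is required.
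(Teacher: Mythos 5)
Your proposal is correct and follows essentially the same route as the paper's proof in \eqref{eq:assoc-proof-calc}: there the two bracketings are likewise related by the Pachner 2--2 relation \eqref{eq:alg.pachner2-2}, with the cyclicity relation \eqref{eq:cyclic-only-in} and the leaf exchange \eqref{eq:alg.leaf} supplying exactly the slot-reordering and sign adjustments you describe. The only imprecision is that the all-$c_{-1}$ instance of the Pachner left-hand side does not literally coincide with $\mu\circ(\mu\otimes\id_A)$ until one performs the cyclic rotation of one trivalent vertex that you reserve for your third step (that rotation flips one edge sign, which is why the paper's calculation passes through a $c_{1}$ before applying the Pachner move), but this is precisely the bookkeeping you anticipate.
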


\begin{proof} 
	Using non-degeneracy of $c_{\pm1}$, we can rewrite the 
    cyclicity property \eqref{eq:alg.cyclic} as an identity of morphisms $A^{\otimes 3} \to {\bf 1}_\mathcal{S}$:
\begin{equation}\label{eq:cyclic-only-in}
    \raisebox{-3em}{
    \begin{tikzpicture}
        \coordinate (su) at (0cm,0.5cm);
        \coordinate (sd) at (0cm,-0.5cm);
        \node[name=t, draw, minimum width=1.5cm] at (0cm,2cm) {$t$};
        \coordinate (t1) at ($(t.south west)!0.2!(t.south east)$);
        \coordinate (t2) at (t.south);
        \coordinate (t3) at ($(t.south west)!0.8!(t.south east)$);
        \node[name=N0, draw, naka] at (-1.5cm,0cm) {$s(e_0)$};
        \node[name=N1, draw, naka] at (0cm,0cm) {$s(e_1)$};
        \node[name=N2, draw, naka] at (1.5cm,0cm) {$s(e_2)$};
        \foreach \N in {0,1,2} \coordinate (in\N) at ($(N\N)+(0cm,-1cm)$);
        \foreach \N/\M in {
            N0.north/t1,
            N1.north/t2,
            N2.north/t3,
            in0/N0.south,
            in1/N1.south,
            in2/N2.south}
            \draw (\N) .. controls ($(\N)+(su)$) and ($(\M)+(sd)$) .. (\M);
    \end{tikzpicture}}
    =
    \raisebox{-3em}{
    \begin{tikzpicture}
        \coordinate (su) at (0cm,0.5cm);
        \coordinate (sd) at (0cm,-0.5cm);
        \node[name=t, draw, minimum width=1.5cm] at (0cm,2cm) {$t$};
        \coordinate (t1) at ($(t.south west)!0.2!(t.south east)$);
        \coordinate (t2) at (t.south);
        \coordinate (t3) at ($(t.south west)!0.8!(t.south east)$);
        \node[name=N0, draw, naka] at (-1.5cm,0cm) {$-s(e_0)$};
        \node[name=N1, draw, naka] at (0cm,0cm) {$s(e_1)$};
        \node[name=N2, draw, naka] at (1.5cm,0cm) {$s(e_2)$};
        \foreach \N in {0,1,2} \coordinate (in\N) at ($(N\N)+(0cm,-1cm)$);
        \foreach \N/\M in {
            N0.north/t3,
            N1.north/t1,
            N2.north/t2,
            in0/N0.south,
            in1/N1.south,
            in2/N2.south}
            \draw (\N) .. controls ($(\N)+(su)$) and ($(\M)+(sd)$) .. (\M);
    \end{tikzpicture}}
\end{equation}
The same can be done for the Pachner moves \eqref{eq:alg.pachner2-2} and \eqref{eq:alg.pachner3-1}; we omit the details.
We then compute:
\begin{align}\label{eq:assoc-proof-calc}
    \raisebox{-0.8cm}{
    \begin{tikzpicture}
        \foreach \N in {1,2,3} \coordinate (in\N) at ($(0cm,0cm)+\N*(1cm,0cm)$);
        \node[name=p1, product] at (2.5cm,0.5cm){};
        \node[name=p2, product] at (2cm,1cm){};
        \coordinate (out) at (2cm,1.75cm);
        \draw (in1) -- (p2);
        \draw (in2) -- (p1);
        \draw (in3) -- (p1);
        \draw (p1) -- (p2);
        \draw (p2) -- (out);
    \end{tikzpicture}}
    &=
    \raisebox{-0.8cm}{\begin{tikzpicture}
        \coordinate (su) at (0cm,0.5cm);
        \coordinate (sd) at (0cm,-0.5cm);
        \node[name=t1, draw, minimum width=1.5cm] at (0cm,2cm) {$t$};
        \node[name=t2, draw, minimum width=1.5cm] at (2.5cm,2cm) {$t$};
        \foreach \N in {1,2} {
            \coordinate (t\N1) at ($(t\N.south west)!0.33!(t\N.south)$);
            \coordinate (t\N2) at (t\N.south);
            \coordinate (t\N3) at ($(t\N.south east)!0.33!(t\N.south)$);
        }
        \node[name=c1, draw] at ($0.5*(t1)+0.5*(t2)+(0cm,-1.5cm)$) {$c_{-1}$};
        \node[name=c2, draw] at ($(t2)+(1cm,-1.5cm)$) {$c_{-1}$};
        \foreach \N in {1,2} {
            \coordinate (c\N1) at ($(c\N.north west)!0.33!(c\N.north east)$);
            \coordinate (c\N2) at ($(c\N.north west)!0.66!(c\N.north east)$);
        }
        \foreach \N in {1,2,3} \coordinate (in\N) at ($(t1\N)+(0cm,-1.5cm)$);
        \coordinate (out) at ($(t2)+(1.5cm,0.5cm)$);
        \foreach \N/\M in {
            in1/t21,
            in2/t11,
            in3/t12,
            c11/t13,
            c12/t22,
            c21/t23,
            c22/out} \draw (\N) .. controls ($(\N)+(su)$) and ($(\M)+(sd)$) .. (\M);
        \end{tikzpicture}}
        \stackrel{\eqref{eq:cyclic-only-in}}{=}
    \raisebox{-0.8cm}{\begin{tikzpicture}
        \coordinate (su) at (0cm,0.5cm);
        \coordinate (sd) at (0cm,-0.5cm);
        \node[name=t1, draw, minimum width=1.5cm] at (0cm,2cm) {$t$};
        \node[name=t2, draw, minimum width=1.5cm] at (2.5cm,2cm) {$t$};
        \foreach \N in {1,2} {
            \coordinate (t\N1) at ($(t\N.south west)!0.33!(t\N.south)$);
            \coordinate (t\N2) at (t\N.south);
            \coordinate (t\N3) at ($(t\N.south east)!0.33!(t\N.south)$);
        }
        \node[name=c1, draw] at ($0.5*(t1)+0.5*(t2)+(0cm,-1.5cm)$) {$c_{1}$};
        \node[name=c2, draw] at ($(t2)+(1cm,-1.5cm)$) {$c_{-1}$};
        \foreach \N in {1,2} {
            \coordinate (c\N1) at ($(c\N.north west)!0.33!(c\N.north east)$);
            \coordinate (c\N2) at ($(c\N.north west)!0.66!(c\N.north east)$);
        }
        \foreach \N in {1,2,3} \coordinate (in\N) at ($(t1\N)+(0cm,-1.5cm)$);
        \coordinate (out) at ($(t2)+(1.5cm,0.5cm)$);
        \node[name=N1, naka] at ($(t11)+(0cm,-1cm)$) {};
        \foreach \N/\M in {
            in1/N1.south,
            N1.north/t23,
            in2/t12,
            in3/t13,
            c11/t11,
            c12/t21,
            c21/t22,
            c22/out} \draw (\N) .. controls ($(\N)+(su)$) and ($(\M)+(sd)$) .. (\M);
    \end{tikzpicture}}
    \\&\stackrel{\eqref{eq:alg.pachner2-2}}{=}
    \raisebox{-0.8cm}{\begin{tikzpicture}
        \coordinate (su) at (0cm,0.5cm);
        \coordinate (sd) at (0cm,-0.5cm);
        \node[name=t1, draw, minimum width=1.5cm] at (0cm,2cm) {$t$};
        \node[name=t2, draw, minimum width=1.5cm] at (2.5cm,2cm) {$t$};
        \foreach \N in {1,2} {
            \coordinate (t\N1) at ($(t\N.south west)!0.33!(t\N.south)$);
            \coordinate (t\N2) at (t\N.south);
            \coordinate (t\N3) at ($(t\N.south east)!0.33!(t\N.south)$);
        }
        \node[name=c1, draw] at ($0.5*(t1)+0.5*(t2)+(0cm,-1.5cm)$) {$c_{1}$};
        \node[name=c2, draw] at ($(t2)+(1cm,-1.5cm)$) {$c_{1}$};
        \foreach \N in {1,2} {
            \coordinate (c\N1) at ($(c\N.north west)!0.33!(c\N.north east)$);
            \coordinate (c\N2) at ($(c\N.north west)!0.66!(c\N.north east)$);
        }
        \foreach \N in {1,2,3} \coordinate (in\N) at ($(t1\N)+(0cm,-1.5cm)$);
        \coordinate (in3) at ($(t22)+(0cm,-1.5cm)$);
        \coordinate (out) at ($(t2)+(1.5cm,0.5cm)$);
        \node[name=N1, naka] at ($(t11)+(0cm,-1cm)$) {};
        \node[name=N2, naka] at ($(t12)+(0cm,-1cm)$) {};
        \node[name=N3, naka] at ($(t22)+(0cm,-1cm)$) {};
        \foreach \N/\M in {
            in1/N1.south,
            N1.north/t12,
            in2/N2.south,
            N2.north/t13,
            in3/N3.south,
            N3.north/t22,
            c11/t11,
            c12/t21,
            c21/t23,
            c22/out} \draw (\N) .. controls ($(\N)+(su)$) and ($(\M)+(sd)$) .. (\M);
    \end{tikzpicture}}
    \stackrel{\eqref{eq:alg.leaf}}{=}
    \raisebox{-0.8cm}{\begin{tikzpicture}
        \coordinate (su) at (0cm,0.5cm);
        \coordinate (sd) at (0cm,-0.5cm);
        \node[name=t1, draw, minimum width=1.5cm] at (0cm,2cm) {$t$};
        \node[name=t2, draw, minimum width=1.5cm] at (2.5cm,2cm) {$t$};
        \foreach \N in {1,2} {
            \coordinate (t\N1) at ($(t\N.south west)!0.33!(t\N.south)$);
            \coordinate (t\N2) at (t\N.south);
            \coordinate (t\N3) at ($(t\N.south east)!0.33!(t\N.south)$);
        }
        \node[name=c1, draw] at ($0.5*(t1)+0.5*(t2)+(0cm,-1.5cm)$) {$c_{1}$};
        \node[name=c2, draw] at ($(t2)+(1cm,-1.5cm)$) {$c_{-1}$};
        \foreach \N in {1,2} {
            \coordinate (c\N1) at ($(c\N.north west)!0.33!(c\N.north east)$);
            \coordinate (c\N2) at ($(c\N.north west)!0.66!(c\N.north east)$);
        }
        \foreach \N in {1,2,3} \coordinate (in\N) at ($(t1\N)+(0cm,-1.5cm)$);
        \coordinate (in3) at ($(t22)+(0cm,-1.5cm)$);
        \coordinate (out) at ($(t2)+(1.5cm,0.5cm)$);
        \foreach \N/\M in {
            in1/t12,
            in2/t13,
            in3/t22,
            c11/t11,
            c12/t21,
            c21/t23,
            c22/out} \draw (\N) .. controls ($(\N)+(su)$) and ($(\M)+(sd)$) .. (\M);
    \end{tikzpicture}}
    \nonumber\\&\stackrel{\eqref{eq:cyclic-only-in}}{=}
    \raisebox{-0.8cm}{\begin{tikzpicture}
        \coordinate (su) at (0cm,0.5cm);
        \coordinate (sd) at (0cm,-0.5cm);
        \node[name=t1, draw, minimum width=1.5cm] at (0cm,2cm) {$t$};
        \node[name=t2, draw, minimum width=1.5cm] at (2.5cm,2cm) {$t$};
        \foreach \N in {1,2} {
            \coordinate (t\N1) at ($(t\N.south west)!0.33!(t\N.south)$);
            \coordinate (t\N2) at (t\N.south);
            \coordinate (t\N3) at ($(t\N.south east)!0.33!(t\N.south)$);
        }
        \node[name=c1, draw] at ($0.5*(t1)+0.5*(t2)+(0cm,-1.5cm)$) {$c_{-1}$};
        \node[name=c2, draw] at ($(t2)+(1cm,-1.5cm)$) {$c_{-1}$};
        \foreach \N in {1,2} {
            \coordinate (c\N1) at ($(c\N.north west)!0.33!(c\N.north east)$);
            \coordinate (c\N2) at ($(c\N.north west)!0.66!(c\N.north east)$);
        }
        \foreach \N in {1,2,3} \coordinate (in\N) at ($(t1\N)+(0cm,-1.5cm)$);
        \coordinate (in3) at ($(t22)+(0cm,-1.5cm)$);
        \coordinate (out) at ($(t2)+(1.5cm,0.5cm)$);
        \foreach \N/\M in {
            in1/t11,
            in2/t12,
            in3/t22,
            c11/t13,
            c12/t21,
            c21/t23,
            c22/out} \draw (\N) .. controls ($(\N)+(su)$) and ($(\M)+(sd)$) .. (\M);
    \end{tikzpicture}}
    =
    \raisebox{-0.8cm}{
    \begin{tikzpicture}
        \foreach \N in {1,2,3} \coordinate (in\N) at ($(0cm,0cm)+\N*(1cm,0cm)$);
        \node[name=p1, product] at (1.5cm,0.5cm){};
        \node[name=p2, product] at (2cm,1cm){};
        \coordinate (out) at (2cm,1.75cm);
        \draw (in1) -- (p1);
        \draw (in2) -- (p1);
        \draw (in3) -- (p2);
        \draw (p1) -- (p2);
        \draw (p2) -- (out);
    \end{tikzpicture}}
\nonumber
\end{align}
\end{proof}

We already have a non-degenerate pairing on the algebra $A$, namely $b$. 

\begin{lemma}\label{lem:alg.invariance}
    The pairing $b$ is invariant with respect to the product $\mu $, i.e.
    \begin{equation}
        b \circ (\mu \otimes \id_A) = b \circ (\id_A \otimes  \mu ) \ .
    \end{equation}
\end{lemma}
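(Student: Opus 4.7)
The plan is to reduce associativity of the pairing to cyclicity of $t$, together with Lemma \ref{lem:naka.t}. Since $b \circ (\mu \otimes \id_A) = t$ is immediate from \eqref{eq:t-via-mu}, the statement is equivalent to $b \circ (\id_A \otimes \mu) = t$.

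The first step is to extract a clean rotation identity for $t$ from the cyclic move \eqref{eq:alg.cyclic}. Rewriting that move as in \eqref{eq:cyclic-only-in} and specialising the edge signs to $(s(e_0), s(e_1), s(e_2)) = (-1, 1, 1)$ collapses all the $N_{s(e_i)}$-vertices to identities except for a single $N$, yielding
\begin{equation*}
    t \circ \rho \;=\; t \circ (N \otimes \id_A \otimes \id_A),
\end{equation*}
where $\rho : A^{\otimes 3} \to A^{\otimes 3}$ denotes the cyclic permutation $a \otimes b \otimes c \mapsto b \otimes c \otimes a$, built from two symmetric braidings.

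The second step is diagrammatic. Expanding $\mu = (t \otimes \id_A) \circ (\id_{A \otimes A} \otimes c_{-1})$ inside $b \circ (\id_A \otimes \mu)$ yields a diagram in which the first input is paired by $b$ with the second tensor factor of $c_{-1}$, while the other two inputs together with the first tensor factor of $c_{-1}$ feed into $t$. Applying the rotation identity above to shift the $c_{-1}$-leg from the third slot of $t$ into its first slot (producing one $N$), and then identifying the remaining composite of $b$, the symmetric braiding, $c_{-1}$ and that new $N$ attached to the first input as $N \circ N$ via the definition \eqref{eq:Nak-definition} of $N$, gives
\begin{equation*}
b \circ (\id_A \otimes \mu) \;=\; t \circ (N^2 \otimes \id_A \otimes \id_A).
\end{equation*}
This equals $t$ by Lemma \ref{lem:naka.t}, finishing the proof.

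The main obstacle is the bookkeeping: tracking which input slot of $t$ is acted on by which power of $N$ after the cyclic shift, and recognising the emerging combination of $b$, $c_{-1}$, the symmetric braiding and $N$ as $N^2$ rather than some other twist. Because Lemma \ref{lem:naka.t} absorbs even powers of $N$ in any slot of $t$, the argument is robust: only the parity of the total $N$-count matters for the conclusion.
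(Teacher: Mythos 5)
Your proof is correct and follows essentially the same route as the paper's: both expand $\mu$ as $(t\otimes\id_A)\circ(\id_{A\otimes A}\otimes c_{-1})$ inside $b\circ(\id_A\otimes\mu)$ and then invoke the cyclicity relation \eqref{eq:cyclic-only-in} to rotate the copairing leg into the first slot of $t$. The only difference is in the bookkeeping of the final step: the paper absorbs the resulting $N$ into flipping $c_{-1}$ to $c_{1}$ and then cancels the $b$--$c_{1}$ pair directly via the duality \eqref{eq:nondegen}, whereas you keep the $N$ explicit, recognise the $b$--braiding--$c_{-1}$ composite as a second $N$ via \eqref{eq:Nak-definition}, and dispose of $N^{2}$ with Lemma \ref{lem:naka.t} --- the two endgames evaluate the same sub-diagram.
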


\begin{proof} By direct computation:
\begin{align}
    \raisebox{-0.8cm}{
    \begin{tikzpicture}
        \foreach \N in {1,2,3} \coordinate (in\N) at ($(0cm,0cm)+\N*(1cm,0cm)$);
        \node[name=p1, product] at (2.5cm,0.75cm){};
        \node[name=p2, product] at (2cm,1.5cm){};
        \draw (in1) -- (p2);
        \draw (in2) -- (p1);
        \draw (in3) -- (p1);
        \draw (p1) -- (p2);
    \end{tikzpicture}}
    &=
    \raisebox{-0.8cm}{\begin{tikzpicture}
        \coordinate (su) at (0cm,0.5cm);
        \coordinate (sd) at (0cm,-0.5cm);
        \node[name=t, draw, minimum width=1.5cm] at (0cm,2cm) {$t$};
        \coordinate (t1) at ($(t.south west)!0.33!(t.south)$);
        \coordinate (t2) at (t.south);
        \coordinate (t3) at ($(t.south east)!0.33!(t.south)$);
        \node[name=b, draw, minimum width=0.8cm] at (1.5cm,2cm) {$b$};
        \coordinate (b1) at ($(b.south west)!0.33!(b.south east)$);
        \coordinate (b2) at ($(b.south west)!0.66!(b.south east)$);
        \node[name=c, draw] at ($0.5*(t)+0.5*(b)+(0cm,-1.5cm)$) {$c_{-1}$};
        \coordinate (c1) at ($(c.north west)!0.33!(c.north east)$);
        \coordinate (c2) at ($(c.north west)!0.66!(c.north east)$);
        \foreach \N in {1,2,3} \coordinate (in\N) at ($(t\N)+(-0.5cm,-1.5cm)$);
        \foreach \N/\M in {
            in1/b1,
            in2/t1,
            in3/t2,
            c1/t3,
            c2/b2} \draw (\N) .. controls ($(\N)+(su)$) and ($(\M)+(sd)$) .. (\M);
    \end{tikzpicture}}
    \stackrel{\eqref{eq:cyclic-only-in}}{=}
    \raisebox{-0.8cm}{\begin{tikzpicture}
        \coordinate (su) at (0cm,0.5cm);
        \coordinate (sd) at (0cm,-0.5cm);
        \node[name=t, draw, minimum width=1.5cm] at (0cm,2cm) {$t$};
        \coordinate (t1) at ($(t.south west)!0.33!(t.south)$);
        \coordinate (t2) at (t.south);
        \coordinate (t3) at ($(t.south east)!0.33!(t.south)$);
        \node[name=b, draw, minimum width=0.8cm] at (-1.5cm,2cm) {$b$};
        \coordinate (b1) at ($(b.south west)!0.33!(b.south east)$);
        \coordinate (b2) at ($(b.south west)!0.66!(b.south east)$);
        \node[name=c, draw] at ($0.5*(t)+0.5*(b)+(0cm,-1.5cm)$) {$c_{1}$};
        \coordinate (c1) at ($(c.north west)!0.33!(c.north east)$);
        \coordinate (c2) at ($(c.north west)!0.66!(c.north east)$);
        \foreach \N in {1,2,3} \coordinate (in\N) at ($(t\N)+(0,-1.5cm)$);
        \coordinate (in1) at ($(b1)+(0cm,-1.5cm)$);
        \foreach \N/\M in {
            in1/b1,
            in2/t2,
            in3/t3,
            c1/t1,
            c2/b2} \draw (\N) .. controls ($(\N)+(su)$) and ($(\M)+(sd)$) .. (\M);
    \end{tikzpicture}}
    \\&\stackrel{\text{nondeg.}}{=}
    \raisebox{-0.8cm}{\begin{tikzpicture}
        \node[name=t, draw, minimum width=1.5cm] at (0cm,2cm) {$t$};
        \coordinate (t1) at ($(t.south west)!0.33!(t.south)$);
        \coordinate (t2) at (t.south);
        \coordinate (t3) at ($(t.south east)!0.33!(t.south)$);
        \foreach \N in {1,2,3} {
            \coordinate (in\N) at ($(t\N)+(0,-1.5cm)$);
            \draw (in\N) -- (t\N);
        }
    \end{tikzpicture}}
    \stackrel{\text{nondeg.}}{=}
    \raisebox{-0.8cm}{\begin{tikzpicture}
        \coordinate (su) at (0cm,0.5cm);
        \coordinate (sd) at (0cm,-0.5cm);
        \node[name=t, draw, minimum width=1.5cm] at (0cm,2cm) {$t$};
        \coordinate (t1) at ($(t.south west)!0.33!(t.south)$);
        \coordinate (t2) at (t.south);
        \coordinate (t3) at ($(t.south east)!0.33!(t.south)$);
        \node[name=b, draw, minimum width=0.8cm] at (1.5cm,2cm) {$b$};
        \coordinate (b1) at ($(b.south west)!0.33!(b.south east)$);
        \coordinate (b2) at ($(b.south west)!0.66!(b.south east)$);
        \node[name=c, draw] at ($0.5*(t)+0.5*(b)+(0cm,-1.5cm)$) {$c_{-1}$};
        \coordinate (c1) at ($(c.north west)!0.33!(c.north east)$);
        \coordinate (c2) at ($(c.north west)!0.66!(c.north east)$);
        \foreach \N in {1,2,3} \coordinate (in\N) at ($(t\N)+(0cm,-1.5cm)$);
        \coordinate (in3) at ($(b2)+(0cm,-1.5cm)$);
        \foreach \N/\M in {
            in1/t1,
            in2/t2,
            in3/b2,
            c1/t3,
            c2/b1} \draw (\N) .. controls ($(\N)+(su)$) and ($(\M)+(sd)$) .. (\M);
    \end{tikzpicture}}=
    \raisebox{-0.8cm}{
    \begin{tikzpicture}
        \foreach \N in {1,2,3} \coordinate (in\N) at ($(0cm,0cm)+\N*(1cm,0cm)$);
        \node[name=p1, product] at (1.5cm,0.75cm){};
        \node[name=p2, product] at (2cm,1.5cm){};
        \draw (in1) -- (p1);
        \draw (in2) -- (p1);
        \draw (in3) -- (p2);
        \draw (p1) -- (p2);
    \end{tikzpicture}}
\nonumber
\end{align}
\end{proof}

To proceed, we need to make our second assumption:
\begin{quote}
{\bf Assumption 2:} The algebra $(A,\mu)$ has a unit $\eta : \mathbf{1}_\mathcal{S} \to A$.
\end{quote}

The graphical notation we use for the unit is listed in Figure \ref{fig:graph-symbol}.
By Lemma \ref{lem:alg.invariance} and Assumption 2, $A$ together with the non-degenerate pairing $b$ is a Frobenius algebra.

One may now define a coalgebra structure on the Frobenius algebra $A$ in the standard way, so that the coproduct is a map of $A$-$A$-bimodules. Explicitly, the coproduct $\Delta :A\to A\otimes A$ and counit $\varepsilon : A \to \mathbf{1}$ are given by
\begin{equation}\label{eq:coproduct-counit}
    \Delta = 
    \raisebox{-0.5cm}{\begin{tikzpicture}
        \coordinate (su) at (0cm,0.5cm);
        \coordinate (sd) at (0cm,-0.5cm);
        \coordinate (sdl) at (-0.5cm,-0.5cm);
        \coordinate (sdr) at (0.5cm,-0.5cm);
        \coordinate (sul) at (-0.5cm,0.5cm);
        \coordinate (sur) at (0.5cm,0.5cm);
        \coordinate (in) at (0.5cm,0cm);
        \coordinate (out1) at (1cm,1.5cm);
        \coordinate (out2) at (2cm,1.5cm);
        \node[name=c, product] at (1.5cm,0.5cm){};
        \node[name=p, product] at (1cm,1cm){};
        \draw (p) -- (c);
        \draw (p) -- (out1);
        \foreach \N/\M/\S/\T in {in/p/su/sdl,c/out2/sur/sd}  
            \draw (\N) .. controls ($(\N)+(\S)$) and ($(\M)+(\T)$) .. (\M);
    \end{tikzpicture}}\quad ,  \qquad
    \varepsilon =
    \raisebox{-0.5cm}{\begin{tikzpicture}
        \coordinate (su) at (0cm,0.3cm);
        \coordinate (sd) at (0cm,-0.3cm);
        \coordinate (sdl) at (-0.3cm,-0.3cm);
        \coordinate (sdr) at (0.3cm,-0.3cm);
        \coordinate (sul) at (-0.3cm,0.3cm);
        \coordinate (sur) at (0.3cm,0.3cm);
        \coordinate (in) at (0cm,0cm);
        \node[name=b, product] at (0.5cm,1cm){};
        \node[name=u, naka] at (0.8cm,0.3cm){};
        \foreach \N/\M/\S/\T in {in/b/su/sdl,u/b/su/sdr}  
            \draw (\N) .. controls ($(\N)+(\S)$) and ($(\M)+(\T)$) .. (\M);
    \end{tikzpicture}}\quad .
\end{equation}
The asymmetry in this definition is only apparent, since
\begin{equation}\label{eq:coproduct-counit-2nd}
    \begin{split}
    \Delta &\stackrel{\text{nondeg.}}{=}
    \raisebox{-0.5cm}{\begin{tikzpicture}
        \coordinate (su) at (0cm,0.5cm);
        \coordinate (sd) at (0cm,-0.5cm);
        \coordinate (sdl) at (-0.5cm,-0.5cm);
        \coordinate (sdr) at (0.5cm,-0.5cm);
        \coordinate (sul) at (-0.5cm,0.5cm);
        \coordinate (sur) at (0.5cm,0.5cm);
        \coordinate (in) at (0.5cm,0cm);
        \coordinate (out1) at (-1cm,1.5cm);
        \coordinate (out2) at (2cm,1.5cm);
        \node[name=c1, product] at (1.5cm,0.5cm){};
        \node[name=c2, product] at (-0.5cm,0.5cm){};
        \node[name=b, product] at (0.5cm,1.5cm){};
        \node[name=p, product] at (1cm,1cm){};
        \foreach \N/\M/\S/\T in {
            in/p/su/sdl,
            c1/p/sul/sdr,
            p/b/sul/sdr,
            c2/b/sur/sdl,
            c2/out1/sul/sd,
            c1/out2/sur/sd}  
            \draw (\N) .. controls ($(\N)+(\S)$) and ($(\M)+(\T)$) .. (\M);
    \end{tikzpicture}}
    \stackrel{\text{invariance}}{=}
    \raisebox{-0.5cm}{\begin{tikzpicture}
        \coordinate (su) at (0cm,0.5cm);
        \coordinate (sd) at (0cm,-0.5cm);
        \coordinate (sdl) at (-0.5cm,-0.5cm);
        \coordinate (sdr) at (0.5cm,-0.5cm);
        \coordinate (sul) at (-0.5cm,0.5cm);
        \coordinate (sur) at (0.5cm,0.5cm);
        \coordinate (in) at (0.5cm,0cm);
        \coordinate (out1) at (-1cm,1.5cm);
        \coordinate (out2) at (2cm,1.5cm);
        \node[name=c1, product] at (1.5cm,0.5cm){};
        \node[name=c2, product] at (-0.5cm,0.5cm){};
        \node[name=b, product] at (0.5cm,1.5cm){};
        \node[name=p, product] at (0cm,1cm){};
        \foreach \N/\M/\S/\T in {
            in/p/su/sdr,
            c1/b/sul/sdr,
            p/b/sur/sdl,
            c2/p/sur/sdl,
            c2/out1/sul/sd,
            c1/out2/sur/sd}  
            \draw (\N) .. controls ($(\N)+(\S)$) and ($(\M)+(\T)$) .. (\M);
    \end{tikzpicture}}
    \stackrel{\text{nondeg.}}{=}
    \raisebox{-0.5cm}{\begin{tikzpicture}
        \coordinate (su) at (0cm,0.5cm);
        \coordinate (sd) at (0cm,-0.5cm);
        \coordinate (sdl) at (-0.5cm,-0.5cm);
        \coordinate (sdr) at (0.5cm,-0.5cm);
        \coordinate (sul) at (-0.5cm,0.5cm);
        \coordinate (sur) at (0.5cm,0.5cm);
        \coordinate (in) at (0.5cm,0cm);
        \coordinate (out1) at (-1cm,1.5cm);
        \coordinate (out2) at (0cm,1.5cm);
        \node[name=c2, product] at (-0.5cm,0.5cm){};
        \node[name=p, product] at (0cm,1cm){};
        \foreach \N/\M/\S/\T in {
            in/p/su/sdr,
            c2/p/sur/sdl,
            c2/out1/sul/sd,
            p/out2/su/sd}
            \draw (\N) .. controls ($(\N)+(\S)$) and ($(\M)+(\T)$) .. (\M);
    \end{tikzpicture}}\ ,\\
    \varepsilon &\stackrel{\text{unit}}{=}
    \raisebox{-0.5cm}{\begin{tikzpicture}
        \coordinate (su) at (0cm,0.3cm);
        \coordinate (sd) at (0cm,-0.3cm);
        \coordinate (sdl) at (-0.3cm,-0.3cm);
        \coordinate (sdr) at (0.3cm,-0.3cm);
        \coordinate (sul) at (-0.3cm,0.3cm);
        \coordinate (sur) at (0.3cm,0.3cm);
        \coordinate (in) at (0.5cm,-0.5cm);
        \node[name=b, product] at (0.5cm,1cm){};
        \node[name=p, product] at (0cm,0.5cm){};
        \node[name=u1, naka] at (0.8cm,0.3cm){};
        \node[name=u2, naka] at (-0.3cm,-0.2cm){};
        \foreach \N/\M/\S/\T in {
            in/p/su/sdr,
            u1/b/su/sdr,
            u2/p/su/sdl,
            p/b/sur/sdl}  
            \draw (\N) .. controls ($(\N)+(\S)$) and ($(\M)+(\T)$) .. (\M);
    \end{tikzpicture}}
    \stackrel{\text{inv.}}{=}
    \raisebox{-0.5cm}{\begin{tikzpicture}
        \begin{scope}[xscale=-1] 
        \coordinate (su) at (0cm,0.3cm);
        \coordinate (sd) at (0cm,-0.3cm);
        \coordinate (sdl) at (-0.3cm,-0.3cm);
        \coordinate (sdr) at (0.3cm,-0.3cm);
        \coordinate (sul) at (-0.3cm,0.3cm);
        \coordinate (sur) at (0.3cm,0.3cm);
        \coordinate (in) at (0.5cm,-0.5cm);
        \node[name=b, product] at (0.5cm,1cm){};
        \node[name=p, product] at (0cm,0.5cm){};
        \node[name=u1, naka] at (0.8cm,0.3cm){};
        \node[name=u2, naka] at (-0.3cm,-0.2cm){};
        \end{scope}
        \foreach \N/\M/\S/\T in {
            in/p/su/sdr,
            u1/b/su/sdr,
            u2/p/su/sdl,
            p/b/sur/sdl}  
            \draw (\N) .. controls ($(\N)+(\S)$) and ($(\M)+(\T)$) .. (\M);
    \end{tikzpicture}}
    \stackrel{\text{unit}}{=}
    \raisebox{-0.5cm}{\begin{tikzpicture}
        \begin{scope}[xscale=-1] 
        \coordinate (su) at (0cm,0.3cm);
        \coordinate (sd) at (0cm,-0.3cm);
        \coordinate (sdl) at (-0.3cm,-0.3cm);
        \coordinate (sdr) at (0.3cm,-0.3cm);
        \coordinate (sul) at (-0.3cm,0.3cm);
        \coordinate (sur) at (0.3cm,0.3cm);
        \coordinate (in) at (0cm,0cm);
        \node[name=b, product] at (0.5cm,1cm){};
        \node[name=u, naka] at (0.8cm,0.3cm){};
        \end{scope}
        \foreach \N/\M/\S/\T in {in/b/su/sdl,u/b/su/sdr}  
            \draw (\N) .. controls ($(\N)+(\S)$) and ($(\M)+(\T)$) .. (\M);
    \end{tikzpicture}}\ .
    \end{split}
\end{equation}
It is now trivial to see that $\varepsilon$ is indeed a counit. Coassociativity is easily checked by combining the two expressions for $\Delta$:
\begin{equation}
    (\Delta \otimes \id_A)\circ \Delta =
    \raisebox{-0.5cm}{\begin{tikzpicture}
        \coordinate (su) at (0cm,0.5cm);
        \coordinate (sd) at (0cm,-0.5cm);
        \coordinate (sdl) at (-0.5cm,-0.5cm);
        \coordinate (sdr) at (0.5cm,-0.5cm);
        \coordinate (sul) at (-0.5cm,0.5cm);
        \coordinate (sur) at (0.5cm,0.5cm);
        \coordinate (in) at (0.5cm,0cm);
        \coordinate (out1) at (-0.5cm,2cm);
        \coordinate (out2) at (0.5cm,2cm);
        \coordinate (out3) at (2cm,2cm);
        \node[name=c1, product] at (1.5cm,0.5cm){};
        \node[name=c2, product] at (0cm,1cm){};
        \node[name=p2, product] at (0.5cm,1.5cm){};
        \node[name=p1, product] at (1cm,1cm){};
        \foreach \N/\M/\S/\T in {
            in/p1/su/sdl,
            c1/p1/sul/sdr,
            p1/p2/sul/sdr,
            c2/p2/sur/sdl,
            c2/out1/sul/sd,
            p2/out2/su/sd,
            c1/out3/sur/sd}  
            \draw (\N) .. controls ($(\N)+(\S)$) and ($(\M)+(\T)$) .. (\M);
    \end{tikzpicture}}
    = 
    \raisebox{-0.5cm}{\begin{tikzpicture}
        \begin{scope}[xscale=-1] 
        \coordinate (su) at (0cm,0.5cm);
        \coordinate (sd) at (0cm,-0.5cm);
        \coordinate (sdl) at (-0.5cm,-0.5cm);
        \coordinate (sdr) at (0.5cm,-0.5cm);
        \coordinate (sul) at (-0.5cm,0.5cm);
        \coordinate (sur) at (0.5cm,0.5cm);
        \coordinate (in) at (0.5cm,0cm);
        \coordinate (out1) at (-0.5cm,2cm);
        \coordinate (out2) at (0.5cm,2cm);
        \coordinate (out3) at (2cm,2cm);
        \node[name=c1, product] at (1.5cm,0.5cm){};
        \node[name=c2, product] at (0cm,1cm){};
        \node[name=p2, product] at (0.5cm,1.5cm){};
        \node[name=p1, product] at (1cm,1cm){};
        \end{scope}
        \foreach \N/\M/\S/\T in {
            in/p1/su/sdl,
            c1/p1/sul/sdr,
            p1/p2/sul/sdr,
            c2/p2/sur/sdl,
            c2/out1/sul/sd,
            p2/out2/su/sd,
            c1/out3/sur/sd}  
            \draw (\N) .. controls ($(\N)+(\S)$) and ($(\M)+(\T)$) .. (\M);
    \end{tikzpicture}}
    = (\id_A \otimes \Delta) \circ \Delta \ .
\end{equation}
The Frobenius property, which states that $\Delta$ is a bimodule map, namely
\begin{equation}
    \raisebox{-0.5cm}{\begin{tikzpicture}
        \coordinate (su) at (0cm,0.5cm);
        \coordinate (sd) at (0cm,-0.5cm);
        \coordinate (sdl) at (-0.5cm,-0.5cm);
        \coordinate (sdr) at (0.5cm,-0.5cm);
        \coordinate (sul) at (-0.5cm,0.5cm);
        \coordinate (sur) at (0.5cm,0.5cm);
        \coordinate (in1) at (0cm,0cm);
        \coordinate (in2) at (1cm,0cm);
        \coordinate (out1) at (0.5cm,1.5cm);
        \coordinate (out2) at (1.5cm,1.5cm);
        \node[name=p, product] at (0.5cm,1cm){};
        \node[name=cp, product] at (1cm,0.5cm){};
        \foreach \N/\M/\S/\T in {
            in2/cp/su/sd,
            in1/p/su/sdl,
            cp/p/sul/sdr,
            cp/out2/sur/sd,
            p/out1/su/sd}
            \draw (\N) .. controls ($(\N)+(\S)$) and ($(\M)+(\T)$) .. (\M);
    \end{tikzpicture}}
    =
    \raisebox{-0.5cm}{\begin{tikzpicture}
        \coordinate (su) at (0cm,0.5cm);
        \coordinate (sd) at (0cm,-0.5cm);
        \coordinate (sdl) at (-0.5cm,-0.5cm);
        \coordinate (sdr) at (0.5cm,-0.5cm);
        \coordinate (sul) at (-0.5cm,0.5cm);
        \coordinate (sur) at (0.5cm,0.5cm);
        \coordinate (in1) at (0cm,0cm);
        \coordinate (in2) at (1cm,0cm);
        \coordinate (out1) at (0cm,1.5cm);
        \coordinate (out2) at (1cm,1.5cm);
        \node[name=p, product] at (0.5cm,0.5cm){};
        \node[name=cp, product] at (0.5cm,1cm){};
        \foreach \N/\M/\S/\T in {
            in1/p/sur/sdl,
            in2/p/sul/sdr,
            p/cp/su/sd,
            cp/out1/sul/sdr,
            cp/out2/sur/sdl}
            \draw (\N) .. controls ($(\N)+(\S)$) and ($(\M)+(\T)$) .. (\M);
    \end{tikzpicture}}
    =
    \raisebox{-0.5cm}{\begin{tikzpicture}
        \begin{scope}[xscale=-1] 
        \coordinate (su) at (0cm,0.5cm);
        \coordinate (sd) at (0cm,-0.5cm);
        \coordinate (sdl) at (-0.5cm,-0.5cm);
        \coordinate (sdr) at (0.5cm,-0.5cm);
        \coordinate (sul) at (-0.5cm,0.5cm);
        \coordinate (sur) at (0.5cm,0.5cm);
        \coordinate (in1) at (0cm,0cm);
        \coordinate (in2) at (1cm,0cm);
        \coordinate (out1) at (0.5cm,1.5cm);
        \coordinate (out2) at (1.5cm,1.5cm);
        \node[name=p, product] at (0.5cm,1cm){};
        \node[name=cp, product] at (1cm,0.5cm){};
        \end{scope}
        \foreach \N/\M/\S/\T in {
            in2/cp/su/sd,
            in1/p/su/sdl,
            cp/p/sul/sdr,
            cp/out2/sur/sd,
            p/out1/su/sd}
            \draw (\N) .. controls ($(\N)+(\S)$) and ($(\M)+(\T)$) .. (\M);
    \end{tikzpicture}}\ ,
\end{equation}
is equally straightforward to check. We omit the details. Finally, note that
\begin{equation}
	b = \varepsilon \circ \mu
	\qquad , \qquad
	c_{-1} = \Delta \circ \eta \ .
\end{equation}
From hereon we consider $A$ as a Frobenius algebra with structure morphisms $\mu,\eta,\Delta,\varepsilon$ as described above. By definition, the morphism $N$ defined in \eqref{eq:Nak-definition} is the Nakayama automorphism of $A$, see e.g.\ \cite{fuchsstigner2009}. For completeness we state

\begin{proposition}\label{prop:Nak-Frob-auto}
The Nakayama automorphism is a unital algebra automorphism and a counital coalgebra automorphism of a Frobenius algebra.
\end{proposition}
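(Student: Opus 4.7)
The approach is to reduce everything to the single defining identity of $N$, namely
\begin{equation*}
    b \circ (\id_A \otimes N) = b \circ \sigma_{A,A}\,,
\end{equation*}
which is immediate from \eqref{eq:Nak-definition-pic} together with the snake identity \eqref{eq:nondegen}. Pointwise, this reads $b(y, Nx) = b(x, y)$; combined with the non-degeneracy of $b$ and the pairing invariance established in Lemma \ref{lem:alg.invariance}, each of the four claims (unit, counit, algebra, coalgebra) will follow by a short calculation. I would prove the four properties in that order.

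First the easy two: for the unit, set $y \in A$ arbitrary and compute $b(y,N\eta) = b(\eta,y) = \varepsilon(y) = b(y,\eta)$, so non-degeneracy of $b$ forces $N \circ \eta = \eta$. For the counit, apply the defining identity with $y=\eta$ to get $\varepsilon(Nx) = b(\eta, Nx) = b(x, \eta) = \varepsilon(x)$, giving $\varepsilon \circ N = \varepsilon$.

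For the algebra-morphism property, I would show $b(z, N(xy)) = b(z, N(x)N(y))$ for all $x,y,z$, using the associativity of $\mu$ and the invariance of $b$. Specifically, one computes
\begin{equation*}
    b(z, N(xy)) \;=\; b(xy, z) \;=\; b(x, yz)
\end{equation*}
via the defining property of $N$ and Lemma \ref{lem:alg.invariance}, and in parallel
\begin{equation*}
    b(z, N(x)N(y)) \;=\; b(zN(x), Ny) \;=\; b(y, zN(x)) \;=\; b(yz, Nx) \;=\; b(x, yz)\,,
\end{equation*}
alternating invariance with the defining relation $b(u,Nv)=b(v,u)$. Non-degeneracy then yields $N \circ \mu = \mu \circ (N \otimes N)$.

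For the coalgebra-morphism property, the cleanest route is to first show that $N$ preserves the copairing, $(N \otimes N) \circ c_{-1} = c_{-1}$, and then exploit the Frobenius property $\Delta = (\mu \otimes \id_A) \circ (\id_A \otimes c_{-1})$. Writing $c_{-1} = \sum c_i \otimes d_i$ formally, the snake identity gives $\sum b(a,c_i) d_i = a$; applying $b(x,Ny)=b(y,x)$ twice one finds $\sum b(c_i,a) d_i = N^{-1}a$, whence $(b \otimes \id_A)(a \otimes (N \otimes N)c_{-1}) = \sum b(c_i,a) Nd_i = N(N^{-1}a) = a$, and non-degeneracy of $b$ forces $(N\otimes N)c_{-1}=c_{-1}$. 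Combining this with the algebra-morphism property just proved,
\begin{equation*}
(N \otimes N) \circ \Delta \;=\; (\mu \otimes \id_A) \circ (N \otimes N \otimes N) \circ (\id_A \otimes c_{-1}) \;=\; (\mu \otimes \id_A) \circ (\id_A \otimes c_{-1}) \circ N \;=\; \Delta \circ N\,.
\end{equation*}
The only mildly subtle point is the coalgebra step, since one must be careful not to tacitly assume $b$ is symmetric; working entirely through the $b(y,Nx)=b(x,y)$ identity (rather than trying to use cyclic invariance) avoids this pitfall, and no new input beyond Assumptions 1--2 and Lemma \ref{lem:alg.invariance} is needed.
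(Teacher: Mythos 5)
Your proof is correct, and it differs from the paper's in a worthwhile way on the multiplicativity step. The identity $b\circ(\id_A\otimes N)=b\circ\sigma_{A,A}$ that you take as your single working tool is indeed available (the paper derives it later, in the proof of Proposition \ref{lem:moves_from_alg}), and the unit/counit statements follow exactly as you say. For $N\circ\mu=\mu\circ(N\otimes N)$ the paper's own proof is a string-diagram manipulation: it inserts the definition \eqref{eq:Nak-definition-pic} of $N$, then uses associativity, the Frobenius property and coassociativity of $\Delta$ to slide the pairing--copairing pair past the product. Your argument instead pairs $N(xy)$ and $N(x)N(y)$ against an arbitrary $z$, alternating the invariance of $b$ (Lemma \ref{lem:alg.invariance}, or just $b=\varepsilon\circ\mu$ plus associativity) with $b(u,Nv)=b(v,u)$, and concludes by non-degeneracy; this is the classical proof, and it buys you independence from the coproduct and the Frobenius property at that stage. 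For comultiplicativity the two proofs are essentially parallel: the paper establishes the copairing identity $(N^{-1}\otimes\id_A)\circ c_{-1}=(\id_A\otimes N)\circ c_{-1}$, you establish the equivalent $(N\otimes N)\circ c_{-1}=c_{-1}$, and both then combine this with $\Delta=(\mu\otimes\id_A)\circ(\id_A\otimes c_{-1})$ from \eqref{eq:coproduct-counit} and the already-proved multiplicativity. Two small remarks: your element-style sums $c_{-1}=\sum_i c_i\otimes d_i$ should be read as shorthand for the corresponding string diagrams so that the argument is valid in a general symmetric monoidal category (the duality argument recovering $u=v$ from $(b\otimes\id_A)\circ(\id_A\otimes u)=(b\otimes\id_A)\circ(\id_A\otimes v)$ does go through categorically by composing with $c_{-1}$ and the snake identity \eqref{eq:nondegen}); and deriving $\sum_i b(c_i,a)\,d_i=N^{-1}a$ needs only one application of $b(x,Ny)=b(y,x)$ together with the snake, not two -- a harmless miscount. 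Your caution about not tacitly assuming $b$ symmetric is exactly right.
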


\begin{proof}
That $N \circ \eta = \eta$ and $\varepsilon \circ N = \varepsilon$ is straightforward. Compatibility with the product follows from
\begin{equation}
\begin{split}
N \circ \mu &=
    \raisebox{-0.5cm}{\begin{tikzpicture}
        \coordinate (su) at (0cm,0.5cm);
        \coordinate (sd) at (0cm,-0.5cm);
        \coordinate (sdl) at (-0.5cm,-0.5cm);
        \coordinate (sdr) at (0.5cm,-0.5cm);
        \coordinate (sul) at (-0.5cm,0.5cm);
        \coordinate (sur) at (0.5cm,0.5cm);
        \coordinate (in1) at (0cm,-0.5cm);
        \coordinate (in2) at (1cm,-0.5cm);
        \coordinate (out) at (2cm,2cm);
        \node[name=p, product] at (0.5cm,0.5cm){};
        \node[name=b, product] at (1cm,1.5cm){};
        \node[name=c, product] at (1.5cm,0.5cm){};
        \foreach \N/\M/\S/\T in {
            in1/p/su/sdl,
            in2/p/su/sdr,
            p/b/su/sdl,
            c/b/sur/sdr,
            c/out/sul/sd}
            \draw (\N) .. controls ($(\N)+(\S)$) and ($(\M)+(\T)$) .. (\M);
    \end{tikzpicture}}
    \stackrel{\text{ass.}}{=}
    \raisebox{-0.5cm}{\begin{tikzpicture}
        \coordinate (su) at (0cm,0.5cm);
        \coordinate (sd) at (0cm,-0.5cm);
        \coordinate (sdl) at (-0.5cm,-0.5cm);
        \coordinate (sdr) at (0.5cm,-0.5cm);
        \coordinate (sul) at (-0.5cm,0.5cm);
        \coordinate (sur) at (0.5cm,0.5cm);
        \coordinate (in1) at (0cm,-0.5cm);
        \coordinate (in2) at (0.5cm,-0.5cm);
        \coordinate (out) at (2cm,2cm);
        \node[name=p, product] at (1.5cm,1cm){};
        \node[name=b, product] at (1cm,1.5cm){};
        \node[name=c, product] at (1cm,0cm){};
        \foreach \N/\M/\S/\T in {
            in1/b/su/sdl,
            in2/p/su/sdl,
            p/b/sul/sdr,
            c/p/sur/sdr,
            c/out/sul/sd}
            \draw (\N) .. controls ($(\N)+(\S)$) and ($(\M)+(\T)$) .. (\M);
    \end{tikzpicture}}
    \stackrel{\text{Frob.}}{=}
    \raisebox{-0.5cm}{\begin{tikzpicture}
        \coordinate (su) at (0cm,0.5cm);
        \coordinate (sd) at (0cm,-0.5cm);
        \coordinate (sdl) at (-0.5cm,-0.5cm);
        \coordinate (sdr) at (0.5cm,-0.5cm);
        \coordinate (sul) at (-0.5cm,0.5cm);
        \coordinate (sur) at (0.5cm,0.5cm);
        \coordinate (in1) at (0cm,-0.5cm);
        \coordinate (in2) at (0.5cm,-0.5cm);
        \coordinate (out) at (2cm,2cm);
        \node[name=cp, product] at (1.5cm,0.5cm){};
        \node[name=b1, product] at (1cm,1.5cm){};
        \node[name=b2, product] at (1cm,1cm){};
        \node[name=c, product] at (1cm,0cm){};
        \foreach \N/\M/\S/\T in {
            in1/b1/su/sdl,
            in2/b2/su/sdl,
            cp/b1/sur/sdr,
            cp/b2/sul/sdr,
            c/cp/sur/sdl,
            c/out/sul/sd}
            \draw (\N) .. controls ($(\N)+(\S)$) and ($(\M)+(\T)$) .. (\M);
    \end{tikzpicture}}
    \stackrel{\text{coass.}}{=}
    \raisebox{-0.5cm}{\begin{tikzpicture}
        \coordinate (su) at (0cm,0.5cm);
        \coordinate (sd) at (0cm,-0.5cm);
        \coordinate (sdl) at (-0.5cm,-0.5cm);
        \coordinate (sdr) at (0.5cm,-0.5cm);
        \coordinate (sul) at (-0.5cm,0.5cm);
        \coordinate (sur) at (0.5cm,0.5cm);
        \coordinate (in1) at (0cm,-0.5cm);
        \coordinate (in2) at (0.5cm,-0.5cm);
        \coordinate (out) at (2cm,2cm);
        \node[name=cp, product] at (1.2cm,0.3cm){};
        \node[name=b1, product] at (1cm,1.5cm){};
        \node[name=b2, product] at (1cm,1cm){};
        \node[name=c, product] at (1.5cm,0cm){};
        \foreach \N/\M/\S/\T in {
            in1/b1/su/sdl,
            in2/b2/su/sdl,
            cp/b2/sur/sdr,
            cp/out/sul/sd,
            c/cp/sul/sdr,
            c/b1/sur/sdr}
            \draw (\N) .. controls ($(\N)+(\S)$) and ($(\M)+(\T)$) .. (\M);
    \end{tikzpicture}}
    \\&\stackrel{\text{def.\ of $\Delta$}}{=}
    \raisebox{-0.5cm}{\begin{tikzpicture}
        \coordinate (su) at (0cm,0.5cm);
        \coordinate (sd) at (0cm,-0.5cm);
        \coordinate (sdl) at (-0.5cm,-0.5cm);
        \coordinate (sdr) at (0.5cm,-0.5cm);
        \coordinate (sul) at (-0.5cm,0.5cm);
        \coordinate (sur) at (0.5cm,0.5cm);
        \coordinate (in1) at (0cm,0cm);
        \coordinate (in2) at (0.5cm,0cm);
        \coordinate (out) at (1.5cm,2.2cm);
        \node[name=p, product] at (1.5cm,1.5cm){};
        \node[name=b1, product] at (1cm,1.5cm){};
        \node[name=b2, product] at (1cm,2cm){};
        \node[name=c1, product] at (1.5cm,0.5cm){};
        \node[name=c2, product] at (2cm,1cm){};
        \foreach \N/\M/\S/\T in {
            in1/b1/su/sdl,
            in2/b2/su/sdl,
            p/out/su/sd,
            c2/p/sul/sdr,
            c2/b2/sur/sdr,
            c1/p/sul/sdl,
            c1/b1/sur/sdr}
            \draw (\N) .. controls ($(\N)+(\S)$) and ($(\M)+(\T)$) .. (\M);
    \end{tikzpicture}}
    \stackrel{\text{deform}}{=}
    \raisebox{-0.5cm}{\begin{tikzpicture}
        \coordinate (su) at (0cm,0.3cm);
        \coordinate (sd) at (0cm,-0.3cm);
        \coordinate (sdl) at (-0.3cm,-0.3cm);
        \coordinate (sdr) at (0.3cm,-0.3cm);
        \coordinate (sul) at (-0.3cm,0.3cm);
        \coordinate (sur) at (0.3cm,0.3cm);
        \coordinate (in1) at (0.5cm,0cm);
        \coordinate (in2) at (1.5cm,0cm);
        \coordinate (out) at (1.5cm,2.2cm);
        \node[name=p, product] at (1.5cm,1.5cm){};
        \node[name=b1, product] at (0.5cm,1cm){};
        \node[name=b2, product] at (1.5cm,1cm){};
        \node[name=c1, product] at (1cm,0.5cm){};
        \node[name=c2, product] at (2cm,0.5cm){};
        \foreach \N/\M/\S/\T in {
            in1/b1/su/sdl,
            in2/b2/su/sdl,
            p/out/su/sd,
            c2/p/sul/sdr,
            c2/b2/sur/sdr,
            c1/p/sul/sdl,
            c1/b1/sur/sdr}
            \draw (\N) .. controls ($(\N)+(\S)$) and ($(\M)+(\T)$) .. (\M);
    \end{tikzpicture}}
=  \mu \circ (N \otimes N) \ .
\end{split}
\end{equation}
To see compatibility with the coproduct, first note that
\begin{equation}
     \raisebox{-0.5cm}{\begin{tikzpicture}
        \coordinate (su) at (0cm,0.3cm);
        \coordinate (sd) at (0cm,-0.3cm);
        \coordinate (sdl) at (-0.3cm,-0.3cm);
        \coordinate (sdr) at (0.3cm,-0.3cm);
        \coordinate (sul) at (-0.3cm,0.3cm);
        \coordinate (sur) at (0.3cm,0.3cm);
        \coordinate (out1) at (-0.5cm,1.5cm);
        \coordinate (out2) at (0.5cm,1.5cm);
        \node[name=c, product] at (0cm,0cm){};
        \node[name=N, draw] at (-0.5cm,1cm){$N^{-1}$};
        \foreach \N/\M/\S/\T in {
            c/N.south/sul/sd,
            c/out2/sur/sd,
            N.north/out1/su/sd}
            \draw (\N) .. controls ($(\N)+(\S)$) and ($(\M)+(\T)$) .. (\M);
    \end{tikzpicture}}
    =
    \raisebox{-0.5cm}{\begin{tikzpicture}
        \coordinate (su) at (0cm,0.3cm);
        \coordinate (sd) at (0cm,-0.3cm);
        \coordinate (sdl) at (-0.3cm,-0.3cm);
        \coordinate (sdr) at (0.3cm,-0.3cm);
        \coordinate (sul) at (-0.3cm,0.3cm);
        \coordinate (sur) at (0.3cm,0.3cm);
        \coordinate (out1) at (-1cm,1.5cm);
        \coordinate (out2) at (1cm,1.5cm);
        \node[name=c1, product] at (-0.5cm,0cm){};
        \node[name=c2, product] at (0.5cm,0cm){};
        \node[name=b, product] at (0cm,1cm){};
        \foreach \N/\M/\S/\T in {
            c1/out1/sur/sd,
            c1/b/sul/sdl,
            c2/b/sul/sdr,
            c2/out2/sur/sd} 
            \draw (\N) .. controls ($(\N)+(\S)$) and ($(\M)+(\T)$) .. (\M);
    \end{tikzpicture}}
    =
    \raisebox{-0.5cm}{\begin{tikzpicture}
        \coordinate (su) at (0cm,0.5cm);
        \coordinate (sd) at (0cm,-0.5cm);
        \coordinate (sdl) at (-0.5cm,-0.5cm);
        \coordinate (sdr) at (0.5cm,-0.5cm);
        \coordinate (sul) at (-0.5cm,0.5cm);
        \coordinate (sur) at (0.5cm,0.5cm);
        \coordinate (out1) at (-0.5cm,1.5cm);
        \coordinate (out2) at (0.5cm,1.5cm);
        \node[name=c, product] at (0cm,0cm){};
        \foreach \N/\M/\S/\T in {
            c/out1/sur/sd,
            c/out2/sul/sd}
            \draw (\N) .. controls ($(\N)+(\S)$) and ($(\M)+(\T)$) .. (\M);
    \end{tikzpicture}}
    =
    \raisebox{-0.5cm}{\begin{tikzpicture}
        \begin{scope}[xscale=-1] 
        \coordinate (su) at (0cm,0.3cm);
        \coordinate (sd) at (0cm,-0.3cm);
        \coordinate (sdl) at (-0.3cm,-0.3cm);
        \coordinate (sdr) at (0.3cm,-0.3cm);
        \coordinate (sul) at (-0.3cm,0.3cm);
        \coordinate (sur) at (0.3cm,0.3cm);
        \coordinate (out1) at (-1cm,1.5cm);
        \coordinate (out2) at (1cm,1.5cm);
        \node[name=c1, product] at (-0.5cm,0cm){};
        \node[name=c2, product] at (0.5cm,0cm){};
        \node[name=b, product] at (0cm,1cm){};
        \end{scope}
        \foreach \N/\M/\S/\T in {
            c1/out1/sur/sd,
            c1/b/sul/sdl,
            c2/b/sul/sdr,
            c2/out2/sur/sd} 
            \draw (\N) .. controls ($(\N)+(\S)$) and ($(\M)+(\T)$) .. (\M);
    \end{tikzpicture}}
    =
    \raisebox{-0.5cm}{\begin{tikzpicture}
        \begin{scope}[xscale=-1] 
        \coordinate (su) at (0cm,0.3cm);
        \coordinate (sd) at (0cm,-0.3cm);
        \coordinate (sdl) at (-0.3cm,-0.3cm);
        \coordinate (sdr) at (0.3cm,-0.3cm);
        \coordinate (sul) at (-0.3cm,0.3cm);
        \coordinate (sur) at (0.3cm,0.3cm);
        \coordinate (out1) at (-0.5cm,1.5cm);
        \coordinate (out2) at (0.5cm,1.5cm);
        \node[name=c, product] at (0cm,0cm){};
        \node[name=N, draw] at (-0.5cm,1cm){$N$};
        \end{scope}
        \foreach \N/\M/\S/\T in {
            c/N.south/sul/sd,
            c/out2/sur/sd,
            N.north/out1/su/sd}
            \draw (\N) .. controls ($(\N)+(\S)$) and ($(\M)+(\T)$) .. (\M);
    \end{tikzpicture}}\ .
\end{equation}
When combining this with the definition of the coproduct in terms of the product and copairing in \eqref{eq:coproduct-counit}, the compatibility of $N$ with the coproduct follows from the already established result 
	that $N$ is an algebra homomorphism.
\end{proof}

The following identity will be used frequently in the calculations below:

\begin{lemma} \label{lem:product-past-coproduct}
Let $A$ be a Frobenius algebra and $N$ its Nakayama automorphism. Then
\begin{equation}
\big[ \id \otimes (\mu \circ \sigma_{A,A}) \big] \circ \big[ \Delta \otimes \id \big] 
= \big[ \mu \otimes \id \big] \circ \big[ \id \otimes \sigma_{A,A} \big] \circ \big[ \Delta \otimes N \big] \ .
\end{equation}
Graphically, this reads
\begin{equation}
\raisebox{-0.5cm}{\begin{tikzpicture}
        \coordinate (su) at (0cm,0.5cm);
        \coordinate (sd) at (0cm,-0.5cm);
        \coordinate (sdl) at (-0.5cm,-0.5cm);
        \coordinate (sdr) at (0.5cm,-0.5cm);
        \coordinate (sul) at (-0.5cm,0.5cm);
        \coordinate (sur) at (0.5cm,0.5cm);
        \coordinate (out1) at (-0.5cm,2cm);
        \coordinate (out2) at (0.5cm,2cm);
        \coordinate (in1) at (0cm,0cm);
        \coordinate (in2) at (0.5cm,0cm);
        \node[name=p, product] at (0.5cm,1.5cm){};
        \node[name=cp, product] at (0cm,0.5cm){};
        \foreach \N/\M/\S/\T in {
            in1/cp/su/sd,
            in2/p/su/sdl,
            cp/out1/sul/sd,
            cp/p/sur/sdr,
            p/out2/su/sd}
            \draw (\N) .. controls ($(\N)+(\S)$) and ($(\M)+(\T)$) .. (\M);
    \end{tikzpicture}}
    =
    \raisebox{-0.5cm}{\begin{tikzpicture}
        \coordinate (su) at (0cm,0.5cm);
        \coordinate (sd) at (0cm,-0.5cm);
        \coordinate (sdl) at (-0.5cm,-0.5cm);
        \coordinate (sdr) at (0.5cm,-0.5cm);
        \coordinate (sul) at (-0.5cm,0.5cm);
        \coordinate (sur) at (0.5cm,0.5cm);
        \coordinate (out1) at (-0.5cm,2cm);
        \coordinate (out2) at (0.5cm,2cm);
        \coordinate (in1) at (0cm,0cm);
        \coordinate (in2) at (0.5cm,0cm);
        \node[name=p, product] at (-0.5cm,1.5cm){};
        \node[name=cp, product] at (0cm,0.5cm){};
        \node[name=N, naka] at (0.5cm,0.5cm){};
        \foreach \N/\M/\S/\T in {
            in1/cp/su/sd,
            in2/N/su/sd,
            cp/p/sul/sdl,
            N/p/su/sdr,
            p/out1/su/sd,
            cp/out2/sur/sd}
            \draw (\N) .. controls ($(\N)+(\S)$) and ($(\M)+(\T)$) .. (\M);
    \end{tikzpicture}}\quad .
    \label{eq:inner-mult-past-delta}
\end{equation}
\end{lemma}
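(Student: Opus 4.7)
The plan is to exploit the non-degeneracy of the Frobenius pairing $b$: two morphisms $A\otimes A\to A\otimes A$ agree if and only if they agree after tensoring with an auxiliary factor on the right and contracting positions $2$ and $3$ with $b$, since $c_{-1}$ provides the inverse copairing. So I would post-compose both sides with $\id_A\otimes b$ and verify that the resulting morphisms $A^{\otimes 3}\to A$ coincide. In Sweedler-like notation, I will show both reduce to $x\otimes y\otimes z \mapsto xz\cdot N(y)$.

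For the right-hand side, pairing the second output against $z$ gives $\sum x_{(1)}\,N(y)\,b(x_{(2)},z)$. The scalar $b(x_{(2)},z)\colon\mathbf{1}\to\mathbf{1}$ slides past the $A$-valued factor $N(y)$, so this equals $\bigl[\sum x_{(1)}\,b(x_{(2)},z)\bigr]\cdot N(y)=(xz)\cdot N(y)$, using the Frobenius identity $(\id_A\otimes b)\circ(\Delta\otimes\id_A)=\mu$ that follows at once from the definition \eqref{eq:coproduct-counit} of $\Delta$ together with the non-degeneracy relation \eqref{eq:nondegen}.

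For the left-hand side, the analogous procedure yields $\sum x_{(1)}\,b(y\,x_{(2)},z)$, which by invariance of $b$ under $\mu$ (Lemma \ref{lem:alg.invariance}) equals $\sum x_{(1)}\,b(y,\,x_{(2)}\,z)$. Expanding $y$ via the non-degeneracy relation $y=\sum_k u_k\,b(v_k,y)$ (the second form of \eqref{eq:nondegen}) and invoking the Nakayama identity $b(a,c)=b(c,N(a))$ --- which is a one-line consequence of the definition \eqref{eq:Nak-definition} of $N$ together with the two non-degeneracy relations of $b$ against $c_{-1}$ --- converts $b(u_k,x_{(2)}z)$ into $b(x_{(2)},\,z\,N(u_k))$. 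A second application of the Frobenius identity turns $\sum x_{(1)}\,b(x_{(2)},\,z\,N(u_k))$ into $xz\,N(u_k)$. Summing with weights $b(v_k,y)$ and pushing $N$ through the linear combination gives $xz\cdot N\bigl(\sum_k u_k\,b(v_k,y)\bigr)=xz\,N(y)$, matching the right-hand side.

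The only real obstacle is the bookkeeping with orders of multiplication: the identity is non-trivial precisely because $y\,x_{(2)}$ and $x_{(2)}\,y$ differ in general, and the Nakayama automorphism is exactly what measures and corrects this discrepancy --- hence its appearance in the claim. Everything else is standard Frobenius-algebra manipulation, and the argument admits an equivalent fully graphical rendering in which one starts from the right-hand side, expands $N$ via its defining diagram \eqref{eq:Nak-definition-pic}, uses the Frobenius (bimodule) relation for $\Delta$ to slide the product past the coproduct, and cancels the resulting $b$--$c_{-1}$ pair by \eqref{eq:nondegen}.
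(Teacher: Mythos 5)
Your proof is correct. It takes a different route from the paper's: there, \eqref{eq:inner-mult-past-delta} is proved by a direct string-diagram computation starting from the left-hand side --- the Frobenius (bimodule) property, coassociativity, the Frobenius property again, a deformation, and finally recognition of the defining diagram \eqref{eq:Nak-definition-pic} of $N$ in the resulting picture. You instead dualize: contract the second output against an auxiliary input via $b$, reduce to an identity of morphisms $A^{\otimes 3}\to A$ (legitimate by \eqref{eq:nondegen}, since $f$ is recovered from $(\id_A\otimes b)\circ(f\otimes\id_A)$ by reattaching $c_{-1}$), and evaluate both sides to $xz\cdot N(y)$ using the Frobenius identity $(\id_A\otimes b)\circ(\Delta\otimes\id_A)=\mu$, invariance of $b$ (Lemma \ref{lem:alg.invariance}), and the adjointness $b(a,c)=b(c,N(a))$, which indeed follows in one line from \eqref{eq:Nak-definition} and \eqref{eq:nondegen}. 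The two arguments draw on the same toolkit, but yours lets $N$ enter through its characterization as the defect of symmetry of the pairing rather than through its explicit diagram, which makes the appearance of $N$ in the statement conceptually transparent; the price is Sweedler-style element notation, which must be read as shorthand for generalized elements or string diagrams in the arbitrary symmetric monoidal category $\mathcal{S}$ --- every step you perform does translate, and the graphical argument you sketch in your closing sentence is, read in reverse, essentially the paper's own proof.
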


\begin{proof}
By direct calculation:
\begin{equation*}
     \raisebox{-0.5cm}{\begin{tikzpicture}
        \coordinate (su) at (0cm,0.5cm);
        \coordinate (sd) at (0cm,-0.5cm);
        \coordinate (sdl) at (-0.5cm,-0.5cm);
        \coordinate (sdr) at (0.5cm,-0.5cm);
        \coordinate (sul) at (-0.5cm,0.5cm);
        \coordinate (sur) at (0.5cm,0.5cm);
        \coordinate (out1) at (-0.5cm,2cm);
        \coordinate (out2) at (0.5cm,2cm);
        \coordinate (in1) at (0cm,0cm);
        \coordinate (in2) at (0.5cm,0cm);
        \node[name=p, product] at (0.5cm,1.5cm){};
        \node[name=cp, product] at (0cm,0.5cm){};
        \foreach \N/\M/\S/\T in {
            in1/cp/su/sd,
            in2/p/su/sdl,
            cp/out1/sul/sd,
            cp/p/sur/sdr,
            p/out2/su/sd}
            \draw (\N) .. controls ($(\N)+(\S)$) and ($(\M)+(\T)$) .. (\M);
    \end{tikzpicture}}
    \stackrel{\text{Frob.}}{=}
    \raisebox{-0.5cm}{\begin{tikzpicture}
        \coordinate (su) at (0cm,0.5cm);
        \coordinate (sd) at (0cm,-0.5cm);
        \coordinate (sdl) at (-0.5cm,-0.5cm);
        \coordinate (sdr) at (0.5cm,-0.5cm);
        \coordinate (sul) at (-0.5cm,0.5cm);
        \coordinate (sur) at (0.5cm,0.5cm);
        \coordinate (out1) at (-0.5cm,2cm);
        \coordinate (out2) at (0.5cm,2cm);
        \coordinate (in1) at (0cm,0cm);
        \coordinate (in2) at (0.5cm,0cm);
        \node[name=cp2, product] at (0.5cm,1cm){};
        \node[name=cp1, product] at (0cm,0.5cm){};
        \node[name=b, product] at (0cm,1.5cm){};
        \foreach \N/\M/\S/\T in {
            in1/cp1/su/sd,
            in2/b/su/sdl,
            cp1/out1/sul/sd,
            cp1/cp2/sur/sdl,
            cp2/b/sul/sdr,
            cp2/out2/su/sd}
            \draw (\N) .. controls ($(\N)+(\S)$) and ($(\M)+(\T)$) .. (\M);
    \end{tikzpicture}}
    \stackrel{\text{coass.}}{=}
    \raisebox{-0.5cm}{\begin{tikzpicture}
        \coordinate (su) at (0cm,0.5cm);
        \coordinate (sd) at (0cm,-0.5cm);
        \coordinate (sdl) at (-0.5cm,-0.5cm);
        \coordinate (sdr) at (0.5cm,-0.5cm);
        \coordinate (sul) at (-0.5cm,0.5cm);
        \coordinate (sur) at (0.5cm,0.5cm);
        \coordinate (out1) at (-0.5cm,2cm);
        \coordinate (out2) at (0.5cm,2cm);
        \coordinate (in1) at (0cm,0cm);
        \coordinate (in2) at (0.5cm,0cm);
        \node[name=cp2, product] at (-0.5cm,1cm){};
        \node[name=cp1, product] at (0cm,0.5cm){};
        \node[name=b, product] at (-0.2cm,1.7cm){};
        \foreach \N/\M/\S/\T in {
            in1/cp1/su/sd,
            in2/b/su/sdl,
            cp1/cp2/sul/sdr,
            cp1/out2/sur/sd,
            cp2/b/sur/sdr,
            cp2/out1/su/sd}
            \draw (\N) .. controls ($(\N)+(\S)$) and ($(\M)+(\T)$) .. (\M);
    \end{tikzpicture}}
    \stackrel{\text{Frob.}}{=}
    \raisebox{-0.5cm}{\begin{tikzpicture}
        \coordinate (su) at (0cm,0.3cm);
        \coordinate (sd) at (0cm,-0.3cm);
        \coordinate (sdl) at (-0.3cm,-0.3cm);
        \coordinate (sdr) at (0.3cm,-0.3cm);
        \coordinate (sul) at (-0.3cm,0.3cm);
        \coordinate (sur) at (0.3cm,0.3cm);
        \coordinate (out1) at (-0.5cm,2cm);
        \coordinate (out2) at (0.5cm,2cm);
        \coordinate (in1) at (0cm,0cm);
        \coordinate (in2) at (0.5cm,0cm);
        \node[name=p, product] at (-0.5cm,1.5cm){};
        \node[name=cp, product] at (0cm,0.5cm){};
        \node[name=b, product] at (0cm,1.8cm){};
        \node[name=c, product] at (-0.1cm,0.8cm){};
        \foreach \N/\M/\S/\T in {
            in1/cp/su/sd,
            in2/b/su/sdl,
            cp/p/sul/sdl,
            cp/out2/sur/sd,
            c/p/sul/sdr,
            c/b/sur/sdr,
            p/out1/su/sd}
            \draw (\N) .. controls ($(\N)+(\S)$) and ($(\M)+(\T)$) .. (\M);
    \end{tikzpicture}}
    \stackrel{\text{deform}}{=}
    \raisebox{-0.5cm}{\begin{tikzpicture}
        \coordinate (su) at (0cm,0.3cm);
        \coordinate (sd) at (0cm,-0.3cm);
        \coordinate (sdl) at (-0.3cm,-0.3cm);
        \coordinate (sdr) at (0.3cm,-0.3cm);
        \coordinate (sul) at (-0.3cm,0.3cm);
        \coordinate (sur) at (0.3cm,0.3cm);
        \coordinate (out1) at (-0.5cm,2cm);
        \coordinate (out2) at (0.5cm,2cm);
        \coordinate (in1) at (0cm,0cm);
        \coordinate (in2) at (0.5cm,0cm);
        \node[name=p, product] at (-0.5cm,1.5cm){};
        \node[name=cp, product] at (0cm,0.5cm){};
        \node[name=b, product] at (0.8cm,1.3cm){};
        \node[name=c, product] at (1cm,0.5cm){};
        \foreach \N/\M/\S/\T in {
            in1/cp/su/sd,
            in2/b/su/sdl,
            cp/p/sul/sdl,
            cp/out2/sur/sd,
            c/p/sul/sdr,
            c/b/sur/sdr,
            p/out1/su/sd}
            \draw (\N) .. controls ($(\N)+(\S)$) and ($(\M)+(\T)$) .. (\M);
    \end{tikzpicture}}
    =
    \raisebox{-0.5cm}{\begin{tikzpicture}
        \coordinate (su) at (0cm,0.5cm);
        \coordinate (sd) at (0cm,-0.5cm);
        \coordinate (sdl) at (-0.5cm,-0.5cm);
        \coordinate (sdr) at (0.5cm,-0.5cm);
        \coordinate (sul) at (-0.5cm,0.5cm);
        \coordinate (sur) at (0.5cm,0.5cm);
        \coordinate (out1) at (-0.5cm,2cm);
        \coordinate (out2) at (0.5cm,2cm);
        \coordinate (in1) at (0cm,0cm);
        \coordinate (in2) at (0.5cm,0cm);
        \node[name=p, product] at (-0.5cm,1.5cm){};
        \node[name=cp, product] at (0cm,0.5cm){};
        \node[name=N, naka] at (0.5cm,0.5cm){};
        \foreach \N/\M/\S/\T in {
            in1/cp/su/sd,
            in2/N/su/sd,
            cp/p/sul/sdl,
            N/p/su/sdr,
            p/out1/su/sd,
            cp/out2/sur/sd}
            \draw (\N) .. controls ($(\N)+(\S)$) and ($(\M)+(\T)$) .. (\M);
    \end{tikzpicture}}
\end{equation*}
\end{proof}

Proposition \ref{prop:Nak-Frob-auto} and Lemma \ref{lem:product-past-coproduct} hold in general. For the Frobenius algebra $A$ constructed above from $t,c_{\pm1}$ we have in addition:

\begin{lemma}
The Nakayama automorphism of $A$ is an involution.
\end{lemma}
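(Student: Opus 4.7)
The key input is Lemma \ref{lem:naka.t}, which states that $t \circ (N^2 \otimes \id_A \otimes \id_A) = t$. My plan is to leverage this together with the non-degeneracy of $b$ (Assumption 1) and the existence of a unit $\eta$ (Assumption 2) to transfer the triviality of $N^2$ from $t$ to $\id_A$.

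First I would rewrite the conclusion of Lemma \ref{lem:naka.t} using the relation $t = b \circ (\mu \otimes \id_A)$ from \eqref{eq:t-via-mu}, which gives
\[
b \circ \big(\,(\mu \circ (N^2 \otimes \id_A))\,\otimes \id_A\,\big) \;=\; b \circ (\mu \otimes \id_A)
\]
as morphisms $A^{\otimes 3} \to \mathbf{1}$. The pairing $b$ is non-degenerate by Assumption 1, so its adjoint $A \to A^{\vee}$ induced by $(b \otimes \id_A)\circ(\id_A \otimes c_{-1}) = \id_A$ (see \eqref{eq:nondegen}) is an isomorphism. Consequently, two morphisms $h,h' : X \to A$ agree whenever $b \circ (h \otimes \id_A) = b \circ (h' \otimes \id_A)$. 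Applying this with $X = A \otimes A$, $h = \mu \circ (N^2 \otimes \id_A)$ and $h' = \mu$ gives
\[
\mu \circ (N^2 \otimes \id_A) \;=\; \mu \, .
\]

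Now I precompose both sides with $\id_A \otimes \eta : A \to A \otimes A$. Since the category is (strict) monoidal, $(N^2 \otimes \id_A) \circ (\id_A \otimes \eta) = N^2 \otimes \eta = (\id_A \otimes \eta) \circ N^2$, so the left-hand side becomes $\mu \circ (\id_A \otimes \eta) \circ N^2 = N^2$ by the unit axiom, while the right-hand side is simply $\mu \circ (\id_A \otimes \eta) = \id_A$. Therefore $N^2 = \id_A$, as required.

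No step here is really an obstacle: the whole content of the lemma is packaged into Lemma \ref{lem:naka.t}, and the argument above is just an extraction by pairing-cancellation followed by evaluating at the unit. The only point meriting care is the justification of the cancellation step, i.e.\ spelling out explicitly why non-degeneracy of $b$ lets one remove the trailing $\otimes \id_A$; this is where the snake identity \eqref{eq:nondegen} is used, and a one-line graphical calculation (compose with $c_{-1}$ on the free $A$-strand and apply the duality) makes it transparent.
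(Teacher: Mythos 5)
Your proof is correct and takes essentially the same route as the paper: both extract $N^2=\id_A$ from Lemma \ref{lem:naka.t} using non-degeneracy of the pairing and the unit. The paper just compresses your two-step cancellation into the single identity $N^2 = (t\otimes \id_A)\circ (N^2 \otimes \eta \otimes c_{-1})$.
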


\begin{proof}
    We use the unit property, nondegeneracy and Lemma \ref{lem:naka.t}:
    \begin{equation}
        N^2 = (t\otimes \id_A)\circ (N^2 \otimes \eta \otimes c_{-1}) \stackrel{\ref{lem:naka.t}}{=} (t\otimes \id_A)\circ (\id_A \otimes \eta \otimes c_{-1}) = \id_A\ .
    \end{equation}
\end{proof}
We call a Frobenius algebra {\em $\Delta$-separable} if $\mu \circ \Delta  \circ \eta = \eta$, 
i.e. $\Delta\circ \eta$ is a separability idempotent. 
We have:

\begin{lemma}
    $A$ is $\Delta $-separable.
\end{lemma}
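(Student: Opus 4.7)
My plan is to first reduce the claim $\mu\circ\Delta\circ\eta=\eta$ to the identity $\mu\circ c_{-1}=\eta$, and then to derive the latter from the Pachner 3-1 relation \eqref{eq:alg.pachner3-1} with a carefully chosen assignment of internal edge signs.

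For the reduction, I first observe that $\Delta\circ\eta=c_{-1}$: using $\Delta=(\mu\otimes\id_A)\circ(\id_A\otimes c_{-1})$ together with the unit property, one computes $\Delta\circ\eta=(\mu\otimes\id_A)\circ(\eta\otimes c_{-1})=c_{-1}$. Hence $\mu\circ\Delta\circ\eta=\mu\circ c_{-1}$, and it suffices to prove $\mu\circ c_{-1}=\eta$.

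To obtain this reduced identity, I would apply \eqref{eq:alg.pachner3-1} with the specific choice $s_{12}=s_{23}=1$ and $s_{31}=-1$. These values satisfy the constraint $s_{12}s_{23}s_{31}=-1$, and the rules $s_B'=s_{12}s_B$ and $s_C'=-s_{31}s_C$ yield $s_B'=s_B$ and $s_C'=s_C$; combined with $s_A'=s_A$, all three external edge signs on the two sides of \eqref{eq:alg.pachner3-1} then agree. Stripping off the three external copairings via non-degeneracy of $c_{\pm1}$, \eqref{eq:alg.pachner3-1} becomes an equality $\tau_3=t$ of morphisms $A^{\otimes 3}\to\mathbf{1}$, where $\tau_3$ denotes the three-triangle composition with internal copairings $c_1$, $c_1$, $c_{-1}$.

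The final step is to precompose $\tau_3=t$ with $\id_A\otimes\eta\otimes\eta$. The right-hand side reduces to $t\circ(\id_A\otimes\eta\otimes\eta)=\epsilon$ using $t=b\circ(\mu\otimes\id_A)$ and the unit property. On the left, inserting $\eta$ into the two triangles $t_2,t_3$ turns them into pairings $b$ (since $t\circ(\eta\otimes\id_A\otimes\id_A)=b$). A short diagrammatic calculation then collapses the internal copairings via non-degeneracy and the Nakayama identity $b\circ\sigma_{A,A}=b\circ(\id_A\otimes N)$, reducing the left-hand side to $b(\,\cdot\,,\mu\circ c_{-1})$. The key auxiliary identity for this collapse is $\sum_i f^i\otimes N(e_i)=c_{-1}$ (writing $c_{-1}=\sum_i e_i\otimes f^i$), which follows from $c_1=\sigma_{A,A}\circ c_{-1}=(N\otimes\id_A)\circ c_{-1}$ together with $N^2=\id_A$. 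Equating the two sides of the precomposed identity then yields $b(\,\cdot\,,\mu\circ c_{-1})=b(\,\cdot\,,\eta)$, and non-degeneracy of $b$ gives $\mu\circ c_{-1}=\eta$, concluding the proof.

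The main obstacle is the diagrammatic collapse in the last step; it requires careful bookkeeping of the internal contractions but uses only identities already established in this subsection.
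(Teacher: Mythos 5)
Your proof is correct and follows essentially the same route as the paper: both arguments reduce $\mu\circ\Delta\circ\eta=\eta$ to the Pachner 3-1 relation \eqref{eq:alg.pachner3-1} with an admissible choice of internal edge signs ($s_{12}s_{23}s_{31}=-1$), insert two units to turn two of the three triangles into pairings, and collapse the remaining copairings using non-degeneracy and $b\circ\sigma_{A,A}=b\circ(N\otimes\id_A)$. The only differences are cosmetic — you run the argument from the Pachner identity down to $\mu\circ c_{-1}=\eta$ rather than building the three-triangle configuration up from $\mu\circ\Delta\circ\eta$, and you use the sign assignment $(1,1,-1)$ where the paper uses $(1,-1,1)$.
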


\begin{proof}
The statement follows from the identities
    \begin{align}
    \raisebox{-0.5cm}{\begin{tikzpicture}
        \coordinate (su) at (0cm,0.5cm);
        \coordinate (sd) at (0cm,-0.5cm);
        \coordinate (sdl) at (-0.5cm,-0.5cm);
        \coordinate (sdr) at (0.5cm,-0.5cm);
        \coordinate (sul) at (-0.5cm,0.5cm);
        \coordinate (sur) at (0.5cm,0.5cm);
        \node[name=p, product] at (0cm,0cm){};
        \node[name=c, product] at (0cm,-1cm){};
        \coordinate (out) at (0cm,0.5cm);
        \foreach \N/\M/\S/\T in {
            c/p/sul/sdl,
            c/p/sur/sdr,
            p/out/su/sd}
            \draw (\N) .. controls ($(\N)+(\S)$) and ($(\M)+(\T)$) .. (\M);
    \end{tikzpicture}}
    &\stackrel{\text{Frob.}}{=}
    \raisebox{-0.5cm}{\begin{tikzpicture}
        \coordinate (su) at (0cm,0.5cm);
        \coordinate (sd) at (0cm,-0.5cm);
        \coordinate (sdl) at (-0.5cm,-0.5cm);
        \coordinate (sdr) at (0.5cm,-0.5cm);
        \coordinate (sul) at (-0.5cm,0.5cm);
        \coordinate (sur) at (0.5cm,0.5cm);
        \node[name=p, product] at (-0.5cm,-0.5cm){};
        \node[name=c1, product] at (0cm,-1cm){};
        \node[name=c2, product] at (-1cm,-1cm){};
        \node[name=b, product] at (0cm,0cm){};
        \coordinate (out) at (-1.5cm,0.5cm);
        \foreach \N/\M/\S/\T in {
            c1/p/sul/sdr,
            c1/b/sur/sdr,
            p/b/sur/sdl,
            c2/p/sur/sdl,
            c2/out/sul/sd}
            \draw (\N) .. controls ($(\N)+(\S)$) and ($(\M)+(\T)$) .. (\M);
    \end{tikzpicture}}
    \stackrel{\text{nondeg.}}{=}
    \raisebox{-1cm}{\begin{tikzpicture}
        \coordinate (su) at (0cm,0.5cm);
        \coordinate (sd) at (0cm,-0.5cm);
        \coordinate (sdl) at (-0.5cm,-0.5cm);
        \coordinate (sdr) at (0.5cm,-0.5cm);
        \coordinate (sul) at (-0.5cm,0.5cm);
        \coordinate (sur) at (0.5cm,0.5cm);
        \node[name=p, product] at (-0.5cm,-0.5cm){};
        \node[name=c1, product] at (1cm,-1.5cm){};
        \node[name=c2, product] at (-1cm,-1cm){};
        \node[name=c3, product] at (0cm,-1.5cm){};
        \node[name=c4, product] at (2cm,-1.5cm){};
        \node[name=b1, product] at (0cm,0cm){};
        \node[name=b2, product] at (1cm,0cm){};
        \node[name=b3, product] at (2cm,0cm){};
        \coordinate (out) at (-1.5cm,0.5cm);
        \foreach \N/\M/\S/\T in {
            c1/b3/sul/sdl,
            c1/b1/sur/sdr,
            p/b1/sur/sdl,
            c2/p/sur/sdl,
            c2/out/sul/sd,
            c3/p/sur/sdr,
            c3/b2/sul/sdr,
            c4/b3/sul/sdr,
            c4/b2/sur/sdl}
            \draw (\N) .. controls ($(\N)+(\S)$) and ($(\M)+(\T)$) .. (\M);
    \end{tikzpicture}}
    \\
    &\stackrel{\text{unit}}{=}
    \raisebox{-1cm}{\begin{tikzpicture}
        \coordinate (su) at (0cm,0.5cm);
        \coordinate (sd) at (0cm,-0.5cm);
        \coordinate (sdl) at (-0.5cm,-0.5cm);
        \coordinate (sdr) at (0.5cm,-0.5cm);
        \coordinate (sul) at (-0.5cm,0.5cm);
        \coordinate (sur) at (0.5cm,0.5cm);
        \node[name=p1, product] at (-0.5cm,-0.5cm){};
        \node[name=c1, product] at (1.75cm,-2.5cm){};
        \node[name=c2, product] at (-1cm,-1cm){};
        \node[name=c3, product] at (0.5cm,-2.5cm){};
        \node[name=c4, product] at (3cm,-2.5cm){};
        \node[name=b1, product] at (0cm,0cm){};
        \node[name=b2, product] at (2cm,0cm){};
        \node[name=b3, product] at (4cm,0cm){};
        \node[name=p2, product] at ($(b2)+(-0.5cm,-0.5cm)$){};
        \node[name=p3, product] at ($(b3)+(-0.5cm,-0.5cm)$){};
        \node[name=u1, naka] at ($(p2)+(-0.3cm,-0.3cm)$){};
        \node[name=u2, naka] at ($(p3)+(-0.3cm,-0.3cm)$){};
        \coordinate (out) at (-1.5cm,0.5cm);
        \foreach \N/\M/\S/\T in {
            c1/p3/sul/sdr,
            c1/b1/sur/sdr,
            p1/b1/sur/sdl,
            p2/b2/sur/sdl,
            p3/b3/sur/sdl,
            c2/p1/sur/sdl,
            c2/out/sul/sd,
            c3/p1/sur/sdr,
            c3/b2/sul/sdr,
            c4/b3/sul/sdr,
            c4/p2/sur/sdr,
            u1/p2/sur/sdl,
            u2/p3/sur/sdl}
            \draw (\N) .. controls ($(\N)+(\S)$) and ($(\M)+(\T)$) .. (\M);
    \end{tikzpicture}}
    \nonumber\\ 
    &\stackrel{\text{\eqref{eq:t-via-mu}}}{=}
    \raisebox{-1cm}{\begin{tikzpicture}
        \coordinate (su) at (0cm,0.5cm);
        \coordinate (sd) at (0cm,-0.5cm);
        \coordinate (sdl) at (-0.5cm,-0.5cm);
        \coordinate (sdr) at (0.5cm,-0.5cm);
        \coordinate (sul) at (-0.5cm,0.5cm);
        \coordinate (sur) at (0.5cm,0.5cm);
        \node[name=c0, product] at (-1cm,-1cm){};
        \node[name=c1, draw, minimum width=0.8cm, minimum height=1.4em] at (0.5cm,-2.5cm){$c_{1}$};
        \node[name=c2, draw, minimum width=0.8cm, minimum height=1.4em] at (1.75cm,-2.5cm){$c_{-1}$};
        \node[name=c3, draw, minimum width=0.8cm, minimum height=1.4em] at (3cm,-2.5cm){$c_{1}$};
        \foreach \N in {1,2,3} {
            \coordinate (c\N1) at ($(c\N.north west)!0.25!(c\N.north east)$);
            \coordinate (c\N2) at ($(c\N.north west)!0.75!(c\N.north east)$);
        }
        \node[name=t1, draw, minimum width=1.5cm, minimum height=1.4em] at (0cm,0cm){$t$};
        \node[name=t2, draw, minimum width=1.5cm, minimum height=1.4em] at (2cm,0cm){$t$};
        \node[name=t3, draw, minimum width=1.5cm, minimum height=1.4em] at (4cm,0cm){$t$};
        \foreach \N in {1,2,3} {
            \coordinate (t\N1) at ($(t\N.south west)!0.33!(t\N.south)$);
            \coordinate (t\N2) at (t\N.south);
            \coordinate (t\N3) at ($(t\N.south east)!0.33!(t\N.south)$);
        }
        \node[name=u1, naka] at ($(t21)+(0cm,-0.8cm)$){};
        \node[name=u2, naka] at ($(t31)+(0cm,-0.8cm)$){};
        \coordinate (out) at (-1.5cm,0.5cm);
        \foreach \N/\M/\S/\T in {
            c0/t11/sur/sd,
            c0/out/sul/sd}
            \draw (\N) .. controls ($(\N)+0.5*(\S)$) and ($(\M)+0.5*(\T)$) .. (\M);
        \foreach \N/\M in {
            u1/t21,
            u2/t31,
            c11/t12,
            c12/t23,
            c21/t32,
            c22/t13,
            c31/t22,
            c32/t33}
             \draw (\N) .. controls ($(\N)+(su)$) and ($(\M)+(sd)$) .. (\M);
    \end{tikzpicture}}
        \nonumber\\ 
&\stackrel{\text{\eqref{eq:alg.pachner3-1}}}{=}
    \raisebox{-0.5cm}{\begin{tikzpicture}
        \coordinate (su) at (0cm,0.5cm);
        \coordinate (sd) at (0cm,-0.5cm);
        \coordinate (sdl) at (-0.5cm,-0.5cm);
        \coordinate (sdr) at (0.5cm,-0.5cm);
        \coordinate (sul) at (-0.5cm,0.5cm);
        \coordinate (sur) at (0.5cm,0.5cm);
        \node[name=c0, product] at (-1cm,-1cm){};
        \node[name=t1, draw, minimum width=1.5cm, minimum height=1.4em] at (0cm,0cm){$t$};
        \foreach \N in {1} {
            \coordinate (t\N1) at ($(t\N.south west)!0.33!(t\N.south)$);
            \coordinate (t\N2) at (t\N.south);
            \coordinate (t\N3) at ($(t\N.south east)!0.33!(t\N.south)$);
        }
        \node[name=u1, naka] at ($(t12)+(0cm,-0.8cm)$){};
        \node[name=u2, naka] at ($(t13)+(0cm,-0.8cm)$){};
        \coordinate (out) at (-1.5cm,0.5cm);
        \foreach \N/\M/\S/\T in {
            c0/t11/sur/sd,
            c0/out/sul/sd}
            \draw (\N) .. controls ($(\N)+0.5*(\S)$) and ($(\M)+0.5*(\T)$) .. (\M);
        \foreach \N/\M in {
            u1/t12,
            u2/t13}
             \draw (\N) .. controls ($(\N)+(su)$) and ($(\M)+(sd)$) .. (\M);
    \end{tikzpicture}}
    =
    \raisebox{-0.5cm}{\begin{tikzpicture}
        \coordinate (su) at (0cm,0.5cm);
        \coordinate (sd) at (0cm,-0.5cm);
        \coordinate (sdl) at (-0.5cm,-0.5cm);
        \coordinate (sdr) at (0.5cm,-0.5cm);
        \coordinate (sul) at (-0.5cm,0.5cm);
        \coordinate (sur) at (0.5cm,0.5cm);
        \node[name=c0, product] at (-1cm,-1cm){};
        \node[name=p, product] at (-0.5cm,-0.5cm){};
        \node[name=b, product] at (0cm,0cm){};
        \node[name=u1, naka] at (0cm,-1cm){};
        \node[name=u2, naka] at (1cm,-1cm){};
        \coordinate (out) at (-1.5cm,0.5cm);
        \foreach \N/\M/\S/\T in {
            c0/p/sur/sdl,
            c0/out/sul/sd,
            u1/p/sul/sdr,
            u2/b/sul/sdr,
            p/b/sur/sdl}
            \draw (\N) .. controls ($(\N)+0.5*(\S)$) and ($(\M)+0.5*(\T)$) .. (\M);
    \end{tikzpicture}}
    = 
    \raisebox{-0.5cm}{\begin{tikzpicture}
        \coordinate (su) at (0cm,0.5cm);
        \coordinate (sd) at (0cm,-0.5cm);
        \coordinate (sdl) at (-0.5cm,-0.5cm);
        \coordinate (sdr) at (0.5cm,-0.5cm);
        \coordinate (sul) at (-0.5cm,0.5cm);
        \coordinate (sur) at (0.5cm,0.5cm);
        \node[name=u1, naka] at (0cm,-1.5cm){};
        \coordinate (out) at (0cm,0cm);
        \foreach \N/\M/\S/\T in {
            u1/out/su/sd}
            \draw (\N) .. controls ($(\N)+0.5*(\S)$) and ($(\M)+0.5*(\T)$) .. (\M);
    \end{tikzpicture}}
    \quad .
    \nonumber
\end{align}
\end{proof}

We have now arrived at the desired algebra structure encoding $t,c_{\pm1}$ and their properties. As described above, under Assumptions 1 and 2, the data $t,c_{\pm1}$, subject to  relations 1--5 in Section \ref{sec:localmoves}, give rise to a $\Delta$-separable Frobenius algebra whose Nakayama automorphism is an involution. The following result shows that the converse holds as well.

\begin{proposition}    \label{lem:moves_from_alg}
Let $A$ be a $\Delta $-separable Frobenius algebra whose Nakayama automorphism is an involution. Set 
\begin{equation} \label{eq:tc-via-A}
 t =\varepsilon \circ \mu  \circ (\mu \otimes \id_A) ~, \quad
 c_{-1} = \Delta \circ \eta ~, \quad
 c_{1} = \sigma_{A,A} \circ \Delta \circ \eta \ .
\end{equation} 
Then $t,c_{\pm 1}$ fulfil relations 1--5 in Section \ref{sec:localmoves}.
\end{proposition}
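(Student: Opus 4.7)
The approach is to check each of the five relations in turn by translating the graphical identities into algebraic identities for $A$ via the definitions \eqref{eq:tc-via-A}. The key translation rule, which follows from non-degeneracy of $b = \varepsilon \circ \mu$ together with the definition of the Nakayama automorphism in \eqref{eq:Nak-definition}, is
\begin{align*}
  (b \otimes \id_A) \circ (\id_A \otimes c_{-1}) &= \id_A,   &
  (b \otimes \id_A) \circ (\id_A \otimes c_{+1}) &= N,\\
  (\id_A \otimes b) \circ (c_{-1} \otimes \id_A) &= \id_A,   &
  (\id_A \otimes b) \circ (c_{+1} \otimes \id_A) &= N^{-1}.
\end{align*}
Thus, after closing off the three output legs of each relation with $b$, replacing a copairing $c_{-1}$ by $c_{+1}$ (i.e.\ flipping its sign) amounts to inserting an $N$ (or $N^{-1}$) on that leg.

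Relation 1 is immediate from the definition $c_{+1} = \sigma_{A,A} \circ c_{-1}$. For Relation 3, testing both sides of \eqref{eq:alg.cyclic} against inputs $x_0,x_1,x_2$ reduces the cyclic-permutation-with-one-sign-flip identity to $\varepsilon(y_0 y_1 y_2) = \varepsilon(y_1 y_2 \cdot N y_0)$, which is the defining cyclic property of the Nakayama automorphism $b(a,b)=b(b,Na)$ --- itself a direct consequence of the invariance of $b$ (Lemma \ref{lem:alg.invariance}). For Relation 2, flipping all three signs simultaneously translates, via the rule above, to the identity $\varepsilon(N x_0 \cdot N x_1 \cdot N x_2) = \varepsilon(x_0 x_1 x_2)$; this follows from $N$ being an algebra automorphism together with $\varepsilon \circ N = \varepsilon$ (Proposition \ref{prop:Nak-Frob-auto}), while the involutivity $N^2 = \id_A$ ensures that the various partial sign-flip patterns all give the same formula.

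For Relation 4 (Pachner 2-2), each side of \eqref{eq:alg.pachner2-2}, after the contraction described above, expresses $\varepsilon$ applied to a bracketed product of the four inputs $x_A,x_B,x_C,x_D$, interleaved with $N$'s dictated by the signs $s_A,s_B,s_C,s_D$ and by the internal sign $s$. Using associativity of $\mu$ and invariance of $b$, both sides can be rewritten as $\varepsilon(x_A x_B x_C x_D)$ up to $N$-decorations; the specific sign prescription $s_B' = -s\,s_B$, $s_C' = -s_C$, $s_D' = -s\,s_D$ is precisely what is required for the $N$-decorations on both sides to match, after using $N^2 = \id_A$ to cancel square powers of $N$. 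Lemma \ref{lem:product-past-coproduct} is the main tool for moving $N$'s past coproducts as the product is re-bracketed.

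For Relation 5 (Pachner 3-1), the three $t$'s joined by three internal copairings $c_{s_{12}}, c_{s_{23}}, c_{s_{31}}$ form a ``triangle'' around a central contracted vertex; after contraction with pairings, this cluster becomes an $\varepsilon\circ\mu$ of a product containing the sub-expression $\mu \circ \Delta \circ \eta$, which equals $\eta$ by $\Delta$-separability, collapsing three triangles to one. The admissibility condition $s_{12} s_{23} s_{31} = -1$ corresponds to the total $N$-insertion at the central vertex being $N^{2k+1} = N$, which combines correctly with the sign change in $s_C' = -s_{31} s_C$; other combined sign products evaluate to $\id_A$ via $N^2 = \id_A$. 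The main obstacle throughout is purely the sign bookkeeping: at every stage one must track exactly which legs carry an $N$ and use $N^2=\id_A$, $N$-equivariance of $\mu,\Delta$, and the Frobenius property to close the computation.
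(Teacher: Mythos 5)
Your proposal is correct and follows essentially the same route as the paper: relations 1--3 via the Nakayama identities $b\circ(N\otimes\id_A)=b\circ \sigma_{A,A}$ and $(N\otimes\id_A)\circ c_\nu=c_{-\nu}$ together with $N$ being a (co)unital (co)algebra automorphism, relation 4 by reduction to associativity with Nakayama bookkeeping (using Lemma \ref{lem:product-past-coproduct} and $N^2=\id_A$), and relation 5 via $\Delta$-separability combined with the constraint $s_{12}s_{23}s_{31}=-1$. The only notable economy in the paper is that for relation 4 it does not redo the sign expansion from scratch but simply reverses the calculation \eqref{eq:assoc-proof-calc} (which derived associativity \emph{from} the Pachner 2-2 relation) to get the special case $s_A=s_B=s_C=s_D=s=1$, and then composes with Nakayama automorphisms to reach general signs --- the same idea you describe, packaged more efficiently.
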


\begin{proof}
\noindent
{\em Relation (1):} 
Immediate from \eqref{eq:tc-via-A}.

\medskip\noindent
{\em Relation (2):}
Follows from $(N \otimes \id) \circ c_{\nu} = c_{-\nu}$ together with the fact that $N$ is an automorphism of Frobenius algebras (Proposition \ref{prop:Nak-Frob-auto}).

\medskip\noindent
{\em Relation (3):} 
Applying the pairing $b$ to each leg shows that \eqref{eq:alg.cyclic} is equivalent to
\begin{equation}
	t \circ (N_{-s(e_0)} \otimes N_{-s(e_1)} \otimes N_{-s(e_2)}) 
	= t \circ c_{A,A\otimes A} \circ  (N_{s(e_0)} \otimes N_{-s(e_1)} \otimes N_{-s(e_2)}) \ .
\end{equation}
Cancelling the Nakayama automorphisms 
gives the following reformulation of \eqref{eq:alg.cyclic}:
\begin{equation}
	t \circ (N \otimes \id_{A \otimes A})
	= t \circ c_{A,A\otimes A} \ .
\end{equation}
To see that this equality holds, 
first substitute $t = b \circ (\id_A \otimes \mu)$ and then use $b \circ (N \otimes \id_A) = b \circ c_{A,A}$. This last identity follows by first composing the definition of $N$ in \eqref{eq:Nak-definition-pic} with $b$ to get $b \circ (\id_A \otimes N) = b \circ c_{A,A}$ and then noting that $b \circ (\id_A \otimes N) = b \circ (N \otimes \id_A)$.

\medskip\noindent
{\em Relation (4):} 
Recall the calculation in \eqref{eq:assoc-proof-calc} which was used to establish associativity. Remove the equal sign labelled by Equation \eqref{eq:alg.pachner2-2} and instead use associativity of $\mu$ to equate the first and last expression. Since we have already established Relations 2 and 3, that is, Equations \eqref{eq:alg.leaf} and \eqref{eq:alg.cyclic}, this reformulation of the calculation in \eqref{eq:assoc-proof-calc} shows that the equality labelled by \eqref{eq:alg.pachner2-2} in \eqref{eq:assoc-proof-calc} holds. This equality proves a special case of relation (4), i.e.\ of \eqref{eq:alg.pachner2-2}: use $b$ to turn the first three out-going legs into in-going legs and set $s_A=s_B=s_C=s_D=s=1$.
The remaining cases are established by composing with Nakayama automorphisms as appropriate.

\medskip\noindent
{\em Relation (5):} We have to show the identity \eqref{eq:alg.pachner3-1}. By composing with Nakayama automorphisms as appropriate, we may assume $s_A=s_B=s_C=-1$. Using $b$ to turn all out-going legs into in-going ones and substituting the definitions of $t$ and $c_{\pm1}$, we see that \eqref{eq:alg.pachner3-1} is equivalent to
\begin{equation}\label{eq:Frob-to-pachner31-aux1}
   \raisebox{-2cm}{\begin{tikzpicture}
        \coordinate (su) at (0cm,0.5cm);
        \coordinate (sd) at (0cm,-0.5cm);
        \coordinate (sdl) at (-0.5cm,-0.5cm);
        \coordinate (sdr) at (0.5cm,-0.5cm);
        \coordinate (sul) at (-0.5cm,0.5cm);
        \coordinate (sur) at (0.5cm,0.5cm);
        \node[name=p1, product] at (-0.5cm,-0.5cm){};
        \node[name=c1, product] at (0.5cm,-3cm){};
        \node[name=c2, product] at (2cm,-3cm){};
        \node[name=c3, product] at (4.5cm,-3cm){};
        \node[name=b1, product] at (0cm,0cm){};
        \node[name=b2, product] at (2cm,0cm){};
        \node[name=b3, product] at (4cm,0cm){};
        \node[name=p2, product] at ($(b2)+(-0.5cm,-0.5cm)$){};
        \node[name=p3, product] at ($(b3)+(-0.5cm,-0.5cm)$){};
        \node[name=N1, naka] at ($(c1)+(-0.5cm,0.7cm)$){$-s_{12}$};
        \node[name=N2, naka] at ($(c2)+(0.5cm,0.7cm)$){$-s_{31}$};
        \node[name=N3, naka] at ($(c3)+(-0.5cm,0.7cm)$){$-s_{23}$};
        \foreach \N in {1,2,3} \coordinate (in\N) at ($(p\N)+(-0.5cm,-3cm)$);
        \coordinate (in3) at ($(in3)+(0.2cm,0cm)$);
        \coordinate (ic2) at ($(c2)+(-0.5cm,0.7cm)$);
        \foreach \N in {1,2,3} \coordinate (ip\N) at ($(p\N)+(-0.5cm,-1cm)$);
        \coordinate (ip3) at ($(ip3)+(0.2cm,0cm)$);
        \node[name=C1, ellipse, dashed, minimum height=1.8cm, minimum width=4cm, draw, rotate=-70] at ($(N1)+(-0.25cm,0.5cm)$){};
        \node[name=C2, circle, dashed, minimum size=1.8cm, draw] at ($(N2)+(-0.25cm,0cm)$){};
        \node[name=C3, circle, dashed, minimum size=1.5cm, draw] at ($(b3)+(-0.25cm,-0.3cm)$){};
        \node[left] at (C1.south east){\small{(1)}};
        \node[below] at (C2.south){\small{(2)}};
        \node[right] at (C3.east){\small{(3)}};
        \foreach \N/\M/\S/\T in {
            ic2/p3/su/sdr,
            N2.north/b1/su/sdr,
            p1/b1/sur/sdl,
            p2/b2/sur/sdl,
            p3/b3/sur/sdl,
            N1.north/p1/su/sdr,
            c1/b2/sur/sdr,
            c3/b3/sur/sdr,
            N3.north/p2/su/sdr,
            in1/ip1/su/sd,
            ip1/p1/su/sdl,
            in2/ip2/su/sd,
            ip2/p2/su/sdl,
            in3/ip3/su/sd}
            \draw (\N) .. controls ($(\N)+(\S)$) and ($(\M)+(\T)$) .. (\M);
        \foreach \N/\M/\S/\T in {
            c2/ic2/sul/sd,
            c1/N1.south/sul/sd,
            c2/N2.south/sur/sd,
            c3/N3.south/sul/sd,
            ip3/p3/su/sdl}
            \draw (\N) .. controls ($(\N)+0.5*(\S)$) and ($(\M)+0.5*(\T)$) .. (\M);
    \end{tikzpicture}}
	= \varepsilon \circ \mu \circ (\mu \otimes \id_A) \circ (\id_A \otimes N_{s_{12}} \otimes N_{-s_{31}}) \ .
\end{equation}
To prove this identity, start from the left hand side. Inside the dashed circle 1, move $N_{-s_{12}}$ past the copairing $c_{-1}$ and convert product and copairing to a coproduct by substituting \eqref{eq:coproduct-counit}. In dashed circle 2, remove the braiding by replacing $N_{-s_{31}}$ by $N_{s_{31}}$. Then one can use the duality properties to cancel $c_{-1}$ against $b$. In dashed circle 3, apply associativity. Deforming the resulting string diagram slightly gives the first equality in:
\begin{align}
\text{lhs.\ of \eqref{eq:Frob-to-pachner31-aux1}}
&\overset{(1)}=
    \raisebox{-1cm}{\begin{tikzpicture}
        \coordinate (su) at (0cm,0.5cm);
        \coordinate (sd) at (0cm,-0.5cm);
        \coordinate (sdl) at (-0.5cm,-0.5cm);
        \coordinate (sdr) at (0.5cm,-0.5cm);
        \coordinate (sul) at (-0.5cm,0.5cm);
        \coordinate (sur) at (0.5cm,0.5cm);
        \coordinate (in1) at (0cm,-0.2cm);
        \coordinate (in2) at (1.5cm,-0.2cm);
        \coordinate (in3) at (4cm,-0.2cm);
        \coordinate (iin3) at (4cm,3.5cm);
        \node[name=cp1, product] at (0cm,0.5cm){};
        \node[name=N1, naka] at (0.5cm,1.25cm){$-s_{12}$};
        \node[name=N2, naka] at (0cm,2.5cm){$s_{31}$};
        \node[name=N3, naka] at (2.5cm,0.75cm){$-s_{23}$};
        \node[name=b1, product] at (1.5cm,2.5cm){};
        \node[name=b2, product] at (3.5cm,4.5cm){};
        \coordinate (ib1) at ($(b1)+(0cm,-0.5cm)$);
        \node[name=p1, product] at (2cm,1.5cm){};
        \node[name=p2, product] at (3.5cm,3.5cm){};
        \node[name=c1, product] at (3cm,0cm){};
         \foreach \N/\M/\S/\T in {
            in1/cp1/su/sd,
            cp1/N2.south/sul/sd,
            p1/ib1/sul/sdr,
            in2/p1/su/sdl,
            c1/p2/sur/sdr,
            N2.north/p2/su/sdl,
            p2/b2/su/sdr,
            in3/iin3/su/sd,
            iin3/b2/su/sdl}
            \draw (\N) .. controls ($(\N)+(\S)$) and ($(\M)+(\T)$) .. (\M);
        \foreach \N/\M/\S/\T in {
            cp1/N1.south/sur/sd,
            N1.north/ib1/su/sdl,
            ib1/b1/sur/sdr,
            ib1/b1/sul/sdl,
            N3.north/p1/su/sdr,
            c1/N3.south/sul/sd}
            \draw (\N) .. controls ($(\N)+0.5*(\S)$) and ($(\M)+0.5*(\T)$) .. (\M);
    \end{tikzpicture}}
\overset{(2)}=
    \raisebox{-1cm}{\begin{tikzpicture}
        \coordinate (su) at (0cm,0.5cm);
        \coordinate (sd) at (0cm,-0.5cm);
        \coordinate (sdl) at (-0.5cm,-0.5cm);
        \coordinate (sdr) at (0.5cm,-0.5cm);
        \coordinate (sul) at (-0.5cm,0.5cm);
        \coordinate (sur) at (0.5cm,0.5cm);
        \coordinate (in1) at (0cm,0cm);
        \coordinate (in2) at (1.5cm,0cm);
        \coordinate (in3) at (2.5cm,0cm);
        \node[name=cp1, product] at (0cm,0.5cm){};
        \node[name=N12, naka] at (0.5cm,1.25cm){$s_{12}$};
        \node[name=p1, product] at (1cm,2cm){};
        \node[name=N23, naka] at (1.5cm,2.75cm){$-s_{23}$}; 
        \node[name=N31, naka] at (0cm,2.5cm){$s_{31}$};
        \node[name=p2, product] at (1.5cm,3.5cm){};
        \node[name=b, product] at (2cm,4cm){};
        \node[name=Nr, naka] at (2.5cm,2.75cm){};
        \foreach \N/\M/\S/\T in {
            in1/cp1/su/sd,
            in2/p1/su/sdr,
            in3/Nr.south/su/sd,
            Nr.north/b/su/sdr,
            cp1/N31.south/sul/sd, 
        p2/b/sur/sdl}
            \draw (\N) .. controls ($(\N)+(\S)$) and ($(\M)+(\T)$) .. (\M);
        \foreach \N/\M/\S/\T in {
            cp1/N12.south/sur/sd,
            N12.north/p1/su/sdl,
            p1/N23.south/su/sd,
            N23.north/p2/su/sd,
            N31.north/p2/su/sdl}
            \draw (\N) .. controls ($(\N)+0.5*(\S)$) and ($(\M)+0.5*(\T)$) .. (\M);
    \end{tikzpicture}}
\\&\overset{(3)}=
    \raisebox{-1cm}{\begin{tikzpicture}
        \coordinate (su) at (0cm,0.5cm);
        \coordinate (sd) at (0cm,-0.5cm);
        \coordinate (sdl) at (-0.5cm,-0.5cm);
        \coordinate (sdr) at (0.5cm,-0.5cm);
        \coordinate (sul) at (-0.5cm,0.5cm);
        \coordinate (sur) at (0.5cm,0.5cm);
        \coordinate (in1) at (0cm,0cm);
        \coordinate (in2) at (1cm,0cm);
        \coordinate (in3) at (2.5cm,0cm);
        \coordinate (iin3) at (2.5cm,3cm);
        \node[name=N12, naka] at (1cm,0.75cm) {$s_{12}$};
        \node[name=p1, product] at (0.5cm,1.5cm) {};
        \node[name=cp1, product] at (0.5cm,2cm){};
        \coordinate (icp1) at (-0.5cm,3cm);
        \node[name=NBig, naka] at (1cm,3cm) {$-s_{12}s_{23}s_{31}$};
        \node[name=p2, product] at (0.5cm,4cm) {};
        \node[name=b, product] at (1cm,4.5cm){};
        \node[name=N31, naka] at (2.5cm,0.75cm){$-s_{31}$};
        \foreach \N/\M/\S/\T in {
            in2/N12.south/su/sd,
            in1/p1/su/sdl,
            p1/cp1/su/sd,
            p2/b/sur/sdl,
            in3/N31.south/su/sd,
            N31.north/iin3/su/sd,
            iin3/b/su/sdr}
            \draw (\N) .. controls ($(\N)+(\S)$) and ($(\M)+(\T)$) .. (\M);
        \foreach \N/\M/\S/\T in {
            N12.north/p1/su/sdr,
            cp1/NBig/sur/sd,
            cp1/icp1/sul/sd,
            icp1/p2/su/sdl,
            NBig/p2/su/sdr}
            \draw (\N) .. controls ($(\N)+0.5*(\S)$) and ($(\M)+0.5*(\T)$) .. (\M);
    \end{tikzpicture}}
~\overset{(4)}=~
\text{rhs.\ of \eqref{eq:Frob-to-pachner31-aux1}} \ .
\nonumber
\end{align}
In the second equality, $b \circ c_{A,A} = b \circ (N \otimes \id_A)$ is used twice, and after one use of associativity, a pairing has been cancelled against a copairing. Step 3 is associativity and the fact that $N$ is an algebra automorphism. Equality 4 uses that $s_{12}s_{23}s_{31}=-1$ and $\Delta$-separability of $A$.

\end{proof}

With the tools assembled so far, we can prove the main result of this paper. To state the result, we need a little bit more notation. Let $B$ be the number of boundary components of the given spin surface. We would like to think of the morphism assigned to this spin surface as a ``correlator'', that is, we prefer to write it as a morphism $A^{\otimes 3B} \to  \mathbf{1}_\mathcal{S}$ rather than the other way around as is the case for $T_\text{triang}$ in \eqref{eq:Ttriang-def}. We use the map $b$ to achieve this and define
    \begin{equation}\label{eq:TA-def}
        T_A(\Sigma) := (b^{\otimes 3B}) \circ \tau \circ \big( \id_{A^{\otimes 3B}} \otimes  T_\text{triang}(\mathcal{C},\tilde{\varphi},\Sigma) \big)
  \ ,
    \end{equation}
where $\tau$ is a permutation $(A^{\otimes 3B} \otimes A^{\otimes 3B}) \to (A^{\otimes 2})^{\otimes 3B}$, which connects the $i$'th factor of $A$ in the first (resp.\ second) copy of $A^{\otimes 3B}$ in the source object to the first (resp.\ second) copy of $A$ in the $i$'th factor of $A^{\otimes 2}$ in the target object.

\begin{theorem}\label{thm:TA-triang-indep}
Let $A$ be a Frobenius algebra in a symmetric 
	strict
monoidal category $\mathcal{S}$, such that $A$ is $\Delta$-separable and has a Nakayama automorphism which is an involution. 
Then $T_A(\Sigma)$ is independent of the choice of spin triangulation of the spin surface $\Sigma$ and $T_A(\Sigma) = T_A(\Sigma')$ for isomorphic spin surfaces $\Sigma$ and $\Sigma'$.
\end{theorem}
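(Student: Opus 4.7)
The plan is to combine the algebraic characterisation in Proposition \ref{lem:moves_from_alg} with the combinatorial invariance in Proposition \ref{prop:invariance}, and then verify that an isomorphism of spin surfaces induces a compatible transport of spin triangulations.

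First I would use Proposition \ref{lem:moves_from_alg} to pass from the hypotheses of the theorem to those of Proposition \ref{prop:invariance}. Setting
\begin{equation*}
	t = \varepsilon \circ \mu \circ (\mu \otimes \id_A) \,, \quad
	c_{-1} = \Delta \circ \eta \,, \quad
	c_{1} = \sigma_{A,A} \circ \Delta \circ \eta \,,
\end{equation*}
the $\Delta$-separability of $A$ together with $N \circ N = \id_A$ guarantees that the morphisms $t,c_{\pm 1}$ satisfy relations 1--5 of Section \ref{sec:localmoves}. Proposition \ref{prop:invariance} then yields $T_\text{triang}(\Lambda_1)=T_\text{triang}(\Lambda_2)$ for any two spin triangulations $\Lambda_1,\Lambda_2$ of the same spin surface $\Sigma$. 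Triangulation-independence of $T_A$ follows at once from its definition \eqref{eq:TA-def}, since $T_A$ differs from $T_\text{triang}$ only by post-composition with the fixed permutation $\tau$ and with $b^{\otimes 3B}$, neither of which depends on the chosen spin triangulation.

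For the isomorphism statement, I would fix an isomorphism $\tilde f:\Sigma \to \Sigma'$ of spin surfaces (with underlying diffeomorphism $f$) and a spin triangulation $\Lambda = ((\mathcal{C},f_i,d^1_0,d^2_0),(\varphi,\tilde\chi_\sigma),(\Sigma,\tilde\varphi_i))$ of $\Sigma$. Transport this to a spin triangulation $\Lambda'$ of $\Sigma'$ by keeping the combinatorial surface $(\mathcal{C},f_i,d^1_0,d^2_0)$ unchanged and replacing $\varphi$, $\tilde\chi_\sigma$ by $f\circ\varphi$, $\tilde f\circ\tilde\chi_\sigma$; since $\tilde f$ respects the boundary parametrisations, the parametrising maps of $\Sigma'$ are $\tilde\varphi_i' = \tilde f \circ \tilde\varphi_i$. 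The transition functions $g_p(\tilde\chi_{\sigma_L}^{-1}\circ\tilde\chi_{\sigma_R})$ entering Definition \ref{def:se-inner_edge} and their boundary counterparts in Definition \ref{def:se-boundary_edge} are unaffected by this replacement, because the inserted factors $\tilde f^{-1}\circ\tilde f$ cancel. Hence the edge signs coincide: $s_\Lambda(e)=s_{\Lambda'}(e)$ for every edge $e$.

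Since $T_\text{triang}$ is constructed purely from the polarised progressive graph $\Gamma(\mathcal{C})$ together with the valuation by $t$ and $c_{s(e)}$, and this data is identical for $\Lambda$ and $\Lambda'$, we obtain $T_\text{triang}(\Lambda)=T_\text{triang}(\Lambda')$, and therefore $T_A(\Sigma)=T_A(\Sigma')$. The main obstacle in the overall argument is really absorbed into the preparatory work: the delicate part is the algebraic analysis of Section \ref{sec:ana-alg-str} that yields Proposition \ref{lem:moves_from_alg}, and the verification via Pachner moves and refinement of triangulations that underpins Proposition \ref{prop:invariance}. Given those two results, the theorem follows by direct assembly, and the only genuinely new observation needed here is the cancellation argument showing that edge signs are invariant under push-forward along an isomorphism of spin surfaces.
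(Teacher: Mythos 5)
Your proposal is correct and follows essentially the same route as the paper's own proof: invoke Proposition \ref{lem:moves_from_alg} to obtain relations 1--5, apply Proposition \ref{prop:invariance} for triangulation independence, and push the spin triangulation forward along the isomorphism $\tilde f$, noting that the transition functions (hence the edge signs) are unchanged. The cancellation argument you spell out for the edge signs is exactly the reason the paper states that $f_*\Lambda$ "produces the same edge signs as $\Lambda$".
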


\begin{proof}
Let $\Lambda= ((\mathcal{C},f_i,d^1_0,d^2_0), (\varphi, \tilde\chi_\sigma) ,(\Sigma,\tilde\varphi_i))$ be a spin triangulation of $\Sigma$. By Proposition \ref{lem:moves_from_alg}, $t,c_{\pm1}$ as defined via $A$ satisfy relations 1--5 in Section \ref{sec:localmoves}. By Proposition \ref{prop:invariance}, this implies that $T_\text{triang}(\Lambda)$ is independent of the choice of spin triangulation. 
Given an isomorphism $\tilde f : \Sigma \to \Sigma'$ of spin surfaces, we obtain a spin triangulation 
$f_*\Lambda := ((\mathcal{C},f_i,d^1_0,d^2_0), (f \circ \varphi, \tilde f \circ\tilde\chi_\sigma) ,(\Sigma',\tilde\varphi_i'))$ of $\Sigma'$ which produces the same edge signs as $\Lambda$. Hence, $T_\text{triang}$ gives the same morphism in $\mathcal{S}$ for $\Lambda$ and $f_*\Lambda$.
\end{proof}

\begin{remark}\label{rem:compare-Lurie}
At this point we can make some connections to Lurie's description of topological field theories as fully dualisable objects \cite{lurie2009classification} (see also \cite{schommer2009classification,davidovich2011state} for discussions of the two-dimensional case). In the symmetric monoidal bicategory of algebras, bimodules, and bimodule morphisms (over some algebraically closed field), the fully dualisable objects are finite-dimensional semisimple algebras $A$. The dual is $A^\text{op}$, the algebra with opposite product. Write $A^e = A \otimes A^{op}$. Then the dualising bimodules are ${}_{A^e}A_{\mathbf1}$ and ${}_{\mathbf1}A_{A^e}$. The homotopy SO(2) action is given by the Serre-automorphism, i.e.\ by tensoring with the $A$-$A$-bimodule $A^*$. To pass from framed to spin TFTs, we need the Serre-automorphism to be an involution. 

Let now $A$ be a Frobenius algebra as in Theorem \ref{thm:TA-triang-indep}. Since $A$ is separable, it is semi-simple. Since $A^* \cong A_N$ as bimodules (where $A_N$ is the $A$-$A$-bimodule $A$ with right action twisted by the Nakayama automorphism), and $A_N \otimes_A A_N \cong A_{N^2}$, we see that the Serre-automorphism is indeed an involution.
\end{remark}

\subsection{Behaviour of the morphisms under gluing of spin surfaces}

Let $\Sigma$ be a spin triangulated surface, and $(i,j,\varepsilon)$ be spin gluing data
such that $(i,j)$ is simplicial gluing data (see Section \ref{sec:smooth-triang})
Consider the morphism
\begin{equation}
	\Gamma_{i,j,\varepsilon} =
    \raisebox{-2cm}{\begin{tikzpicture}
        \coordinate (su) at (0cm,1cm);
        \coordinate (sd) at (0cm,-1cm);
        \coordinate (out1) at (0.5cm,5cm);
        \foreach \N in {2,3,4,5,6} \coordinate (out\N) at ($(0.5cm,5cm)+\N*(0.5cm,0cm)$);
        \foreach \N in {7,8,9,10,11} \coordinate (out\N) at ($(1cm,5cm)+\N*(0.5cm,0cm)$);
        \coordinate (out12) at (7.5cm,5cm);
        \foreach \N in {1,...,12} \coordinate (in\N) at ($(out\N)+(1cm,-5cm)$);
        \node[name=c1, draw, minimum width=0.8cm] at (0cm,0.5cm){$c_{\varepsilon}$};
        \node[name=c2, draw, minimum width=0.8cm] at (0cm,1.5cm){$c_{\varepsilon}$};
        \node[name=c3, draw, minimum width=0.8cm] at (0cm,2.5cm){$c_{\varepsilon}$};
        \foreach \N in {1,2,3} {
            \coordinate (c\N1) at ($(c\N.north west)!0.25!(c\N.north east)$);
            \coordinate (c\N2) at ($(c\N.north west)!0.75!(c\N.north east)$);
        }
        \coordinate (i1) at ($(c11)+(-1.8cm,2cm)$);
        \coordinate (i2) at ($(c21)+(-1cm,1cm)$);
        \foreach \N/\M in {
            i1/out3,
            c12/out10,
            i2/out4,
            c22/out9,
            c31/out5,
            c32/out8,
            in1/out1,
            in2/out2,
            in6/out6,
            in7/out7,
            in11/out11,
            in12/out12}
            \draw (\N) .. controls ($(\N)+(su)$) and ($(\M)+(sd)$) .. (\M);
        \foreach \N/\M in {
            c11/i1,
            c21/i2}
            \draw (\N) .. controls ($(\N)+0.5*(su)$) and ($(\M)+0.5*(sd)$) .. (\M);
        \node[] at ($0.5*(in1)+0.5*(in2)+(-0.1cm,0.5cm)$){$\dots$};
        \node[] at ($0.5*(in6)+0.5*(in7)+(-0.1cm,0.5cm)$){$\dots$};
        \node[] at ($0.5*(in11)+0.5*(in12)+(-0.1cm,0.5cm)$){$\dots$};
        \node[] at ($0.5*(out1)+0.5*(out2)+(0cm,-0.2cm)$){$\dots$};
        \node[] at ($0.5*(out6)+0.5*(out7)+(0cm,-0.2cm)$){$\dots$};
        \node[] at ($0.5*(out11)+0.5*(out12)+(0cm,-0.2cm)$){$\dots$};
        \foreach \N/\M in {1/1,2/3,3/5} \node[above left] at (c\N1) {\tiny{$\M$}};
        \foreach \N/\M in {1/2,2/4,3/6} \node[above right] at (c\N2) {\tiny{$\M$}};
        \foreach \N/\M in {1/1,2/3i{-}3,4/3i{-}1,6/3i{+}1,7/3j{-}3,9/3j{-}1,11/3j{+}1,12/3B}
            \node[above] at (out\N) {\tiny{$\M$}};
        \foreach \N/\M in {3/3i{-}2,5/3i,8/3j{-}2,10/3j,12/}
            \node[above=0.3cm] at (out\N) {\tiny{$\M$}};
        \end{tikzpicture}}
    \ .
\end{equation}
from $A^{3(B-2)}$ to $A^{3B}$ with $B$ the number of boundary components of $\Sigma$.
In formulas, this reads
\begin{equation} \label{eq:Gamma-ijkeps}
	\Gamma_{i,j,\varepsilon} =  \tau \circ \big( c_{\varepsilon}^{\otimes 3} \otimes \id_A^{\otimes 3(B-2)} \big) \ ,
\end{equation}
where $\tau:A^{\otimes 3B}\to A^{\otimes 3B}$ represents the permutation that connects
\begin{itemize}
    \item the first output to the $(3i-2)$'th input,
    \item the second output to the $(3j)$'th input,
    \item the third output to the $(3i-1)$'th input,
    \item the fourth output to the $(3j-1)$'th input,
    \item the fifth output to the $(3i)$'th input and
    \item the sixth output to the $(3j-2)$'th input.
\end{itemize}
$\tau$ keeps the order of the remaining tensor factors fixed. 

We now come to an important property of $T_A$: the behaviour under gluing.

\begin{proposition}\label{prop:T-gluing}
Let $\Sigma$ be a spin triangulated surface, and $(i,j,\varepsilon)$ be spin gluing data. Then 
\begin{equation}
    T(\Sigma_{i\#j}^\varepsilon) = T(\Sigma) \circ \Gamma_{i,j,\varepsilon} \ .
\end{equation}
\end{proposition}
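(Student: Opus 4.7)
Here is my plan.

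\medskip

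\noindent\textbf{Overall strategy.}
I will choose compatible spin triangulations of $\Sigma$ and $\Sigma^\varepsilon_{i\#j}$, reduce the problem to a local check near the three glued edges, and then establish a single algebraic identity involving copairings and the Nakayama automorphism.

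First, pick a spin triangulation $\Lambda$ of $\Sigma$ with the property that the two boundary components $i$ and $j$ are parametrised in a way compatible with the gluing map $f_i\circ s_\mathcal{C}\circ f_j^{-1}$, so that the gluing procedure of Definition \ref{def:glue-triang-surf} produces a well-defined spin triangulation $\Lambda^\varepsilon_{i\#j}$ of $\Sigma^\varepsilon_{i\#j}$. By Theorem \ref{thm:TA-triang-indep}, both sides of the claimed equality are invariants of the underlying spin surfaces, so it suffices to prove the equality for $\Lambda$ and $\Lambda^\varepsilon_{i\#j}$.

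\medskip

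\noindent\textbf{Localisation to the three glued edges.}
Compare the two diagrams directly. Everything away from the three pairs of boundary edges of $\Lambda$ that get glued agrees verbatim in $T_\text{triang}(\Lambda)$ and $T_\text{triang}(\Lambda^\varepsilon_{i\#j})$. For the $k$-th pair of glued edges ($k=1,2,3$), with boundary signs $s_i^{(k)}$ and $s_j^{(k)}$, we have:
\begin{itemize}
\setlength{\leftskip}{-1em}
\item In $T_A(\Lambda^\varepsilon_{i\#j})$: a single bivalent vertex valued by $c_{s(e^{(k)})}$, with both legs feeding into the two adjacent triangles. By Lemma \ref{lem:index.gluing}, $s(e^{(k)}) = \varepsilon\, s_i^{(k)} s_j^{(k)}$.
\item In $T_A(\Lambda)\circ \Gamma_{i,j,\varepsilon}$: the two original bivalent vertices contribute $c_{s_i^{(k)}}$ and $c_{s_j^{(k)}}$, each with one leg going to a triangle and one ``boundary'' leg. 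These boundary legs are paired via $b$ to the two output legs of one of the copairings $c_\varepsilon$ supplied by $\Gamma_{i,j,\varepsilon}$; the permutation $\tau$ in \eqref{eq:Gamma-ijkeps} is exactly designed so that the $k$-th copairing $c_\varepsilon$ connects the $k$-th edge of boundary $i$ to its partner on boundary $j$ under $s_\mathcal{C}$.
\end{itemize}
Thus the statement reduces to the local identity
\begin{equation}\label{eq:plan-key-lemma}
(\id_A \otimes b \otimes \id_A \otimes b \otimes \id_A) \circ (c_\alpha \otimes c_\beta \otimes c_\gamma) = c_{\alpha\beta\gamma}
\end{equation}
for all $\alpha,\beta,\gamma\in\{\pm1\}$, applied with $(\alpha,\beta,\gamma)=(s_i^{(k)},\varepsilon,s_j^{(k)})$.

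\medskip

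\noindent\textbf{Proof of the local identity.}
I will verify \eqref{eq:plan-key-lemma} by pairing both sides with an arbitrary pair $(x,y)\in A\otimes A$ against $b\otimes b$ and showing the resulting scalars agree. Writing $c_{-1}=\sum c_{-1}^{(1)}\otimes c_{-1}^{(2)}$ in Sweedler-like notation, non-degeneracy of $b$ gives $\sum c_{-1}^{(1)}b(c_{-1}^{(2)},z)=z$ and $\sum b(z,c_{-1}^{(1)})c_{-1}^{(2)}=z$, while the definition \eqref{eq:Nak-definition} of $N$ together with $c_{+1}=\sigma c_{-1}$ yields $\sum b(z,c_{+1}^{(1)})c_{+1}^{(2)}=N(z)$. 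Setting $M_s:=N_{-s}$ (so $M_{-1}=\id_A$, $M_{+1}=N$), one deduces both $\sum b(z,c_s^{(1)})c_s^{(2)}=M_s(z)$ and $\sum c_s^{(1)}b(c_s^{(2)},z)=M_s(z)$. A direct computation (contracting the three Sweedler sums one at a time) shows that the LHS of \eqref{eq:plan-key-lemma}, paired with $(x,y)$, equals $b(M_{s_1}(x),M_{s_2}M_{s_3}(y))$, and the RHS equals $b(x,M_{\alpha\beta\gamma}(y))$. The Nakayama identity $b(M_s(u),v)=b(u,M_s(v))$ (coming from $b\circ\sigma=b\circ(N\otimes\id)$ and $N^2=\id_A$) converts the former to $b(x,M_{s_1}M_{s_2}M_{s_3}(y))$, and the equality $M_{s_1}M_{s_2}M_{s_3}=M_{s_1 s_2 s_3}$ follows from $N^2=\id_A$ by counting parities.

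\medskip

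\noindent\textbf{Main obstacle.}
The conceptual content is light, but the bookkeeping is delicate: one must check carefully that the permutation $\tau$ appearing in $\Gamma_{i,j,\varepsilon}$ routes the six legs of $c_\varepsilon^{\otimes 3}$ to match precisely the gluing pattern of the three pairs of boundary edges (recall from \eqref{eq:combinatorial.gluing.map} that $s_\mathcal{C}$ fixes $e_0$ and swaps $e_1\leftrightarrow e_2$), and also to be compatible with the order of the outgoing legs of each bivalent vertex as prescribed in Figure \ref{fig:graph.order}(b). Once this alignment is confirmed, \eqref{eq:plan-key-lemma} slots in cleanly for each of the three edge pairs and the proposition follows.
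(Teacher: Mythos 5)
Your proposal is correct and takes essentially the same route as the paper: both arguments come down to the connectivity check for $\tau$, the glued-edge sign $s=\varepsilon\, s(e_i)s(e_j)$ from Lemma \ref{lem:index.gluing}, and a local copairing identity, where your three-fold contraction formula is just an unpacked, Sweedler-style version of the paper's key relation $(N\otimes \id_A)\circ c_{\varepsilon} = c_{-\varepsilon} = (\id_A\otimes N)\circ c_{\varepsilon}$. The only difference is in presentation: you spell out the contraction and the parity count for $N^2=\id_A$ explicitly, whereas the paper leaves these steps implicit.
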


\begin{proof}
The connectivity represented by the map $\tau$ follows from the composition map \eqref{eq:combinatorial.gluing.map}. Lemma \ref{lem:index.gluing} gives the spin signs of a glued edge as
	$s=\varepsilon \,s_i\,s_j$. Since
    \begin{equation}
        (N\otimes \id_A) \circ c_{\varepsilon} = c_{-\varepsilon} = (\id_A \otimes N) \circ c_{\varepsilon} \ ,
    \end{equation}
    precomposition with $c_{\varepsilon}$ gives indeed the same result as evaluating the glued surface.
\end{proof}

\subsection{Evaluation of the TFT on the cylinder}\label{sec:TFT.cylinder}
Let $C$ be a spin cylinder, i.e. a spin surface such that $\underline{C} \cong S^1\times [0,1]$. Figure \ref{fig:cylinder.triangulation} gives an explicit triangulation of $\underline C$ with markings and labels. 

\begin{figure}[tb]
    \centering
    \begin{tikzpicture}
        \begin{scope}[decoration={
    markings,
    mark=at position 0.5 with {\arrow{stealth}}}
    ] 
       \node[name=A, rectangle, minimum width=5cm, minimum height=3cm]{};
       \draw[green!40, line width=3pt] ($(A.south west)!0.33!(A.north west)$)+(0pt,1pt) -- ($0.66*(A.south east)+0.33*(A.north east)+(0pt,1pt)$);
       \draw[green!40, line width=3pt] (A.south west)+(0pt,1pt) -- ($(A.south east)+(0pt,1pt)$);
       \draw[green!40, line width=3pt] ($0.33*(A.south west)+0.66*(A.north west)+(0pt,1pt)$) -- ($0.33*(A.south east)+0.66*(A.north east)+(0pt,1pt)$);
       \draw[green!40, line width=3pt] (A.north west)+(1pt,1pt) -- ($0.33*(A.south east)+0.66*(A.north east)+(0pt,1pt)$);
       \draw[green!40, line width=3pt] ($0.33*(A.south west)+0.66*(A.north west)+(1pt,1pt)$) -- ($0.66*(A.south east)+0.33*(A.north east)+(1pt,1pt)$);
       \draw[green!40, line width=3pt] ($0.66*(A.south west)+0.33*(A.north west)+(1pt,1pt)$) -- ($(A.south east)+(1pt,1pt)$);
       \draw[postaction=decorate] ($(A.south west)!0.33!(A.north west)$) -- ($(A.south east)!0.33!(A.north east)$);
       \draw[postaction=decorate] (A.north west) -- (A.north east);
       \draw[postaction=decorate] (A.south west) -- (A.south east);
       \draw[postaction=decorate] ($(A.south west)!0.66!(A.north west)$) -- ($(A.south east)!0.66!(A.north east)$);
       \draw[postaction=decorate] (A.north west) -- ($(A.south east)!0.66!(A.north east)$);
       \draw[postaction=decorate] ($(A.south west)!0.66!(A.north west)$) -- ($(A.south east)!0.33!(A.north east)$);
       \draw[postaction=decorate] ($(A.south west)!0.33!(A.north west)$) -- (A.south east);
\end{scope}
        \begin{scope}[decoration={
    markings,
    mark=at position 0.166 with {\arrow{stealth}};,
    mark=at position 0.5 with {\arrow{stealth}};,
    mark=at position 0.833 with {\arrow{stealth}}}
]
       \draw[postaction=decorate] (A.south west) -- (A.north west);
       \draw[postaction=decorate] (A.north east) -- (A.south east);
   \end{scope}
   \node[left] at ($(A.south west)!0.166!(A.north west)$) {\tiny{$0$}} ;
   \node[left] at ($(A.south west)!0.5!(A.north west)$) {\tiny{$1$}} ;
   \node[left] at ($(A.south west)!0.833!(A.north west)$) {\tiny{$2$}} ;
   \node[right] at ($(A.south east)!0.166!(A.north east)$) {\tiny{$2$}} ;
   \node[right] at ($(A.south east)!0.5!(A.north east)$) {\tiny{$1$}} ;
   \node[right] at ($(A.south east)!0.833!(A.north east)$) {\tiny{$0$}} ;
   \node[right] at ($(A.south west)!0.166!(A.north west)$) {\tiny{$e_1$}} ;
   \node[right] at ($(A.south west)!0.5!(A.north west)$) {\tiny{$e_2$}} ;
   \node[right] at ($(A.south west)!0.833!(A.north west)$) {\tiny{$e_3$}} ;
   \node[left] at ($(A.south east)!0.166!(A.north east)$) {\tiny{$e_6$}} ;
   \node[left] at ($(A.south east)!0.5!(A.north east)$) {\tiny{$e_5$}} ;
   \node[left] at ($(A.south east)!0.833!(A.north east)$) {\tiny{$e_4$}} ;
   \node[below] at ($0.5*(A.south west)+0.5*(A.south east)$) {\tiny{$e_7$}} ;
   \node[below] at ($0.5*(A.south west)+0.5*(A.south east)+0.5*0.33*(A.north west)-0.5*0.33*(A.south west)$) {\tiny{$e_8$}} ;
   \node[below] at ($0.5*(A.south west)+0.5*(A.south east)+0.5*0.66*(A.north west)-0.5*0.66*(A.south west)$) {\tiny{$e_9$}} ;
   \node[below] at ($0.5*(A.south west)+0.5*(A.south east)+0.5*(A.north west)-0.5*(A.south west)$) {\tiny{$e_{10}$}} ;
   \node[below] at ($0.5*(A.south west)+0.5*(A.south east)+0.5*1.33*(A.north west)-0.5*1.33*(A.south west)$) {\tiny{$e_{11}$}} ;
   \node[below] at ($0.5*(A.south west)+0.5*(A.south east)+0.5*1.67*(A.north west)-0.5*1.67*(A.south west)$) {\tiny{$e_{12}$}} ;
   \node[below] at ($0.5*(A.north west)+0.5*(A.north east)$) {\tiny{$e_7$}} ;
   \node[] at ($0.166*0.2*(A.north east)+0.833*0.2*(A.south east)+0.166*0.8*(A.north west)+0.833*0.8*(A.south west)$) {\small{$\sigma_1$}} ;
   \node[] at ($0.166*0.8*(A.north east)+0.833*0.8*(A.south east)+0.166*0.2*(A.north west)+0.833*0.2*(A.south west)$) {\small{$\sigma_2$}} ;
   \node[] at ($0.5*0.2*(A.north east)+0.5*0.2*(A.south east)+0.5*0.8*(A.north west)+0.5*0.8*(A.south west)$) {\small{$\sigma_3$}} ;
   \node[] at ($0.5*0.8*(A.north east)+0.5*0.8*(A.south east)+0.5*0.2*(A.north west)+0.5*0.2*(A.south west)$) {\small{$\sigma_4$}} ;
   \node[] at ($0.833*0.2*(A.north east)+0.166*0.2*(A.south east)+0.833*0.8*(A.north west)+0.166*0.8*(A.south west)$) {\small{$\sigma_5$}} ;
   \node[] at ($0.833*0.8*(A.north east)+0.166*0.8*(A.south east)+0.833*0.2*(A.north west)+0.166*0.2*(A.south west)$) {\small{$\sigma_6$}} ;
   \node[below left] at (A.south west) {\tiny{$v_1$}} ;
   \node[left] at ($(A.south west)!0.33!(A.north west)$) {\tiny{$v_2$}} ;
   \node[left] at ($(A.south west)!0.66!(A.north west)$) {\tiny{$v_3$}} ;
   \node[above left] at (A.north west) {\tiny{$v_1$}} ;
   \node[below right] at (A.south east) {\tiny{$v_4$}} ;
   \node[right] at ($(A.south east)!0.33!(A.north east)$) {\tiny{$v_6$}} ;
   \node[right] at ($(A.south east)!0.66!(A.north east)$) {\tiny{$v_5$}} ;
   \node[above right] at (A.north east) {\tiny{$v_4$}} ;
    \end{tikzpicture}
    \caption{A triangulation of the cylinder together with markings and labels. Triangles are labeled by $\sigma_1,\dots,\sigma_6$, edges are labeled by $e_1,\dots,e_{12}$ and vertices are labeled by $v_1,\dots,v_6$. The bottom and top edge -- both labeled by $e_7$ -- are identified. The marked edge $d^2_0(\sigma_i)$ for each triangle $\sigma_i$ is indicated by a fat green line. The positions of the boundary edges on the respective boundary are given by small numbers $0,1,2$. The boundaries are ordered from left to right.}
    \label{fig:cylinder.triangulation}
\end{figure}
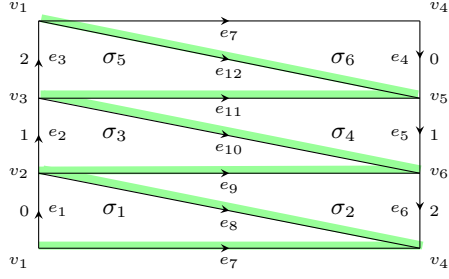

By the construction in Section \ref{sse:spin_reconstruct}, an admissible edge sign configuration determines a spin structure. We proceed to find the admissible edge sign configurations.
Let $s_i:=s(e_i)$ be the edge signs, which are yet to be determined. 
By applying Lemma \ref{lem:index.marking}\,(1) for the faces $\sigma_1,\dots,\sigma_6$, we can choose some of the edge signs to our liking. Specifically,
we will set 
\begin{equation}\label{eq:cylinder.signs.convention}
    s_4=s_5=s_6=s_8=s_{10}=s_{12} = 1 \ .
\end{equation}
Next we evaluate Lemma \ref{lem:vertex.rule.boundary}, the boundary vertex rule at $v_1,\dots,v_6$.
For NS-NS-boundary conditions we obtain:
\begin{align}
    v_1&:\; s_1s_3s_7=-1 ~,~~ &
        v_4&:\; s_7=-1 \ , \label{eq:cylinder.signs.ns}\\
    v_2&:\; s_1s_2s_9=1 ~,~~ &
	    v_5&:\;s_{11}=1 \ ,  \nonumber\\
    v_3&:\; s_2s_3s_{11}=1~,~~ &
	    v_6&:\;s_{9}=1 \ . \nonumber
\end{align}
For R-R-boundary conditions we obtain:
\begin{align}
    v_1&:\; s_1s_3s_7=1~,~~ &
        v_4&:\; s_7=1\ , \label{eq:cylinder.signs.r}\\
    v_2&:\; s_1s_2s_9=1~,~~ &
	    v_5&:\; s_{11}=1 \ ,  \nonumber\\
    v_3&:\; s_2s_3s_{11}=1~,~~ &
	    v_6&:\; s_{9}=1 \ .  \nonumber
\end{align}
Let $\varepsilon\in \{\pm 1\}$. All solutions to these sets of equations are then given by
\begin{equation}\label{eq:cylinder.signs.solution}
    s_1=s_2=s_3= \varepsilon \quad , \qquad
    s_9=s_{11}=1 \ ,
\end{equation}
as well as $s_7=-1$ for NS-NS-boundary conditions and $s_7=1$ for R-R-boundary conditions. Let us denote the resulting spin cylinders by $C^\varepsilon_{NS}$ and $C^\varepsilon_{R}$.

The calculation of the corresponding morphism $T_A(C^\pm_{NS/R})$ is given in Appendix \ref{sse:cylinder_calculation}. To express the resulting morphisms, it is helpful to define the maps
\begin{equation}\label{eq:iota-pi-PNSR}
\iota^{13} = \raisebox{-3em}{
\begin{tikzpicture}
    \coordinate (in) at (0cm,0.5cm);
    \node[name=cp1,circle,fill,inner sep=0.05cm] at (0cm,1cm) {};
    \node[name=cp2,circle,fill,inner sep=0.05cm] at (0.5cm,1.5cm) {};
    \coordinate (i1) at (-1cm,1.5cm);
    \coordinate (i2) at (0cm,2cm);
    \coordinate (i3) at (1cm,2cm);
    \coordinate (i4) at (0.5cm,2cm);
    \coordinate (i5) at (-0.5cm,2.5cm);
    \coordinate (out1) at (-0.5cm,3cm);
    \coordinate (out2) at (0cm,3cm);
    \coordinate (out3) at (0.5cm,3cm);
    \draw (in) -- (cp1);
    \draw (cp1) -- (cp2);
    \draw (cp1) .. controls (i1) and (i4) .. (out3);
    \draw (cp2) .. controls (i2) .. (out2);
    \draw (cp2) .. controls (i3) and (i5) .. (out1);
\end{tikzpicture}}
~,~~
\pi^{31} =  \raisebox{-3em}{
\begin{tikzpicture}
    \coordinate (in) at (0cm,-0.5cm);
    \node[name=cp1,circle,fill,inner sep=0.05cm] at (0cm,-1cm) {};
    \node[name=cp2,circle,fill,inner sep=0.05cm] at (0.5cm,-1.5cm) {};
    \coordinate (i1) at (-1cm,-1.5cm);
    \coordinate (i2) at (0cm,-2cm);
    \coordinate (i3) at (1cm,-2cm);
    \coordinate (i4) at (0.5cm,-2cm);
    \coordinate (i5) at (-0.5cm,-2.5cm);
    \coordinate (out1) at (-0.5cm,-3cm);
    \coordinate (out2) at (0cm,-3cm);
    \coordinate (out3) at (0.5cm,-3cm);
    \draw (in) -- (cp1);
    \draw (cp1) -- (cp2);
    \draw (cp1) .. controls (i1) and (i4) .. (out3);
    \draw (cp2) .. controls (i2) .. (out2);
    \draw (cp2) .. controls (i3) and (i5) .. (out1);
\end{tikzpicture}}
~,~~
P^{NS} = \raisebox{-3em}{
\begin{tikzpicture}
    \coordinate (in) at (0cm,0cm);
    \node[name=cp,circle,fill,inner sep=0.05cm] at (0cm,0.5cm) {};
    \node[name=p,circle,fill,inner sep=0.05cm] at (0cm,2cm) {};
    \node[name=N,circle,draw,inner sep=0.05cm] at (-0.3cm,1cm) {};
    \coordinate (id1r) at (0.3cm,1cm);
    \coordinate (id2l) at (-0.3cm,1.5cm);
    \coordinate (id2r) at (0.3cm,1.5cm);
    \coordinate (out) at (0cm,2.5cm);
    \coordinate (sd) at (0cm,-0.2cm);
    \coordinate (su) at (0cm,0.2cm);
    \draw (in) -- (cp);
    \draw (cp) .. controls ($(N)+(sd)$) .. (N);
    \draw (cp) .. controls ($(id1r)+(sd)$) .. (id1r);
    \draw (N) .. controls ($(N)+(su)$) and ($(id2r)+(sd)$) .. (id2r);
    \draw (id1r) .. controls ($(id1r)+(su)$) and ($(id2l)+(sd)$) .. (id2l);
    \draw (id2l) .. controls ($(id2l)+(su)$) .. (p);
    \draw (id2r) .. controls ($(id2r)+(su)$) .. (p);
    \draw (p) -- (out);
\end{tikzpicture}}
~,~~
P^{R} = \raisebox{-3em}{
\begin{tikzpicture}
    \coordinate (in) at (0cm,0cm);
    \node[name=cp,circle,fill,inner sep=0.05cm] at (0cm,0.5cm) {};
    \node[name=p,circle,fill,inner sep=0.05cm] at (0cm,2cm) {};
    \coordinate (id1l) at (-0.3cm,1cm);
    \coordinate (id1r) at (0.3cm,1cm);
    \coordinate (id2l) at (-0.3cm,1.5cm);
    \coordinate (id2r) at (0.3cm,1.5cm);
    \coordinate (out) at (0cm,2.5cm);
    \coordinate (sd) at (0cm,-0.2cm);
    \coordinate (su) at (0cm,0.2cm);
    \draw (in) -- (cp);
    \draw (cp) .. controls ($(id1l)+(sd)$) .. (id1l);
    \draw (cp) .. controls ($(id1r)+(sd)$) .. (id1r);
    \draw (id1l) .. controls ($(id1l)+(su)$) and ($(id2r)+(sd)$) .. (id2r);
    \draw (id1r) .. controls ($(id1r)+(su)$) and ($(id2l)+(sd)$) .. (id2l);
    \draw (id2l) .. controls ($(id2l)+(su)$) .. (p);
    \draw (id2r) .. controls ($(id2r)+(su)$) .. (p);
    \draw (p) -- (out);
\end{tikzpicture}}
 \ .
\end{equation}
In terms of these, the cylinder morphisms read
\begin{equation} \label{eq:TFT-cylinder-values}
    T_A(C_{\delta}^{\varepsilon}) = b \circ ( P^{\delta}\otimes \id_A) \circ (N_{-\varepsilon} \otimes \id_A) \circ (\pi^{31}\otimes \pi^{31}) \ ,
\end{equation}
where $\delta  \in \{NS,R\}$.

\subsection{Cylinder projections and state spaces}\label{sec:cylinder-projections}

In the following, we will write
\begin{equation} \label{eq:q_nu-def}
	q_\nu = \mu \circ \sigma_{A,A} \circ (N_{-\nu} \otimes \id_A) \circ \Delta \ ,
\end{equation}
so that $P^{NS} = q_{+}$ and $P^R = q_{-}$. 

\begin{lemma}
For $\nu \in \{\pm1\}$,
    \begin{enumerate}
\item
$q_\nu \circ q_\nu = q_\nu$, i.e.\ $P^{NS}$ and $P^R$ are idempotents.
\item
$q_\nu \circ N = N \circ q_\nu$, i.e.\ $P^{NS}$ and $P^R$ commute with the Nakayama automorphism.
\item
	$P^{NS} \circ N = P^{NS}$, or, equivalently,
	$q_\nu \circ N = q_\nu \circ N_\nu$ and $q_\nu = q_\nu \circ N_{-\nu}$.
\end{enumerate}
\label{lem:PNS/R-idemp}
\end{lemma}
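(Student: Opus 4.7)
My plan is to handle the three parts in the order (2), (1), (3), using Sweedler-style calculations based on the Frobenius property and the fact that $N$ is both an algebra and coalgebra automorphism of $A$ (Proposition~\ref{prop:Nak-Frob-auto}); $\Delta$-separability is only needed for~(1).

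Part~(2) is a straightforward propagation of $N$ through each constituent of $q_\nu = \mu \circ c_{A,A} \circ (N_{-\nu} \otimes \id_A) \circ \Delta$: past $\Delta$ via $\Delta \circ N = (N \otimes N) \circ \Delta$, past $(N_{-\nu} \otimes \id_A)$ using that $N$ and $N_{-\nu}$ are powers of the same map and hence commute, past $c_{A,A}$ by naturality of the braiding, and past $\mu$ using $\mu \circ (N \otimes N) = N \circ \mu$; the net result is $N \circ q_\nu$.

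For~(1), I would compute $q_\nu \circ q_\nu$ in Sweedler notation with the three-factor expansion $(\id_A \otimes \Delta) \circ \Delta(a) = a_{(1)} \otimes a_{(2)} \otimes a_{(3)}$. Writing $q_\nu(a) = a_{(2)} N_{-\nu}(a_{(1)})$, the Frobenius relation $\Delta \circ \mu = (\mu \otimes \id_A) \circ (\id_A \otimes \Delta)$ gives $\Delta(q_\nu(a)) = a_{(2)} \otimes a_{(3)} N_{-\nu}(a_{(1)})$. Applying $q_\nu$ a second time and using that $N_{-\nu}$ is an algebra homomorphism yields
\[
	q_\nu(q_\nu(a)) \;=\; a_{(3)} \, N_{-\nu}(a_{(1)}) \, N_{-\nu}(a_{(2)}) \;=\; a_{(3)} \, N_{-\nu}(a_{(1)} a_{(2)}).
\]
The tensor identity $\sum a_{(1)} a_{(2)} \otimes a_{(3)} = \Delta(a)$ (a restatement of $(\mu \otimes \id_A) \circ (\id_A \otimes \Delta) \circ \Delta = \Delta$, which follows from coassociativity together with $\Delta$-separability) then collapses the right-hand side back to $q_\nu(a)$.

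The key step is~(3). A brute-force attack via composing with $b$ and using the Nakayama identity $b \circ (N \otimes \id_A) = b \circ c_{A,A}$ tends to circularize back to the statement itself. Instead, I would exploit that the Frobenius property yields two equivalent expressions for the coproduct,
\[
	\Delta \;=\; (\mu \otimes \id_A) \circ (\id_A \otimes c_{-1}) \;=\; (\id_A \otimes \mu) \circ (c_{-1} \otimes \id_A),
\]
as recorded in~\eqref{eq:coproduct-counit} and~\eqref{eq:coproduct-counit-2nd}. Substituting each into $P^{NS} = \mu \circ c_{A,A} \circ (N \otimes \id_A) \circ \Delta$ and using $N \circ \mu = \mu \circ (N \otimes N)$, one obtains (with $c_{-1} = \sum_i e_i \otimes e^i$) the two expressions
\[
	P^{NS}(a) \;=\; \sum_i e^i \, N(a) \, N(e_i) \;=\; \sum_i e^i \, a \, N(e_i).
\]
Now the right-hand expression evaluated at $N(a)$ coincides with the left-hand expression evaluated at $a$, so $P^{NS}(N(a)) = P^{NS}(a)$. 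The equivalent reformulations of~(3) in terms of $q_\nu, N_\nu, N_{-\nu}$ then follow by a trivial case split on $\nu$.
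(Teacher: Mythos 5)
Your proof is correct. Parts (1) and (2) are essentially the paper's argument: (2) is the same one-line appeal to $N$ being a Frobenius automorphism (Proposition~\ref{prop:Nak-Frob-auto}), and (1) is the paper's graphical computation transcribed into Sweedler notation — note only that the intermediate formula $\Delta(q_\nu(a)) = a_{(2)} \otimes a_{(3)} N_{-\nu}(a_{(1)})$ comes from the \emph{other} half of the Frobenius relation, $(\id_A \otimes \mu)\circ(\Delta \otimes \id_A) = \Delta\circ\mu$, not the half you cite (the half you cite would instead produce $a_{(3)}N_{-\nu}(a_{(1)}) \otimes N_{-\nu}(a_{(2)})$ and a longer route to the conclusion); also, since $\mathcal{S}$ is a general symmetric monoidal category, the element notation must be read as shorthand for string-diagram identities, which it is throughout. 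Part (3) is where you genuinely diverge, to good effect. The paper shows that $P^{NS}$ and $P^{NS}\circ N$ both equal the manifestly symmetric expression $a \mapsto b(a_{(1)}\otimes a_{(3)})\,a_{(2)}$, and the second identification explicitly invokes $N = N^{-1}$ together with the formula \eqref{eq:Nak-inv} for the inverse Nakayama automorphism. You instead play the two Frobenius expansions \eqref{eq:coproduct-counit} and \eqref{eq:coproduct-counit-2nd} of $\Delta$ against each other, using only that $N$ is an algebra homomorphism; the involution hypothesis never enters, so your argument establishes $P^{NS}\circ N = P^{NS}$ for an arbitrary Frobenius algebra. This is a mild but real strengthening (consistent with the fact that $Z^{NS}$ is just the centre of $A$, Lemma~\ref{lem:ZNS-is-centre-of-A}, and with the appearance of $P^{NS}$ in settings with unconstrained Nakayama automorphism, cf.\ Remark~\ref{rem:BT-BCP-relation}), at the cost of nothing: both proofs are equally short.
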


\begin{proof}
For part 1 one computes
\begin{equation}\label{eq:PNS/R-idemp-aux1}
    q_{-\nu} \circ q_{-\nu} =
     \raisebox{-2cm}{\begin{tikzpicture}
        \coordinate (su) at (0cm,0.5cm);
        \coordinate (sd) at (0cm,-0.5cm);
        \coordinate (sdl) at (-0.5cm,-0.5cm);
        \coordinate (sdr) at (0.5cm,-0.5cm);
        \coordinate (sul) at (-0.5cm,0.5cm);
        \coordinate (sur) at (0.5cm,0.5cm);
        \node[name=p1, product] at (0.3cm,4.2cm){};
        \node[name=cp1, product] at (0cm,0.5cm){};
        \node[name=n1, naka] at (-0.5cm,1cm){$\nu$};
        \coordinate (in) at (0cm,0cm);
        \coordinate (i1) at (0.5cm,2cm);
        \coordinate (i2) at (-0.5cm,1.8cm);
        \node[name=p2, product] at (0cm,4.5cm){};
        \node[name=cp2, product] at (0cm,2.5cm){};
        \node[name=n2, naka] at (-0.5cm,3cm){$\nu$};
        \coordinate (i3) at ($(p2)+(-0.5cm,-0.7cm)$);
        \coordinate (out) at (0cm,5cm);
        \foreach \N/\M/\S/\T in {
            in/cp1/su/sd,
            cp1/i2/sur/sd,
            i2/cp2/su/sd,
            n1/i1/su/sd,
            i1/p1/su/sdl,
            n2/p1/su/sdr,
            p1/p2/sul/sdr,
            cp2/i3/sur/sd,
            p2/out/su/sd}
            \draw (\N) .. controls ($(\N)+(\S)$) and ($(\M)+(\T)$) .. (\M);
        \foreach \N/\M/\S/\T in {
            i3/p2/su/sdl,
            cp1/n1/sul/sd,
            cp2/n2/sul/sd}
            \draw (\N) .. controls ($(\N)+0.5*(\S)$) and ($(\M)+0.5*(\T)$) .. (\M);
    \end{tikzpicture}}
    =
    \raisebox{-2cm}{\begin{tikzpicture}
        \coordinate (su) at (0cm,0.5cm);
        \coordinate (sd) at (0cm,-0.5cm);
        \coordinate (sdl) at (-0.5cm,-0.5cm);
        \coordinate (sdr) at (0.5cm,-0.5cm);
        \coordinate (sul) at (-0.5cm,0.5cm);
        \coordinate (sur) at (0.5cm,0.5cm);
        \node[name=p1, product] at (0cm,2cm){};
        \node[name=cp1, product] at (0cm,0.5cm){};
        \node[name=n1, naka] at (-0.5cm,1cm){$\nu$};
        \coordinate (in) at (0cm,0cm);
        \coordinate (i1) at (0.5cm,2cm);
        \coordinate (i2) at (-0.5cm,1.8cm);
        \node[name=p2, product] at (0cm,4.5cm){};
        \node[name=cp2, product] at (0cm,2.5cm){};
        \node[name=n2, naka] at (-0.5cm,3cm){$\nu$};
        \node[name=n3, naka] at (-0.5cm,1.5cm){$\nu$};
        \coordinate (i3) at ($(p2)+(-0.5cm,-0.7cm)$);
        \coordinate (out) at (0cm,5cm);
        \foreach \N/\M/\S/\T in {
            in/cp1/su/sd,
            cp1/p1/sur/sdr,
            n1/n3/su/sd,
            p1/cp2/su/sd,
            cp2/i3/sur/sd,
            n2/p2/su/sdr,
            p2/out/su/sd}
            \draw (\N) .. controls ($(\N)+(\S)$) and ($(\M)+(\T)$) .. (\M);
        \foreach \N/\M/\S/\T in {
            i3/p2/su/sdl,
            cp1/n1/sul/sd,
            cp2/n2/sul/sd,
            n3/p1/su/sdl}
            \draw (\N) .. controls ($(\N)+0.5*(\S)$) and ($(\M)+0.5*(\T)$) .. (\M);
    \end{tikzpicture}}
    = q_{-\nu} \ .
\end{equation}
Part 2 follows since $N$ is an automorphism of Frobenius algebras (Proposition \ref{prop:Nak-Frob-auto}). For part 3 first note that
\begin{equation}
    \raisebox{-1.5cm}{\begin{tikzpicture}
    \coordinate (in) at (0cm,-0.5cm);
    \node[name=cp,circle,fill,inner sep=0.05cm] at (0cm,0.5cm) {};
    \node[name=p,circle,fill,inner sep=0.05cm] at (0cm,2cm) {};
    \node[name=N,circle,draw,inner sep=0.05cm] at (-0.3cm,1cm) {};
    \coordinate (id1r) at (0.3cm,1cm);
    \coordinate (id2l) at (-0.3cm,1.5cm);
    \coordinate (id2r) at (0.3cm,1.5cm);
    \coordinate (out) at (0cm,3cm);
    \coordinate (sd) at (0cm,-0.2cm);
    \coordinate (su) at (0cm,0.2cm);
    \draw (in) -- (cp);
    \draw (cp) .. controls ($(N)+(sd)$) .. (N);
    \draw (cp) .. controls ($(id1r)+(sd)$) .. (id1r);
    \draw (N) .. controls ($(N)+(su)$) and ($(id2r)+(sd)$) .. (id2r);
    \draw (id1r) .. controls ($(id1r)+(su)$) and ($(id2l)+(sd)$) .. (id2l);
    \draw (id2l) .. controls ($(id2l)+(su)$) .. (p);
    \draw (id2r) .. controls ($(id2r)+(su)$) .. (p);
    \draw (p) -- (out);
    \end{tikzpicture}}
   \overset{\text{\eqref{eq:Nak-definition-pic}}}=
   \raisebox{-1.5cm}{\begin{tikzpicture}
        \coordinate (su) at (0cm,0.5cm);
        \coordinate (sd) at (0cm,-0.5cm);
        \coordinate (sdl) at (-0.5cm,-0.5cm);
        \coordinate (sdr) at (0.5cm,-0.5cm);
        \coordinate (sul) at (-0.5cm,0.5cm);
        \coordinate (sur) at (0.5cm,0.5cm);
        \node[name=p, product] at (0.5cm,3cm){};
        \node[name=cp, product] at (0cm,0.5cm){};
        \coordinate (in) at (0cm,0cm);
        \node[name=b, product] at (-0.5cm,2cm){};
        \node[name=c, product] at (0cm,1cm){};
        \coordinate (i1) at (0.5cm,1.5cm);
        \coordinate (out) at (0.5cm,3.5cm);
        \foreach \N/\M/\S/\T in {
            in/cp/su/sd,
            cp/b/sul/sdl,
            cp/i1/sur/sd,
            i1/p/su/sdl,
            c/b/sur/sdr,
            c/p/sul/sdr,
            p/out/su/sd}
            \draw (\N) .. controls ($(\N)+(\S)$) and ($(\M)+(\T)$) .. (\M);
    \end{tikzpicture}}
   \overset{\text{deform}}=
    \raisebox{-1.5cm}{\begin{tikzpicture}
        \coordinate (su) at (0cm,0.5cm);
        \coordinate (sd) at (0cm,-0.5cm);
        \coordinate (sdl) at (-0.5cm,-0.5cm);
        \coordinate (sdr) at (0.5cm,-0.5cm);
        \coordinate (sul) at (-0.5cm,0.5cm);
        \coordinate (sur) at (0.5cm,0.5cm);
        \node[name=p, product] at (0.5cm,3cm){};
        \node[name=cp, product] at (0cm,0.5cm){};
        \coordinate (in) at (0cm,0cm);
        \node[name=b, product] at (1cm,3cm){};
        \node[name=c, product] at (1cm,2cm){};
        \coordinate (i1) at (0.5cm,1.5cm);
        \coordinate (out) at (0.5cm,3.5cm);
        \foreach \N/\M/\S/\T in {
            in/cp/su/sd,
            cp/b/sul/sdl,
            cp/i1/sur/sd,
            i1/p/su/sdl,
            c/b/sur/sdr,
            c/p/sul/sdr,
            p/out/su/sd}
            \draw (\N) .. controls ($(\N)+(\S)$) and ($(\M)+(\T)$) .. (\M);
    \end{tikzpicture}}
   \underset{\text{coassoc.}}{\overset{\text{\eqref{eq:coproduct-counit}}}=}
    \raisebox{-1.5cm}{\begin{tikzpicture}
        \coordinate (su) at (0cm,0.5cm);
        \coordinate (sd) at (0cm,-0.5cm);
        \coordinate (sdl) at (-0.5cm,-0.5cm);
        \coordinate (sdr) at (0.5cm,-0.5cm);
        \coordinate (sul) at (-0.5cm,0.5cm);
        \coordinate (sur) at (0.5cm,0.5cm);
        \node[name=cp1, product] at (0cm,1.5cm){};
        \node[name=cp2, product] at (0cm,0.5cm){};
        \coordinate (in) at (0cm,0cm);
        \node[name=b, product] at (0.5cm,3cm){};
        \coordinate (out) at (0cm,3.5cm);
        \foreach \N/\M/\S/\T in {
            in/cp1/su/sd,
            cp1/b/sul/sdl,
            cp1/cp2/su/sd,
            cp2/b/sur/sdr,
            cp2/out/su/sd}
            \draw (\N) .. controls ($(\N)+(\S)$) and ($(\M)+(\T)$) .. (\M);
    \end{tikzpicture}}\ .
\label{eq:PRN=PR-aux1}
\end{equation}
On the other hand, using that $N=N^{-1}$ and inserting the explicit expression \eqref{eq:Nak-inv} for $N^{-1}$ we get
\begin{equation}\label{eq:PRN=PR-aux2}
    \raisebox{-1.5cm}{\begin{tikzpicture}
    \begin{scope}[xscale=-1]
    \coordinate (in) at (0cm,-0.5cm);
    \node[name=cp,circle,fill,inner sep=0.05cm] at (0cm,0.5cm) {};
    \node[name=p,circle,fill,inner sep=0.05cm] at (0cm,2cm) {};
    \node[name=N,circle,draw,inner sep=0.05cm] at (-0.3cm,1cm) {};
    \coordinate (id1r) at (0.3cm,1cm);
    \coordinate (id2l) at (-0.3cm,1.5cm);
    \coordinate (id2r) at (0.3cm,1.5cm);
    \coordinate (out) at (0cm,3cm);
    \coordinate (sd) at (0cm,-0.2cm);
    \coordinate (su) at (0cm,0.2cm);
    \end{scope}
    \draw (in) -- (cp);
    \draw (cp) .. controls ($(N)+(sd)$) .. (N);
    \draw (cp) .. controls ($(id1r)+(sd)$) .. (id1r);
    \draw (N) .. controls ($(N)+(su)$) and ($(id2r)+(sd)$) .. (id2r);
    \draw (id1r) .. controls ($(id1r)+(su)$) and ($(id2l)+(sd)$) .. (id2l);
    \draw (id2l) .. controls ($(id2l)+(su)$) .. (p);
    \draw (id2r) .. controls ($(id2r)+(su)$) .. (p);
    \draw (p) -- (out);
    \end{tikzpicture}}
\overset{\text{\eqref{eq:Nak-inv}}}=
   \raisebox{-1.5cm}{\begin{tikzpicture}
       \begin{scope}[xscale=-1]
        \coordinate (su) at (0cm,0.5cm);
        \coordinate (sd) at (0cm,-0.5cm);
        \coordinate (sdl) at (-0.5cm,-0.5cm);
        \coordinate (sdr) at (0.5cm,-0.5cm);
        \coordinate (sul) at (-0.5cm,0.5cm);
        \coordinate (sur) at (0.5cm,0.5cm);
        \node[name=p, product] at (0.5cm,3cm){};
        \node[name=cp, product] at (0cm,0.5cm){};
        \coordinate (in) at (0cm,0cm);
        \node[name=b, product] at (-0.5cm,2cm){};
        \node[name=c, product] at (0cm,1cm){};
        \coordinate (i1) at (0.5cm,1.5cm);
        \coordinate (out) at (0.5cm,3.5cm);
        \end{scope}
        \foreach \N/\M/\S/\T in {
            in/cp/su/sd,
            cp/b/sul/sdl,
            cp/i1/sur/sd,
            i1/p/su/sdl,
            c/b/sur/sdr,
            c/p/sul/sdr,
            p/out/su/sd}
            \draw (\N) .. controls ($(\N)+(\S)$) and ($(\M)+(\T)$) .. (\M);
    \end{tikzpicture}}
   \overset{\text{deform}}=
    \raisebox{-1.5cm}{\begin{tikzpicture}
       \begin{scope}[xscale=-1]
        \coordinate (su) at (0cm,0.5cm);
        \coordinate (sd) at (0cm,-0.5cm);
        \coordinate (sdl) at (-0.5cm,-0.5cm);
        \coordinate (sdr) at (0.5cm,-0.5cm);
        \coordinate (sul) at (-0.5cm,0.5cm);
        \coordinate (sur) at (0.5cm,0.5cm);
        \node[name=p, product] at (0.5cm,3cm){};
        \node[name=cp, product] at (0cm,0.5cm){};
        \coordinate (in) at (0cm,0cm);
        \node[name=b, product] at (1cm,3cm){};
        \node[name=c, product] at (1cm,2cm){};
        \coordinate (i1) at (0.5cm,1.5cm);
        \coordinate (out) at (0.5cm,3.5cm);
        \end{scope}
        \foreach \N/\M/\S/\T in {
            in/cp/su/sd,
            cp/b/sul/sdl,
            cp/i1/sur/sd,
            i1/p/su/sdl,
            c/b/sur/sdr,
            c/p/sul/sdr,
            p/out/su/sd}
            \draw (\N) .. controls ($(\N)+(\S)$) and ($(\M)+(\T)$) .. (\M);
    \end{tikzpicture}} 
   \overset{\text{\eqref{eq:coproduct-counit-2nd}}}=
    \raisebox{-1.5cm}{\begin{tikzpicture}
       \begin{scope}[xscale=-1]
        \coordinate (su) at (0cm,0.5cm);
        \coordinate (sd) at (0cm,-0.5cm);
        \coordinate (sdl) at (-0.5cm,-0.5cm);
        \coordinate (sdr) at (0.5cm,-0.5cm);
        \coordinate (sul) at (-0.5cm,0.5cm);
        \coordinate (sur) at (0.5cm,0.5cm);
        \node[name=cp1, product] at (0cm,0.5cm){};
        \node[name=cp2, product] at (0cm,1.5cm){};
        \coordinate (in) at (0cm,0cm);
        \node[name=b, product] at (0.5cm,3cm){};
        \coordinate (out) at (0cm,3.5cm);
        \end{scope}
        \foreach \N/\M/\S/\T in {
            in/cp1/su/sd,
            cp1/b/sul/sdl,
            cp1/cp2/su/sd,
            cp2/b/sur/sdr,
            cp2/out/su/sd}
            \draw (\N) .. controls ($(\N)+(\S)$) and ($(\M)+(\T)$) .. (\M);
    \end{tikzpicture}}\ ,
\end{equation}
which is equal to  \eqref{eq:PRN=PR-aux1}. This implies $P^{NS} \circ N = P^{NS}$.
\end{proof}

Next we turn the cylinder morphisms into endomorphisms of $A^{\otimes 3}$ via the ``copairing'' $\Gamma$ we defined in \eqref{eq:Gamma-ijkeps}. First observe that
\begin{equation} \label{eq:pi31-through-Gamma}
	(\pi^{31} \otimes \id_{A^{\otimes 3}}) \circ \Gamma_{1,2,-} = (\id_A \otimes \iota^{13}) \circ \Delta \circ \eta \ .
\end{equation}
Using this, one arrives at
\begin{equation}\label{eq:f-eps-NS/R-def}
	f^\varepsilon_{NS/R} := (T_A(C_{NS/R}^\varepsilon) \otimes \id_{A^{\otimes 3}}) \circ \Gamma_{2,3,-}
	= \iota^{13} \circ P^{NS/R} \circ N_{-\varepsilon} \circ \pi^{31} \ .
\end{equation}
Since $\pi^{31}\circ \iota^{13} = \id_A$, together with Lemma \ref{lem:PNS/R-idemp} it follows that 
	$f^-_{NS/R}$ are idempotents. 

To speak about state spaces, we need a further assumption:
\begin{quote}
{\bf Assumption 3:} The idempotents $P^{NS/R}$ are split.
\end{quote}
Let us denote the image of $P^{NS/R}$ by $Z^{NS/R}(A)$ and write $\iota^{NS/R} : Z^{NS/R}(A) \to A$ and $\pi^{NS/R} : A \to Z^{NS/R}(A)$ for the embedding and restriction maps. 
	We will call $Z^{NS}(A)$ (resp.\ $Z^R(A)$) the $NS$-{\em type} (resp.\ $R$-{\em type}) {\em state space} of the spin TFT associated to $A$.
The morphism $T_A(\Sigma)$ assigned to a spin surface $\Sigma$ factors through the state spaces in the following sense:

\begin{proposition}\label{prop:state-space-projectors-omit}
Let $\Sigma$ be a spin surface and let $\delta_i \in \{NS,R\}$ be the type of the $i$'th of the $B$ boundary components of $\Sigma$. Then
    \begin{equation}
        T_A(\Sigma) = T_A(\Sigma) \circ
	\bigotimes_{i=1}^B  \big( \iota^{13} \circ \iota^{\delta_i} \circ \pi^{\delta_i} \circ \pi^{31} \big)
  \ .
    \end{equation}
\end{proposition}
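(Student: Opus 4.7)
The strategy is to exhibit $\Sigma$, up to isomorphism of spin surfaces, as the result of gluing an identity spin cylinder onto each boundary component, and then to combine the gluing axiom (Proposition \ref{prop:T-gluing}), the isomorphism invariance of Theorem \ref{thm:TA-triang-indep}, and the explicit cylinder formula \eqref{eq:f-eps-NS/R-def}.

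In more detail, I would proceed as follows. For each boundary component $i$ with spin type $\delta_i$, glue the spin cylinder $C^{-}_{\delta_i}$ of Section \ref{sec:TFT.cylinder} onto the $i$-th boundary of $\Sigma$, with a suitable spin gluing parameter $\varepsilon_i' \in \{\pm 1\}$ chosen so that the resulting spin surface $\Sigma'$ is isomorphic to $\Sigma$ as a spin surface with parametrised boundary. By Theorem \ref{thm:TA-triang-indep} this gives $T_A(\Sigma') = T_A(\Sigma)$. On the other hand, Proposition \ref{prop:T-gluing}, together with the monoidal factorisation $T_A(\Sigma \sqcup C^{-}_{\delta_i}) = T_A(\Sigma) \otimes T_A(C^{-}_{\delta_i})$, rewrites $T_A(\Sigma')$ as $T_A(\Sigma)$ precomposed with the endomorphism of $A^{\otimes 3}$ obtained from the cylinder acting on the $i$-th boundary legs. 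By construction this endomorphism is precisely $f^{-}_{\delta_i}$ of \eqref{eq:f-eps-NS/R-def}.

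Since $N_{-(-1)} = N_{+1} = \id_A$, the formula \eqref{eq:f-eps-NS/R-def} yields
\[
f^{-}_{\delta_i} \;=\; \iota^{13} \circ P^{\delta_i} \circ \pi^{31}.
\]
Invoking the splitting $P^{\delta_i} = \iota^{\delta_i} \circ \pi^{\delta_i}$ provided by Assumption 3 turns this into $\iota^{13} \circ \iota^{\delta_i} \circ \pi^{\delta_i} \circ \pi^{31}$. Iterating the gluing over all $B$ boundary components in turn (each step leaves $T_A$ unchanged by Theorem \ref{thm:TA-triang-indep}) gives the claimed identity
\[
T_A(\Sigma) \;=\; T_A(\Sigma) \circ \bigotimes_{i=1}^{B} \bigl( \iota^{13} \circ \iota^{\delta_i} \circ \pi^{\delta_i} \circ \pi^{31} \bigr).
\]

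The main obstacle is the very first step: verifying that $\varepsilon_i'$ can indeed be chosen so that the glued spin surface $\Sigma'$ is isomorphic to $\Sigma$. Heuristically, $C^{-}_{\delta_i}$ is the identity bordism in the spin bordism $2$-category, but this must be checked concretely by tracking the edge sign on the newly created glued edge via Lemma \ref{lem:index.gluing} and then confirming, via the reconstruction Theorem \ref{thm:reconstruction}, that the resulting spin structure on the underlying surface of $\Sigma'$ agrees with that on $\Sigma$. The admissibility analysis in \eqref{eq:cylinder.signs.ns}--\eqref{eq:cylinder.signs.solution}, which already identifies $C^{\pm}_{\delta}$ as representatives of the two spin structures on the collar, supplies exactly the data needed to pin down the correct $\varepsilon_i'$; the remaining work is a bookkeeping exercise rather than a conceptual one.
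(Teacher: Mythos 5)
Your proposal follows the same geometric idea as the paper --- insert the cylinder $C^-_{\delta_i}$ at each boundary and identify the resulting endomorphism of $A^{\otimes 3}$ with $f^-_{\delta_i}=\iota^{13}\circ\iota^{\delta_i}\circ\pi^{\delta_i}\circ\pi^{31}$ --- but the justification mechanism differs, and the difference matters. The paper does \emph{not} glue an external cylinder and then argue that the glued spin surface is isomorphic to $\Sigma$. Instead it builds a new spin triangulation $\Lambda'$ of the \emph{same} spin surface $\Sigma$ by pushing the existing triangulation slightly inward and inserting the cylinder triangulation of Figure \ref{fig:cylinder.triangulation} with $\varepsilon=-1$ in the freed collar (Figure \ref{fig:push-triang-inwards}). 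Since $\Lambda$ and $\Lambda'$ triangulate the identical spin surface and the original edge signs are unaffected, Theorem \ref{thm:TA-triang-indep} alone gives $T_\text{triang}(\Lambda)=T_\text{triang}(\Lambda')$, and reading off the diagram for $\Lambda'$ yields $T_A(\Sigma)=\big(T_A(C^-_{\delta_i})\otimes T_A(\Sigma)\big)\circ\Gamma_{2,i,-}$ with no further geometric input. This is precisely the step you defer as ``bookkeeping'': in your version you must actually exhibit a spin isomorphism $\Sigma'\cong\Sigma$, and moreover pin down that the correct gluing sign is $\varepsilon_i'=-1$, because the formula \eqref{eq:f-eps-NS/R-def} for $f^-_{\delta_i}$ is tied to the specific composite with $\Gamma_{\cdot,\cdot,-}$; a different gluing sign would insert an extra Nakayama automorphism and would \emph{not} give $\iota^{13}\circ P^{\delta_i}\circ\pi^{31}$ (for $R$-type boundaries this genuinely changes the answer, since $P^R\circ N\neq P^R$ in general, as the paper notes). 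So your argument is salvageable, but the deferred verification is exactly the content that distinguishes the ``identity cylinder'' from its twisted cousin, and it should not be waved off; the paper's re-triangulation device is what makes that verification unnecessary.
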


\begin{figure}[tb]
    \centering
    \begin{tikzpicture}
        \node[name=A, minimum width=3cm, minimum height=4cm] at (0cm,0cm){};
        \foreach \N in {1,2,3,4} \coordinate (Av\N) at ($(A.south west)+\N*0.333*(A.south east)-\N*0.333*(A.south west)-0.333*(A.south east)+0.333*(A.south west)$);
        \coordinate (Av5) at ($(A.east)!0.8!(A.west)$);
        \coordinate (Av6) at ($(A.east)!0.4!(A.west)$);
        \coordinate (Av7) at ($($(A.east)!0.15!(A.west)$)+0.1*(A.south)$);
        \coordinate (Av8) at ($0.2*(A.east)+0.4*(A.north)+0.7*(A.west)$);
        \coordinate (Av9) at ($0.4*(A.east)+0.5*(A.north)+0.3*(A.west)$);
        \coordinate (Av10) at ($0.7*(A.east)+0.3*(A.north)+0.3*(A.west)$);
        \coordinate (Av11) at (A.west);
        \coordinate (Av12) at (A.east);
        \coordinate (Ad1) at ($0.5*(Av11)+0.5*(Av8)$);
        \coordinate (Ad2) at ($(Av8)+0.2*(A.west)+0.05*(A.north)$);
        \coordinate (Ad3) at ($(Av8)+0.05*(A.east)+0.2*(A.north)$);
        \coordinate (Ad4) at ($(Av9)+0.05*(A.east)+0.2*(A.north)$);
        \coordinate (Ad5) at  ($(Av9)+0.05*(A.west)+0.2*(A.north)$);
        \coordinate (Ad6) at ($(Av10)+0.1*(A.east)+0.1*(A.north)$);
        \coordinate (Ad7) at ($(Av7)+0.15*(A.north)$);
        \foreach \N/\M in {
            1/4,
            1/5,
            2/5,
            2/6,
            3/6,
            3/7,
            4/7,
            2/8,
            5/8,
            2/9,
            8/9,
            6/9,
            3/10,
            6/10,
            7/10,
            9/10,
            5/11,
            7/12} \draw (Av\N) -- (Av\M);
        \foreach \N/\M in {
            A.south west/A.north west,
            A.south east/A.north east,
            Av5/Ad1,
            Av8/Ad2,
            Av8/Ad3,
            Av9/Ad4,
            Av9/Ad5,
            Av10/Ad6,
            Av7/Ad7}
            \draw[dashed] (\N) -- (\M);
            \node[] at ($(A.west)+(0cm,0.1cm)$) {$=$};
            \node[] at ($(A.east)+(0cm,0.1cm)$) {$=$};
        \node[name=B, minimum width=3cm, minimum height=4cm] at (6cm,0cm){};
        \foreach \N in {1,...,12} \coordinate (Bv\N) at ($(Av\N)+(6cm,0cm)$);
        \foreach \N in {1,...,4} \coordinate (Bn\N) at (Bv\N);
        \foreach \N in {1,...,4} \coordinate (Bv\N) at ($(Bv\N)+(0cm,1cm)$);
        \foreach \N in {1,...,7} \coordinate (Bd\N) at ($(Ad\N)+(6cm,0cm)$);
        \foreach \N/\M in {
            1/4,
            1/5,
            2/5,
            2/6,
            3/6,
            3/7,
            4/7,
            2/8,
            5/8,
            2/9,
            8/9,
            6/9,
            3/10,
            6/10,
            7/10,
            9/10,
            5/11,
            7/12} \draw (Bv\N) -- (Bv\M);
        \foreach \N/\M in {
            Bn1/Bn4,
            Bn1/Bv1,
            Bn2/Bv2,
            Bn3/Bv3,
            Bn4/Bv4,
            Bn1/Bv2,
            Bn2/Bv3,
            Bn3/Bv4} \draw (\N) -- (\M);
        \foreach \N/\M in {
            Bv1/B.north west,
            Bv4/B.north east,
            Bv5/Bd1,
            Bv8/Bd2,
            Bv8/Bd3,
            Bv9/Bd4,
            Bv9/Bd5,
            Bv10/Bd6,
            Bv7/Bd7}
            \draw[dashed] (\N) -- (\M);
            \node[] at ($(A.west)+(6cm,0.1cm)$) {$=$};
            \node[] at ($(A.east)+(6cm,0.1cm)$) {$=$};
        \draw[->] ($(A.east)+(1cm,0cm)$) -- ($(B.west)+(-1cm,0cm)$);
    \end{tikzpicture}
    \caption{Adding the cylinder triangulation from Figure \ref{fig:cylinder.triangulation} at a boundary component by pushing the existing triangulation inwards.}
    \label{fig:push-triang-inwards}
\end{figure}
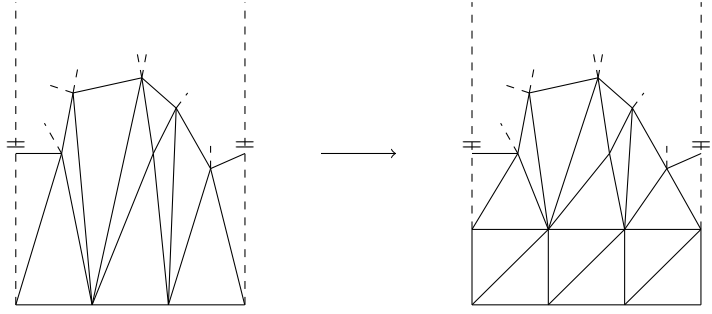

\begin{proof}
Choose a spin triangulation $\Lambda$ of $\Sigma$. Consider the triangulation close to the $i$'th boundary component of $\Sigma$. We may assume that none of the boundary edges are marked edges of the adjacent triangle, and that the edge sign is 1 for all boundary edges. From this build a new spin triangulation $\Lambda'$ by ``pushing the triangulation $\Lambda$ slightly inward'' and adding a cylinder as in Figure \ref{fig:cylinder.triangulation} with $\varepsilon=-1$. This is illustrated in Figure \ref{fig:push-triang-inwards}. That $\varepsilon=-1$ occurs and not $\varepsilon=1$ makes a difference only for R-type boundary conditions (Lemma \ref{lem:PNS/R-idemp}). For these it can be checked by direct calculation or by observing that $P^{R}$ is an idempotent, but $P^{R} \circ N$ is in general not.

The edge signs of the initial spin triangulation $\Lambda$ are not affected by the above procedure. By Theorem \ref{thm:TA-triang-indep}, the spin triangulations $\Lambda$ and $\Lambda'$ produce the same morphism $T_A(\Sigma)$. Comparing the morphisms produced by these two triangulations, this amounts to the identity
    \begin{equation}
        T_A(\Sigma) = \big( T_A(C_{\delta_i}^-) \otimes T_A(\Sigma)  \big) \circ \Gamma_{2,i,-}
  \ .
    \end{equation}
Substituting $f_{\delta_i}^-$ from \eqref{eq:f-eps-NS/R-def}, on sees that the right hand side is equal to $T_A(\Sigma) \circ (\id \otimes \cdots \otimes f_{\delta_i}^- \otimes \cdots \otimes \id)$, where $f_{\delta_i}^-$ is inserted on the $i$'th tensor factor  $A^{\otimes 3}$. Repeating this for each $i$ and using $f_{\delta_i}^- =  \iota^{13} \circ \iota^{\delta_i} \circ \pi^{\delta_i} \circ \pi^{31}$ proves the claim.
\end{proof}

\begin{remark}\label{rem:N-involution-on-Z}
	Let $\delta \in \{NS,R\}$.
The Nakayama automorphism of $A$ induces an involution $\pi^\delta \circ N \circ \iota^\delta$ on $Z^{\delta}(A)$, which by abuse of notation we still call $N$, or $N|_{Z^{\delta}}$ if we want to be more specific. That $N|_{Z^{\delta}}$ is an involution follows from $\pi^{\delta} \circ \iota^{\delta} = \id_{Z^{\delta}(A)}$ and $N \circ N = \id_A$. By Lemma \ref{lem:PNS/R-idemp}, actually $N|_{Z^{NS}} = \id_{Z^{NS}}$. As pointed out in \cite[Ex.\,1 in Sect.\,2.6]{moore2006d},
geometrically, this amounts to the observation that $C_{NS}^+ \cong C_{NS}^-$ as spin surfaces (via a Dehn-twist).
\end{remark}

\begin{remark}\label{rem:BT-BCP-relation} (i)
In \cite{barrett2013spin}, state sum models on spin surfaces are addressed independently of our work. Let us briefly point out some similarities in the algebraic structures considered in \cite{barrett2013spin} and here. In \cite{barrett2013spin}, the algebraic datum is a separable Frobenius algebra on a symmetrically braided vector space, subject to further conditions, see \cite[Def.\,4.1]{barrett2013spin}. In particular, it is imposed that the Nakayama automorphism is an involution. Only closed surfaces are considered in \cite{barrett2013spin}, so that the question of glueing and of state spaces does not arise. Nonetheless, in evaluating the model on higher genus surfaces, the projectors $P^{NS}$ and $P^R$ onto the NS and R state spaces appear, see \cite[Lem.\,4.5]{barrett2013spin}. 

\smallskip\noindent
(ii) The projectors $P^{NS}$ and $P^R$ also appear in \cite{Brunner:2013ota}, see Section 3.2 there, where they are called $\pi^{\text{(c,c)}}_A$ and $\pi_A^{\text{RR}}$, respectively. In \cite{Brunner:2013ota}, the authors are not concerned with spin TFTs, but instead consider ``generalised twisted sectors'' in orbifolds of 2d TFTs. Accordingly, in \cite{Brunner:2013ota} no restriction on the Nakayama automorphism is imposed.
The generalised twisted sectors are described as  the images of $\pi^{\text{(c,c)}}_A$ and $\pi_A^{\text{RR}}$. In the case that the TFT arises as the topological twist of an $N=(2,2)$ supersymmetric field theory, the construction of \cite{Brunner:2013ota} recovers the $(c,c)$-ring and the $R$-ground states of the orbifolded theory, hence the names.
\end{remark}

\subsection{Pair of pants and multiplication}\label{sec:pair-of-pants}
We proceed to evaluate the TFT on the genus $0$ surface with $3$ boundaries, $\Sigma^{0,3}$. 
	The results are collected in Lemmas \ref{lem:pairofpants.spinstructures} and \ref{lem:pair-of-pants-TFT-value} below. The computations going into the proofs 
are slightly tedious and therefore collected in Appendix \ref{app:pairpants}. The general procedure, however, is as in Section \ref{sec:TFT.cylinder}: We fix a triangulation for the surface, determine admissible edge sign configurations and then proceed to calculate the morphism $T_A(\Sigma^{0,3})$. 

\begin{lemma}
    The surface $\Sigma^{0,3}$ admits spin structures only if the spin structures on the boundary are $NS$-$NS$-$NS$ or $NS$-$R$-$R$ (in any order). If so, 
	then up to isomorphism of spin structures,
there are exactly $4$ spin structures.
    \label{lem:pairofpants.spinstructures}
\end{lemma}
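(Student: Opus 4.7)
The plan is to treat the two assertions separately. The count of four spin structures is an immediate application of Proposition \ref{prop:class.spinstructures}: since $\Sigma^{0,3}$ is homotopy equivalent to a wedge of two circles, one has $H^1(\Sigma^{0,3},\mathbb{Z}_2) \cong \mathbb{Z}_2^2$, so there are exactly four isomorphism classes of spin structures. The restriction on boundary types will come from a homological identity.

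To identify the allowed types, I would first exhibit a reference spin structure of type NS--NS--NS. This is obtained by realizing $\underline{\Sigma^{0,3}}$ as a closed disk in $\mathbb{C}$ with two smaller open disks removed, with boundary parametrizations inherited from the standard parametrization of $\mathbb{C}$, and restricting the spin structure $\mathbb{C}^{NS}$ of Section \ref{sec:ss_on_C} to this subsurface. Since $\mathbb{C}^{NS}$ extends smoothly across the two removed disks, the argument of Lemma \ref{lem:ssc} shows that on each of the three boundary components the relevant reference loop $\hat\zeta$ admits no closed spin lift, so each boundary is of NS type.

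Next, the set of spin structures on $\Sigma^{0,3}$ is a torsor over $H^1(\Sigma^{0,3},\mathbb{Z}_2)$, so every spin structure arises from the reference one by twisting by some class $\alpha\in H^1$. The key observation is that such a twist flips the type of the $i$-th boundary precisely when $\alpha([\partial_i])\neq 0$: the type is detected by the closedness of the spin lift of $\hat\zeta$ along $\partial_i$, and twisting by a $\mathbb{Z}_2$-cocycle representing $\alpha$ multiplies the holonomy along $[\partial_i]$ by $(-1)^{\alpha([\partial_i])}$. In $H_1(\Sigma^{0,3},\mathbb{Z}_2)$ the boundary classes satisfy the relation $[\partial_1]+[\partial_2]+[\partial_3]=0$; hence for every $\alpha\in H^1$ the three values $\alpha([\partial_i])$ sum to $0$ mod $2$. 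Starting from NS--NS--NS and twisting by the four classes of $H^1$ therefore yields exactly the four sign patterns with an even number of $R$-boundaries, namely NS--NS--NS, NS--R--R, R--NS--R, and R--R--NS, which is the claimed list.

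The main obstacle is justifying cleanly that the $H^1$-torsor action on spin structures affects the boundary type precisely through the evaluation $\alpha\mapsto\alpha([\partial_i])$. As a self-contained alternative bypassing this point, one can invoke the combinatorial model: fix any marked triangulation of $\underline{\Sigma^{0,3}}$ and use Theorem \ref{thm:parametrisation-of-spin} to enumerate representatives of the four classes in $\mathcal{M}_{\underline{\Sigma^{0,3}}}/\sim_{\text{fix}}$; then read off each boundary's type by direct application of Lemma \ref{lem:vertex.rule.boundary} to the three vertices on each boundary. This verification is mechanical once a sufficiently small triangulation has been chosen.
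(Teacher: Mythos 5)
Your derivation of the parity constraint on the boundary types is a genuinely different and essentially correct route: the paper obtains $\nu_1\nu_2\nu_3=1$ by solving the vertex admissibility conditions of Lemma \ref{lem:vertex.rule.boundary} on an explicit triangulation (Appendix B), while you read it off from the relation $[\partial_1]+[\partial_2]+[\partial_3]=0$ in $H_1(\underline{\Sigma^{0,3}},\mathbb{Z}_2)$ together with the $H^1$-torsor action on spin structures, starting from the reference structure obtained by restricting $\Cb^{NS}$ to a twice-holed disk. Your route is more conceptual and also shows that all four even patterns occur; the paper's route has the advantage of producing the explicit edge-sign representatives needed later to evaluate $T_A$.

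The count of four, however, has a genuine gap. Proposition \ref{prop:class.spinstructures} counts isomorphism classes of spin structures on the bare underlying surface, with no reference to the boundary parametrisation, and your own torsor argument shows that the four classes in $H^1(\underline{\Sigma^{0,3}},\mathbb{Z}_2)$ realise the four admissible boundary patterns NS-NS-NS, NS-R-R, R-NS-R, R-R-NS \emph{one class each}. So your method yields exactly one spin structure per fixed admissible boundary type, whereas the lemma --- as it is used immediately afterwards, where the four structures $\Sigma^{0,3}_{\delta_1,\delta_2,\delta_3,\varepsilon_1,\varepsilon_2}$ carry \emph{fixed} $\delta_i$ and are parametrised by $\varepsilon_1,\varepsilon_2$ --- asserts four for each fixed admissible type. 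The missing multiplicity does not live in $H^1$ of the surface: it sits in the choice of spin lifts $\tilde\varphi_i$ of the three boundary parametrisations. Each $\varphi_i$ admits exactly two spin lifts, differing by the leaf exchange $\omega$ (Lemma \ref{lem:Smap.lift}), so $(\mathbb{Z}_2)^3$ acts freely and transitively on these choices over the unique bare spin structure with the given boundary types; the only identifications come from the global leaf exchange, the unique nontrivial automorphism of the spin structure on the connected surface, which acts diagonally. Hence the correct count is $|(\mathbb{Z}_2)^3/\Delta\mathbb{Z}_2|=4$. That this number coincides with $|H^1(\underline{\Sigma^{0,3}},\mathbb{Z}_2)|$ is an accident of the comparison, not a justification. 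Your combinatorial fallback via Theorem \ref{thm:parametrisation-of-spin} would repair this, but only after one independently determines how many classes of $\mathcal{M}_{\underline\Sigma}/\sim_\text{fix}$ carry a fixed boundary type assignment --- which is precisely the point still in need of an argument.
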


We parametrise the corresponding spin surfaces by numbers $\varepsilon_1,\varepsilon_2\in \{1,-1\}$ and denote them by 
$\Sigma^{0,3}_{\delta_1,\delta_2,\delta_3,\varepsilon_1,\varepsilon_2}$ with $\delta_i \in \{NS,R\}$. 
The definition of these spin structures in terms of admissible edge signs is given in Appendix \ref{app:pairpants}.
One can check that up to diffeomorphisms of spin surfaces, there is just one spin surface with boundaries of type $NS$-$NS$-$NS$ and two such surfaces with boundary types $NS$-$R$-$R$.

\begin{lemma}\label{lem:pair-of-pants-TFT-value}
    The values of the TFT on the above spin surfaces are
    \begin{align}\label{eq:3-holed-sphere-morphs}
	T_A\left( \Sigma^{0,3}_{\delta_1,\delta_2,\delta_3,\varepsilon_1,\varepsilon_2} \right) = 
    b\circ \left(\id_A\otimes \mu\right) 
    &\circ \big(P^{\delta_1} \otimes P^{\delta_2} \otimes  P^{\delta_3}\big) 
    \\ & 
    \circ \big(N_{\varepsilon_1}\otimes N_{\varepsilon_2} \otimes \id_A \big) 
    \circ (\pi^{31})^{\otimes 3}\ . \nonumber
    \end{align}
\end{lemma}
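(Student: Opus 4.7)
The strategy is to fix an explicit marked triangulation of $\Sigma^{0,3}$, list all admissible edge-sign configurations, evaluate the resulting morphism diagram, and then use the algebraic identities satisfied by $A$ to bring it into the stated form.

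First I would fix a small triangulation of the pair of pants (for instance, adding a few inner vertices and edges so that each boundary component carries its three edges in accordance with Definition \ref{def:comb-spin-surf}) together with a convenient marking; the details are carried out in Appendix \ref{app:pairpants}. I would then use the vertex rules in Corollary \ref{lem:vertex.rule.inner} and Lemma \ref{lem:vertex.rule.boundary} at every vertex to enumerate the admissible edge-sign assignments. The parity constraints of the $NS$- versus $R$-boundary rules will force the triple $(\delta_1,\delta_2,\delta_3)$ to lie in $\{(NS,NS,NS)\}$ together with the permutations of $(NS,R,R)$, and in each such case the remaining freedom is exactly a two-parameter family, which I would label by $(\varepsilon_1,\varepsilon_2)\in\{\pm1\}^2$ according to the edge signs on a suitably chosen pair of inner edges. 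This proves Lemma \ref{lem:pairofpants.spinstructures} along the way.

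Second, for each admissible configuration I would read off $T_{\text{triang}}(\Sigma^{0,3})$ from the recipe of Section \ref{sec:prelim-graph}, producing a morphism $\mathbf{1}\to A^{\otimes 9}$ built from copies of $t$ and $c_{\pm 1}$, and then pass to $T_A$ via \eqref{eq:TA-def}. The resulting string diagram would be simplified using the Frobenius identities: the conversions $(t,c_{-1})\leftrightarrow\mu$ of \eqref{eq:A-product-def}, the definition \eqref{eq:coproduct-counit} of $\Delta$, the Frobenius property and Lemma \ref{lem:product-past-coproduct} for moving products past coproducts, the rewriting $c_{1}=(N\otimes \id)\circ c_{-1}$ from \eqref{eq:comparison.of.copairings}, and $\Delta$-separability to cancel inner triangles. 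Applied systematically --- boundary by boundary, working inward --- these moves collapse the diagram into $b\circ(\id\otimes\mu)$ precomposed with three independent per-boundary blocks.

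Third, I would identify each per-boundary block with $P^{\delta_i}\circ N_{\varepsilon_j}\circ \pi^{31}$ using the cylinder evaluation \eqref{eq:TFT-cylinder-values} together with the projector properties of Lemma \ref{lem:PNS/R-idemp}; crucially the identity $P^{NS}\circ N=P^{NS}$ ensures that no nontrivial Nakayama insertion can survive at an $NS$-boundary, which is why the parameters $\varepsilon_i$ only register at $R$-boundaries. Triangulation independence (Theorem \ref{thm:TA-triang-indep}) together with the local moves of Section \ref{sec:moves-notriang} then allows any residual Nakayama factors to be transported to the first two legs, matching the form stated in \eqref{eq:3-holed-sphere-morphs}.

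The main obstacle is the sign and permutation bookkeeping: the polarisation of $\Gamma(\mathcal{C})$ introduces many symmetric braidings, and one has to track the $\pm$ signs from $c_{\pm 1}$ faithfully through the Frobenius moves. The guiding principle is that, once all three boundary cylinders have been absorbed and the inner triangles eliminated by $\Delta$-separability, the only invariant data that can survive are the boundary types $\delta_i$ and the two spin parameters $(\varepsilon_1,\varepsilon_2)$, which is exactly what the right-hand side of \eqref{eq:3-holed-sphere-morphs} records.
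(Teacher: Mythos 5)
Your proposal follows the same route as the paper's proof in Appendix \ref{app:pairpants}: fix the explicit marked triangulation of $\Sigma^{0,3}$, solve the vertex rules to get the constraint $\nu_1\nu_2\nu_3=1$ and the two-parameter family of admissible edge signs, evaluate and simplify the resulting string diagram with the Frobenius identities and $\Delta$-separability, and finally reparametrise and shuffle Nakayama factors (using $q_\nu\circ N_{-\nu}=q_\nu$ and Lemma \ref{lem:forget-one-projector}) to reach the stated form. The only cosmetic difference is that you propose recognising the per-boundary projector blocks via the cylinder evaluation, whereas the paper obtains them directly from the diagram computation; both are sound.
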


	We want to use the morphisms \eqref{eq:3-holed-sphere-morphs} to define an algebra structure on the state space. To do so, it is convenient to add another assumption to our list:
\begin{quote}
	{\bf Assumption 4:} The symmetric 
		strict	
	monoidal category $\mathcal{S}$ is additive.
\end{quote}
Recall the definition of the $NS$- and $R$-type state spaces $Z^{NS/R}(A)$ from the previous section. Under the above assumption, we can now define the {\em total state space}
\begin{equation}\label{eq:Z-total}
	Z(A) := Z^{NS}(A) \oplus Z^R(A)
\end{equation}
of the spin TFT. 
 In the following, we will define an associative, $\mathbb{Z}_2$-graded product on $Z(A)$ and investigate some of its properties. In particular, we will show that the product agrees with the morphisms \eqref{eq:3-holed-sphere-morphs}.

\medskip

To start with, we need to know how the projectors $P^{NS}$ and $P^{R}$ interact with the structure maps of $A$.
Recall the definition of $q_\pm$ from \eqref{eq:q_nu-def}.

\begin{lemma}\label{lem:forget-one-projector}
Let $\nu_1,\nu_2,\nu_3\in \{1,-1\}$ such that $\nu_1\nu_2\nu_3=1$. Then, 
    \begin{align}\label{eq:projectors-product}
        q_{\nu_1}\circ \mu \circ \left( q_{\nu_2}\otimes q_{\nu_3} \right) &= q_{\nu_1} \circ \mu \circ \left( q_{\nu_2} \otimes \id_A \right)  \\
        &= q_{\nu_1} \circ \mu \circ \left( \id_A \otimes q_{\nu_3} \right) \nonumber \\
        &= \mu \circ \left( q_{\nu_2} \otimes q_{\nu_3} \right) \ . \nonumber
    \end{align}
The unit and counit of $A$ satisfy
    \begin{equation}
P^{NS}\circ\eta=\eta 
\quad , \qquad
\varepsilon\circ P^{NS}=\varepsilon 
\ .
        \label{eq:projector-unit-counit}
    \end{equation}
\end{lemma}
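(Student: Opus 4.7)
For part 2, a direct computation works. The key input is the dual of the Nakayama relation $b \circ c_{A,A} = b \circ (N \otimes \id_A)$ (established in the proof of Relation (3) in Proposition \ref{lem:moves_from_alg}), which by non-degeneracy of $b$ is equivalent to $(N \otimes \id_A) \circ c_{-1} = c_{A,A} \circ c_{-1}$. Using $\Delta \circ \eta = c_{-1}$, the braiding cancels against $c_{A,A}$ and $\Delta$-separability finishes the calculation:
\begin{equation*}
P^{NS} \circ \eta = \mu \circ \sigma_{A,A} \circ (N \otimes \id_A) \circ c_{-1} = \mu \circ c_{-1} = \mu \circ \Delta \circ \eta = \eta.
\end{equation*}
The identity $\varepsilon \circ P^{NS} = \varepsilon$ is the mirror computation: use $\varepsilon \circ \mu = b$ and the dual $b \circ \sigma_{A,A} \circ (N \otimes \id_A) = b$ to reduce $\varepsilon \circ P^{NS}$ to $\varepsilon \circ \mu \circ \Delta = \varepsilon$.

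Part 1 requires more work. My plan is to reduce the three equalities in \eqref{eq:projectors-product} to the single identity
\begin{equation*}
(\star) \qquad q_{\nu_1} \circ \mu \circ (q_{\nu_2} \otimes \id_A) = \mu \circ (q_{\nu_2} \otimes q_{\nu_3}) \qquad (\nu_1\nu_2\nu_3 = 1),
\end{equation*}
together with its mirror $(\star')$ obtained by inserting the projector on the right-hand factor instead. Given $(\star)$, the outermost equality $q_{\nu_1} \circ \mu \circ (q_{\nu_2} \otimes q_{\nu_3}) = \mu \circ (q_{\nu_2} \otimes q_{\nu_3})$ follows by composing $(\star)$ on the right with $\id_A \otimes q_{\nu_3}$ and using the idempotence $q_{\nu_3} \circ q_{\nu_3} = q_{\nu_3}$ from Lemma \ref{lem:PNS/R-idemp}(1); combining $(\star)$, $(\star')$ and this consequence yields all three equalities in \eqref{eq:projectors-product}.

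To prove $(\star)$, expand $q_{\nu_1}$ in the LHS using its definition and apply the Frobenius bimodule identity $\Delta \circ \mu = (\id_A \otimes \mu) \circ (\Delta \otimes \id_A)$ to slide the $\Delta$ inside $q_{\nu_1}$ past the outer $\mu$. The resulting diagram contains a $\mu$--$\Delta$ configuration handled by Lemma \ref{lem:product-past-coproduct}, which introduces one factor of $N$. Using that $N$ is an algebra and coalgebra homomorphism with $N^2 = \id_A$ (Proposition \ref{prop:Nak-Frob-auto}), this new $N$ can be transported through the remaining $\mu$'s and collected with the $N_{-\nu_i}$'s carried by each $q_{\nu_i}$; the hypothesis $\nu_1\nu_2\nu_3 = 1$ is precisely what ensures that the accumulated Nakayama twists cancel in pairs. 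A final application of $\Delta$-separability $\mu \circ \Delta = \id_A$ removes a redundant $\Delta$--$\mu$ pair, yielding the RHS of $(\star)$. The mirror identity $(\star')$ is established by the symmetric calculation (reflecting the graphs).

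The main obstacle is the bookkeeping of braidings, Nakayama twists and Frobenius slides: each of the three $q_{\nu_i}$ contributes its own braiding and factor $N_{-\nu_i}$, and these interact nontrivially when one reshuffles the diagram. The parity constraint $\nu_1\nu_2\nu_3 = 1$ is the delicate ingredient that makes the accumulated Nakayama twists cancel; without it the identity fails, which reflects the geometric fact (compare Lemma \ref{lem:pairofpants.spinstructures}) that a pair of pants with boundary spin types $\delta_2, \delta_3, \delta_1$ only exists when $\delta_1 = \delta_2 \cdot \delta_3$ in $\mathbb{Z}_2$.
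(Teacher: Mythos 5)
Your proof is correct and follows essentially the same route as the paper's: part 2 is the same short consequence of $b \circ \sigma_{A,A}\circ(N\otimes\id_A)=b$ and $\Delta$-separability, and for part 1 the paper likewise establishes one of the equalities by expanding the $q_{\nu_i}$, sliding a product past a coproduct via Lemma \ref{lem:product-past-coproduct}, collecting the Nakayama twists (which cancel precisely because $\nu_1\nu_2\nu_3=1$), and finishing with $\Delta$-separability. The only difference is organizational --- the paper proves the ``drop $q_{\nu_3}$'' equality directly and declares the remaining cases analogous, whereas you prove the trade $(\star)$ and recover the other equalities by composing with projectors and invoking idempotence from Lemma \ref{lem:PNS/R-idemp} --- and both bookkeeping schemes work.
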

\begin{proof}
The identities \eqref{eq:projector-unit-counit}
	are a consequence of $b \circ \sigma_{A,A} \circ (N \otimes \id_A) = b$ and $\Delta$-separability. 
    We show the first equality in equation \eqref{eq:projectors-product}:
    \begin{align}
            &q_{\nu_1}\circ \mu \circ \left( q_{\nu_2}\otimes q_{\nu_3} \right) \stackrel{\text{Lem.\,\ref{lem:PNS/R-idemp}}}{=} q_{\nu_1}\circ \mu \circ \left( q_{\nu_2}\otimes
			(q_{\nu_3}\circ N_{-\nu_3}) \right) \\
            &=\raisebox{-2cm}{\begin{tikzpicture}
            \coordinate (sd) at (0cm,-0.2cm);
            \coordinate (su) at (0cm,0.2cm);
            \coordinate (in1) at (0cm,0cm);
            \coordinate (id1l1) at (-0.3cm,1cm);
            \coordinate (id1r1) at (0.3cm,1cm);
            \coordinate (id2l1) at (-0.3cm,1.5cm);
            \coordinate (id2r1) at (0.3cm,1.5cm);
            \coordinate (out1) at (0cm,2.5cm);
            \node[name=cp1,circle,fill,inner sep=0.05cm] at (0cm,0.5cm) {};
            \node[name=p1,circle,fill,inner sep=0.05cm] at (0cm,2cm) {};
            \node[name=N1,circle,draw,inner sep=0.05cm] at ($(id1l1)+(0cm,-0.1cm)$) {\tiny{$-\nu_2$}};
            \draw (in1) -- (cp1);
            \draw (cp1) .. controls ($(id1r1)+(sd)$) .. (id1r1);
            \draw (cp1) .. controls ($(N1.south)+0.2*(sd)$) .. (N1.south);
            \draw (N1.north) .. controls ($(N1.north)+(su)$) and ($(id2r1)+(sd)$) .. (id2r1);
            \draw (id1r1) .. controls ($(id1r1)+(su)$) and ($(id2l1)+(sd)$) .. (id2l1);
            \draw (id2l1) .. controls ($(id2l1)+(su)$) .. (p1);
            \draw (id2r1) .. controls ($(id2r1)+(su)$) .. (p1);
            \coordinate (sd3) at (1.5cm,0cm); 
            \coordinate (in3) at ($(in1)+(sd3)$);
            \node[name=cp3,circle,fill,inner sep=0.05cm] at ($(cp1)+(sd3)$) {};
            \node[name=p3,circle,fill,inner sep=0.05cm] at ($(p1)+(sd3)$) {};
            \node[name=N3,circle,draw,inner sep=0.05cm] at ($(id1r1)+(sd3)+(0cm,-0.1cm)$) {\tiny{$-\nu_3$}};
            \coordinate (id1l3) at ($(id1l1)+(sd3)$);
            \coordinate (id2l3) at ($(id2l1)+(sd3)$);
            \coordinate (id2r3) at ($(id2r1)+(sd3)$);
            \coordinate (out3) at ($(out1)+(sd3)$);
            \draw (in3) -- (cp3);
            \draw (cp3) .. controls ($(id1l3)+(sd)$) .. (id1l3);
            \draw (cp3) .. controls ($(N3.south)+0.2*(sd)$) .. (N3.south);
            \draw (N3.north) .. controls ($(N3.north)+(su)$) and ($(id2l3)+(sd)$) .. (id2l3);
            \draw (id1l3) .. controls ($(id1l3)+(su)$) and ($(id2r3)+(sd)$) .. (id2r3);
            \draw (id2l3) .. controls ($(id2l3)+(su)$) .. (p3);
            \draw (id2r3) .. controls ($(id2r3)+(su)$) .. (p3);
            \coordinate (sd2) at (0.75cm,3cm); 
            \coordinate (in2) at ($(in1)+(sd2)$);
            \node[name=pc,circle,fill,inner sep=0.05cm] at (in2) {};
            \node[name=cp2,circle,fill,inner sep=0.05cm] at ($(cp1)+(sd2)$) {};
            \node[name=p2,circle,fill,inner sep=0.05cm] at ($(p1)+(sd2)$) {};
            \node[name=N2,circle,draw,inner sep=0.05cm] at ($(N1)+(sd2)$) {\tiny{$-\nu_1$}};
            \coordinate (id1r2) at ($(id1r1)+(sd2)$);
            \coordinate (id2l2) at ($(id2l1)+(sd2)$);
            \coordinate (id2r2) at ($(id2r1)+(sd2)$);
            \coordinate (out2) at ($(out1)+(sd2)$);
            \draw (pc) -- (cp2);
            \draw (cp2) .. controls ($(id1r2)+(sd)$) .. (id1r2);
            \draw (cp2) .. controls ($(N2.south)+0.2*(sd)$) .. (N2.south);
            \draw (N2.north) .. controls ($(N2.north)+(su)$) and ($(id2r2)+(sd)$) .. (id2r2);
            \draw (id1r2) .. controls ($(id1r2)+(su)$) and ($(id2l2)+(sd)$) .. (id2l2);
            \draw (id2l2) .. controls ($(id2l2)+(su)$) .. (p2);
            \draw (id2r2) .. controls ($(id2r2)+(su)$) .. (p2);
            \draw (p2) -- (out2);
            \draw (p1) .. controls ($(p1)+(su)$) .. (pc);
            \draw (p3) .. controls ($(p3)+(su)$) .. (pc);
        \end{tikzpicture}}
\overset{\text{assoc.}}=
       \raisebox{-2cm}{\begin{tikzpicture}
            \coordinate (sd) at (0cm,-0.2cm);
            \coordinate (su) at (0cm,0.2cm);
            \coordinate (in1) at (0cm,0cm);
            \coordinate (id1l1) at (-0.3cm,1cm);
            \coordinate (id1r1) at (0.3cm,1cm);
            \coordinate (id2l1) at (-0.3cm,1.5cm);
            \coordinate (out1) at (0cm,2.5cm);
            \node[name=cp1,circle,fill,inner sep=0.05cm] at (0cm,0.5cm) {};
            \node[name=p1,circle,fill,inner sep=0.05cm] at (0cm,2cm) {};
            \node[name=N1,circle,draw,inner sep=0.05cm] at (-0.3cm,0.9cm) {\tiny{$-\nu_2$}};
            \node[name=id2r1,circle,fill,inner sep=0.05cm] at (0.3cm,1.5cm) {};
            \draw (in1) -- (cp1);
            \draw (cp1) .. controls ($(id1r1)+(sd)$) .. (id1r1);
            \draw (cp1) .. controls ($(N1.south)+0.2*(sd)$) .. (N1.south);
            \draw (N1.north) .. controls ($(N1.north)+(su)$) and ($(id2r1)+(sd)$) .. (id2r1);
            \draw (id1r1) .. controls ($(id1r1)+(su)$) and ($(id2l1)+(sd)$) .. (id2l1);
            \draw (id2l1) .. controls ($(id2l1)+(su)$) .. (p1);
            \draw (id2r1) .. controls ($(id2r1)+(su)$) .. (p1);
            \coordinate (sd3) at (1.5cm,0cm); 
            \coordinate (in3) at ($(in1)+(sd3)$);
            \node[name=cp3,circle,fill,inner sep=0.05cm] at ($(cp1)+(sd3)$) {};
            \coordinate (p3) at ($(p1)+(sd3)$);
            \node[name=N3,circle,draw,inner sep=0.05cm] at ($(id1r1)+(sd3)+(0cm,-0.1cm)$) {\tiny{$-\nu_3$}};
            \coordinate (id1l3) at ($(id1l1)+(sd3)$);
            \coordinate (id1r3) at ($(id1r1)+(sd3)$);
            \coordinate (id2l3) at ($(id2l1)+(sd3)$);
            \coordinate (id2r3) at ($(id2r1)+(sd3)$);
            \coordinate (out3) at ($(out1)+(sd3)$);
            \draw (in3) -- (cp3);
            \draw (cp3) .. controls ($(id1l3)+(sd)$) .. (id1l3);
            \draw (cp3) .. controls ($(N3.south)+0.2*(sd)$) .. (N3.south);
            \draw (N3.north) .. controls ($(N3.north)+(su)$) .. (id2r1);
            \draw (id1l3) .. controls ($(id1l3)+(su)$) and ($(p3)+(sd)$) .. (p3);
            \coordinate (sd2) at (0.75cm,3cm); 
            \coordinate (in2) at ($(in1)+(sd2)$);
            \node[name=pc,circle,fill,inner sep=0.05cm] at (in2) {};
            \node[name=cp2,circle,fill,inner sep=0.05cm] at ($(cp1)+(sd2)$) {};
            \node[name=p2,circle,fill,inner sep=0.05cm] at ($(p1)+(sd2)$) {};
            \node[name=N2,circle,draw,inner sep=0.05cm] at ($(N1)+(sd2)$) {\tiny{$-\nu_1$}};
            \coordinate (id1r2) at ($(id1r1)+(sd2)$);
            \coordinate (id2l2) at ($(id2l1)+(sd2)$);
            \coordinate (id2r2) at ($(id2r1)+(sd2)$);
            \coordinate (out2) at ($(out1)+(sd2)$);
            \draw (pc) -- (cp2);
            \draw (cp2) .. controls ($(id1r2)+(sd)$) .. (id1r2);
            \draw (cp2) .. controls ($(N2.south)+0.2*(sd)$) .. (N2.south);
            \draw (N2.north) .. controls ($(N2.north)+(su)$) and ($(id2r2)+(sd)$) .. (id2r2);
            \draw (id1r2) .. controls ($(id1r2)+(su)$) and ($(id2l2)+(sd)$) .. (id2l2);
            \draw (id2l2) .. controls ($(id2l2)+(su)$) .. (p2);
            \draw (id2r2) .. controls ($(id2r2)+(su)$) .. (p2);
            \draw (p2) -- (out2);
            \draw (p1) .. controls ($(p1)+(su)$) .. (pc);
            \draw (p3) .. controls ($(p3)+(su)$) .. (pc);
        \end{tikzpicture}}
\nonumber\\
&\underset{\text{assoc.}}{\overset{\text{\eqref{eq:inner-mult-past-delta}}}=}
        \raisebox{-2cm}{\begin{tikzpicture}
            \coordinate (sd) at (0cm,-0.2cm);
            \coordinate (su) at (0cm,0.2cm);
            \node[name=pc,circle,fill,inner sep=0.05cm] at (in2) {};
            \node[name=pc2, circle, fill, inner sep=0.05cm] at ($(in2)+(0cm,0.25cm)$){};
            \coordinate (ipc2) at ($(pc2)+(-0.5cm,-0.5cm)$);
            \coordinate (in1) at (0cm,0cm);
            \coordinate (id1l1) at (-0.3cm,1cm);
            \coordinate (id1r1) at (0.3cm,1cm);
            \coordinate (id2l1) at (-0.3cm,1.5cm);
            \coordinate (id2r1) at (0.3cm,1.5cm);
            \coordinate (out1) at (0cm,2.5cm);
            \node[name=cp1,circle,fill,inner sep=0.05cm] at (0cm,0.5cm) {};
            \node[name=p1,circle,fill,inner sep=0.05cm] at (0cm,2cm) {};
            \node[name=N1,circle,draw,inner sep=0.05cm] at (-0.3cm,0.9cm) {\tiny{$-\nu_2$}};
            \draw (in1) -- (cp1);
            \draw (cp1) .. controls ($(id1r1)+(sd)$) .. (id1r1);
            \draw (cp1) .. controls ($(N1.south)+0.2*(sd)$) .. (N1.south);
            \draw (N1.north) .. controls ($(N1.north)+(su)$) and ($(id2r1)+(sd)$) .. (id2r1);
            \draw (id1r1) .. controls ($(id1r1)+(su)$) and ($(id2l1)+(sd)$) .. (id2l1);
            \draw (id2l1) .. controls ($(id2l1)+(su)$) .. (p1);
            \draw (id2r1) .. controls ($(id2r1)+(su)$) .. (p1);
            \coordinate (sd3) at (1.5cm,0cm); 
            \coordinate (in3) at ($(in1)+(sd3)$);
            \node[name=cp3,circle,fill,inner sep=0.05cm] at ($(cp1)+(sd3)$) {};
            \coordinate (p3) at ($(p1)+(sd3)$);
            \node[name=N3,circle,draw,inner sep=0.05cm] at ($(id1r1)+(sd3)+(0.5cm,0.5cm)$) {\tiny{$-\nu_2\nu_3$}};
            \coordinate (id1l3) at ($(id1l1)+(sd3)$);
            \coordinate (id1r3) at ($(id1r1)+(sd3)$);
            \coordinate (id2l3) at ($(id2l1)+(sd3)$);
            \coordinate (id2r3) at ($(id2r1)+(sd3)$);
            \coordinate (out3) at ($(out1)+(sd3)$);
            \draw (in3) -- (cp3);
            \draw (cp3) .. controls ($(id1l3)+(sd)$) .. (id1l3);
            \draw (cp3) .. controls ($(N3.south)+0.2*(sd)$) .. (N3.south);
            \draw (N3.north) .. controls ($(N3.north)+(su)$) and ($(ipc2)+(sd)$) .. ($(ipc2)$);
            \draw (ipc2) .. controls ($(ipc2)+(su)$) .. (pc2);
            \draw (id1l3) .. controls ($(id1l3)+(su)$) and ($(p3)+(sd)$) .. (p3);
            \coordinate (sd2) at (0.75cm,3cm); 
            \coordinate (in2) at ($(in1)+(sd2)$);
            \node[name=cp2,circle,fill,inner sep=0.05cm] at ($(cp1)+(sd2)$) {};
            \node[name=p2,circle,fill,inner sep=0.05cm] at ($(p1)+(sd2)$) {};
            \node[name=N2,circle,draw,inner sep=0.05cm] at ($(N1)+(sd2)$) {\tiny{$-\nu_1$}};
            \coordinate (id1r2) at ($(id1r1)+(sd2)$);
            \coordinate (id2l2) at ($(id2l1)+(sd2)$);
            \coordinate (id2r2) at ($(id2r1)+(sd2)$);
            \coordinate (out2) at ($(out1)+(sd2)$);
            \draw (pc) -- (pc2);
            \draw (pc2) -- (cp2);
            \draw (cp2) .. controls ($(id1r2)+(sd)$) .. (id1r2);
            \draw (cp2) .. controls ($(N2.south)+0.2*(sd)$) .. (N2.south);
            \draw (N2.north) .. controls ($(N2.north)+(su)$) and ($(id2r2)+(sd)$) .. (id2r2);
            \draw (id1r2) .. controls ($(id1r2)+(su)$) and ($(id2l2)+(sd)$) .. (id2l2);
            \draw (id2l2) .. controls ($(id2l2)+(su)$) .. (p2);
            \draw (id2r2) .. controls ($(id2r2)+(su)$) .. (p2);
            \draw (p2) -- (out2);
            \draw (p1) .. controls ($(p1)+(su)$) .. (pc);
            \draw (p3) .. controls ($(p3)+(su)$) .. (pc);
        \end{tikzpicture}}
\underset{\text{assoc.}}{\overset{\text{Frob.}}=}
        \raisebox{-2cm}{\begin{tikzpicture}
            \coordinate (sd) at (0cm,-0.2cm);
            \coordinate (su) at (0cm,0.2cm);
            \node[name=pc,circle,fill,inner sep=0.05cm] at (in2) {};
            \coordinate (in1) at (0cm,0cm);
            \coordinate (id1l1) at (-0.3cm,1cm);
            \coordinate (id1r1) at (0.3cm,1cm);
            \coordinate (id2l1) at (-0.3cm,1.5cm);
            \coordinate (id2r1) at (0.3cm,1.5cm);
            \coordinate (out1) at (0cm,2.5cm);
            \node[name=cp1,circle,fill,inner sep=0.05cm] at (0cm,0.5cm) {};
            \node[name=p1,circle,fill,inner sep=0.05cm] at (0cm,2cm) {};
            \node[name=N1,circle,draw,inner sep=0.05cm] at (-0.3cm,0.9cm) {\tiny{$-\nu_2$}};
            \draw (in1) -- (cp1);
            \draw (cp1) .. controls ($(id1r1)+(sd)$) .. (id1r1);
            \draw (cp1) .. controls ($(N1.south)+0.2*(sd)$) .. (N1.south);
            \draw (N1.north) .. controls ($(N1.north)+(su)$) and ($(id2r1)+(sd)$) .. (id2r1);
            \draw (id1r1) .. controls ($(id1r1)+(su)$) and ($(id2l1)+(sd)$) .. (id2l1);
            \draw (id2l1) .. controls ($(id2l1)+(su)$) .. (p1);
            \draw (id2r1) .. controls ($(id2r1)+(su)$) .. (p1);
            \coordinate (sd3) at (1.5cm,0cm); 
            \coordinate (in3) at ($(in1)+(sd3)$);
            \node[name=cp3,circle,fill,inner sep=0.05cm] at ($(cp1)+(sd3)$) {};
            \node[name=p3,circle,fill,inner sep=0.05cm] at ($(p1)+(sd3)$) {};
            \node[name=N3,circle,draw,inner sep=0.05cm] at ($(id1r1)+(sd3)+(0.2cm,0.2cm)$) {\tiny{$\nu_1\nu_2\nu_3$}};
            \coordinate (id1l3) at ($(id1l1)+(sd3)$);
            \coordinate (id1r3) at ($(id1r1)+(sd3)$);
            \coordinate (id2l3) at ($(id2l1)+(sd3)$);
            \coordinate (id2r3) at ($(id2r1)+(sd3)$);
            \coordinate (out3) at ($(out1)+(sd3)$);
            \draw (in3) -- (cp3);
            \draw (cp3) .. controls ($(id1l3)+(sd)$) .. (id1l3);
            \draw (cp3) .. controls ($(N3.south)+0.2*(sd)$) .. (N3.south);
            \draw (N3.north) .. controls ($(N3.north)+(su)$)  .. (p3);
            \draw (id1l3) .. controls ($(id1l3)+(su)$) .. (p3);
            \coordinate (sd2) at (0.75cm,3cm); 
            \coordinate (in2) at ($(in1)+(sd2)$);
            \node[name=cp2,circle,fill,inner sep=0.05cm] at ($(cp1)+(sd2)$) {};
            \node[name=p2,circle,fill,inner sep=0.05cm] at ($(p1)+(sd2)$) {};
            \node[name=N2,circle,draw,inner sep=0.05cm] at ($(N1)+(sd2)$) {\tiny{$-\nu_1$}};
            \coordinate (id1r2) at ($(id1r1)+(sd2)$);
            \coordinate (id2l2) at ($(id2l1)+(sd2)$);
            \coordinate (id2r2) at ($(id2r1)+(sd2)$);
            \coordinate (out2) at ($(out1)+(sd2)$);
            \draw (pc) -- (cp2);
            \draw (cp2) .. controls ($(id1r2)+(sd)$) .. (id1r2);
            \draw (cp2) .. controls ($(N2.south)+0.2*(sd)$) .. (N2.south);
            \draw (N2.north) .. controls ($(N2.north)+(su)$) and ($(id2r2)+(sd)$) .. (id2r2);
            \draw (id1r2) .. controls ($(id1r2)+(su)$) and ($(id2l2)+(sd)$) .. (id2l2);
            \draw (id2l2) .. controls ($(id2l2)+(su)$) .. (p2);
            \draw (id2r2) .. controls ($(id2r2)+(su)$) .. (p2);
            \draw (p2) -- (out2);
            \draw (p1) .. controls ($(p1)+(su)$) .. (pc);
            \draw (p3) .. controls ($(p3)+(su)$) .. (pc);
        \end{tikzpicture}}
        \nonumber\\
        &=q_{\nu_1} \circ \mu \circ \left( q_{\nu_2} \otimes \id_A \right) \ . \nonumber
    \end{align}
In the last step $\Delta$-separability and the assumption $\nu_1\nu_2\nu_3=1$ are used. The other cases are analogous.
\end{proof}

\begin{lemma}\label{lem:P_NS/R-and-braiding}
Let $\nu \in \{\pm1\}$. We have $\mu \circ \sigma_{A,A} \circ (q_\nu \otimes \id_A) = \mu \circ (q_\nu \otimes N_\nu)$.
\end{lemma}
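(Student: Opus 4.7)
The plan is to deduce the identity from Lemma~\ref{lem:product-past-coproduct} (equation~\eqref{eq:inner-mult-past-delta}), together with associativity of $\mu$, the fact that $N$ is an algebra homomorphism (Proposition~\ref{prop:Nak-Frob-auto}), and the involutivity $N\circ N = \id_A$.

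First, I would unfold $q_\nu = \mu \circ \sigma_{A,A} \circ (N_{-\nu}\otimes\id_A)\circ\Delta$ on the left-hand side. Writing informally $\Delta(x) = \sum x_{(1)}\otimes x_{(2)}$, the left-hand side reads $(x,y)\mapsto y\cdot(x_{(2)}\cdot N_{-\nu}(x_{(1)}))$, which by associativity of $\mu$ equals $(y\cdot x_{(2)})\cdot N_{-\nu}(x_{(1)})$. This exhibits the outer multiplication by $y$ as acting on the $x_{(2)}$-leg of the coproduct, opposite to where $N_{-\nu}$ sits.

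Second, I would apply Lemma~\ref{lem:product-past-coproduct} to transport the $y$-multiplication to the other coproduct leg: the lemma replaces the pair $(x_{(1)},\,y\cdot x_{(2)})$ by $(x_{(1)}\cdot N(y),\, x_{(2)})$. Using that $N_{-\nu}$ is an algebra automorphism (Proposition~\ref{prop:Nak-Frob-auto}), applying it to the first leg yields $N_{-\nu}(x_{(1)})\cdot N_{-\nu}(N(y))$. The key observation is that $N_{-\nu}\circ N = N_\nu$ in both cases: for $\nu=1$, $N\circ N = \id_A = N_1$, and for $\nu=-1$, $\id_A\circ N = N = N_{-1}$. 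Hence the pair becomes $(N_{-\nu}(x_{(1)})\cdot N_\nu(y),\, x_{(2)})$.

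Finally, reassembling with the outer multiplication (now acting on the $x_{(1)}$-leg via $\mu\circ\sigma_{A,A}$ rather than the $x_{(2)}$-leg) and one further use of associativity yields
\[
x_{(2)}\cdot\bigl(N_{-\nu}(x_{(1)})\cdot N_\nu(y)\bigr) \;=\; \bigl(x_{(2)}\cdot N_{-\nu}(x_{(1)})\bigr)\cdot N_\nu(y) \;=\; q_\nu(x)\cdot N_\nu(y),
\]
which is $\mu\circ(q_\nu\otimes N_\nu)$ applied to $(x,y)$. I expect the main obstacle to be purely bookkeeping: one has to track the two symmetries $\sigma_{A,A}$ (the one inside $q_\nu$ and the outer one) together with the $N$ produced by Lemma~\ref{lem:product-past-coproduct}, so that this new $N$ lands precisely on the leg carrying $N_{-\nu}$, where it can be absorbed uniformly into $N_\nu$ via $N_{-\nu}\circ N = N_\nu$; handling both $\nu = \pm 1$ by a single calculation (rather than by two parallel ad hoc ones) is what makes this observation essential.
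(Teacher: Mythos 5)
Your proof is correct and follows essentially the same route as the paper: expand $q_\nu$, use associativity to attach the outer multiplication to the $x_{(2)}$-leg, transport it across the coproduct via Lemma~\ref{lem:product-past-coproduct}, and absorb the resulting $N$ into $N_{-\nu}$ using that $N$ is an algebra automorphism with $N_{-\nu}\circ N = N_\nu$. The bookkeeping of the two braidings that you flag as the main obstacle is exactly what the paper's string-diagram computation carries out.
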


\begin{proof}
By direct calculation:
\begin{equation}
    \raisebox{-1.5cm}{\begin{tikzpicture}
        \coordinate (su) at (0cm,0.5cm);
        \coordinate (sd) at (0cm,-0.5cm);
        \coordinate (sdl) at (-0.5cm,-0.5cm);
        \coordinate (sdr) at (0.5cm,-0.5cm);
        \coordinate (sul) at (-0.5cm,0.5cm);
        \coordinate (sur) at (0.5cm,0.5cm);
        \node[name=p1, product] at (0cm,2.5cm){};
        \node[name=p2, product] at (0.5cm,3.5cm){};
        \node[name=cp, product] at (0cm,0.5cm){};
        \node[name=N, naka] at (-0.5cm,1.25cm){$-\nu$};
        \coordinate (i1) at (0.5cm,1.25cm);
        \coordinate (i2) at (1cm,2.5cm);
        \coordinate (in1) at (0cm,0cm);
        \coordinate (in2) at (1cm,0cm);
        \coordinate (out) at (0.5cm,4cm);
        \foreach \N/\M/\S/\T in {
            in1/cp/su/sd,
            in2/i2/su/sd,
            i2/p2/su/sdl,
            i1/p1/su/sdl,
            N/p1/su/sdr,
            p1/p2/su/sdr,
            p2/out/su/sd}
            \draw (\N) .. controls ($(\N)+(\S)$) and ($(\M)+(\T)$) .. (\M);
        \foreach \N/\M/\S/\T in {
            cp/N/sul/sd,
            cp/i1/sur/sd}
            \draw (\N) .. controls ($(\N)+0.5*(\S)$) and ($(\M)+0.5*(\T)$) .. (\M);
    \end{tikzpicture}}
    =
    \raisebox{-1.5cm}{\begin{tikzpicture}
        \coordinate (su) at (0cm,0.5cm);
        \coordinate (sd) at (0cm,-0.5cm);
        \coordinate (sdl) at (-0.5cm,-0.5cm);
        \coordinate (sdr) at (0.5cm,-0.5cm);
        \coordinate (sul) at (-0.5cm,0.5cm);
        \coordinate (sur) at (0.5cm,0.5cm);
        \node[name=p1, product] at (0cm,2.5cm){};
        \node[name=p2, product] at (0.5cm,1.25cm){};
        \node[name=cp, product] at (0cm,0.5cm){};
        \node[name=N, naka] at (-0.5cm,1.25cm){$-\nu$};
        \coordinate (in1) at (0cm,-0.5cm);
        \coordinate (in2) at (1cm,-0.5cm);
        \coordinate (out) at (0cm,3.5cm);
        \foreach \N/\M/\S/\T in {
            in1/cp/su/sd,
            in2/p2/su/sdl,
            p2/p1/su/sdl,
            N/p1/su/sdr,
            p1/out/su/sd}
            \draw (\N) .. controls ($(\N)+(\S)$) and ($(\M)+(\T)$) .. (\M);
        \foreach \N/\M/\S/\T in {
            cp/N/sul/sd,
            cp/p2/sur/sdr}
            \draw (\N) .. controls ($(\N)+0.5*(\S)$) and ($(\M)+0.5*(\T)$) .. (\M);
    \end{tikzpicture}}
    \stackrel{\text{\eqref{eq:inner-mult-past-delta}}}{=}
    \raisebox{-1.5cm}{\begin{tikzpicture}
        \coordinate (su) at (0cm,0.5cm);
        \coordinate (sd) at (0cm,-0.5cm);
        \coordinate (sdl) at (-0.5cm,-0.5cm);
        \coordinate (sdr) at (0.5cm,-0.5cm);
        \coordinate (sul) at (-0.5cm,0.5cm);
        \coordinate (sur) at (0.5cm,0.5cm);
        \node[name=p1, product] at (0cm,2.5cm){};
        \node[name=p2, product] at (-0.5cm,0.5cm){};
        \node[name=cp, product] at (0cm,-0.5cm){};
        \node[name=N1, naka] at (-0.5cm,1.25cm){$-\nu$};
        \node[name=N2, naka] at (1cm,-0.5cm){};
        \coordinate (i1) at (0.5cm,1.25cm);
        \coordinate (in1) at (0cm,-1cm);
        \coordinate (in2) at (1cm,-1cm);
        \coordinate (out) at (0cm,3cm);
        \foreach \N/\M/\S/\T in {
            in1/cp/su/sd,
            in2/N2/su/sd,
            N2/p2/su/sdr,
            p2/N1/su/sd,
            N1/p1/su/sdr,
            p1/out/su/sd,
            i1/p1/su/sdl}
            \draw (\N) .. controls ($(\N)+(\S)$) and ($(\M)+(\T)$) .. (\M);
        \foreach \N/\M/\S/\T in {
            cp/i1/sur/sd,
            cp/p2/sul/sdl}
            \draw (\N) .. controls ($(\N)+0.5*(\S)$) and ($(\M)+0.5*(\T)$) .. (\M);
    \end{tikzpicture}}
    =\raisebox{-1.5cm}{\begin{tikzpicture}
        \coordinate (su) at (0cm,0.5cm);
        \coordinate (sd) at (0cm,-0.5cm);
        \coordinate (sdl) at (-0.5cm,-0.5cm);
        \coordinate (sdr) at (0.5cm,-0.5cm);
        \coordinate (sul) at (-0.5cm,0.5cm);
        \coordinate (sur) at (0.5cm,0.5cm);
        \node[name=p1, product] at (0cm,2.5cm){};
        \node[name=p2, product] at (0.5cm,3.5cm){};
        \node[name=cp, product] at (0cm,0.5cm){};
        \node[name=N1, naka] at (-0.5cm,1.25cm){$-\nu$};
        \node[name=N2, naka] at (1cm,2.5cm){$\nu$};
        \coordinate (i1) at (0.5cm,1.25cm);
        \coordinate (in1) at (0cm,0cm);
        \coordinate (in2) at (1cm,0cm);
        \coordinate (out) at (0.5cm,4cm);
        \foreach \N/\M/\S/\T in {
            in1/cp/su/sd,
            in2/N2/su/sd,
            N2/p2/su/sdr,
            i1/p1/su/sdl,
            N1/p1/su/sdr,
            p1/p2/su/sdl,
            p2/out/su/sd}
            \draw (\N) .. controls ($(\N)+(\S)$) and ($(\M)+(\T)$) .. (\M);
        \foreach \N/\M/\S/\T in {
            cp/N1/sul/sd,
            cp/i1/sur/sd}
            \draw (\N) .. controls ($(\N)+0.5*(\S)$) and ($(\M)+0.5*(\T)$) .. (\M);
    \end{tikzpicture}}
\end{equation}
\end{proof}

Recall that in a symmetric monoidal category, a {\em centre of an algebra} $A$ is an object $C$ together with a morphism $c : C \to A$ such that 
\begin{enumerate}
\item $\mu \circ \sigma_{A,A} \circ (c \otimes \id) = \mu \circ (c \otimes \id)$, and
\item the following universal property holds: for all $d : D \to A$ such that $\mu \circ \sigma_{A,A} \circ (d \otimes \id) = \mu \circ (d \otimes \id)$, there is a unique morphism $f : D \to C$ such that $d = c \circ f$. 
\end{enumerate}
It is easy to see that $c$ is necessarily a monomorphism. Since a centre $(C,c)$ is unique up to unique isomorphism, we will speak of ``the'' centre.

\begin{lemma}\label{lem:ZNS-is-centre-of-A}
$Z^{NS}(A)$, together with $\iota^{NS} : Z^{NS}(A) \to A$, is the centre of $A$.
\end{lemma}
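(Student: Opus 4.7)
The plan is to verify both clauses of the definition of a centre: the commutativity condition for $\iota^{NS}$, and the universal property.

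For the commutativity condition $\mu \circ \sigma_{A,A} \circ (\iota^{NS} \otimes \id_A) = \mu \circ (\iota^{NS} \otimes \id_A)$, I would use that $\pi^{NS} \circ \iota^{NS} = \id_{Z^{NS}(A)}$ implies $P^{NS} \circ \iota^{NS} = \iota^{NS}$. Since $P^{NS} = q_{+1}$ in the notation of \eqref{eq:q_nu-def} and $N_{+1} = \id_A$, Lemma \ref{lem:P_NS/R-and-braiding} specialises to $\mu \circ \sigma_{A,A} \circ (P^{NS} \otimes \id_A) = \mu \circ (P^{NS} \otimes \id_A)$. Composing with $\iota^{NS} \otimes \id_A$ on the right and using $P^{NS} \circ \iota^{NS} = \iota^{NS}$ on both sides yields the desired identity.

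For the universal property, suppose $d : D \to A$ satisfies $\mu \circ \sigma_{A,A} \circ (d \otimes \id_A) = \mu \circ (d \otimes \id_A)$. Uniqueness of the factorisation is immediate because $\iota^{NS}$ is a split monomorphism with retract $\pi^{NS}$, forcing $f = \pi^{NS} \circ d$. For existence one must show $P^{NS} \circ d = d$, since then $\iota^{NS} \circ (\pi^{NS} \circ d) = P^{NS} \circ d = d$.

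The heart of the argument is showing $P^{NS} \circ d = d$. Expanding the projector and using the Frobenius identity $\Delta = (\id_A \otimes \mu) \circ (c_{-1} \otimes \id_A)$ gives
\begin{equation*}
P^{NS} \circ d = \mu \circ \sigma_{A,A} \circ (N \otimes \id_A) \circ \Delta \circ d = \mu \circ \sigma_{A,A} \circ (N \otimes \mu) \circ (c_{-1} \otimes d).
\end{equation*}
Reading the three strands as ``left leg of $c_{-1}$'', ``right leg of $c_{-1}$'', and ``output of $d$'', the right-hand side produces $(\text{right} \cdot d) \cdot N(\text{left})$. Using associativity and then the centrality condition on $d$ (applied after precomposing with $\id_D \otimes N$, so that the output of $d$ commutes past $N(\text{left})$), this rewrites as $(\text{right} \cdot N(\text{left})) \cdot d$. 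The first factor is $\mu \circ \sigma_{A,A} \circ (N \otimes \id_A) \circ c_{-1} = P^{NS} \circ \eta$, which equals $\eta$ by \eqref{eq:projector-unit-counit} of Lemma \ref{lem:forget-one-projector}. A final application of the unit axiom finishes the calculation.

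The main obstacle is to execute the commutation step categorically without lapsing into element notation; in particular one must be careful that the ``centrality of $d$'' hypothesis applies not only with $\id_A$ on the second slot but also with $N$ there, which follows from precomposing the hypothesis with $\id_D \otimes N$. Once this manoeuvre is made explicit, the rest is a direct unwinding using associativity of $\mu$, the Frobenius identity, and the already-established identity $P^{NS} \circ \eta = \eta$.
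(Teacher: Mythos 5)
Your proposal is correct and follows essentially the same route as the paper: commutativity of $\iota^{NS}$ from Lemma \ref{lem:P_NS/R-and-braiding} with $\nu=+1$, uniqueness from $\iota^{NS}$ being (split) mono, and existence by showing $P^{NS}\circ d = d$ via the Frobenius/copairing expansion and the centrality of $d$. The only difference is cosmetic: the paper concludes the key computation by a diagram deformation and the substitution $N=N^{-1}$, whereas you collapse the leftover factor using $P^{NS}\circ\eta=\eta$ from \eqref{eq:projector-unit-counit}; both are valid and rely only on results established earlier.
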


\begin{proof}
That $\iota^{NS}$ satisfies the property 1 follows from $\iota^{NS} = p^{NS} \circ \iota^{NS}$ and Lemma \ref{lem:P_NS/R-and-braiding}. To check property 2, let $d : D \to A$ satisfy $\mu \circ \sigma_{A,A} \circ (d \otimes \id) = \mu \circ (d \otimes \id)$. Then
\begin{equation*}
	P^{NS} \circ d =
     \raisebox{-1.5cm}{\begin{tikzpicture}
        \coordinate (su) at (0cm,0.5cm);
        \coordinate (sd) at (0cm,-0.5cm);
        \coordinate (sdl) at (-0.5cm,-0.5cm);
        \coordinate (sdr) at (0.5cm,-0.5cm);
        \coordinate (sul) at (-0.5cm,0.5cm);
        \coordinate (sur) at (0.5cm,0.5cm);
        \node[name=p1, product] at (0cm,2.5cm){};
        \node[name=p2, product] at (0.5cm,1.25cm){};
        \node[name=cp, product] at (0cm,0.5cm){};
        \node[name=N, naka] at (-0.5cm,1.25cm){};
        \coordinate (in2) at (1cm,-0.5cm);
        \coordinate (out) at (0cm,3cm);
        \node[name=d, draw, minimum height=1.4em] at (1cm,0.25cm) {$d$};
        \foreach \N/\M/\S/\T in {
            in2/d.south/su/sd,
            d.north/p2/su/sdr,
            p2/p1/su/sdl,
            N/p1/su/sdr,
            p1/out/su/sd}
            \draw (\N) .. controls ($(\N)+(\S)$) and ($(\M)+(\T)$) .. (\M);
        \foreach \N/\M/\S/\T in {
            cp/N/sul/sd,
            cp/p2/sur/sdl}
            \draw (\N) .. controls ($(\N)+0.5*(\S)$) and ($(\M)+0.5*(\T)$) .. (\M);
    \end{tikzpicture}}
    \stackrel{\text{def.\ of $d$}}{=}
    \raisebox{-1.5cm}{\begin{tikzpicture}
        \coordinate (su) at (0cm,0.5cm);
        \coordinate (sd) at (0cm,-0.5cm);
        \coordinate (sdl) at (-0.5cm,-0.5cm);
        \coordinate (sdr) at (0.5cm,-0.5cm);
        \coordinate (sul) at (-0.5cm,0.5cm);
        \coordinate (sur) at (0.5cm,0.5cm);
        \node[name=p1, product] at (0cm,2.5cm){};
        \node[name=p2, product] at (0.5cm,1.25cm){};
        \node[name=cp, product] at (0cm,0.5cm){};
        \node[name=N, naka] at (-0.5cm,1.25cm){};
        \coordinate (in2) at (1cm,-0.5cm);
        \coordinate (out) at (0cm,3cm);
        \node[name=d, draw, minimum height=1.4em] at (1cm,0.25cm) {$d$};
        \foreach \N/\M/\S/\T in {
            in2/d.south/su/sd,
            d.north/p2/su/sdl,
            p2/p1/su/sdl,
            N/p1/su/sdr,
            p1/out/su/sd}
            \draw (\N) .. controls ($(\N)+(\S)$) and ($(\M)+(\T)$) .. (\M);
        \foreach \N/\M/\S/\T in {
            cp/N/sul/sd,
            cp/p2/sur/sdr}
            \draw (\N) .. controls ($(\N)+0.5*(\S)$) and ($(\M)+0.5*(\T)$) .. (\M);
    \end{tikzpicture}}
    \stackrel{\text{deform}}{=}
    \raisebox{-1.5cm}{\begin{tikzpicture}
        \coordinate (su) at (0cm,0.5cm);
        \coordinate (sd) at (0cm,-0.5cm);
        \coordinate (sdl) at (-0.5cm,-0.5cm);
        \coordinate (sdr) at (0.5cm,-0.5cm);
        \coordinate (sul) at (-0.5cm,0.5cm);
        \coordinate (sur) at (0.5cm,0.5cm);
        \node[name=p1, product] at (0cm,2.5cm){};
        \coordinate (out) at (0cm,3cm);
        \begin{scope}[xscale=-1]
        \node[name=p2, product] at (0.5cm,1.25cm){};
        \node[name=cp, product] at (0cm,0.5cm){};
        \node[name=N, naka] at (-0.5cm,1.25cm){};
        \coordinate (in2) at (1cm,-0.5cm);
        \node[name=d, draw, minimum height=1.4em] at (1cm,0.25cm) {$d$};
        \end{scope}
        \foreach \N/\M/\S/\T in {
            in2/d.south/su/sd,
            d.north/p2/su/sdl,
            p2/p1/su/sdl,
            N/p1/su/sdr,
            p1/out/su/sd}
            \draw (\N) .. controls ($(\N)+(\S)$) and ($(\M)+(\T)$) .. (\M);
        \foreach \N/\M/\S/\T in {
            cp/N/sul/sd,
            cp/p2/sur/sdr}
            \draw (\N) .. controls ($(\N)+0.5*(\S)$) and ($(\M)+0.5*(\T)$) .. (\M);
    \end{tikzpicture}}
    \overset{N=N^{-1}}=
    \raisebox{-1.5cm}{\begin{tikzpicture}
        \coordinate (su) at (0cm,0.5cm);
        \coordinate (sd) at (0cm,-0.5cm);
        \coordinate (sdl) at (-0.5cm,-0.5cm);
        \coordinate (sdr) at (0.5cm,-0.5cm);
        \coordinate (sul) at (-0.5cm,0.5cm);
        \coordinate (sur) at (0.5cm,0.5cm);
        \node[name=p1, product] at (0cm,2.5cm){};
        \coordinate (out) at (0cm,3cm);
        \begin{scope}[xscale=-1]
        \node[name=p2, product] at (0.5cm,1.25cm){};
        \node[name=cp, product] at (0cm,0.5cm){};
        \coordinate (i1) at (-0.5cm,1.25cm);
        \coordinate (in2) at (1cm,-0.5cm);
        \node[name=d, draw, minimum height=1.4em] at (1cm,0.25cm) {$d$};
        \end{scope}
        \foreach \N/\M/\S/\T in {
            in2/d.south/su/sd,
            d.north/p2/su/sdl,
            p2/p1/su/sdl,
            i1/p1/su/sdr,
            p1/out/su/sd}
            \draw (\N) .. controls ($(\N)+(\S)$) and ($(\M)+(\T)$) .. (\M);
        \foreach \N/\M/\S/\T in {
            cp/i1/sur/sd,
            cp/p2/sul/sdr}
            \draw (\N) .. controls ($(\N)+0.5*(\S)$) and ($(\M)+0.5*(\T)$) .. (\M);
    \end{tikzpicture}}
    = d \ .
\end{equation*}
In property 2 we can hence choose $f = \pi^{NS} \circ d$, since $\iota^{NS} \circ f = P^{NS} \circ d = d$. This shows existence. For uniqueness, note that $d = \iota^{NS} \circ f$ and $d = \iota^{NS} \circ f'$ implies $f=f'$ since $\iota^{NS}$ is mono.
\end{proof}

\begin{lemma}
\begin{enumerate}
\item $\mu \circ (q_\nu \otimes \id) \circ \Delta = \mu \circ (q_{-\nu} \otimes \id) \circ \Delta$ , where $\nu \in \{\pm 1\}$ ,
\item $\mu \circ (P^{NS} \otimes P^{NS}) \circ \Delta \circ \eta = \mu \circ (P^R \otimes P^{R}) \circ \Delta \circ \eta$ .
\end{enumerate}
\label{lem:eulerchar-NS-R}
\end{lemma}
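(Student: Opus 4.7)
The plan is to handle Part 1 first and then deduce Part 2 from Part 1 via Lemma \ref{lem:forget-one-projector}.

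\emph{Part 1.} I would start by unfolding $q_\nu = \mu \circ \sigma_{A,A} \circ (N_{-\nu} \otimes \id_A) \circ \Delta$ and, using coassociativity together with associativity, bring $\mu \circ (q_\nu \otimes \id_A) \circ \Delta$ into the Sweedler form
\[
x \longmapsto \sum x_{(2),(1)} \cdot N_{-\nu}(x_{(1)}) \cdot x_{(2),(2)} \ ,
\]
where $(\id_A \otimes \Delta)\Delta(x) = \sum x_{(1)} \otimes x_{(2),(1)} \otimes x_{(2),(2)}$. Using Assumption 4 to take the difference between $\nu = +1$ and $\nu = -1$, Part 1 reduces to the identity $\sum x_{(2),(1)} (N-\id)(x_{(1)}) x_{(2),(2)} = 0$.

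The crucial step is a ``window'' identity. Introducing the morphism $W : A \to A$ whose Sweedler value is $W(y) = \sum \eta_{(1)} \cdot y \cdot \eta_{(2)}$, the two forms of the Frobenius bimodule identity $\Delta(z) = z\eta_{(1)} \otimes \eta_{(2)} = \eta_{(1)} \otimes \eta_{(2)} z$ imply, for any $y, z \in A$,
\[
\sum z_{(1)}\, y\, z_{(2)} \;=\; z \cdot W(y) \;=\; W(y) \cdot z \ .
\]
Setting $z = x_{(2)}$ and $y = (N-\id)(x_{(1)})$ turns the reduced identity into $\sum x_{(2)} \cdot W((N-\id)(x_{(1)})) = 0$; the equation $z \, W(y) = W(y) \, z$ also shows that $W(A)$ lies in the centre of $A$, which by Lemma \ref{lem:ZNS-is-centre-of-A} coincides with $\mathrm{im}(P^{NS})$. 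It therefore suffices to prove $W \circ (N - \id) = 0$.

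That is where the main work goes in: $W$ commutes with $N$ because $(N \otimes N) \circ c_{-1} = c_{-1}$, an easy consequence of $\sigma_{A,A} \circ c_{-1} = (N \otimes \id_A) \circ c_{-1} = (\id_A \otimes N) \circ c_{-1}$ together with $N^2 = \id_A$. Lemma \ref{lem:PNS/R-idemp}(2)--(3) yields $N \circ P^{NS} = P^{NS} \circ N = P^{NS}$, so $N$ acts as the identity on $\mathrm{im}(P^{NS})$. Hence $W(N(y)) = N(W(y)) = W(y)$ for all $y$, giving $W \circ (N - \id) = 0$ and completing Part 1.

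\emph{Part 2.} I would apply Lemma \ref{lem:forget-one-projector} with $(\nu_1,\nu_2,\nu_3) = (+,+,+)$ and with $(+,-,-)$ (both satisfying $\nu_1\nu_2\nu_3 = 1$) to obtain $\mu \circ (P^\delta \otimes P^\delta) = P^{NS} \circ \mu \circ (P^\delta \otimes \id_A)$ for $\delta \in \{NS, R\}$. Precomposing with $c_{-1} = \Delta \circ \eta$ and specialising Part 1 to $x = \eta$ (which produces $\mu \circ (P^{NS} \otimes \id_A) \circ c_{-1} = \mu \circ (P^R \otimes \id_A) \circ c_{-1}$), Part 2 follows immediately upon applying $P^{NS}$.
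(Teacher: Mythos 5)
Your argument is correct, and Part 2 is essentially the paper's: both sides get an extra projector inserted or dropped via Lemma \ref{lem:forget-one-projector} applied to $\mu\circ(q_\nu\otimes q_\nu)\circ\Delta\circ\eta$, after which Part 1 specialised to the unit finishes the job. Part 1, however, goes a genuinely different way. The paper's proof is a three-step string-diagram computation: one application each of associativity and coassociativity sandwiching Lemma \ref{lem:product-past-coproduct} (sliding $\mu$ past a coproduct leg costs a Nakayama twist), which converts the $N_{-\nu}$ inside $q_\nu$ into $N_{-\nu}\circ N=N_{\nu}$ and hence $q_\nu$ into $q_{-\nu}$; it needs neither additivity nor anything about the centre. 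You instead subtract the two sides (this is where Assumption 4 enters), use the two forms of the Frobenius bimodule identity to reduce to $W\circ(N-\id)=0$ for the Casimir averaging map $W(y)=\sum \eta_{(1)}\,y\,\eta_{(2)}$, and conclude because $W$ commutes with $N$ (indeed $(N\otimes N)\circ c_{-1}=c_{-1}$) and takes values in the centre $=Z^{NS}$, on which $N$ is the identity by Lemma \ref{lem:PNS/R-idemp} and Lemma \ref{lem:ZNS-is-centre-of-A}. Every step checks out, so this is a valid alternative; its payoff is a conceptual reason for the identity — the averaging map factors through the centre, where the Nakayama automorphism acts trivially — at the cost of being longer, of relying on additivity, and of invoking the splitting of $P^{NS}$ (the latter is avoidable: $P^{NS}\circ W=W$ and $N\circ P^{NS}=P^{NS}$ already give $N\circ W=W$ without passing to the image object).
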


\begin{proof}
We prove part 1 by direct calculation:
\begin{equation*}
	\mu \circ (q_\nu \otimes \id) \circ \Delta 
	\underset{\text{coassoc.}}{\overset{\text{assoc.}}=}
    \raisebox{-1.5cm}{\begin{tikzpicture}
        \coordinate (su) at (0cm,0.5cm);
        \coordinate (sd) at (0cm,-0.5cm);
        \coordinate (sdl) at (-0.5cm,-0.5cm);
        \coordinate (sdr) at (0.5cm,-0.5cm);
        \coordinate (sul) at (-0.5cm,0.5cm);
        \coordinate (sur) at (0.5cm,0.5cm);
        \coordinate (in) at (0cm,0cm);
        \node[name=cp1, product] at (0cm,0.5cm){};
        \node[name=cp2, product] at (0cm,1cm){};
        \node[name=N, naka] at (-0.8cm,1.1cm){$-\nu$};
        \node[name=p1, product] at (0.5cm,3cm){};
        \node[name=p2, product] at (0cm,3.5cm){};
        \coordinate (out) at (0cm,4cm);
        \coordinate (i1) at (-0.5cm,1.5cm);
        \coordinate (i2) at (-0.2cm,2.25cm);
        \coordinate (i3) at (-0.5cm,3cm);
        \foreach \N/\M/\S/\T in {
            in/cp1/su/sd,
            N.north/p1/su/sdl,
            cp1/cp2/su/sd,
            cp2/p1/sur/sdr,
            p1/p2/sul/sdr,
            p2/out/su/sd}
            \draw (\N) .. controls ($(\N)+(\S)$) and ($(\M)+(\T)$) .. (\M);
        \foreach \N/\M/\S/\T in {
            cp2/i1/sul/sd,
            i1/i2/su/sd,
            i2/i3/su/sd,
            i3/p2/su/sdl,
            cp1/N.south/sul/sd}
            \draw (\N) .. controls ($(\N)+0.5*(\S)$) and ($(\M)+0.5*(\T)$) .. (\M);
    \end{tikzpicture}}
	\overset{\text{\eqref{eq:inner-mult-past-delta}}}= 
    \raisebox{-1.5cm}{\begin{tikzpicture}
        \coordinate (su) at (0cm,0.5cm);
        \coordinate (sd) at (0cm,-0.5cm);
        \coordinate (sdl) at (-0.5cm,-0.5cm);
        \coordinate (sdr) at (0.5cm,-0.5cm);
        \coordinate (sul) at (-0.5cm,0.5cm);
        \coordinate (sur) at (0.5cm,0.5cm);
        \coordinate (in) at (0cm,0cm);
        \node[name=cp1, product] at (0cm,0.5cm){};
        \node[name=cp2, product] at (0cm,1.5cm){};
        \node[name=N1, naka] at (-0.8cm,1.1cm){$-\nu$};
        \node[name=N2, naka] at (-0.8cm,1.5cm){};
        \node[name=p1, product] at (-0.5cm,2.5cm){};
        \node[name=p2, product] at (0cm,3.5cm){};
        \coordinate (out) at (0cm,4cm);
        \foreach \N/\M/\S/\T in {
            in/cp1/su/sd,
            N2.north/p1/su/sdr,
            cp1/cp2/su/sd,
            cp2/p2/sur/sdr,
            cp2/p1/sul/sdl,
            p1/p2/su/sdl,
            p2/out/su/sd}
            \draw (\N) .. controls ($(\N)+(\S)$) and ($(\M)+(\T)$) .. (\M);
        \foreach \N/\M/\S/\T in {
            N1.north/N2.south/su/sd,
            cp1/N1.south/sul/sd}
            \draw (\N) .. controls ($(\N)+0.5*(\S)$) and ($(\M)+0.5*(\T)$) .. (\M);
    \end{tikzpicture}}
	\underset{\text{coassoc.}}= \mu \circ (q_{-\nu} \otimes \id) \circ \Delta \ .
\end{equation*}
For part 2 first note that 
\begin{equation} \label{eq:eulerchar-NS-R-aux1}
(q_\nu \otimes q_\nu) \circ \Delta \circ \eta = (q_\nu \otimes \id) \circ \Delta \circ \eta \ .
\end{equation}
This follows from Lemma \ref{lem:forget-one-projector}: replace $\eta = q_+ \circ \eta$ and omit one of the $q_\nu$ idempotents. Using this, we compute
\begin{align}
&\mu \circ (q_+ \otimes q_+) \circ \Delta \circ \eta
\overset{\text{\eqref{eq:eulerchar-NS-R-aux1}}}=
\mu \circ (q_+ \otimes \id) \circ \Delta \circ \eta
\\
&\overset{\text{part 1}}=
\mu \circ (q_- \otimes \id) \circ \Delta \circ \eta
\overset{\text{\eqref{eq:eulerchar-NS-R-aux1}}}=
\mu \circ (q_- \otimes q_-) \circ \Delta \circ \eta \ . \nonumber
\end{align}
\end{proof}

Let us drop the $A$ from the state spaces for brevity: $Z = Z^{NS} \oplus Z^R$. It will be convenient to define $Z_+ := Z^{NS}$ and $Z_- := Z^{R}$, so that $Z_\nu$ is the image of the idempotent $q_\nu$. Define the embedding and projection maps
\begin{equation}
\iota^Z_\nu : Z_\nu \to Z ~,~~
\pi^Z_\nu : Z \to Z_\nu \quad , \qquad
\iota^A_\nu : Z_\nu \to A~,~~
\pi^A_\nu : A \to Z_\nu \ .
\end{equation}
Comparing to the notation in Section \ref{sec:cylinder-projections} we have $\pi^{NS} = \pi^A_+$ and $\pi^{R} = \pi^A_-$, and analogously for $\iota^{NS/R}$ and $\iota^A_\pm$. We abbreviate, for $\nu \in \{\pm1\}$,
\begin{equation}
e_\nu = \big[ Z \xrightarrow{\pi^Z_\nu} Z_\nu  \xrightarrow{\iota^A_\nu} A \big] \quad , \qquad
f_\nu = \big[  A  \xrightarrow{\pi^A_\nu} Z_\nu \xrightarrow{\iota^Z_\nu} Z  \big] \ .
\end{equation}
One quickly checks that $e_\nu \circ f_\nu = q_\nu$ and that $\sum_{\nu =\pm1} f_\nu \circ e_\nu = \id_Z$.
Finally, define the morphisms
\begin{align} \label{eq:Z-Frobalg}
\mu_Z &=  \!\! \sum_{\alpha,\beta = \pm 1} \!\! \big[ 
	Z \otimes Z \xrightarrow{e_\alpha \otimes e_\beta} 
	A \otimes A \xrightarrow{\mu} A
	\xrightarrow{f_{\alpha\beta}} Z \big] ~,
&
\eta_Z &= \big[ \mathbf{1} \xrightarrow{\eta} A \xrightarrow{f_+} Z \big]
\\
\Delta_Z &= \!\! \sum_{\alpha,\beta = \pm 1} \!\! \big[ 
	Z 
	\xrightarrow{e_{\alpha\beta}} 
	A
	\xrightarrow{\Delta}
	A \otimes A 
	\xrightarrow{f_\alpha \otimes f_\beta} 
	Z \otimes Z 
	\big]~,
&
\varepsilon_Z &= \big[ Z \xrightarrow{e_+} A \xrightarrow{\varepsilon} \mathbf{1} \big]
\nonumber 
\end{align}
Recall the involution $N|_{Z^{NS/R}}$ from Remark \ref{rem:N-involution-on-Z}. We obtain an involution $N^Z$ on $Z$, which in the present notation reads
\begin{equation} \label{eq:NZ-def-via-e-f}
	N^Z = \sum_{\nu = \pm} \big[ Z \xrightarrow{e_\nu} A \xrightarrow{N} A \xrightarrow{f_\nu} Z \big] \ .
\end{equation}
As for the Nakayama automorphism, we set $N^Z_{+1} = \id_Z$ and $N^Z_{-1} = N^Z$.

\begin{proposition}
\begin{enumerate}
\item
The morphisms \eqref{eq:Z-Frobalg} define the structure of a $\mathbb{Z}_2$-graded Frobenius algebra on $Z$ with graded components $Z_+$ and $Z_-$ (here, $\mathbb{Z}_2$ is written multiplicatively). 
\item
$N^Z$ is the Nakayama automorphism of $Z$. It satisfies $(N^Z)^2=\id_Z$ and $N^Z \circ \iota^Z_+ = \iota^Z_+$.
\item
The product of $Z$ is graded commutative in the sense that
\begin{align}\label{eq:Z-is-graded-comm}
	\mu_Z \circ \sigma_{Z,Z} \circ (\iota^Z_\alpha \otimes \iota^Z_\beta) 
	&= \mu_Z \circ (N^Z_{\gamma} \otimes \id_Z) \circ  (\iota^Z_\alpha \otimes \iota^Z_\beta) \\
	&= \mu_Z \circ (\id_Z \otimes N^Z_{\gamma}) \circ  (\iota^Z_\alpha \otimes \iota^Z_\beta) \ ,
	\nonumber
\end{align}
where $\alpha,\beta \in \{\pm1\}$, and $\gamma=-1$ if $\alpha=\beta=-1$ and $\gamma=1$ else. 
\end{enumerate}
\label{prop:Z-is-graded-Frobalg}
\end{proposition}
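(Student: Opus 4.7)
My plan is to verify each of the three claims by reducing, via the maps $e_\nu$ and $f_\nu$ (which satisfy $e_\nu\circ f_\nu = q_\nu$), to statements about the Frobenius algebra $A$ that are already controlled by Lemmas \ref{lem:PNS/R-idemp}, \ref{lem:forget-one-projector} and \ref{lem:P_NS/R-and-braiding}.

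For part (1), I would first show that $\mu_Z$ and $\Delta_Z$ respect the $\mathbb{Z}_2$-grading. That $\mu_Z(Z_\alpha\otimes Z_\beta)\subseteq Z_{\alpha\beta}$ is immediate from Lemma \ref{lem:forget-one-projector} applied with $\nu_1=\alpha\beta,\nu_2=\alpha,\nu_3=\beta$, which gives $q_{\alpha\beta}\circ\mu\circ(q_\alpha\otimes q_\beta) = \mu\circ(q_\alpha\otimes q_\beta)$. The dual statement $(q_\alpha\otimes q_\beta)\circ\Delta = (q_\alpha\otimes q_\beta)\circ\Delta\circ q_{\alpha\beta}$ follows by rewriting $\Delta = (\mu\otimes\id_A)\circ(\id_A\otimes c_{-1})$ using the Frobenius relation and then absorbing the resulting projectors with Lemma \ref{lem:forget-one-projector} together with the fact that $q_\nu$ commutes with $N$ (Lemma \ref{lem:PNS/R-idemp}(2)). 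With gradedness in hand, associativity, coassociativity, both unit and counit axioms, and the Frobenius relation for $(\mu_Z,\eta_Z,\Delta_Z,\varepsilon_Z)$ all follow by unfolding the definitions in \eqref{eq:Z-Frobalg}: each identity on $Z$ contains composites $e_\nu\circ f_\nu = q_\nu$ between consecutive structure maps which can be absorbed using Lemma \ref{lem:forget-one-projector} (or its coproduct counterpart just derived) and \eqref{eq:projector-unit-counit}, after which each axiom reduces to the corresponding one on $A$.

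For part (2), Lemma \ref{lem:PNS/R-idemp}(2) says that $N$ restricts to an automorphism $N|_{Z_\nu}$ of each summand, so the assembled morphism $N^Z$ in \eqref{eq:NZ-def-via-e-f} squares to the identity because $N^2 = \id_A$, and Lemma \ref{lem:PNS/R-idemp}(3) together with $\pi^A_+\circ\iota^A_+ = \id_{Z_+}$ gives $N^Z\circ\iota^Z_+ = \iota^Z_+$. To identify $N^Z$ with the Nakayama automorphism of $Z$ computed intrinsically from $b_Z$ and $c^Z_{-1} = \Delta_Z\circ\eta_Z$ via \eqref{eq:Nak-definition}, one uses that $b_Z$ vanishes on $Z_\alpha\otimes Z_\beta$ unless $\alpha = \beta$ (since $\varepsilon_Z = \varepsilon\circ e_+$ annihilates $Z_-$ while $\mu_Z$ is graded), so only the diagonal summands of $c^Z_{-1} = \sum_\alpha(f_\alpha\otimes f_\alpha)\circ c_{-1}$ contribute; after cancelling the pairs $e_\nu\circ f_\nu = q_\nu$ using Lemma \ref{lem:forget-one-projector} and \eqref{eq:projector-unit-counit}, the expression collapses to the $A$-level formula \eqref{eq:Nak-definition} evaluated on $\iota^A_\nu(x)$, which is $N|_{Z_\nu}$ by definition.

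Finally, for part (3), on $\iota^Z_\alpha x\otimes\iota^Z_\beta y$ I unfold $\mu_Z\circ\sigma_{Z,Z}$ to $f_{\alpha\beta}\circ\mu\circ(\iota^A_\beta y\otimes\iota^A_\alpha x)$ and apply Lemma \ref{lem:P_NS/R-and-braiding} with $\nu = \beta$ (using $q_\beta\circ\iota^A_\beta = \iota^A_\beta$ and $N^2 = \id_A$) to rewrite this as $f_{\alpha\beta}\circ\mu\circ(\iota^A_\alpha N_\beta x\otimes\iota^A_\beta y)$. Since $N|_{Z_+} = \id$ by Lemma \ref{lem:PNS/R-idemp}(3), the operator $N_\beta|_{Z_\alpha}$ is the identity unless $\alpha=\beta=-1$, in which case it is $N|_{Z_-}$; this is precisely $N^Z_\gamma$ in the convention of the proposition, which yields the first equality in \eqref{eq:Z-is-graded-comm}. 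The second equality follows by the symmetric application of Lemma \ref{lem:P_NS/R-and-braiding} with $\nu = \alpha$ instead. I expect the main technical obstacle to be the bookkeeping of projectors in the Frobenius-relation check of part (1), where one has to alternate between Lemma \ref{lem:forget-one-projector} and its coproduct counterpart to shuffle the $q_\nu$'s across several structure maps simultaneously.
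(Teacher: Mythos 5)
Your proposal is correct and follows essentially the same route as the paper: reduce each axiom to the $A$-level via $e_\nu\circ f_\nu=q_\nu$, absorb the superfluous projector with Lemma \ref{lem:forget-one-projector}, identify $N^Z$ and its properties via Lemma \ref{lem:PNS/R-idemp}, and derive graded commutativity from Lemma \ref{lem:P_NS/R-and-braiding}. The only (harmless) deviation is in part (3), where you apply the braiding lemma with $\nu=\beta$ to land $N$ directly on the first factor, whereas the paper applies it with $\nu=\alpha$ and then shuffles $N^Z_\alpha$ around using that it is an algebra automorphism before comparing the four cases.
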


\begin{proof}
{\em Part 1:}
That $Z$ is a Frobenius algebra follows in a straightforward way from Lemma \ref{lem:forget-one-projector}. We illustrate this for the associativity relation:
\begin{align*}
\mu_Z \circ (\id_Z \otimes \mu_Z)
&\stackrel{\text{(1)}}= 
	\sum_{\alpha,\beta,\gamma \in \pm1} f_{\alpha\beta\gamma} \circ \mu \circ 
	(id_A \otimes q_{\beta\gamma}) \circ (\id_A \otimes \mu) \circ (e_\alpha \otimes e_\beta \otimes e_\gamma)
\\
&\stackrel{\text{(2)}}= 
	\sum_{\alpha,\beta,\gamma \in \pm1} f_{\alpha\beta\gamma} \circ \mu \circ 
	(\id_A \otimes \mu) \circ (e_\alpha \otimes e_\beta \otimes e_\gamma)
\\
&\stackrel{\text{(3)}}= 
	\sum_{\alpha,\beta,\gamma \in \pm1} f_{\alpha\beta\gamma} \circ \mu \circ 
	(\mu \otimes \id_A) \circ (e_\alpha \otimes e_\beta \otimes e_\gamma) 
\\
&\stackrel{\text{(4)}}= \mu_Z \circ (\mu_Z \otimes \id_Z) \ . 
\end{align*}
In step 1 we substituted the definition of $\mu_Z$ and used $e_{\beta\gamma} \circ f_{\beta\gamma} = q_{\beta\gamma}$. For step 2 note that $f_\nu = f_\nu \circ q_\nu$ and $e_\nu = q_\nu \circ e_\nu$. Therefore, each of the two products can be surrounded by three idempotents $q$, and by Lemma \ref{lem:forget-one-projector} we can omit one of these. We choose to omit $q_{\beta\gamma}$. Step 3 is just associativity of $\mu$, and in step 4 one carries out steps 1 and 2 backwards.

That $Z$ is graded is clear from the definition of the structure maps in \eqref{eq:Z-Frobalg}.

\medskip\noindent
{\em Part 2:}
That $N^Z$ is the Nakayama automorphism of $Z$ follows from substituting \eqref{eq:Z-Frobalg} into the definition \eqref{eq:Nak-definition-pic} of the Nakayama automorphism and using Lemma \ref{lem:forget-one-projector}. That $N^Z$ is an involution is immediate from  \eqref{eq:NZ-def-via-e-f}, together with Lemma \ref{lem:PNS/R-idemp}\,(2).
Furthermore, from Remark \ref{rem:N-involution-on-Z} one concludes that  $N^Z \circ \iota^Z_+ = \iota^Z_+$.

\medskip\noindent
{\em Part 3:}
To establish graded commutativity, first compute
\begin{align}
	\mu_Z \circ \sigma_{Z,Z} \circ (\iota^Z_\alpha \otimes \iota^Z_\beta) 
        &\stackrel{\text{def.\,$\mu_Z$}}= 
	f_{\alpha\beta} \circ \mu \circ \sigma_{A,A} \circ (\iota^A_\alpha \otimes \iota^A_\beta) 
\\
        &\stackrel{\iota^A_\alpha = q_\alpha \iota^A_\alpha}= 
	f_{\alpha\beta} \circ \mu \circ \sigma_{A,A} \circ (q_\alpha \otimes q_\beta) \circ (\iota^A_\alpha \otimes \iota^A_\beta) 
\nonumber\\
        &\stackrel{\text{Lem.\,\ref{lem:P_NS/R-and-braiding}}}= 
	f_{\alpha\beta} \circ \mu \circ (\id_A \otimes N_\alpha) \circ (\iota^A_\alpha \otimes \iota^A_\beta) 
\nonumber\\
        &\stackrel{\text{def.\,$\mu_Z$,$N^Z$}}= 
	\mu_Z \circ (\id_Z \otimes N^Z_\alpha) \circ (\iota^Z_\alpha \otimes \iota^Z_\beta) 
\nonumber\\
        &\stackrel{\text{$N^Z$ autom.}}= 
	N^Z_\alpha \circ \mu_Z \circ (N^Z_\alpha \otimes \id_Z) \circ (\iota^Z_\alpha \otimes \iota^Z_\beta) \ .
\nonumber
\end{align}
Now use the last two lines to compare to \eqref{eq:Z-is-graded-comm} in all four cases $\alpha,\beta = \pm$. For example, consider the last line with $\alpha = \beta = -1$: The multiplication $\mu_Z$ has image in $Z_{\alpha\beta} = Z_{+}$. By part 2, $N^Z$ is the identity on $Z_{+}$, and so the last line reads $\mu_Z \circ (N^Z \otimes \id_Z) \circ (\iota^Z_{-} \otimes \iota^Z_{-})$, as required.
\end{proof}

\medskip

We still need to relate the product defined in \eqref{eq:Z-Frobalg} to the amplitude the spin TFT assigns to three-holed spheres in \eqref{eq:3-holed-sphere-morphs}. This is done in the following proposition whose proof is immediate from Lemma \ref{lem:pair-of-pants-TFT-value}.

\begin{proposition}
We have
\begin{align}
        &T_A(\Sigma^{0,3}_{\delta_1,\delta_2,\delta_3,\varepsilon_1,\varepsilon_2}) \circ
	\bigotimes_{i=1}^3  \big( \iota^{13} \circ \iota^{\delta_i}  \big)
	\\
	&= \varepsilon_Z \circ
	\mu_Z \circ (\mu_Z \otimes \id_Z) \circ (N^Z_{\varepsilon_1} \otimes N^Z_{\varepsilon_2} \otimes \id_Z) \circ
	(\iota^{\delta_1} \otimes\iota^{\delta_2} \otimes\iota^{\delta_3}) \ .
	\nonumber
\end{align}
\end{proposition}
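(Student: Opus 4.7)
My plan is to unpack both sides and reduce them to the same expression on $A^{\otimes 3}$ using the structural identities already established.

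First, I would substitute Lemma~\ref{lem:pair-of-pants-TFT-value} into the left hand side. After composing with $\bigotimes_{i=1}^3 (\iota^{13} \circ \iota^{\delta_i})$, the three copies of $\pi^{31}$ cancel against the three copies of $\iota^{13}$ via $\pi^{31} \circ \iota^{13} = \id_A$ (the same identity used to define $f^\varepsilon_{NS/R}$ in \eqref{eq:f-eps-NS/R-def}). Next I would use that $P^{\delta_i} \circ \iota^{\delta_i} = \iota^{\delta_i}$, that the projectors $P^{\delta_i}$ commute with $N$ by Lemma~\ref{lem:PNS/R-idemp}(2), and that $N \circ \iota^{NS} = \iota^{NS}$ (which follows from combining Lemma~\ref{lem:PNS/R-idemp}(2),(3) with $\iota^{NS} = P^{NS} \circ \iota^{NS}$, as remarked in Remark~\ref{rem:N-involution-on-Z}). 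After these cancellations, writing $b = \varepsilon \circ \mu$, the left hand side becomes
\begin{equation*}
\varepsilon \circ \mu \circ (\id_A \otimes \mu) \circ (N_{\varepsilon_1} \iota^A_{\delta_1} \otimes N_{\varepsilon_2} \iota^A_{\delta_2} \otimes \iota^A_{\delta_3}) \ .
\end{equation*}

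For the right hand side, I would work from the outside in, substituting the definitions in \eqref{eq:Z-Frobalg}. Using the identities $e_\alpha \circ \iota^Z_\beta = \delta_{\alpha\beta}\, \iota^A_\alpha$, $N^Z_\varepsilon \circ \iota^Z_\alpha = f_\alpha \circ N_\varepsilon \circ \iota^A_\alpha$ (from \eqref{eq:NZ-def-via-e-f}), $e_\alpha \circ f_\beta = \delta_{\alpha\beta}\, q_\alpha$, and $q_\alpha \circ N_\varepsilon \circ \iota^A_\alpha = N_\varepsilon \circ \iota^A_\alpha$, two successive applications of $\mu_Z$ collapse the RHS to
\begin{equation*}
\varepsilon_Z \circ f_{\delta_1\delta_2\delta_3} \circ \mu \circ \bigl(q_{\delta_1\delta_2} \circ \mu \circ (N_{\varepsilon_1}\iota^A_{\delta_1} \otimes N_{\varepsilon_2}\iota^A_{\delta_2}) \,\otimes\, \iota^A_{\delta_3}\bigr)\ .
\end{equation*}
Since the proposition presumes $(\delta_1,\delta_2,\delta_3)$ equals $NS$-$NS$-$NS$ or a permutation of $NS$-$R$-$R$, we have $\delta_1\delta_2\delta_3=+1$, so $\varepsilon_Z \circ f_{\delta_1\delta_2\delta_3} = \varepsilon \circ q_+ = \varepsilon$ by \eqref{eq:projector-unit-counit}. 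The inner projector $q_{\delta_1\delta_2}$ can then be dropped by Lemma~\ref{lem:forget-one-projector} (with $\nu_1=\delta_1\delta_2$, $\nu_2=\delta_1$, $\nu_3=\delta_2$, which satisfy $\nu_1\nu_2\nu_3 = 1$), after re-inserting the projectors $q_{\delta_i}$ in front of $N_{\varepsilon_i}\iota^A_{\delta_i}$ at no cost. Finally, associativity of $\mu$ identifies the resulting expression with the one obtained from the left hand side.

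The main obstacle I anticipate is purely bookkeeping: keeping careful track of the two flavours of inclusion $\iota^A_\nu : Z_\nu \to A$ and $\iota^Z_\nu : Z_\nu \to Z$, together with the partial isometries $e_\nu$, $f_\nu$ between $A$ and $Z$. Once the grading constraint $\delta_1\delta_2\delta_3 = +1$ is used to reduce $\varepsilon_Z \circ f_{\delta_1\delta_2\delta_3}$ to $\varepsilon$, the rest is a direct application of Lemma~\ref{lem:forget-one-projector} and associativity, which is presumably why the authors state that the proof is immediate.
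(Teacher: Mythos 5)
Your proof is correct and is exactly the intended argument: the paper's own proof consists of the single sentence that the claim is ``immediate from Lemma \ref{lem:pair-of-pants-TFT-value}'', and your unpacking of both sides via $\pi^{31}\circ\iota^{13}=\id_A$, the identities for $e_\nu, f_\nu$, Lemma \ref{lem:forget-one-projector}, and associativity is precisely the computation being left to the reader. (The only superfluous step is invoking $N\circ\iota^{NS}=\iota^{NS}$ on the left hand side; commutation of $P^{\delta_i}$ with $N$ together with $P^{\delta_i}\circ\iota^{\delta_i}=\iota^{\delta_i}$ already suffices.)
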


\begin{remark} \label{rem:MS-euler-char-matches}
Let us compare the properties of the total state space $Z$ to the algebraic description of spin TFTs given by Moore and Segal in \cite[Sect.\,2.6]{moore2006d}. They consider $\mathcal{S} = \mathbf{SVect}$ and require $N^Z$ to be the parity involution. As discussed in the introduction, by Remark \ref{rem:N-involution-on-Z}, $Z_+$ must then be purely even and graded commutativity \eqref{eq:Z-is-graded-comm} translates into ordinary commutativity of $Z$, considered as an algebra in $\mathbf{Vect}$: $\mu_Z \circ c_{Z,Z}^{\mathbf{Vect}} = \mu_Z$. In addition, Moore and Segal require that the NS- and R-type ``Euler characters'' agree, $\chi_{ns} = \chi_r$. In our notation, this can be expressed as follows. Write $p_\nu = \iota^Z_\nu \circ \pi^Z_\nu : Z \to Z$ for the projection onto the subobject $Z_\nu$ of $Z$. Then
\begin{align}
\chi_{ns} &= \mu^Z \circ (p_+ \otimes p_+) \circ \Delta^Z \circ \eta^Z \ ,\\
\chi_{r} &= \mu^Z \circ (p_- \otimes p_-) \circ \Delta^Z \circ \eta^Z \ . \nonumber
\end{align}
Substituting the definitions, we can rewrite $\chi_{ns} = \mu \circ (P^{NS} \otimes P^{NS}) \circ \Delta \circ \eta$ and $\chi_r = \mu \circ (P^R \otimes P^{R}) \circ \Delta \circ \eta$. These two are indeed equal by Lemma \ref{lem:eulerchar-NS-R}\,(2).
\end{remark}

\subsection{Value of the TFT on spin tori}\label{sec:TFT-spin-torus}

Let $\delta \in \{ NS,R\}$ and consider the cylinder $C^\varepsilon_\delta$ as defined in Section \ref{sec:TFT.cylinder}. Define the spin torus $T^\varepsilon_\delta$ by gluing together the two boundary components
\begin{equation}\label{eq:Teps-delta-def}
	T^\varepsilon_\delta := (C^\varepsilon_\delta)_{1 \# 2}^- \ ,
\end{equation}
where the marking and edge signs of the glued spin triangulated surface are determined via Lemma \ref{lem:index.gluing}. The choice of ``$-$'' over ``$+$'' in the above gluing is a convention; it can be absorbed into the value of $\varepsilon$. 

As it stands, the above gluing is actually ill-defined as it does not produce a spin triangulated surface. We should instead first subdivide the triangulation before gluing the boundaries. However, this does not make a difference to the value of the TFT, and we take the liberty to work with \eqref{eq:Teps-delta-def}, even though it is not a spin triangulated surface. 

The cell decomposition of the surface $\underline T^\varepsilon_\delta$ is as in Figure \ref{fig:cylinder.triangulation}, except that the left vertical circle (consisting of edges $e_1,e_2,e_3$) is replaced by the right one (consisting of edges $e_4,e_5,e_6$). The new edge signs are
\begin{equation}
	s(e_4) = - s_3 s_4 = - \varepsilon~,~~
	s(e_5) = - s_2 s_5 = - \varepsilon~,~~
	s(e_6) = - s_1 s_6 = - \varepsilon \ .
\end{equation}
The remaining edge signs are as in Section \ref{sec:TFT.cylinder}. Let $\nu=+1$ if $\delta = NS$ and $\nu=-1$  if $\delta = R$. Then
\begin{equation}
	s(e_7) = -\nu~~,~~~
	s(e_8) = 
	s(e_9) = 
	s(e_{10}) = 
	s(e_{11}) = 
	s(e_{12}) = 1\ .
\end{equation}

As an application of Lemma \ref{lem:path_lifting_inner_generic} we can now investigate the lifting properties of the two simple closed curves $\gamma_h$, which we define to run horizontally through $\sigma_3$ and $\sigma_4$, and $\gamma_v$, which we define to run vertically through all triangles $\sigma_1,\dots,\sigma_6$.

\medskip\noindent 
{\em For $\gamma_h$:} The product of the edge signs is $s(e_{10})s(e_5)=-\varepsilon$, all $\mu_{i,i+1}$ are $1$,  $k_3=2$, $k_4=0$, $\eta_3=-1$, $\eta_4=+1$. The left hand side of \eqref{eq:lemma_path_lifting_inner_generic} reads
$-\varepsilon e^{\pi i(0+1)} = \varepsilon$. Thus $\gamma_h$ has a closed lift iff $\varepsilon=1$. 

\medskip\noindent 
{\em For $\gamma_v$:} The product of the edge signs is $-\nu$, all $\mu_{i,i+1}$ are $1$,  $k_1,\dots,k_6=0$, $\eta_1=\eta_3=\eta_5=1$, $\eta_2=\eta_4=\eta_6=-1$. Thus the left hand side of \eqref{eq:lemma_path_lifting_inner_generic} reads
$-\nu e^{\pi i(-3+3)} = - \nu$. As we already knew from the type of the boundary components, for $\delta = NS$ we get $-1$, so that the curve does not have a closed lift, and for $\delta=R$ we get $+1$, so that the curve does have a closed lift.

\medskip

Let us now compute $T_A(T^\varepsilon_\delta)$. According to Proposition \ref{prop:T-gluing}, $T_A(T^\varepsilon_\delta) = T_A(C^\varepsilon_\delta) \circ \Gamma_{1,2,-}$. Combining \eqref{eq:TFT-cylinder-values} and \eqref{eq:pi31-through-Gamma}, we find
\begin{align} \label{eq:spin-torus-value}
T_A(T^\varepsilon_\delta) 
&= b \circ ( P^{\delta}\otimes \id_A) \circ (N_{-\varepsilon} \otimes \id_A) \circ (\pi^{31}\otimes \pi^{31}) \circ \Gamma_{1,2,-}
\\
&= \varepsilon \circ \mu \circ \big[( P^{\delta} \circ N_{-\varepsilon}) \otimes \id_A \big]  \circ \Delta \circ \eta \ .
\nonumber
\end{align}
Since by Lemma \ref{lem:PNS/R-idemp}, $P^{NS} \circ N = P^{NS}$, we see $T_A(T^+_{NS}) = T_A(T^-_{NS})$. From Lemma \ref{lem:eulerchar-NS-R} we learn $T_A(T^-_{NS}) = T_A(T^-_{R})$. 
Thus, on a spin torus, $T_A$ can take at most two different values,
\begin{equation}
	T_A(T^+_{NS}) = T_A(T^-_{NS}) = T_A(T^-_{R})
	\qquad \text{and} \qquad T_A(T^+_{R}) \ ,
\end{equation}
in agreement with the action of spin diffeomorphisms.

\subsection{Admissible edge signs and the condition $N \ast \id=0$}\label{sec:N*id=0}

Suppose we are given a marked triangulated surface $\underline\Lambda = ((\mathcal{C},f_i,d^1_0,d^2_0), (\varphi, \chi_\sigma) ,(\Sigma,\varphi_i))$ together with a choice of edge signs $s$. For this data we can define a morphism $T_A'(\underline\Lambda;s)$ in the same way as $T_A$. Of course, $T_A'$ will no longer be independent of the choice of triangulation as this relies on the assumption that the edge signs come from a spin structure.

Recall the definition of admissible edge signs from Section \ref{sse:spin_reconstruct}. In terms of punctured spin triangulated surface from Section \ref{sec:lifting-prop}, the admissibility condition is satisfied iff the spin structure extends to the vertices. The resulting spin structure was denoted by $S(\underline\Lambda,s)$ in \eqref{eq:S(Sig,s)-def}.
In case the spin structure extends, by definition and by Theorem \ref{thm:TA-triang-indep}, we have $T_A'(\underline\Lambda;s) = T_A(S(\underline\Lambda,s))$. 

It turns out that there is a simple property of the algebra $A$ which implies that $T_A'(\underline\Lambda;s)$ is zero unless the edge signs are admissible. 
	(Recall that by Assumption 4, $\mathcal{S}$ is additive, so that it makes sense to say that a morphism in $\mathcal{S}$ is zero.)
We proceed to describe this in more detail.

A Frobenius algebra $A$ allows to define the {\em convolution product} on the set $\mathrm{End}(A)$. Namely, for $f,g \in \mathrm{End}(A)$ we set $f \ast g = \mu \circ (f\otimes g) \circ \Delta$. This defines an associative product with unit $\eta \circ \varepsilon$. Clearly, $A$ is $\Delta$-separable iff $\id_A \ast \id_A = \id_A$. Furthermore, if $A$ is $\Delta$-separable, every automorphism $\alpha$ of the algebra (or coalgebra) $A$ satisfies $\alpha \ast \alpha=  \alpha$.

\begin{lemma}\label{lem:N*id-NS-R-zero1}
Let $A$ be a $\Delta$-separable Frobenius algebra such that $N \circ N=\id_A$ and $N \ast \id=0$. Then $P^R \circ P^{NS} = 0 = P^{NS} \circ P^R$.
\end{lemma}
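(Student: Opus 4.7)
The plan is to substitute the definitions of $P^{NS}$ and $P^R$ into each composition and, using the Frobenius identity and coassociativity, reduce it to a morphism that factors through either $N \ast \id = \mu \circ (N \otimes \id_A) \circ \Delta$ or $\id \ast N = \mu \circ (\id_A \otimes N) \circ \Delta$; the hypothesis then immediately gives zero. As a first step I would check that $N \ast \id = 0$ forces $\id \ast N = 0$ as well. Since $N$ is a Frobenius algebra automorphism (Proposition \ref{prop:Nak-Frob-auto}) and $N \circ N = \id_A$, one has
\begin{equation*}
N \circ (N \ast \id) \;=\; \mu \circ (N \otimes N) \circ (N \otimes \id_A) \circ \Delta \;=\; \mu \circ (\id_A \otimes N) \circ \Delta \;=\; \id \ast N,
\end{equation*}
so the two convolutions vanish simultaneously.

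For $P^{NS} \circ P^R = \mu \circ \sigma_{A,A} \circ (N \otimes \id_A) \circ \Delta \circ \mu \circ \sigma_{A,A} \circ \Delta$, I would apply the Frobenius identity $\Delta \circ \mu = (\id_A \otimes \mu) \circ (\Delta \otimes \id_A)$ to the central occurrence of $\Delta \circ \mu$, then push the $N$ through the newly-exposed product using $N \circ \mu = \mu \circ (N \otimes N)$, and finally use coassociativity of $\Delta$ to regroup the resulting three-fold coproduct. One then recognises the composition as $\mu \circ (\id_A \otimes (\id \ast N)) \circ \sigma_{A,A} \circ \Delta$, which is zero by the preliminary step. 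The calculation for $P^R \circ P^{NS}$ is structurally identical but produces a factor of $N \ast \id$ on the inside instead; this is zero by hypothesis.

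The main obstacle is not conceptual but combinatorial: keeping track of the two braidings $\sigma_{A,A}$ and of the $N$ factor inside $P^{NS}$ when the Frobenius move is applied in the middle of the composition. After the move, one of the braidings must be absorbed against the other via naturality, and the $N$ has to be reorganised (using that it is a coalgebra homomorphism, together with coassociativity) so as to sit between the two outputs of the same inner coproduct, thereby producing a clean instance of $\id \ast N$ or $N \ast \id$. Graphically this amounts to a short sequence of string-diagram manipulations of the same nature as those used throughout Section \ref{sec:ana-alg-str}, so I do not expect any genuine difficulty beyond the bookkeeping.
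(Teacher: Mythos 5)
Your proposal is correct and follows essentially the same route as the paper: the paper's proof reuses the string-diagram manipulation from Lemma \ref{lem:PNS/R-idemp}(1) (Frobenius identity, coassociativity, and $N$ being a Frobenius algebra automorphism) to rewrite $q_\varepsilon \circ q_\nu$ as $q_\varepsilon \circ \big((N_{-\varepsilon}\circ N_{-\nu}) \ast \id\big)$, which vanishes when $\varepsilon\neq\nu$ because the convolution factor is $N\ast\id$. Your version merely lands on the equivalent normal form $\mu\circ(\id\otimes(\id\ast N))\circ\sigma_{A,A}\circ\Delta$ (with the convolution factor inside rather than post-composed), and your preliminary reduction $\id\ast N = N\circ(N\ast\id)=0$ is the same observation the paper records immediately after the lemma.
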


\begin{proof}
Analogous to the proof of part 1 of Lemma \ref{lem:PNS/R-idemp}: instead of \eqref{eq:PNS/R-idemp-aux1} one computes $q_\varepsilon \circ q_\nu = q_\varepsilon \circ \mu \circ ((N_{-\varepsilon} \circ N_{-\nu}) \otimes \id_A) \circ \Delta$. Due to the assumption $N \ast \id=0$, this is zero unless $\varepsilon = \nu$.
\end{proof}

Note that if the Nakayama automorphism $N$ is an involution, as required in the above lemma, the conditions $N \ast \id = 0$ and $\id \ast N=0$ are equivalent since, for example, the first condition implies $0 = N \circ (N\ast\id) = N^2 \ast N = \id \ast N$.

\medskip
Suppose now that $A$ is a Frobenius algebra such that $N \circ N=\id_A$.
Then $\pi_+ = \frac12(\id_A + N)$ is an idempotent, namely the projection onto the eigenspace for eigenvalue $+1$ of $N$. Let $A_+$ be its image (assuming $\pi_+$ splits). We observe the following implication of the $N\ast \id=0$ condition:

\begin{lemma}\label{lem:N*id-NS-R-zero2}
Let $A$ be a Frobenius algebra such that $N \circ N=\id_A$ and let $A_+$ be the image of $\pi_+$.
The structure maps of $A$ turn $A_+$ into a Frobenius algebra. If in addition $A$ is $\Delta$-separable and $N \ast \id=0$, then $A_+$ satisfies $\mu_+ \circ \Delta_+ = \frac12 \,\id_{A_+}$.
\end{lemma}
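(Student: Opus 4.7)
The plan is to handle the two claims of the lemma separately. To set up, I would observe first that $\pi_+ = \tfrac{1}{2}(\id_A + N)$ is an idempotent (since $N^2 = \id_A$), and assume a splitting $\pi_+ = e \circ p$ with $p : A \to A_+$, $e : A_+ \to A$, and $p \circ e = \id_{A_+}$. Two identities will be used constantly: $N \circ e = e$ and $p \circ N = p$. The first follows from $\pi_+ \circ e = e$ together with $\pi_+ = \tfrac{1}{2}(\id + N)$; the second follows analogously from $p \circ \pi_+ = p$.

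For the structural claim, I would define the inherited structure maps
$$\mu_+ = p \circ \mu \circ (e \otimes e),\quad \eta_+ = p \circ \eta,\quad \Delta_+ = (p \otimes p) \circ \Delta \circ e,\quad \varepsilon_+ = \varepsilon \circ e.$$
By Proposition~\ref{prop:Nak-Frob-auto}, $N$ is a Frobenius algebra automorphism, so $N \circ \mu = \mu \circ (N \otimes N)$, $\Delta \circ N = (N \otimes N) \circ \Delta$, $N \circ \eta = \eta$, and $\varepsilon \circ N = \varepsilon$. Combined with $N \circ e = e$ one verifies $\mu \circ (e \otimes e) = e \circ \mu_+$ and $e \circ \eta_+ = \eta$, from which associativity, the unit axiom, and the counit axiom for $A_+$ descend directly from the corresponding axioms for $A$. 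For coassociativity (and analogously the Frobenius identity) one is forced to sandwich a factor $\pi_+ \otimes \pi_+$ in the middle of a coassociativity diagram; the intertwining relation $(\Delta \otimes \id) \circ (N^i \otimes N^j) = (N^i \otimes N^i \otimes N^j) \circ (\Delta \otimes \id)$ together with $p \circ N^i = p$ collapses all four terms in the expansion of $\pi_+ \otimes \pi_+$ after post-composition with $p \otimes p \otimes p$, so the inserted projection is harmless and coassociativity of $\Delta$ suffices.

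For the second claim the computation is short. Writing $\ast$ for the convolution product on $\mathrm{End}(A)$ and expanding the two idempotents gives
\begin{align*}
\mu_+ \circ \Delta_+ \;&=\; p \circ \mu \circ (\pi_+ \otimes \pi_+) \circ \Delta \circ e \\
&=\; \tfrac{1}{4}\, p \circ \bigl(\, \id \ast \id \,+\, N \ast \id \,+\, \id \ast N \,+\, N \ast N \,\bigr) \circ e.
\end{align*}
Now $\id \ast \id = \id_A$ by $\Delta$-separability; $N \ast \id = 0$ by hypothesis; $\id \ast N = 0$ since applying $N$ on the left and using $\mu \circ (N \otimes N) = N \circ \mu$ together with $N^2 = \id$ gives $N \circ (\id \ast N) = N \ast \id = 0$; and $N \ast N = \mu \circ (N \otimes N) \circ \Delta = N \circ \mu \circ \Delta = N$ by the algebra automorphism property and $\Delta$-separability. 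Substituting yields $\mu_+ \circ \Delta_+ = \tfrac{1}{4} p \circ (\id + N) \circ e = \tfrac{1}{2}\, p \circ \pi_+ \circ e = \tfrac{1}{2}\, p \circ e = \tfrac{1}{2}\, \id_{A_+}$.

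The main obstacle is not the computation in the second claim but the structural claim: even though $N$ is a Frobenius algebra automorphism, $\Delta$ does not in general factor through $A_+ \otimes A_+$, since its image only lies in the strictly larger $(N \otimes N)$-invariant subobject. Defining $\Delta_+$ by post-projection rather than restriction therefore forces one to verify that the intervening projections $p \otimes p$ do not obstruct coassociativity, the counit axiom, or the Frobenius identity. The identity $p \circ N = p$ is exactly what makes all these projections harmless.
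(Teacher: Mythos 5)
Your proof is correct and follows essentially the same route as the paper: the structural claim rests on the same "harmless projection" identities ($N\circ e=e$, $p\circ N=p$, equivalently the paper's two-out-of-three relation $\mu\circ(\pi_+\otimes\pi_+)=\pi_+\circ\mu\circ(\pi_+\otimes\id)=\dots$), and the second claim is the identical convolution expansion of $\pi_+\otimes\pi_+$ in which the cross terms die by $N\ast\id=\id\ast N=0$ and the diagonal terms give $\tfrac14(\id+N)\circ\mu\circ\Delta=\tfrac12\pi_+$. No gaps.
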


\begin{proof}
First check the two-out-of-three result $\pi_+ \circ \mu \circ (\pi_+ \otimes \pi_+) = \mu \circ (\pi_+ \otimes \pi_+) = \pi_+ \circ \mu \circ (\id \otimes \pi_+) = \pi_+ \circ \mu \circ (\pi_+ \otimes \id)$. This implies that $A_+$ is a Frobenius algebra by a reasoning analogous to Proposition \ref{prop:Z-is-graded-Frobalg}. If $A$ is $\Delta$-separable and $N \ast \id=0$ holds, we have
\begin{align}
\mu \circ (\pi_+ \otimes \pi_+) \circ \Delta &= \tfrac14\big( \mu \circ (\id \otimes \id) \circ \Delta + \mu \circ (N \otimes N) \circ \Delta \big) 
\\
&= \tfrac14\big( \id_A + N \big) \circ \mu \circ \Delta \ .
\nonumber
\end{align}

\end{proof}

Next we show how the $N\ast\id=0$ condition imposes admissibility of edge signs.

\begin{proposition}\label{prop:N*id=0-admissible}
Let $A$ be a $\Delta$-separable Frobenius algebra $A$ such that $N \circ N=\id_A$ and $N \ast \id=0$. Let $\underline\Lambda$ be a marked triangulated surface together with some choice edge signs $s$. Let $B$, $\iota^{13}$, $\delta_i$ be as in Proposition \ref{prop:state-space-projectors-omit}. If the edge signs are not admissible, then
\begin{equation}
T_A'(\underline\Lambda;s) \circ \bigotimes_{i=1}^B  \big( \iota^{13} \circ P^{\delta_i} \big) = 0
  \ .
\end{equation}  
\end{proposition}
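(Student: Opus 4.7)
My strategy is to localize the failure of admissibility to a single vertex of the triangulation and show that the local contribution of that vertex to $T_A'(\underline{\Lambda};s)$ vanishes as a direct consequence of $N\ast\id=0$. I will first treat the case of an inner vertex and then reduce the boundary case to it.

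Suppose $v$ is an inner vertex at which \eqref{eq:vertex.rule.inner} fails, and write $N$ for the number of triangles in its star. I plan to reduce to $N=3$ by a sequence of Pachner 2-2 moves, each of which leaves $T_A'$ invariant. Two observations justify this: first, as shown in the proof of Proposition~\ref{lem:moves_from_alg}(4), the algebraic identity \eqref{eq:alg.pachner2-2} follows from associativity of $\mu$ alone, so it holds for every choice of edge signs, admissible or not; second, the sign transformation rule \eqref{eq:pachner2-2} was derived from the vertex rule itself and therefore preserves both admissibility and non-admissibility at every vertex, in particular at $v$. Choosing each 2-2 move so that $v$ plays the role of $v_0$ in Figure~\ref{fig:Pachner2.2} strictly decreases the star-size of $v$ by one while keeping it non-admissible, so iteration terminates at $N=3$.

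With $N=3$ the subdiagram contributed by the star of $v$ is precisely the left-hand side of \eqref{eq:alg.pachner3-1}, viewed as a morphism $\mathbf{1}\to A^{\otimes 3}$ whose three outputs feed into the rest of the triangulation through the outer edges of the star. I will re-examine the computation in the proof of Proposition~\ref{lem:moves_from_alg}(5), now without imposing the constraint $s_{12}s_{23}s_{31}=-1$. That derivation manipulates the diagram by the cyclic and leaf-exchange relations and the Frobenius identity to produce a single Nakayama factor $N_{-s_{12}s_{23}s_{31}}$ sandwiched between a $\Delta$ and a $\mu$. In the admissible case $s_{12}s_{23}s_{31}=-1$ this factor is $\id$ and $\Delta$-separability completes the 3-1 reduction; in the non-admissible case $s_{12}s_{23}s_{31}=+1$ the factor is $N$ and the composition becomes $\mu\circ(N\otimes\id)\circ\Delta=N\ast\id$, which vanishes by hypothesis. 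Since this local three-triangle piece is a tensor factor of $T_A'(\underline\Lambda;s)$ (it connects to the rest of the diagram only through its three outer-edge outputs), its vanishing forces the full morphism to vanish.

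For a bad boundary vertex $v$ on boundary component $i$ of type $\delta_i$, I plan to use the projector $\iota^{13}\circ P^{\delta_i}$ already imposed in the statement to reduce to the inner-vertex case. Interpreting this projector via Proposition~\ref{prop:T-gluing} as the effect of gluing a spin cylinder $C^{-}_{\delta_i}$ onto boundary $i$, the vertex $v$ becomes an inner vertex of the enlarged spin triangulation. A direct check using the edge signs of $C^{-}_{\delta_i}$ recorded in Section~\ref{sec:TFT.cylinder} shows that the inner-vertex rule \eqref{eq:vertex.rule.inner} at $v$ in the enlarged triangulation coincides with the type-$\delta_i$ boundary-vertex rule \eqref{eq:boundary-vertex-rule} at $v$ in the original one. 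Hence $v$ remains non-admissible after gluing, and the previous argument applies.

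The main obstacle I anticipate is faithfully tracking the string-diagram calculation of the proof of Proposition~\ref{lem:moves_from_alg}(5) to confirm that the dependence on $s_{12}s_{23}s_{31}$ enters cleanly through the single factor $N_{-s_{12}s_{23}s_{31}}$, with no implicit use of $\Delta$-separability that would fail when this factor is $N$ rather than $\id$. A subsidiary difficulty is guaranteeing that the 2-2 reduction always succeeds in small or boundary-adjacent stars; in degenerate configurations one may first need to apply a 1-3 move near $v$ to create enough room for the 2-2 moves before reducing.
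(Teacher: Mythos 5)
Your argument for an inner bad vertex is essentially the paper's: reduce the star of $v$ to three triangles by Pachner 2-2 moves (which, as you correctly note, preserve $T_A'$ for \emph{arbitrary} edge signs because relation \eqref{eq:alg.pachner2-2} is an unconditional algebraic identity, and preserve the failure of \eqref{eq:vertex.rule.inner} at $v$ by construction of \eqref{eq:pachner2-2}), and then exhibit a convolution factor. Your shortcut of reading this factor off the intermediate step of the proof of Proposition \ref{lem:moves_from_alg}(5) is legitimate: in that chain only the final equality invokes $s_{12}s_{23}s_{31}=-1$ and $\Delta$-separability, so in the non-admissible case the diagram genuinely factors through $\id \ast N=0$; the paper instead redoes the string-diagram rewriting from scratch (Figure \ref{fig:check-sign-rule-inner}), but the content is the same. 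Where you genuinely diverge is the boundary case. The paper works locally at the boundary vertex, distinguishing $v=\varphi_i(1)$ from $v\neq\varphi_i(1)$, and for the former composes with $\iota^{13}\circ P^{\delta_i}$ to produce the factor $q_{-\varepsilon}\circ q_{-\nu}$, which vanishes by Lemma \ref{lem:N*id-NS-R-zero1}; for the latter it finds $\mu\circ(N_\varepsilon\otimes\id)\circ\Delta$ directly. Your cylinder-gluing reduction is cleaner in that it avoids this case split and the auxiliary Lemma \ref{lem:N*id-NS-R-zero1}, but it rests on two points you only assert: (i) Proposition \ref{prop:T-gluing} is stated for spin triangulated surfaces, so you must re-derive the gluing identity for $T_A'$ with arbitrary edge signs — this works because it only uses the combinatorics of $\Gamma_{i,j,\varepsilon}$ and the algebraic identity $(N\otimes\id)\circ c_\varepsilon = c_{-\varepsilon}$, together with \emph{defining} the glued edge sign to be $\varepsilon s_i s_j$; and (ii) the equivalence of the boundary-vertex rule \eqref{eq:boundary-vertex-rule} with the inner-vertex rule \eqref{eq:vertex.rule.inner} at $v$ after gluing $C^-_{\delta_i}$, which is most cleanly seen not by a sign count but by noting that both rules detect extendibility at $v$ of the punctured spin structure of Section \ref{sse:spin_reconstruct}, which is unchanged in a neighbourhood of $v$ by the gluing. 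With those two points filled in (and your own caveat about degenerate stars needing a preliminary 1-3 move, which the paper also glosses over), the proof is complete.
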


\begin{proof} If the edge signs are not admissible, there is a vertex $v$ around which the extendibility conditions in Corollary \ref{lem:vertex.rule.inner} and Lemma \ref{lem:vertex.rule.boundary} are not satisfied. 

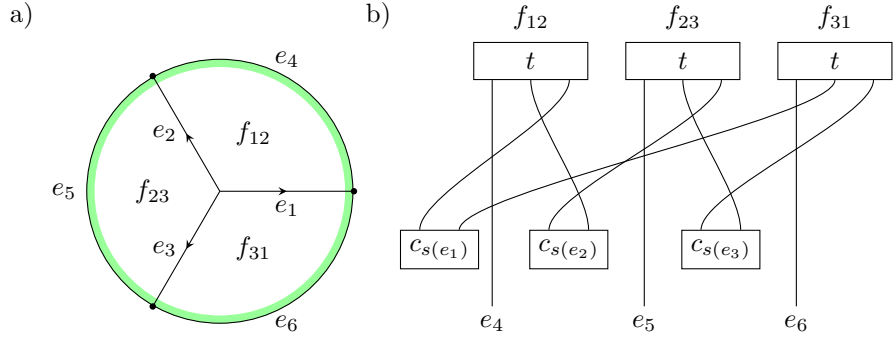
\begin{figure}[tb]
    \centering
    \raisebox{12em}{a)}
    \begin{tikzpicture}
        \node[circle, green!40, line width=3pt, draw, minimum size=3.4cm] at (0cm,0cm){};
        \node[circle, draw, minimum size=3.5cm] at (0cm,0cm){};
        \node[name=D1, regular polygon, regular polygon sides=3, minimum size=3.5cm, rotate=30, dotted, red] at (0cm,0cm){};
        \node[name=D2, regular polygon, regular polygon sides=3, minimum size=3.5cm, rotate=-30, dotted, red] at (0cm,0cm){};
        \foreach \N in {1,2,3} \node[circle, fill, inner sep=0.03cm] at (D1.corner \N){};
        \foreach \N/\M/\K in {1/4/above,2/5/left,3/6/below} \node[\K] at (D2.corner \N){$e_{\M}$};
        \begin{scope}[decoration={markings, mark=at position 0.5 with {\arrow{stealth}}}]
            \foreach \N in {1,2,3} \draw[postaction=decorate] (D1.center) -- (D1.corner \N);
        \end{scope}
        \foreach \N/\M in {1/23,2/31,3/12} \node at (D1.side \N) {$f_{\M}$};
        \foreach \N/\M/\K in {3/1/below,1/2/left,2/3/left} \node[\K] at (D2.side \N) {$e_{\M}$};
    \end{tikzpicture}
    \raisebox{12em}{b)}
    \begin{tikzpicture}
        \coordinate (su) at (0cm,0.5cm);
        \coordinate (sd) at (0cm,-0.5cm);
        \node[name=c1, draw, minimum width=0.8cm, minimum height=1.4em] at (0.5cm,-2.5cm){$c_{s(e_2)}$};
        \node[name=c2, draw, minimum width=0.8cm, minimum height=1.4em] at (-1.2cm,-2.5cm){$c_{s(e_1)}$};
        \node[name=c3, draw, minimum width=0.8cm, minimum height=1.4em] at (2.5cm,-2.5cm){$c_{s(e_3)}$};
        \foreach \N in {1,2,3} {
            \coordinate (c\N1) at ($(c\N.north west)!0.25!(c\N.north east)$);
            \coordinate (c\N2) at ($(c\N.north west)!0.75!(c\N.north east)$);
        }
        \node[name=t1, draw, minimum width=1.5cm, minimum height=1.4em] at (0cm,0cm){$t$};
        \node[name=t2, draw, minimum width=1.5cm, minimum height=1.4em] at (2cm,0cm){$t$};
        \node[name=t3, draw, minimum width=1.5cm, minimum height=1.4em] at (4cm,0cm){$t$};
        \foreach \N in {1,2,3} {
            \coordinate (t\N1) at ($(t\N.south west)!0.33!(t\N.south)$);
            \coordinate (t\N2) at (t\N.south);
            \coordinate (t\N3) at ($(t\N.south east)!0.33!(t\N.south)$);
        }
        \foreach \N in {1,2,3} \coordinate (in\N) at ($(t\N1)+(0cm,-3cm)$);
        \foreach \N/\M in {1/12,2/23,3/31} \node[above=0.3cm] at (t\N) {$f_{\M}$};
        \foreach \N/\M in {1/4,2/5,3/6} \node[below] at (in\N) {$e_{\M}$};
        \foreach \N/\M in {
            in1/t11,
            in2/t21,
            in3/t31,
            c12/t12,
            c11/t23,
            c22/t32,
            c21/t13,
            c32/t22,
            c31/t33}
             \draw (\N) .. controls ($(\N)+(su)$) and ($(\M)+(sd)$) .. (\M);
    \end{tikzpicture}
    \caption{a) Marking of triangles sharing the inner vertex $v$ as used in evaluating the admissibility condition for the edge signs. b) Corresponding part of the diagram computed from the triangulation.}
    \label{fig:check-sign-rule-inner}
\end{figure}

\smallskip\noindent
{\em $v$ is inner:} 
	The edge signs define a spin structure on $\Sigma$ minus its vertices (see Section \ref{sse:spin_reconstruct}). This spin structure is left invariant by Pachner 2-2 moves. We may thus use these moves to achieve that $v$ is shared by exactly three triangles. Using moves on the marking, we may further assume that all edges point away from $v$ and that the marked edge of each triangle does not touch $v$, see Figure \ref{fig:check-sign-rule-inner}. The sign rule from Corollary \ref{lem:vertex.rule.inner} now gives $s(e_1) s(e_2) s(e_3) = (-1)^{0+3+1}=1$, where $e_i$, $i=1,2,3$ are the three edges containing $v$. Figure \ref{fig:check-sign-rule-inner} also shows the corresponding part of the diagram computed from this marked triangulation with edge signs. It is straightforward to check that the morphism obtained from the diagram can be rewritten as 
	(abbreviate $s_i := s(e_i)$)
\begin{equation}
    \raisebox{-0.75cm}{\begin{tikzpicture}
        \coordinate (su) at (0cm,0.5cm);
        \coordinate (sd) at (0cm,-0.5cm);
        \coordinate (sdl) at (-0.5cm,-0.5cm);
        \coordinate (sdr) at (0.5cm,-0.5cm);
        \coordinate (sul) at (-0.5cm,0.5cm);
        \coordinate (sur) at (0.5cm,0.5cm);
        \foreach \N in {1,2,3} \coordinate (out\N) at ($(-2cm,1cm)+\N*(2cm,0cm)$);
        \foreach \N in {1,2,3} \node[name=p\N, product] at ($(-2cm,0.5cm)+\N*(2cm,0cm)$){};
        \foreach \N in {1,2,3} \node[name=c\N, product] at ($(-2cm,-3cm)+\N*(1.5cm,0cm)$){};
        \foreach \N/\M in {1/2,2/3,3/1} \node[name=N\N, naka] at ($(-3cm,-1cm)+\N*(2cm,0cm)$){$-s_\M$};
        \foreach \N in {1,2,3} \coordinate (i\N) at ($(-2cm,-1cm)+\N*(2cm,0cm)$);
        \node[name=N4, naka] at ($(N1)+(0cm,-1cm)$){};
        \foreach \N/\M/\S/\T in {
            c1/N4.south/sul/sd,
            N4.north/N1.south/su/sd,
            N1.north/p1/su/sdl,
            c1/i2/sur/sd,
            i2/p2/su/sdr,
            c2/i1/sul/sd,
            i1/p1/su/sdr,
            c2/N3.south/sur/sd,
            N3.north/p3/su/sdl,
            c3/N2.south/sur/sd,
            N2.north/p2/su/sdl,
            c3/i3/sul/sd,
            i3/p3/su/sdr,
            p1/out1/su/sd,
            p2/out2/su/sd,
            p3/out3/su/sd}
            \draw (\N) .. controls ($(\N)+(\S)$) and ($(\M)+(\T)$) .. (\M);
    \end{tikzpicture}}
    =
    \raisebox{-1.5cm}{\begin{tikzpicture}
        \coordinate (su) at (0cm,0.5cm);
        \coordinate (sd) at (0cm,-0.5cm);
        \coordinate (sdl) at (-0.5cm,-0.5cm);
        \coordinate (sdr) at (0.5cm,-0.5cm);
        \coordinate (sul) at (-0.5cm,0.5cm);
        \coordinate (sur) at (0.5cm,0.5cm);
        \foreach \N in {1,2,3} \coordinate (out\N) at ($(-1cm,0.75cm)+\N*(1cm,0cm)$);
        \node[name=N1, naka] at (0cm,0cm){$s_1$};
        \node[name=N2, naka] at (1cm,0cm){$-s_3$};
        \coordinate (i1) at (2cm,0cm);
        \node[name=N3, naka] at (0.5cm,-3cm){$s_1s_2s_3$};
        \node[name=cp1, product] at (0.5cm,-1cm){};
        \node[name=cp2, product] at (1cm,-1.5cm){};
        \node[name=b, product] at (1cm,-4cm){};
        \node[name=p, product] at (1cm,-2cm){};
        \foreach \N/\M/\S/\T in {
            N1.north/out1/su/sd,
            N2.north/out2/su/sd,
            cp1/N1.south/sul/sd,
            cp1/i1/sur/sd,
            i1/out3/su/sd,
            cp2/cp1/sul/sdr,
            cp2/N2.south/sur/sd,
            p/cp2/su/sd,
            b/p/sur/sdr}
            \draw (\N) .. controls ($(\N)+(\S)$) and ($(\M)+(\T)$) .. (\M);
        \foreach \N/\M/\S/\T in {
            N3.north/p/su/sdl,
            b/N3.south/sul/sd}
            \draw (\N) .. controls ($(\N)+0.5*(\S)$) and ($(\M)+0.5*(\T)$) .. (\M);
    \end{tikzpicture}}\ .
\end{equation}
As $A$ satisfies $N \ast \id=0$, this is zero unless $s(e_1)s(e_2)s(e_3)=1$, as required by the sign rule.

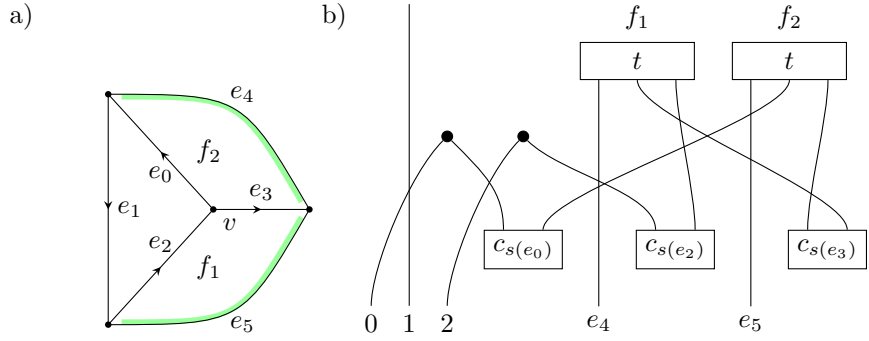
\begin{figure}[tb]
\centering
    \raisebox{12em}{a)}
    \begin{tikzpicture}
        \node[name=D1, regular polygon, regular polygon sides=3, minimum size=3.5cm, rotate=30, dotted, red] at (0cm,0cm){};
        \node[name=D2, regular polygon, regular polygon sides=3, minimum size=3.5cm, rotate=-30, dotted, red] at (0cm,0cm){};
        \foreach \N in {1,2,3} \node[circle, fill, inner sep=0.03cm] at (D1.corner \N){};
        \foreach \N/\M in {1/4,3/5} \node[] at (D2.corner \N){$e_{\M}$};
        \coordinate (nc) at  ($(D1.center)+(0.5cm,0cm)$);
        \node[circle, fill, inner sep=0.03cm] at (nc){};
        \begin{scope}[decoration={markings, mark=at position 0.5 with {\arrow{stealth}}}]
            \foreach \N in {1,3} \draw[postaction=decorate] (nc) -- (D1.corner \N);
            \draw[postaction=decorate] (D1.corner 1) -- (D1.corner 2);
            \draw[postaction=decorate] (D1.corner 2) -- (nc);
        \end{scope}
        \foreach \N/\M in {2/1,3/2} \node at (D1.side \N) {$f_{\M}$};
        \node[right] at (D1.side 1) {$e_1$};
        \node[below=2pt] at ($(nc)!0.5!(D1.corner 1)$) {$e_0$};
        \node[above=2pt] at ($(nc)!0.5!(D1.corner 2)$) {$e_2$};
        \node[above] at ($(nc)!0.5!(D1.corner 3)$) {$e_3$};
        \node[below right] at (nc) {$v$};
        \draw[green!40, line width=2pt] ($(D1.corner 1)+(0pt,-1pt)+0.1*(D2.corner 1)-0.1*(D1.corner 1)$) .. controls ($(D2.corner 1)+(-1pt,-1pt)$) .. ($(D1.corner 3)+(-0.5pt,-1.5pt)+0.1*(D2.corner 1)-0.1*(D1.corner 3)$);
        \draw[green!40, line width=2pt] ($(D1.corner 2)+(0pt,1pt)+0.1*(D2.corner 3)-0.1*(D1.corner 2)$) .. controls ($(D2.corner 3)+(-1pt,1pt)$) .. ($(D1.corner 3)+(-0.5pt,1.5pt)+0.1*(D2.corner 3)-0.1*(D1.corner 3)$);
        \foreach \N/\M/\K in {D1.corner 1/D1.corner 3/D2.corner 1, D1.corner 2/D1.corner 3/D2.corner 3} \draw (\N) .. controls (\K) .. (\M);
    \end{tikzpicture}
    \raisebox{12em}{b)}
    \begin{tikzpicture}
        \coordinate (su) at (0cm,0.5cm);
        \coordinate (sd) at (0cm,-0.5cm);
        \coordinate (sdl) at (-0.5cm,-0.5cm);
        \coordinate (sdr) at (0.5cm,-0.5cm);
        \coordinate (sul) at (-0.5cm,0.5cm);
        \coordinate (sur) at (0.5cm,0.5cm);
        \node[name=c1, draw, minimum width=0.8cm, minimum height=1.4em] at (2.5cm,-2.5cm){$c_{s(e_2)}$};
        \node[name=c2, draw, minimum width=0.8cm, minimum height=1.4em] at (0.5cm,-2.5cm){$c_{s(e_0)}$};
        \node[name=c3, draw, minimum width=0.8cm, minimum height=1.4em] at (4.5cm,-2.5cm){$c_{s(e_3)}$};
        \foreach \N in {1,2,3} {
            \coordinate (c\N1) at ($(c\N.north west)!0.25!(c\N.north east)$);
            \coordinate (c\N2) at ($(c\N.north west)!0.75!(c\N.north east)$);
        }
        \node[name=t2, draw, minimum width=1.5cm, minimum height=1.4em] at (2cm,0cm){$t$};
        \node[name=t3, draw, minimum width=1.5cm, minimum height=1.4em] at (4cm,0cm){$t$};
        \foreach \N in {2,3} {
            \coordinate (t\N1) at ($(t\N.south west)!0.33!(t\N.south)$);
            \coordinate (t\N2) at (t\N.south);
            \coordinate (t\N3) at ($(t\N.south east)!0.33!(t\N.south)$);
        }
        \foreach \N in {2,3} \coordinate (in\N) at ($(t\N1)+(0cm,-3cm)$);
        \foreach \N/\M in {2/1,3/2} \node[above=0.3cm] at (t\N) {$f_{\M}$};
        \foreach \N/\M in {2/4,3/5} \node[below] at (in\N) {$e_{\M}$};
        \foreach \N in {0,1,2} {
            \coordinate (inA\N) at ($(-1.5cm,-3cm)+\N*(0.5cm,0cm)+(0em,-0.7em)$);
            \node[below] at (inA\N) {$\N$};
        }
        \coordinate (out) at ($(inA1)+(0cm,4cm)$);
        \node[name=b1, product] at (0.5cm,-1cm){};
        \node[name=b2, product] at (-0.5cm,-1cm){};
        \foreach \N/\M in {
            in2/t21,
            in3/t31,
            c12/t23,
            c22/t32,
            c32/t22,
            c31/t33,
            inA1/out}
             \draw (\N) .. controls ($(\N)+(su)$) and ($(\M)+(sd)$) .. (\M);
        \foreach \N/\M/\S/\T in {
            c11/b1/su/sdr,
            inA2/b1/su/sdl,
            c21/b2/su/sdr,
            inA0/b2/su/sdl}
            \draw (\N) .. controls ($(\N)+(\S)$) and ($(\M)+(\T)$) .. (\M);
    \end{tikzpicture}
    \caption{a) Marking of triangles sharing the vertex $v$ in the case of $v$ being a boundary vertex. b) Corresponding part of the diagram computed from the triangulation. The lines starting at ``1'' and ``$e_4$'', ``$e_5$'' connect to other parts of the diagram.}
    \label{fig:check-sign-rule-bnd}
\end{figure}

\smallskip\noindent
{\em $v$ is on the boundary:} Consider the case that $v = \varphi_i(1)$, i.e.\ $v$ is the image of the point 1 under the boundary parametrisation. We again start by Pachner 2-2 moves so that $v$ is shared by exactly two triangles. Using moves on the marking, we arrive at the setup in Figure \ref{fig:check-sign-rule-bnd}. In the sign rule from Lemma \ref{lem:vertex.rule.boundary} we have $D_R=0$, $D_{NS}=1$, $K=2$, so that the extension condition is $s(e_0) s(e_2) s(e_3) = (-1)^{1+2+1}=1$ for an NS-type boundary component, and $s(e_0) s(e_2) s(e_3) = (-1)^{0+2+1}=-1$ for R-type.
The morphism from the diagram can be rewritten as
	(abbreviate $s_i := s(e_i)$)
\begin{equation*}
    \raisebox{-1.5cm}{\begin{tikzpicture}
        \coordinate (su) at (0cm,0.5cm);
        \coordinate (sd) at (0cm,-0.5cm);
        \coordinate (sdl) at (-0.5cm,-0.5cm);
        \coordinate (sdr) at (0.5cm,-0.5cm);
        \coordinate (sul) at (-0.5cm,0.5cm);
        \coordinate (sur) at (0.5cm,0.5cm);
        \foreach \N in {0,1,2} {
            \coordinate (in\N) at ($\N*(0.5cm,0cm)$);
            \node[below] at (in\N) {$\N$};
        }
        \coordinate (in4) at (1.5cm,0cm);
        \node[below] at (in4) {$e_4$};
        \coordinate (in5) at (4cm,0cm);
        \node[below] at (in5) {$e_5$};
        \node[name=c, product] at (3.5cm,0cm){};
        \node[name=b1, product] at (2cm,3.5cm){};
        \node[name=b2, product] at (4.5cm,3.5cm){};
        \node[name=p1, product] at ($(b1)+(0.5cm,-0.5cm)$){};
        \node[name=p2, product] at ($(b2)+(0.7cm,-0.7cm)$){};
        \node[name=N1, naka] at (3.3cm,2cm) {$-s_2$};
        \node[name=N2, naka] at (4.7cm,2cm) {$-s_0$};
        \node[name=N3, naka] at (6cm,2cm) {$-s_3$};
        \coordinate (i0) at ($(in0)+(0cm,0.5cm)$);
        \coordinate (i2) at ($(in2)+(0cm,0.5cm)$);
        \coordinate (i3) at ($(c)+(-0.5cm,0.5cm)$);
        \coordinate (i4) at ($(c)+(0.4cm,0.5cm)$);
        \coordinate (i5) at ($(p1)+(-0.5cm,-0.5cm)$);
        \coordinate (out) at (0.5cm,4cm);
        \foreach \N/\M/\S/\T in {
            in1/out/su/sd,
            in0/i0/su/sd,
            i0/N2.south/su/sd,
            in2/i2/su/sd,
            i2/N1.south/su/sd,
            in4/b1/su/sdl,
            in5/b2/su/sdl,
            i3/N3.south/su/sd,
            i4/i5/su/sd,
            p1/b1/sul/sdr,
            p2/b2/sul/sdr}
            \draw (\N) .. controls ($(\N)+(\S)$) and ($(\M)+(\T)$) .. (\M);
        \foreach \N/\M/\S/\T in {
            c/i3/sul/sd,
            c/i4/sur/sd,
            i5/p1/su/sdl,
            N1.north/p1/su/sdr,
            N2.north/p2/su/sdl,
            N3.north/p2/su/sdr}
            \draw (\N) .. controls ($(\N)+0.5*(\S)$) and ($(\M)+0.5*(\T)$) .. (\M);
    \end{tikzpicture}}
    = 
    \raisebox{-1.5cm}{\begin{tikzpicture}
        \coordinate (su) at (0cm,0.5cm);
        \coordinate (sd) at (0cm,-0.5cm);
        \coordinate (sdl) at (-0.5cm,-0.5cm);
        \coordinate (sdr) at (0.5cm,-0.5cm);
        \coordinate (sul) at (-0.5cm,0.5cm);
        \coordinate (sur) at (0.5cm,0.5cm);
        \foreach \N in {0,1,2} {
            \coordinate (in\N) at ($\N*(0.5cm,0cm)$);
            \node[below] at (in\N) {$\N$};
        }
        \coordinate (in4) at (2cm,0cm);
        \node[below] at (in4) {$e_4$};
        \coordinate (in5) at (3cm,0cm);
        \node[below] at (in5) {$e_5$};
        \node[name=p, product] at (1cm,1.5cm){};
        \node[name=cp, product] at ($(p)+(1.5cm,0.5cm)$){};
        \node[name=b1, product] at ($(cp)+(-1cm,2cm)$){};
        \node[name=b2, product] at ($(cp)+(1cm,2cm)$){};
        \node[name=N1, naka] at (0cm,0.75cm) {$\varepsilon$};
        \node[name=N2, naka] at ($(cp)+(-0.5cm,1cm)$) {$-s_2$};
        \node[name=N3, naka] at ($(cp)+(1.7cm,1cm)$) {$s_2s_3$};
        \coordinate (i1) at ($(in4)+(0cm,0.4cm)$);
        \coordinate (out) at (0.5cm,4cm);
        \foreach \N/\M/\S/\T in {
            in1/out/su/sd,
            in0/N1.south/su/sd,
            N1.north/p/su/sdl,
            in2/p/su/sd,
            p/cp/su/sd,
            cp/N2.south/sur/sd,
            cp/N3.south/sul/sd,
            i1/b1/su/sdl,
            in5/b2/su/sdl,
            in4/i1/su/sd}
            \draw (\N) .. controls ($(\N)+(\S)$) and ($(\M)+(\T)$) .. (\M);
        \foreach \N/\M/\S/\T in {
            N2.north/b1/su/sdr,
            N3.north/b2/su/sdr}
            \draw (\N) .. controls ($(\N)+0.5*(\S)$) and ($(\M)+0.5*(\T)$) .. (\M);
        \node[dashed, draw, minimum width=2cm, minimum height=2.6cm] at (0.5cm,0.8cm){};
    \end{tikzpicture}}\ ,
\end{equation*}
where $\varepsilon =  -s(e_0)s(e_2)s(e_3)$.
Composing
	the boxed part of
this morphism with $ \iota^{13} \circ P^{\delta_i}$ gives
\begin{equation}
   \raisebox{-3cm}{\begin{tikzpicture}
        \coordinate (su) at (0cm,0.5cm);
        \coordinate (sd) at (0cm,-0.5cm);
        \coordinate (sdl) at (-0.5cm,-0.5cm);
        \coordinate (sdr) at (0.5cm,-0.5cm);
        \coordinate (sul) at (-0.5cm,0.5cm);
        \coordinate (sur) at (0.5cm,0.5cm);
        \coordinate (in) at (0cm,0cm);
        \node[name=cp1, product] at (0cm,0.5cm){};
        \node[name=N1, naka] at (-0.35cm,1.1cm){$\nu$};
        \coordinate (i1) at ($(N1.north)+(0.7cm,0cm)$);
        \coordinate (i0) at ($(N1.south)+(0.7cm,0cm)$);
        \node[name=p1, product] at (0cm,2cm){};
        \node[name=cp2, product] at (0cm,2.5cm){};
        \coordinate (i2) at (0.5cm,3.2cm);
        \node[name=cp3, product] at (-0.5cm,3cm){};
        \coordinate (i3) at (-0.8cm,3.5cm);
        \coordinate (i4) at (0.4cm,4cm);
        \foreach \N in {0,1,2} {
            \coordinate (ie\N) at ($(-0.75cm,5cm)+\N*(0.75cm,0cm)$);
            \coordinate (out\N) at ($(-0.75cm,7.3cm)+\N*(0.75cm,0cm)$);
            \node[above left] at (ie\N) {\small{$\N$}};
        }
        \node[name=N2, naka] at (-0.75cm,5.8cm) {$\varepsilon$};
        \node[name=p2, product] at (0.75cm,6.8cm){};
        \foreach \N/\M/\S/\T in {
            in/cp1/su/sd,
            N1.north/p1/su/sdr,
            i1/p1/su/sdl,
            p1/cp2/su/sd,
            cp2/cp3/sul/sdr,
            i2/ie0/su/sd,
            i3/ie2/su/sd,
            i4/ie1/su/sd,
            ie0/N2.south/su/sd,
            ie1/out1/su/sd,
            ie2/p2/su/sd,
            N2.north/p2/su/sdl,
            p2/out2/su/sd}
            \draw (\N) .. controls ($(\N)+(\S)$) and ($(\M)+(\T)$) .. (\M);
        \foreach \N/\M/\S/\T in {
            i0/i1/su/sd,
            cp1/N1.south/sul/sd,
            cp1/i0/sur/sd,
            cp2/i2/sur/sd,
            cp3/i3/sul/sd,
            cp3/i4/sur/sd}
            \draw (\N) .. controls ($(\N)+0.5*(\S)$) and ($(\M)+0.5*(\T)$) .. (\M);
        \draw[dashed] ($(ie0)+(-1cm,0cm)$) -- ($(ie2)+(1cm,0cm)$);
    \end{tikzpicture}}
	= \sigma_{A,A} \circ \Delta \circ q_{-\varepsilon} \circ q_{-\nu} \ ,
\end{equation}
	where $\nu = -1$ for $\delta_i=NS$ and $\nu=+1$ for $\delta_i=R$, 
and $q_{-\varepsilon}, q_{-\nu}$ are as in \eqref{eq:q_nu-def}. By Lemma \ref{lem:N*id-NS-R-zero1}, this is zero unless $\varepsilon=\nu$, i.e.\ unless $s(e_0)s(e_2)s(e_3)=1$ (NS-type) or $s(e_0)s(e_2)s(e_3)=-1$ (R-type), in agreement with the extendibility condition.

If $v \neq \varphi_i(1)$, then the sign rule from Lemma \ref{lem:vertex.rule.boundary} gives $s(e_0) s(e_2) s(e_3) =-1$ for both, NS- and R-type. The morphism obtained from the triangulation then is, in the example $v = \varphi_i(e^{2 \pi i/3})$,
\begin{equation}
    \raisebox{-3cm}{\begin{tikzpicture}
        \coordinate (su) at (0cm,0.5cm);
        \coordinate (sd) at (0cm,-0.5cm);
        \coordinate (sdl) at (-0.5cm,-0.5cm);
        \coordinate (sdr) at (0.5cm,-0.5cm);
        \coordinate (sul) at (-0.5cm,0.5cm);
        \coordinate (sur) at (0.5cm,0.5cm);
        \coordinate (in) at (0cm,0cm);
        \node[name=cp1, product] at (0cm,0.5cm){};
        \node[name=N1, naka] at (-0.35cm,1.1cm){$\nu$};
        \coordinate (i1) at ($(N1.north)+(0.7cm,0cm)$);
        \coordinate (i0) at ($(N1.south)+(0.7cm,0cm)$);
        \node[name=p1, product] at (0cm,2cm){};
        \node[name=cp2, product] at (0cm,2.5cm){};
        \coordinate (i2) at (0.5cm,3.2cm);
        \node[name=cp3, product] at (-0.5cm,3cm){};
        \coordinate (i3) at (-0.8cm,3.5cm);
        \coordinate (i4) at (0.4cm,4cm);
        \foreach \N in {0,1,2} {
            \coordinate (ie\N) at ($(-0.75cm,5cm)+\N*(0.75cm,0cm)$);
            \coordinate (ix\N) at ($(-0.75cm,6cm)+\N*(0.75cm,0cm)$);
            \coordinate (out\N) at ($(-0.75cm,8.3cm)+\N*(0.75cm,0cm)$);
        }
        \node[above left] at (ie0) {\small{$0$}};
        \node[above right] at (ie1) {\small{$1$}};
        \node[above right] at (ie2) {\small{$2$}};
        \node[name=N2, naka] at (-0.75cm,6.8cm) {$\varepsilon$};
        \node[name=p2, product] at (0.75cm,7.8cm){};
        \foreach \N/\M/\S/\T in {
            in/cp1/su/sd,
            N1.north/p1/su/sdr,
            i1/p1/su/sdl,
            p1/cp2/su/sd,
            cp2/cp3/sul/sdr,
            i2/ie0/su/sd,
            i3/ie2/su/sd,
            i4/ie1/su/sd,
            ix0/N2.south/su/sd,
            ix1/out1/su/sd,
            ix2/p2/su/sd,
            N2.north/p2/su/sdl,
            p2/out2/su/sd}
            \draw (\N) .. controls ($(\N)+(\S)$) and ($(\M)+(\T)$) .. (\M);
        \foreach \N/\M/\S/\T in {
            ie0/ix2/su/sd,
            ie1/ix0/su/sd,
            ie2/ix1/su/sd,
            i0/i1/su/sd,
            cp1/N1.south/sul/sd,
            cp1/i0/sur/sd,
            cp2/i2/sur/sd,
            cp3/i3/sul/sd,
            cp3/i4/sur/sd}
            \draw (\N) .. controls ($(\N)+0.5*(\S)$) and ($(\M)+0.5*(\T)$) .. (\M);
        \draw[dashed] ($(ie0)+(-1cm,0cm)$) -- ($(ie2)+(1cm,0cm)$);
    \end{tikzpicture}} \ ,
\end{equation}
where $\varepsilon$ and $\nu$ are as above. After disentangling the lines and using coassociativity once, one identifies a morphism $\mu \circ (N_\varepsilon \otimes \id_A) \circ \Delta$ contained in the diagram. Thus we obtain zero unless $\varepsilon=1$, i.e.\ unless $s(e_0)s(e_2)s(e_3)=-1$, as required. The computation for $v = \varphi_i(e^{4 \pi i/3})$ is analogous.
\end{proof}

\begin{remark}\label{rem:spin-to-oriented}
Lemma \ref{lem:N*id-NS-R-zero2} and Proposition \ref{prop:N*id=0-admissible} have an interesting interpretation when considering the local statistical model implementing spin structures mentioned in Section \ref{sec:outlook}: think of the edge signs as statistical variables and of $T'_A$ as the statistical weight of a given edge sign configuration. When summing over all edge sign configurations, we see that only admissible edge signs (i.e.\ those corresponding to a spin structure, see Section \ref{sse:spin_reconstruct}) receive a non-zero weight. On the other hand, summing over the edge signs and assigning a weight of $\frac12$ to each edge amounts to inserting a projector $\pi_+$ on each edge, effectively replacing the algebra $A$ by $A_+$. The latter defines an oriented TFT (by construction the Nakayama automorphism of $A_+$ is the identity), provided one compensates the factor of $\frac12$ in Lemma \ref{lem:N*id-NS-R-zero2} by assigning a weight of $2$ to each vertex (to achieve invariance under the 3-1 Pachner move). In this sense, the sum over spin structures projects the spin TFT defined by $A$ to the oriented TFT defined by $A_+$.
\end{remark}

\subsection{Examples}
\begin{example}
Let $A=k^{1|1} \in \mathbf{SVect}(k)$ for $\text{char}(k)\neq 2$. Define the product 
\begin{equation}
    \mu \,:\, A\otimes A \to A ~~ , \quad
     \begin{pmatrix}x_0\\x_1\end{pmatrix}\otimes\begin{pmatrix}y_0\\y_1\end{pmatrix} \mapsto \begin{pmatrix} x_0y_0+x_1y_1\\ x_0y_1+x_1y_0 \end{pmatrix}
\end{equation}
and the unit/counit
\begin{equation}
\eta : k \to A ~~,~~~
    k\ni \lambda \mapsto \lambda\cdot \begin{pmatrix}1\\0\end{pmatrix}
    \qquad , \qquad
    \varepsilon : A \to k ~~,~~~
    \begin{pmatrix}x_0\\x_1\end{pmatrix}  \mapsto 2x_0 \ .
\end{equation}
It is straightforward to verify that these maps turn $A$ into a Frobenius algebra.
The pairing $b :A\otimes A \to k$ is then given by
\begin{equation}
    b(\begin{pmatrix}x_0\\x_1\end{pmatrix} \otimes \begin{pmatrix}y_0\\y_1\end{pmatrix}) = 2(x_0y_0+x_1y_1).
\end{equation}
	Since $\text{char}(k)\neq2$, $b$ is nondegenerate with
	copairing
\begin{equation}
    c_{-1} \,:\, k\to A\otimes A ~~,~~~ 1\mapsto
    \frac{1}{2} \left( \begin{pmatrix}1\\0\end{pmatrix} \otimes \begin{pmatrix}1\\0\end{pmatrix} + \begin{pmatrix}0\\1\end{pmatrix} \otimes\begin{pmatrix}0\\1\end{pmatrix}  \right) \ .
\end{equation}
From this one computes the Nakayama automorphism of $A$ to be
\begin{equation}
    N \,:\, A\to A ~~,~~~
    \begin{pmatrix}x_0\\x_1\end{pmatrix} \mapsto \begin{pmatrix}x_0\\-x_1\end{pmatrix} \quad .
\end{equation}
Thus $N^2=\text{id}_A$. 
The coproduct can be computed from the copairing as
\begin{align}
    \Delta\begin{pmatrix}x_0\\x_1\end{pmatrix} &=
    \frac{1}{2} \left( \begin{pmatrix}x_0\\x_1\end{pmatrix} \otimes \begin{pmatrix}1\\0\end{pmatrix} + \begin{pmatrix}x_1\\x_0\end{pmatrix} \otimes\begin{pmatrix}0\\1\end{pmatrix}  \right) 
\\
    &=
    \frac{1}{2} \left(  \begin{pmatrix}1\\0\end{pmatrix} \otimes\begin{pmatrix}x_0\\x_1\end{pmatrix} + \begin{pmatrix}0\\1\end{pmatrix}\otimes\begin{pmatrix}x_1\\x_0\end{pmatrix}   \right) \quad .
    \nonumber
\end{align}
From this it is immediate that $A$ is $\Delta$-separable, and that the condition $N \star \id=0$ is satisfied. One also easily computes the idempotents $P^{NS/R}$ to be
\begin{align}
	 P^{NS}\begin{pmatrix}x_0\\x_1\end{pmatrix} =\begin{pmatrix}x_0\\0\end{pmatrix} \quad , \quad 
	 P^{R}\begin{pmatrix}x_0\\x_1\end{pmatrix} =\begin{pmatrix}0\\x_1\end{pmatrix} \quad .
\end{align}
Therefore, the state spaces are given by $Z_{NS}=k^{1|0}$ and $Z_R=k^{0|1}$, and from the formulas for the structure maps in \eqref{eq:Z-Frobalg} we see that in fact $Z = A$ as Frobenius algebras.

Evaluating the TFT for $A$ on $T_{NS/R}^\varepsilon$ according to \eqref{eq:spin-torus-value} gives
\begin{equation}
	T_A(T_{NS}^\pm) = T_A(T_{R}^-) = 1 ~~ , \quad
	T_A(T_{R}^+) = -1 \ .
\end{equation}

In general one verifies that for a closed spin surface $\Sigma$ of genus $g$ one has $T_A(\Sigma)=2^{1-g}\text{Arf}(\Sigma)$, where $\text{Arf}(\Sigma)\in \{1,-1\}$ is the Arf invariant \cite{johnson1980} of $\Sigma$, cf.\ \cite[Sect.\,2.6]{moore2006d} and \cite[Thm.\,4.1]{barrett2013spin}; we omit the details.

\end{example}

\begin{example}
Let $(A,\mu,\eta,\varepsilon)$ be a symmetric Frobenius algebra 
	in an additive symmetric strict	monoidal category $\mathcal{S}$.
Let $x\in \text{Hom}\left( \mathbf{1}, A \right)$ be invertible with respect to the algebra product $\mu$. We denote its inverse by $x^{-1}\in \text{Hom}\left( \mathbf{1}, A \right)$. Let 
    \begin{equation}
            \varepsilon_x:= \varepsilon \circ \mu \circ \left( x \otimes \id_A  \right).
    \end{equation}
    Then $A_x=(A,\mu,\eta,\varepsilon_x)$ is again a Frobenius algebra,
    see e.g. \cite[Lemma 19]{fuchsstigner2009}.
    In the following we draw the original Frobenius algebra morphisms as in Figure \ref{fig:graph-symbol}. 
The coproduct and Nakayama automorphism of $A_x$ are given by
    \begin{equation}
        \Delta_x = \raisebox{-2.5em}{ 
        \begin{tikzpicture}
            \coordinate (in) at (0cm,0cm);
            \node[name=cp,circle,fill,inner sep=0.05cm] at (0cm,0.5cm){};
            \node[name=x,draw, inner sep=0.05cm] at (0.2cm,1.2cm) {\small{$x^{-1}$}};
            \node[name=p,circle,fill,inner sep=0.05cm] at (1cm,2cm){};
            \coordinate (out1) at (-0.5cm,2.5cm);
            \coordinate (out2) at (1cm,2.5cm);
            \coordinate (i1) at (-0.5cm,1cm);
            \coordinate (su) at (0cm,0.3cm);
            \coordinate (sd) at (0cm,-0.3cm);
            \draw (in) -- (cp);
            \draw (cp) .. controls (i1) and ($(out1)+2*(sd)$) .. (out1);
            \draw (x.north) .. controls ($(x.north) + (su)$) .. (p);
            \draw (cp) .. controls ($(p)+3*(sd)$) .. (p);
            \draw (p) -- (out2);
        \end{tikzpicture}}\quad  , \qquad
        N_x = \raisebox{-2.5em}{
        \begin{tikzpicture}
            \coordinate (in) at (0cm,0cm);
            \coordinate (out) at (0cm,2.5cm);
            \node[name=p1,circle,fill,inner sep=0.05cm] at (0cm,1cm){};
            \node[name=p2,circle,fill,inner sep=0.05cm] at (0cm,2cm){};
            \node[name=x,draw] at (-0.5cm,0.3cm) {\small{$x$}};
            \node[name=xinv,draw, inner sep=0.05cm] at (0.7cm,1.2cm) {\small{$x^{-1}$}};
            \coordinate (su) at (0cm,0.2cm);
            \draw (in) -- (p1);
            \draw (p1) -- (p2);
            \draw (p2) -- (out);
            \draw (x.north) .. controls ($(x.north)+(su)$) .. (p1);
            \draw (xinv.north) .. controls ($(xinv.north)+(su)$) .. (p2);
        \end{tikzpicture}} \quad .
    \end{equation}
    The Nakayama automorphism $N_x$ is thus the inner automorphism generated by $x$. 
	It satisfies $N_x^2=\id_A$ iff $x^2 := \mu \circ (x \otimes x)$ is central in $A$, i.e.\ if $\mu \circ \sigma_{A,A} \circ (x^2 \otimes \id) = \mu \circ (x^2 \otimes \id)$.
	By definition, $A_x$ is $\Delta$-separable iff $\mu \circ (\id \otimes \mu) \circ (\id \otimes x^{-1} \otimes \id) \circ \Delta = \id$ holds in $A$.
    If $N_x^2=\id_A$ then the condition $N_x \star_{A_x} \id=0$ is satisfied in $A_x$ iff $\id \star \id=0$ in $A$.

It turns out that in this example, the TFT does not actually depend on the spin structure. Namely, let $R_x := \mu \circ (\id_A \otimes x)$ be right multiplication by $x$. One quickly checks that $P^{NS} = R_x^{-1} \circ P^R \circ R_x$, so that the NS- and R-state spaces are isomorphic. This identity furthermore implies that $P^R \circ N = P^R$ (in addition to $P^{NS} \circ N = P^{NS}$, which holds by Lemma \ref{lem:PNS/R-idemp}). The latter observation implies independence of the spin structure on closed surfaces, cf.\ expression \eqref{eq:spin-torus-value} for the torus.

From the point of view of fully extended TFTs this is not too surprising, since a fully dualisable object in the symmetric monoidal bicategory of algebras does not involve the pairing as a piece of data. Hence, if there exists a symmetric pairing on $A$, the resulting TFT will be independent of the spin structure.

Still, the next example shows that such TFTs can nonetheless be interesting.
    \label{xmp:frobenius.generic}
\end{example}

\begin{example}
    \label{xmp:matrix.alg.in.vect}
Let $A=M_n(k)$ be the algebra of $n{\times}n$ matrices for some integer $n>0$ and a field $k$. Let $E_{ij}$ be the $n{\times}n$ matrix with zero entries everywhere but in place $(i,j)$, where it has entry 1. It satisfies $E_{ij}E_{kl} = \delta_{j,k} E_{il}$ and consequently $\mathrm{tr}(E_{ij}E_{kl}) = \delta_{i,l} \delta_{j,k}$. Thus, the trace pairing on $A$ is non-degenerate (independent of the characteristic of $k$) and we can use it to turn $A$ into a (symmetric) Frobenius algebra. Concretely, the counit and coproduct are
\begin{equation}
        \varepsilon(M) = \mathrm{tr}(M) ~~,\quad
	\Delta(M) = \sum_{i,j=1}^n (ME_{ij}) \otimes E_{ji} = \sum_{i,j=1}^n E_{ij} \otimes (E_{ji}M) \ .
\end{equation}
Now choose $X \in GL_n(k)$ such that, for some $\lambda \in k^\times$,
\begin{enumerate}
\item $X^2 = \lambda \mathbf{1}$, i.e.\ $X^{-1} = \lambda^{-1} X$, and
\item $\mathrm{tr}(X)=\lambda$ .
\end{enumerate}
From Example \ref{xmp:frobenius.generic} we obtain a new Frobenius algebra $A_X$ by twisting the counit with $X$. Condition (1) shows $(N_X)^2 = \id_A$ (since $X^2$ is central), and condition (2) implies that $A_X$ is $\Delta$-separable:
\begin{equation}
\mu(\Delta_X(M)) = \sum_{i,j=1}^n E_{ij} X^{-1} E_{ji}M = \mathrm{tr}(X^{-1}) \cdot M = \lambda^{-1} \mathrm{tr}(X) \cdot M \ .
\end{equation}
Thus, $A_X$ is an example of a $\Delta$-separable Frobenius algebra whose Nakayama automorphism is an involution. 
The projectors $P^{NS/R}$ are straightforward to compute:
\begin{equation}
	P^{NS}(M) = \lambda^{-1} \mathrm{tr}(MX) \cdot \mathbf{1} ~~,\quad 
	P^{R}(M) = \lambda^{-1} \mathrm{tr}(M) \cdot X \ .
\end{equation}
Thus, the state spaces $Z^{NS/R}$ are one-dimensional and given by $Z^{NS} = k \, \mathbf{1}$, $Z^R = k\, X$.

From Example \ref{xmp:frobenius.generic} we know that the TFT for $A_X$ is independent of the spin structure. For example, evaluating the TFT on $T_{NS/R}^\varepsilon$ according to \eqref{eq:spin-torus-value} gives $T_{A_X}(T_{NS}^\pm) = T_{A_X}(T_{R}^\pm) = 1$. 

\medskip

Next we consider the condition $N_X \ast_{A_X} \id = 0$ in $A_X$. From  Example \ref{xmp:frobenius.generic} we know that this is equivalent to $\id \ast \id=0$ in $A$. But $(\id \ast \id)(M) = \mu(\Delta(M)) = n \cdot M$. We see that $N_X \ast_{A_X} \id = 0$ iff
\begin{equation}
	n = 0 \quad \text{in the field} ~~ k \ .
\end{equation}
A simple example would be to take $k$ of characteristic 3, and $n=3$, $\lambda=1$, $X = \mathrm{diag}(1,1,-1)$.

We learn that it is possible to satisfy the $N \ast \id = 0$ even if the TFT is independent of the spin structure. 
In terms of the statistical model from Remark \ref{rem:spin-to-oriented} this means that all admissible edge sign configurations receive the same weight (and non-admissible ones receive weight zero). 

\end{example}

\appendix
\section{Evaluation of the TFT on the cylinder}
\label{sse:cylinder_calculation}

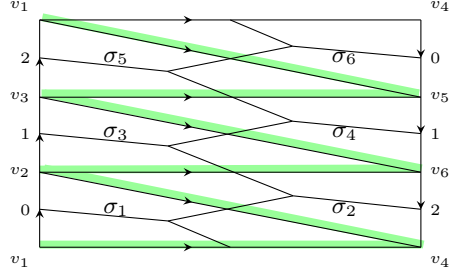
\begin{figure}[tb]
    \centering
    \begin{tikzpicture}
        \begin{scope}[decoration={
    markings,
    mark=at position 0.4 with {\arrow{stealth}}}
    ] 
       \node[name=A, rectangle, minimum width=5cm, minimum height=3cm]{};
       \draw[green!40, line width=3pt] ($(A.south west)!0.33!(A.north west)$)+(0pt,1pt) -- ($0.66*(A.south east)+0.33*(A.north east)+(0pt,1pt)$);
       \draw[green!40, line width=3pt] (A.south west)+(0pt,1pt) -- ($(A.south east)+(0pt,1pt)$);
       \draw[green!40, line width=3pt] ($0.33*(A.south west)+0.66*(A.north west)+(0pt,1pt)$) -- ($0.33*(A.south east)+0.66*(A.north east)+(0pt,1pt)$);
       \draw[green!40, line width=3pt] (A.north west)+(1pt,1pt) -- ($0.33*(A.south east)+0.66*(A.north east)+(0pt,1pt)$);
       \draw[green!40, line width=3pt] ($0.33*(A.south west)+0.66*(A.north west)+(1pt,1pt)$) -- ($0.66*(A.south east)+0.33*(A.north east)+(1pt,1pt)$);
       \draw[green!40, line width=3pt] ($0.66*(A.south west)+0.33*(A.north west)+(1pt,1pt)$) -- ($(A.south east)+(1pt,1pt)$);
       \draw[postaction=decorate] ($(A.south west)!0.33!(A.north west)$) -- ($(A.south east)!0.33!(A.north east)$);
       \draw[postaction=decorate] (A.north west) -- (A.north east);
       \draw[postaction=decorate] (A.south west) -- (A.south east);
       \draw[postaction=decorate] ($(A.south west)!0.66!(A.north west)$) -- ($(A.south east)!0.66!(A.north east)$);
       \draw[postaction=decorate] (A.north west) -- ($(A.south east)!0.66!(A.north east)$);
       \draw[postaction=decorate] ($(A.south west)!0.66!(A.north west)$) -- ($(A.south east)!0.33!(A.north east)$);
       \draw[postaction=decorate] ($(A.south west)!0.33!(A.north west)$) -- (A.south east);
\end{scope}
        \begin{scope}[decoration={
    markings,
    mark=at position 0.166 with {\arrow{stealth}};,
    mark=at position 0.5 with {\arrow{stealth}};,
    mark=at position 0.833 with {\arrow{stealth}}}
]
       \draw[postaction=decorate] (A.south west) -- (A.north west);
       \draw[postaction=decorate] (A.north east) -- (A.south east);
   \end{scope}
   \node[left] at ($(A.south west)!0.166!(A.north west)$) {\tiny{$0$}} ;
   \node[left] at ($(A.south west)!0.5!(A.north west)$) {\tiny{$1$}} ;
   \node[left] at ($(A.south west)!0.833!(A.north west)$) {\tiny{$2$}} ;
   \node[right] at ($(A.south east)!0.166!(A.north east)$) {\tiny{$2$}} ;
   \node[right] at ($(A.south east)!0.5!(A.north east)$) {\tiny{$1$}} ;
   \node[right] at ($(A.south east)!0.833!(A.north east)$) {\tiny{$0$}} ;
   \node[] at ($0.166*0.2*(A.north east)+0.833*0.2*(A.south east)+0.166*0.8*(A.north west)+0.833*0.8*(A.south west)$) {\small{$\sigma_1$}} ;
   \node[] at ($0.166*0.8*(A.north east)+0.833*0.8*(A.south east)+0.166*0.2*(A.north west)+0.833*0.2*(A.south west)$) {\small{$\sigma_2$}} ;
   \node[] at ($0.5*0.2*(A.north east)+0.5*0.2*(A.south east)+0.5*0.8*(A.north west)+0.5*0.8*(A.south west)$) {\small{$\sigma_3$}} ;
   \node[] at ($0.5*0.8*(A.north east)+0.5*0.8*(A.south east)+0.5*0.2*(A.north west)+0.5*0.2*(A.south west)$) {\small{$\sigma_4$}} ;
   \node[] at ($0.833*0.2*(A.north east)+0.166*0.2*(A.south east)+0.833*0.8*(A.north west)+0.166*0.8*(A.south west)$) {\small{$\sigma_5$}} ;
   \node[] at ($0.833*0.8*(A.north east)+0.166*0.8*(A.south east)+0.833*0.2*(A.north west)+0.166*0.2*(A.south west)$) {\small{$\sigma_6$}} ;
   \node[below left] at (A.south west) {\tiny{$v_1$}} ;
   \node[left] at ($(A.south west)!0.33!(A.north west)$) {\tiny{$v_2$}} ;
   \node[left] at ($(A.south west)!0.66!(A.north west)$) {\tiny{$v_3$}} ;
   \node[above left] at (A.north west) {\tiny{$v_1$}} ;
   \node[below right] at (A.south east) {\tiny{$v_4$}} ;
   \node[right] at ($(A.south east)!0.33!(A.north east)$) {\tiny{$v_6$}} ;
   \node[right] at ($(A.south east)!0.66!(A.north east)$) {\tiny{$v_5$}} ;
   \node[above right] at (A.north east) {\tiny{$v_4$}} ;
   \coordinate (sigma5) at ($0.33*(A.north west)+0.33*0.66*(A.north west)+0.33*0.33*(A.south west)+0.33*0.66*(A.north east)+0.33*0.33*(A.south east)$);
   \coordinate (sigma3) at ($0.33*0.66*(A.north west)+0.33*0.33*(A.south west)+0.33*0.33*(A.north west)+0.33*0.66*(A.south west)+0.33*0.33*(A.north east)+0.33*0.66*(A.south east)$);
   \coordinate (sigma1) at ($0.33*(A.south west)+0.33*0.66*(A.south west)+0.33*0.33*(A.north west)+0.33*(A.south east)$);
   \coordinate (sigma6) at ($0.33*(A.north east)+0.33*0.66*(A.north east)+0.33*0.33*(A.south east)+0.33*(A.north west)$);
   \coordinate (sigma4) at ($0.33*0.66*(A.north east)+0.33*0.33*(A.south east)+0.33*0.33*(A.north east)+0.33*0.66*(A.south east)+0.33*0.66*(A.north west)+0.33*0.33*(A.south west)$);
   \coordinate (sigma2) at ($0.33*(A.south east)+0.33*0.66*(A.south east)+0.33*0.33*(A.north east)+0.33*0.66*(A.south west)+0.33*0.33*(A.north west)$);
   \draw ($(A.south west)!0.833!(A.north west)$) -- (sigma5);
   \draw ($(A.south west)!0.5!(A.north west)$) -- (sigma3);
   \draw ($(A.north west)!0.833!(A.south west)$) -- (sigma1);
   \draw ($(A.south east)!0.833!(A.north east)$) -- (sigma6);
   \draw ($(A.south east)!0.5!(A.north east)$) -- (sigma4);
   \draw ($(A.north east)!0.833!(A.south east)$) -- (sigma2);
   \draw (sigma5) -- (sigma6);
   \draw (sigma3) -- (sigma4);
   \draw (sigma1) -- (sigma2);
   \draw (sigma5) -- (sigma4);
   \draw (sigma3) -- (sigma2);
   \draw (sigma1) -- ($(A.south east)!0.5!(A.south west)$);
   \draw (sigma6) -- ($(A.north east)!0.5!(A.north west)$);
    \end{tikzpicture}
    \caption{The triangulation in Figure \ref{fig:cylinder.triangulation} together with its dual.}
    \label{fig:cylinder.triangulation.dual}
\end{figure}

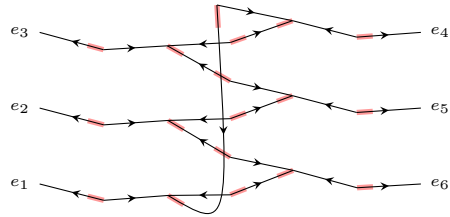
\begin{figure}[tb]
    \centering
    \begin{tikzpicture}
       \node[name=A, rectangle, minimum width=5cm, minimum height=3cm,draw=none]{}; 
   \node[left] at ($(A.south west)!0.166!(A.north west)$) {\tiny{$e_1$}} ;
   \node[left] at ($(A.south west)!0.5!(A.north west)$) {\tiny{$e_2$}} ;
   \node[left] at ($(A.south west)!0.833!(A.north west)$) {\tiny{$e_3$}} ;
   \node[right] at ($(A.south east)!0.166!(A.north east)$) {\tiny{$e_6$}} ;
   \node[right] at ($(A.south east)!0.5!(A.north east)$) {\tiny{$e_5$}} ;
   \node[right] at ($(A.south east)!0.833!(A.north east)$) {\tiny{$e_4$}} ;
   \coordinate (sigma5) at ($0.33*(A.north west)+0.33*0.66*(A.north west)+0.33*0.33*(A.south west)+0.33*0.66*(A.north east)+0.33*0.33*(A.south east)$);
   \coordinate (sigma3) at ($0.33*0.66*(A.north west)+0.33*0.33*(A.south west)+0.33*0.33*(A.north west)+0.33*0.66*(A.south west)+0.33*0.33*(A.north east)+0.33*0.66*(A.south east)$);
   \coordinate (sigma1) at ($0.33*(A.south west)+0.33*0.66*(A.south west)+0.33*0.33*(A.north west)+0.33*(A.south east)$);
   \coordinate (sigma6) at ($0.33*(A.north east)+0.33*0.66*(A.north east)+0.33*0.33*(A.south east)+0.33*(A.north west)$);
   \coordinate (sigma4) at ($0.33*0.66*(A.north east)+0.33*0.33*(A.south east)+0.33*0.33*(A.north east)+0.33*0.66*(A.south east)+0.33*0.66*(A.north west)+0.33*0.33*(A.south west)$);
   \coordinate (sigma2) at ($0.33*(A.south east)+0.33*0.66*(A.south east)+0.33*0.33*(A.north east)+0.33*0.66*(A.south west)+0.33*0.33*(A.north west)$);
   \coordinate (e7d) at ($0.5*(A.south west)+0.5*(A.south east)+(0,-4pt)$);
   \coordinate (e8) at ($0.5*(A.south west)+0.5*(A.south east)+0.5*0.33*(A.north west)-0.5*0.33*(A.south west)+(0,-4pt)$);
   \coordinate (e9) at ($0.5*(A.south west)+0.5*(A.south east)+0.5*0.66*(A.north west)-0.5*0.66*(A.south west)+(0,-4pt)$);
   \coordinate (e10) at ($0.5*(A.south west)+0.5*(A.south east)+0.5*(A.north west)-0.5*(A.south west)+(0,-4pt)$);
   \coordinate (e11) at ($0.5*(A.south west)+0.5*(A.south east)+0.5*1.33*(A.north west)-0.5*1.33*(A.south west)+(0,-4pt)$);
   \coordinate (e12) at ($0.5*(A.south west)+0.5*(A.south east)+0.5*1.67*(A.north west)-0.5*1.67*(A.south west)+(0,-4pt)$);
   \coordinate (e7u) at ($0.5*(A.north west)+0.5*(A.north east)+(-5pt,-4pt)$);
   \coordinate (el3) at ($(A.south west)!0.166!(A.north west)$) ;
   \coordinate (el2) at ($(A.south west)!0.5!(A.north west)$) ;
   \coordinate (el1) at ($(A.south west)!0.833!(A.north west)$) ;
   \coordinate (el6) at ($(A.south east)!0.166!(A.north east)$) ;
   \coordinate (el5) at ($(A.south east)!0.5!(A.north east)$) ;
   \coordinate (el4) at ($(A.south east)!0.833!(A.north east)$) ;
   \foreach \N/\M in {1/5,2/3,3/1,4/6,5/4,6/2} 
   \coordinate (e\N) at ($0.5*(el\N)+0.5*(sigma\M)+(0,-4pt)$);
   \foreach \N in {1,...,6} \draw[red!40, line width=2pt] (e\N) -- ($(e\N)!0.25!(el\N)$);
    \draw[red!40, line width=2pt] (e7u) -- ($(e7u)!0.1!(e7d)$);
    \draw[red!40, line width=2pt] (e12) -- ($(e12)!0.25!(sigma6)$);
    \draw[red!40, line width=2pt] (e11) -- ($(e11)!0.25!(sigma5)$);
    \draw[red!40, line width=2pt] (e10) -- ($(e10)!0.25!(sigma4)$);
    \draw[red!40, line width=2pt] (e9) -- ($(e9)!0.25!(sigma3)$);
    \draw[red!40, line width=2pt] (e8) -- ($(e8)!0.25!(sigma2)$);
    \draw[red!40, line width=2pt] (sigma5) -- ($(sigma5)!0.25!(e11)$);
    \draw[red!40, line width=2pt] (sigma6) -- ($(sigma6)!0.25!(e12)$);
    \draw[red!40, line width=2pt] (sigma4) -- ($(sigma4)!0.25!(e10)$);
    \draw[red!40, line width=2pt] (sigma3) -- ($(sigma3)!0.25!(e9)$);
    \draw[red!40, line width=2pt] (sigma2) -- ($(sigma2)!0.25!(e8)$);
    \draw[red!40, line width=2pt] (sigma1) -- ($(sigma1)!0.25!(e7d)$);

  \begin{scope}[decoration={
    markings,
    mark=at position 0.5 with {\arrow{stealth}}}
    ] 

    \foreach \N/\M in {1/5,2/3,3/1,4/6,5/4,6/2} {
        \draw[postaction=decorate] (e\N) -- (sigma\M);
        \draw[postaction=decorate] (e\N) -- (el\N);
    }
   \draw[postaction=decorate] (e12) -- (sigma5);
   \draw[postaction=decorate] (e12) -- (sigma6);
   \draw[postaction=decorate] (e11) -- (sigma5);
   \draw[postaction=decorate] (e11) -- (sigma4);
   \draw[postaction=decorate] (e10) -- (sigma4);
   \draw[postaction=decorate] (e10) -- (sigma3);
   \draw[postaction=decorate] (e9) -- (sigma3);
   \draw[postaction=decorate] (e9) -- (sigma2);
   \draw[postaction=decorate] (e8) -- (sigma2);
   \draw[postaction=decorate] (e8) -- (sigma1);
   \draw[postaction=decorate] (e7u) -- (sigma6);
   \draw[postaction=decorate] (e7u) .. controls (e7d) .. (sigma1);
   \end{scope}
       \end{tikzpicture}
    \caption{The resulting graph $\Gamma(\mathcal{C})$. 
	We give the polarisation by marking the first leg (i.e.\ the leg number 0, see \eqref{eq:morph-and-edge-order-convention}) of each vertex in red. The remaining legs are labelled counterclockwise for edges in $\text{in}(v)$ and clockwise for edges in $\text{out}(v)$, see again \eqref{eq:morph-and-edge-order-convention}.}
\label{fig:cylinder.triangulation.graph}
\end{figure}

In this appendix we give some details of how to calculate the morphism $T_A(C^\pm_{NS/R})$ defined in Section \ref{sec:TFT.cylinder}. We start with the triangulation of the cylinder given in Figure \ref{fig:cylinder.triangulation}. The dual triangulation is depicted in Figure \ref{fig:cylinder.triangulation.dual}, and the corresponding graph $\Gamma(\mathcal{C})$ in Figure \ref{fig:cylinder.triangulation.graph}.
We label the graph in Figure \ref{fig:cylinder.triangulation.graph} according to the construction in Section \ref{sec:prelim-graph} and then turn it into a correlator as in Equation \eqref{eq:TA-def}. This gives the morphism $T_A$ as a string diagram
 in $\mathcal{S}$:
\begin{equation}\label{eq:cyl-calc-aux1}
    T_A(\Sigma)=\raisebox{-3cm}{
    \begin{tikzpicture}
        \foreach \N/\Na in {1/8,2/9,3/10,4/11,5/12,6/7} {
        \node[name=t\N, rectangle, draw, minimum width=1.5cm, minimum height=0.5cm] at ($2*\N*(1cm,0cm)$) {$t(\sigma_\N)$};
        \node[name=c\Na, rectangle, draw, minimum width=0.5cm, minimum height=0.5cm] at ($(1cm,-2cm)+2*\N*(1cm,0cm)$) {$c_{s_{\Na}}$};
    }
    \foreach \N in {1,...,6}{
        \coordinate (t\N1) at ($(t\N.south west)!0.33!(t\N.south)$);
        \coordinate (t\N2) at (t\N.south);
        \coordinate (t\N3) at ($(t\N.south east)!0.33!(t\N.south)$);
    }
    \foreach \N in {7,...,12}{
        \coordinate (c\N1) at ($(c\N.north west)!0.5!(c\N.north)$);
        \coordinate (c\N2) at ($(c\N.north east)!0.5!(c\N.north)$);
    }
    \coordinate (su) at (0cm,0.5cm);
    \coordinate (sd) at (0cm,-0.5cm);
    \foreach \N in {1,2,3} \node[name=N\N, ellipse, draw, inner sep=0.05cm] at ($(2.5cm,-4cm)+\N*(1.1cm,0cm)$) {$-s_\N$};
    \foreach \N in {4,5,6} \node[name=N\N, ellipse, draw, inner sep=0.05cm] at ($(5cm,-4cm)+\N*(1.1cm,0cm)$) {$-s_\N$};
    \foreach \N in {1,...,6} \coordinate (in\N) at ($(N\N)+(0cm,-0.7cm)$);
    \foreach \N/\M in { 
        c81/t21,
        c82/t12,
        c91/t31,
        c92/t23,
        c101/t41,
        c102/t32,
        c111/t51,
        c112/t43,
        c121/t61,
        c122/t52,
        c71/t11,
        c72/t63} \draw (\N) .. controls ($(\N)+(su)$) and ($(\M)+(sd)$) .. (\M);
    \foreach \N in {1,...,6} \coordinate (h\N) at ($(t\N2)+(0cm,-2cm)$);
    \coordinate (h1) at ($(h1)+(0.5cm,0cm)$);
    \coordinate (h2) at ($(h2)+(0cm,0.3cm)$);
    \coordinate (h3) at ($(h3)+(0.5cm,0cm)$);
    \coordinate (h5) at ($(h5)+(0.5cm,0cm)$);
    \foreach \N/\M/\X in {
        N1.north/t13/h1,
        N2.north/t33/h3,
        N3.north/t53/h5,
        N4.north/t62/h6,
        N5.north/t42/h4,
        N6.north/t22/h2}{
            \draw (\N) .. controls ($(\N)+(su)$) and ($(\X)+2*(sd)$) .. (\X);
            \draw (\X) .. controls ($(\X)+(su)$) and ($(\M)+(sd)$) .. (\M);
        }
    \foreach \N in {1,...,6} \draw (in\N) -- (N\N.south);
\end{tikzpicture}}
\end{equation}
The $\sigma_i$ in $t(\sigma_i)$ is just a reference to which triangle the map comes from in order to make it easier for the reader to verify; the map is in all cases the same map $t:A^{\otimes 3} \to \mathbf{1}$.

Next we replace the morphisms $t$ and $c_{\pm 1}$ by structure maps of the Frobenius algebra as in Proposition \ref{lem:moves_from_alg}. After a tedious but straightforward calculation one arrives at
\begin{equation}
    T_A(\Sigma)=\raisebox{-3cm}{
        \begin{tikzpicture}
            \coordinate (su) at (0cm,0.3cm);
            \coordinate (sd) at (0cm,-0.3cm);
            \node[name=top,circle,fill,inner sep=0.05cm] at (3.5cm,0cm) {};
            \foreach \N in {1,2,3} \node[name=N\N, ellipse, draw, inner sep=0.05cm] at ($(0cm,-5cm)+\N*(1cm,0cm)$) {$s_\N'$};
            \foreach \N in {4,5,6} \node[name=N\N, ellipse, draw, inner sep=0.05cm] at ($(1.5cm,-5cm)+\N*(1cm,0cm)$) {$s_\N'$};
            \node[name=proj31L, draw] at ($(N2)+(0cm,1cm)$) {$\pi^{31}$};
            \node[name=proj31R, draw] at ($(N5)+(0cm,1cm)$) {$\pi^{31}$};
            \foreach \N in {1,...,6} \draw ($(N\N.south)+(0cm,-0.5cm)$) -- (N\N);
            \draw (N1.north) .. controls ($(N1.north)+(su)$) and ($($(proj31L.south west)!0.33!(proj31L.south)$)+(sd)$) .. ($(proj31L.south west)!0.33!(proj31L.south)$);
            \draw (N2.north) -- (proj31L.south);
            \draw (N3.north) .. controls ($(N3.north)+(su)$) and ($($(proj31L.south east)!0.33!(proj31L.south)$)+(sd)$) .. ($(proj31L.south east)!0.33!(proj31L.south)$);
            \draw (N4.north) .. controls ($(N4.north)+(su)$) and ($($(proj31R.south west)!0.33!(proj31R.south)$)+(sd)$) .. ($(proj31R.south west)!0.33!(proj31R.south)$);
            \draw (N5.north) -- (proj31R.south);
            \draw (N6.north) .. controls ($(N6.north)+(su)$) and ($($(proj31R.south east)!0.33!(proj31R.south)$)+(sd)$) .. ($(proj31R.south east)!0.33!(proj31R.south)$);
            \begin{scope}[shift={(proj31L.north)}]
            \coordinate (in1) at (0cm,0cm);
            \coordinate (id1l1) at (-0.4cm,1cm);
            \coordinate (id1r1) at (0.4cm,1cm);
            \coordinate (id2l1) at (-0.4cm,1.8cm);
            \coordinate (id2r1) at (0.4cm,1.8cm);
            \coordinate (out1) at (0cm,2.5cm);
            \node[name=cp1,circle,fill,inner sep=0.05cm] at (0cm,0.5cm) {};
            \node[name=p1,circle,fill,inner sep=0.05cm] at (0cm,2.3cm) {};
            \node[name=NL,ellipse,draw,inner sep=0.05cm] at ($(id1l1)+(0cm,-0.1cm)$) {{$s'$}};
            \node[name=NR,ellipse,draw,inner sep=0.05cm] at ($(id1r1)+(0cm,-0.1cm)$) {{$s'_7$}};
            \end{scope}
            \draw (in1) -- (cp1);
            \draw (cp1) .. controls ($(NR.south)+0.2*(sd)$) .. (NR.south);
            \draw (cp1) .. controls ($(NL.south)+0.2*(sd)$) .. (NL.south);
            \draw (NL.north) .. controls ($(NL.north)+(su)$) and ($(id2r1)+(sd)$) .. (id2r1);
            \draw (NR.north) .. controls ($(NR.north)+(su)$) and ($(id2l1)+(sd)$) .. (id2l1);
            \draw (id2l1) .. controls ($(id2l1)+(su)$) .. (p1);
            \draw (id2r1) .. controls ($(id2r1)+(su)$) .. (p1);
            \draw (p1) .. controls ($(p1)+(su)$) .. (top);
            \draw (proj31R.north) .. controls ($(proj31R.north)+3*(su)$) .. (top);
        \end{tikzpicture}
    }
\end{equation}
with the signs $s'$ and $s_1',\dots,s_7'$ given by
\begin{align}
    s_1'&=-s_1~,~~ &
    s_4'&=s_4\ , \\
    s_2'&=-s_2s_8s_9~,~~ &
    s_5'&=s_5s_{11}s_{12} \ ,  \nonumber\\
    s_3'&=-s_3s_8s_9s_{10}s_{11}~,~~ &
    s_6'&=s_6s_9s_{10}s_{11}s_{12} \ ,  \nonumber\\
    s_7'&=s_7~,~~ &s'&=s_8s_9s_{10}s_{11}s_{12} \ .\nonumber
\end{align}
Evaluating this for the signs $s_i$ given in \eqref{eq:cylinder.signs.convention} and \eqref{eq:cylinder.signs.solution} then yields \eqref{eq:TFT-cylinder-values}.

\section{Evaluation of the TFT on the pair of pants}\label{app:pairpants}

\begin{figure}[tb]
    \centering
    \begin{tikzpicture}
    \node[name=B1, regular polygon, regular polygon sides=3, minimum size=15cm] at (0,0) {};
    \foreach \N/\M/\X in {1/3/right,2/2/left,3/1/above} {
        \coordinate (v\N) at (B1.corner \M);
        \node[\X] at (v\N) {$v_\N$};
    }
    \node[below=0.5cm] at (B1.side 2) {$B_1$};
    \node[name=B2, regular polygon, regular polygon sides=3, minimum size=3cm] at ($0.33*(v1)+0.66*(v2)+0.33*(v3)$) {$B_2$};
    \foreach \N/\M/\X in {4/2/below,5/3/below,6/1/above left} {
        \coordinate (v\N) at (B2.corner \M);
        \node[\X] at (v\N) {$v_\N$};
    }
    \node[name=B3, regular polygon, regular polygon sides=3, minimum size=3cm] at ($0.66*(v1)+0.33*(v2)+0.33*(v3)$) {$B_3$}; 
    \foreach \N/\M/\X in {7/2/below,8/3/below,9/1/above right} {
        \coordinate (v\N) at (B3.corner \M);
        \node[\X] at (v\N) {$v_\N$};
    }
    \begin{scope}[green!40, line width=2pt]
        \foreach \N/\M/\X/\Y in { 
            1/2/0pt/-2pt,
            1/2/0pt/2pt,
            2/3/1pt/-1pt,
            3/1/-1pt/-1pt,
            4/5/0pt/-2pt,
            4/5/0pt/2pt,
            5/6/1pt/1pt,
            6/4/-1pt/1pt,
            7/8/0pt/-2pt,
            7/8/0pt/2pt,
            8/9/1pt/1pt,
            9/7/-1pt/1pt,
            5/7/0pt/-2pt,
            6/9/0pt/2pt
        }{
            \coordinate (x1) at ($(v\N)+(\X,\Y)$);
            \coordinate (x2) at ($(v\M)+(\X,\Y)$);
            \draw ($(x1)!0.05!(x2)$) -- ($(x1)!0.95!(x2)$);
        }
    \end{scope}
    \begin{scope}[decoration={markings,mark=at position 0.5 with {\arrow{stealth}}}]
        \foreach \N/\M/\X/\E in { 
            1/2/above/1,
            2/3/left/2,
            3/1/right/3,
            4/5/above/4,
            5/6/left/5,
            6/4/right/6,
            7/8/above/7,
            8/9/left/8,
            9/7/right/9,
            2/7/below/10,
            2/4/above/12,
            2/6/left/13,
            1/7/below/14,
            1/8/left/15,
            1/9/right/16,
            5/9/above left/17,
            6/9/below/18,
            3/6/right/19,
            3/9/left/20,
            5/7/above/21}
        {
            \draw[postaction=decorate] (v\N) -- (v\M); 
            \node[\X] at ($(v\N)!0.5!(v\M)$) {$e_{\E}$};
        }
        \draw[postaction=decorate] (v2) -- (v5); \node[left=5pt] at ($(v2)!0.5!(v5)$) {$e_{11}$};
    \end{scope}
    \foreach \N/\M/\K/\T in { 
        1/2/7/1,
        2/7/5/2,
        2/5/4/3,
        2/4/6/4,
        2/6/3/5,
        1/8/7/6,
        1/9/8/7,
        1/3/9/8,
        5/7/9/9,
        5/6/9/10,
        3/6/9/11
    }
        \node at ($0.33*(v\N)+0.33*(v\M)+0.33*(v\K)$) {$\sigma_{\T}$};
    \end{tikzpicture}
    \caption{A triangulation of the genus 0 surface with 3 boundaries with markings and labels. Boundaries are labelled by $B_1$, $B_2$, $B_3$ and correspondingly the edges $e_1$, \dots,$e_9$ are boundary edges.}
    \label{fig:pairofpants.triang}
\end{figure}
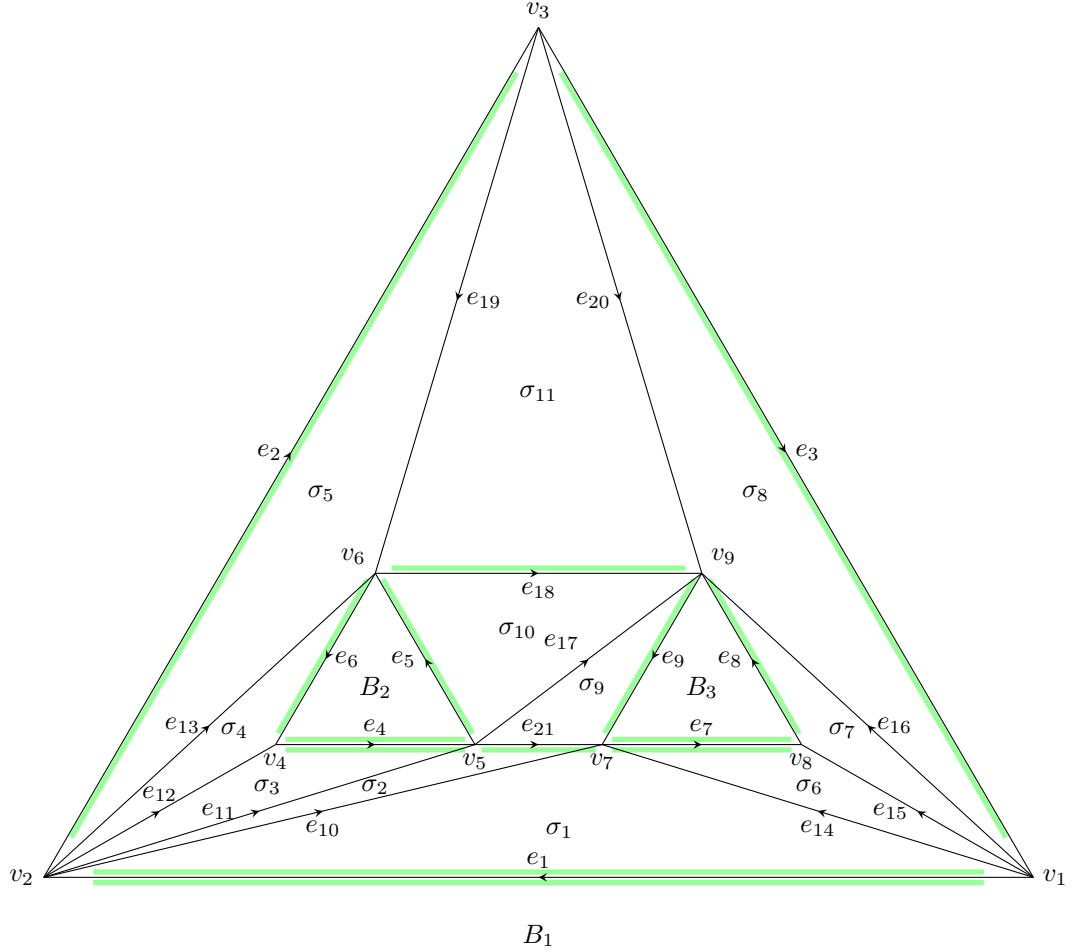

In this appendix we compute the value of the TFT on the surface $\Sigma^{0,3}$. We demand that the $i$'th boundary component $B_i$ is of type $\delta_i$, where $i=1,2,3$ and $\delta_i \in \{NS,R\}$. Our starting point is the triangulation and marking given in Figure \ref{fig:pairofpants.triang}. We determine the possible spin structures with the given boundary types by computing all admissible edge signs (see Section \ref{sse:spin_reconstruct}).

To reduce the number of parameters, use Lemma \ref{lem:index.marking}(1) to set an edge sign to $1$ for each of the triangles $\sigma_1$, \dots, $\sigma_{11}$:

\begin{center}
\begin{tabular}[]{lccccccccccc}
    triangle & $\sigma_1$ & $\sigma_2$ &$\sigma_3$ &$\sigma_4$ &$\sigma_5$ &$\sigma_6$ &$\sigma_7$ &$\sigma_8$ &$\sigma_9$ &$\sigma_{10}$ &$\sigma_{11}$ \\ \hline
    edge fixed& $e_{10}$ &$e_{11}$ &$e_4$ &$e_6$ &$e_{13}$ &$e_7$ &$e_8$ &$e_{16}$ &$e_9$ &$e_5$ &$e_{18}$ 
\end{tabular}
\end{center}
Let $s_i:= s(e_i)$. We thus have 
\begin{equation}\label{eq:pants-s=1-choice}
    s_4 = s_5 = s_6 = s_7 = s_8 = s_9 = s_{10} = s_{11} = s_{13} = s_{16} = s_{18} = 1 \ .
\end{equation}
We now have to evaluate the vertex rules at vertices $v_1$, \dots, $v_9$. These depend on the spin structure on the boundaries. Let 
\begin{equation}
    \nu_i = 
    \begin{cases}
        1 & \text{if $B_i$ is of $NS$-type} \\
        -1 & \text{if $B_i$ is of $R$-type}
    \end{cases}
\end{equation}
	for $i=1,2,3$. 
The conditions at the vertices can then be evaluated to
\begin{equation}
\begin{tabular}{lll}
        $v_1:s_{1}s_{3}s_{14}s_{15}= -\nu_1$ , & 
            $v_2:s_{1}s_{2}s_{12}=-1$ , &
            $v_3:s_{2}s_{3}s_{19}s_{20}=-1$ ,  \\ 
        $v_4:s_{12}=\nu_2$ , &
            $v_5:s_{17}s_{21}=-1$ , &
            $v_6:s_{19}=-1$ , \\
        $v_7:s_{14}s_{21}=-\nu_3$ , &
            $v_8:s_{15}=-1$ , &
            $v_9:s_{17}s_{20}=-1$  .
    \end{tabular} 
\end{equation}
From these equations it follows that 
\begin{equation}
    \nu_1 \, \nu_2 \, \nu_3=1 \ . 
    \label{eq:pop.boundary.spin.restriction}
\end{equation}
If this is the case, let $\alpha_1, \alpha_2 \in \{1,-1\}$. Then all solutions to these equations are given by 
\begin{equation}\label{eq:pants-s_via_alpha-sol}
    \begin{tabular}[]{r|cccccccccc}
        $i$ & $1$ & $2$ & $3$ & $12$ & $14$ & $15$ & $17$ & $19$ & $20$ & $21$ \\ \hline
        $s_i$ & $\alpha_1$ & $-\nu_2\alpha_1$ & $\nu_1\alpha_1\alpha_2$ & $\nu_2$ & $\alpha_2$ & $-1$ & $\nu_3\alpha_2$ & $-1$ & $-\nu_3\alpha_2$ & $-\nu_3\alpha_2$
    \end{tabular} \quad .
\end{equation}

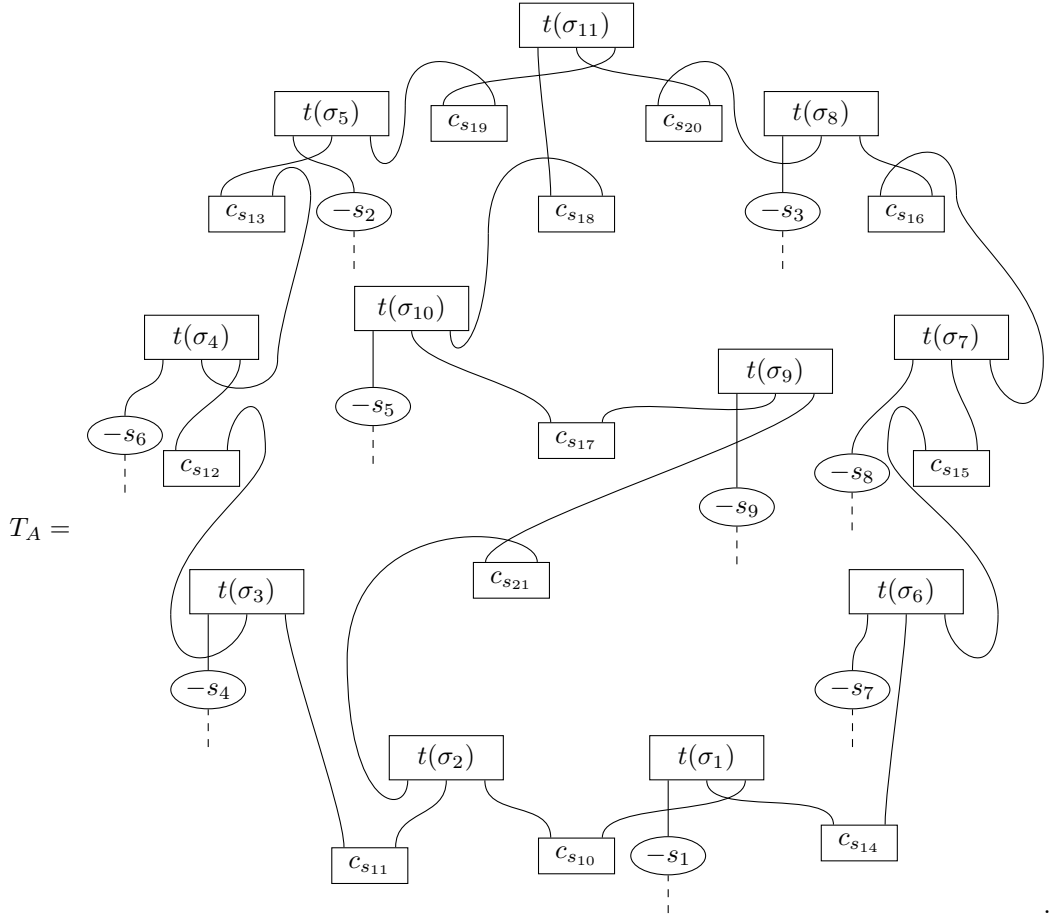
\begin{figure}[tb]
\begin{equation*}
    T_A =\raisebox{-5cm}{ 
    \begin{tikzpicture}
    \node[name=C1, regular polygon, regular polygon sides=9, minimum size=10cm, red, dashed] at (0,0) {};
    \foreach \N/\M in {1/11,2/5,3/4,4/3,5/2,6/1,7/6,8/7,9/8} \node[name=t\M, draw, minimum width=1.5cm] at (C1.corner \N) {$t(\sigma_{\M})$};
    \node[name=t10, draw, minimum width=1.5cm] at ($(t3)!0.5!(t11)$) {$t(\sigma_{10})$};
    \node[name=t9, draw, minimum width=1.5cm] at ($(t1)!0.6!(t8)$) {$t(\sigma_{9})$};
    \node[name=C2, regular polygon, regular polygon sides=9, minimum size=10cm, red, dashed, rotate=20] at (0,0) {};
    \coordinate (sh19) at (0.3cm,-1cm);    
    \coordinate (sh13) at (0cm,0cm);    
    \coordinate (sh12) at (0cm,0cm);    
    \coordinate (sh11) at (0.5cm,-2.3cm);    
    \coordinate (sh10) at (0cm,-1cm);    
    \coordinate (sh14) at (0.5cm,-2cm);    
    \coordinate (sh15) at (0cm,0cm);    
    \coordinate (sh16) at (0cm,0cm);    
    \coordinate (sh20) at (-0.3cm,-1cm);    
    \foreach \N/\M in {1/19,2/13,3/12,4/11,5/10,6/14,7/15,8/16,9/20} \node[name=c\M, draw, minimum width=1cm] at ($(C2.corner \N)+(sh\M)$) {$c_{s_{\M}}$};
    \node[name=c17, draw, minimum width=1cm] at ($(C2.center)+(0cm,-0.5cm)$) {$c_{s_{17}}$};
    \node[name=c18, draw, minimum width=1cm] at ($(C2.center)!0.5!(t11)$) {$c_{s_{18}}$};
    \node[name=c21, draw, minimum width=1cm] at ($(C2.center)!0.5!(t2)$) {$c_{s_{21}}$};
    \foreach \N in {1,...,9} \coordinate (c\N) at (0,0);
    \coordinate (su) at (0cm,0.5cm);
    \coordinate (sd) at (0cm,-0.5cm);
    \foreach \N in {1,...,11} {
        \coordinate (tx\N1) at ($(t\N.south west)!0.33!(t\N.south)$);
        \coordinate (tx\N2) at (t\N.south);
        \coordinate (tx\N3) at ($(t\N.south east)!0.33!(t\N.south)$);
    }
    \foreach \N in {1,...,21} {
        \coordinate (cx\N1) at ($(c\N.north west)!0.33!(c\N.north)$);
        \coordinate (cx\N2) at ($(c\N.north east)!0.33!(c\N.north)$);
        }
    \foreach \C/\T in {
        101/23,
        102/13,
        111/33,
        112/22,
        121/43,
        131/52,
        141/12,
        142/62,
        152/72,
        162/83,
        171/102,
        172/92,
        181/111,
        191/113,
        202/112,
        211/93}
     \draw (cx\C) .. controls ($(cx\C)+(su)$) and ($(tx\T)+(sd)$) .. (tx\T);
    \coordinate (h192) at ($(cx191)+(-0.5cm,0cm)$);
    \coordinate (h182) at ($(cx182)+(-1.5cm,-0.5cm)$);
    \coordinate (h201) at ($(cx201)+(1cm,0cm)$);
    \coordinate (h212) at ($(cx212)+(-2.5cm,-1.3cm)$);
    \foreach \C/\T/\M in { 
        192/53/2,
        182/103/3,
        201/82/2,
        212/21/4}{
        \draw (cx\C) .. controls ($(cx\C)+(su)$) and ($(h\C)+\M*(su)$) .. (h\C);
        \draw (h\C) .. controls ($(h\C)+\M*(sd)$) and ($(tx\T)+(sd)$) .. (tx\T);
    } 
    \coordinate (hc122) at ($(cx122)+(0.5cm,0cm)$);
    \coordinate (ht32) at ($(tx32)+(-1cm,0cm)$);
    \coordinate (hc132) at ($(cx132)+(0.5cm,0cm)$);
    \coordinate (ht42) at ($(tx42)+(1cm,0cm)$);
    \coordinate (hc161) at ($(cx161)+(1cm,0cm)$);
    \coordinate (ht73) at ($(tx73)+(0.7cm,0cm)$);
    \coordinate (hc151) at ($(cx151)+(-0.5cm,0cm)$);
    \coordinate (ht63) at ($(tx63)+(0.7cm,0cm)$);
    \foreach \C/\T/\M in {
        122/32/2,
        132/42/1,
        151/63/2,
        161/73/2}{
        \draw (cx\C) .. controls ($(cx\C)+(su)$) and ($(hc\C)+\M*(su)$) .. (hc\C);
        \draw (hc\C) .. controls ($(hc\C)+(sd)$) and ($(ht\T)+\M*(su)$) .. (ht\T);
        \draw (ht\T) .. controls ($(ht\T)+\M*(sd)$) and ($(tx\T)+(sd)$) .. (tx\T);
    }
    \foreach \T in {1,...,9} \coordinate (s\T) at (0,0);
    \coordinate (s2) at (0.8cm,0cm);
    \coordinate (s6) at (-0.5cm,0cm);
    \coordinate (s7) at (-0.2cm,0cm);
    \coordinate (s8) at (-0.8cm,-0.5cm);
    \coordinate (s9) at (0cm,-0.5cm);
    \foreach \C/\T in {1/1,5/2,8/3,3/4,10/5,4/6,6/7,7/8,9/9} {
        \node[name=N\T, ellipse, draw, inner sep=0.05cm] at ($(tx\C1)+(0cm,-1cm)+(s\T)$) {$-s_{\T}$};
        \draw (N\T.north) .. controls ($(N\T.north)+(su)$) and ($(tx\C1)+(sd)$) .. (tx\C1);
        \draw[dashed] ($(N\T.south)+(0cm,-0.5cm)$) -- (N\T.south);
    }
    \end{tikzpicture}.}
\end{equation*}
    \caption{The string diagram resulting from the triangulation in Figure \ref{fig:pairofpants.triang}. Here the dotted ingoing lines have to be ordered from $1,\dots,9$ according to the edge $e_i$ they correspond to. As in \eqref{eq:cyl-calc-aux1} we write $t(\sigma_i)$ to indicate which triangle the map comes from, but the map is $t:A^{\otimes 3} \to \mathbf{1}$ in all cases.}
\label{fig:pairofpants.morph}
\end{figure}

The result of translating the triangulation in Figure \ref{fig:pairofpants.triang} into a string diagram as in Section \ref{sec:prelim-graph} and Equation \eqref{eq:TA-def} is shown in Figure \ref{fig:pairofpants.morph}.
We now replace the maps $t$ and $c_{\pm 1}$ as in Proposition \ref{lem:moves_from_alg}. 
After a tedious but straightforward calculation one arrives at
    \begin{equation}\label{eq:3-holed-sphere-morphs-aux1}
    T_A = 
    b\circ \left(\id_A\otimes \mu\right) 
    \circ \big(q_{\nu_1} \otimes q_{\nu_2} \otimes q_{\nu_3} \big) 
    \circ \big(N_{\alpha_1}\otimes \id_A \otimes N_{-\eta_1\alpha_2}\big) 
    \circ (\pi^{31})^{\otimes 3}\ . 
    \end{equation}
Two of the identities used to get this result are worth pointing out: firstly, Lemma \ref{lem:forget-one-projector} has been used to insert an additional $q$ to make the expression more symmetric; secondly, the Nakayama automorphism satisfies $q_\nu \circ N_{-\nu} = q_\nu$ (Lemma \ref{lem:PNS/R-idemp}). 

\medskip

We now turn to the proofs of Lemmas \ref{lem:pairofpants.spinstructures} and \ref{lem:pair-of-pants-TFT-value} from Section \ref{sec:pair-of-pants}.

\begin{proof}[Proof of Lemma \ref{lem:pairofpants.spinstructures}]
A spin structure with boundary types $\delta_1, \delta_2, \delta_3$ exists if and only if there are admissible edge signs on the marked triangulation given in Figure \ref{fig:pairofpants.triang}. The necessary (and sufficient) condition for this stated in \eqref{eq:pop.boundary.spin.restriction} proves the first part of the lemma.

For the second statement we need to check that up to isomorphism there are exactly four spin structures, and that representatives of these are provided by the four sets of admissible edge signs found above. 

All possible spin structures are produced from any one spin structure by composing a boundary parametrisation with a leaf exchange. This gives a transitive action of $(\mathbb{Z}_2)^3$ on the set of spin structures. Since the surface is connected, the only non-trivial automorphism of the spin structure in the interior of $\Sigma^{0,3}$ is leaf exchange. On the boundary, this induces the diagonal $\mathbb{Z}_2$-action. The quotient of $(\mathbb{Z}_2)^3$ by the diagonal $\mathbb{Z}_2$ thus acts transitively and faithfully, showing that there are four spin structures (with parametrised boundary). Finally, since changing $\alpha_1$ and $\alpha_2$ amounts to precomposing two of the three boundaries with a leaf exchange, the four values of $(\alpha_1,\alpha_2)$ precisely give the four possible spin structures.
\end{proof}

\begin{proof}[Proof of Lemma \ref{lem:pair-of-pants-TFT-value}]
Given the condition in \eqref{eq:pop.boundary.spin.restriction}, we get four spin structures parametrised by $(\alpha_1,\alpha_2)$. The value of the TFT on the corresponding spin surface is given in \eqref{eq:3-holed-sphere-morphs-aux1}. Now substitute 
\begin{equation}\label{eq:pants-alpha-via-eps}
	\alpha_1 = \varepsilon_1 \, \varepsilon_2
	\quad , \qquad
	\alpha_2 = - \eta_1\, \varepsilon_2 \ ,
\end{equation}
as well as $\id = N_{\varepsilon_2} \circ N_{\varepsilon_2}$. One can then remove one factor of $N_{\varepsilon_2}$ from each leg by moving it through $b \circ (\mu \otimes \id)$. This results in the expression stated in the lemma.
\end{proof}

The above proof also determines the spin structure of $\Sigma^{0,3}_{\delta_1,\delta_2,\delta_3,\varepsilon_1,\varepsilon_2}$ 
to be those obtained from the marked triangulation equipped with the edge signs \eqref{eq:pants-s=1-choice} and \eqref{eq:pants-s_via_alpha-sol}, where $\alpha_{1,2}$ have been replaced as in \eqref{eq:pants-alpha-via-eps}.


\newcommand\arxiv[2]      {\href{http://arXiv.org/abs/#1}{#2}}
\newcommand\doi[2]        {\href{http://dx.doi.org/#1}{#2}}
\newcommand\httpurl[2]    {\href{http://#1}{#2}}

\providecommand{\href}[2]{#2}\begingroup\raggedright

\end{document}